\documentclass[12pt, twoside]{article}
\usepackage{amsmath,amsthm,amssymb}
\usepackage{times}
\usepackage{enumerate}

\pagestyle{myheadings}
\def\titlerunning#1{\gdef\titrun{#1}}
\makeatletter
\def\author#1{\gdef\autrun{\def\and{\unskip, }#1}\gdef\@author{#1}}
\def\address#1{{\def\and{\\\hspace*{18pt}}\renewcommand{\thefootnote}{}%
\footnote {#1}}%
\markboth{\autrun}{\titrun}}
\makeatother
\def\email#1{e-mail: #1}
\def\subjclass#1{{\renewcommand{\thefootnote}{}%
\footnote{\emph{Mathematics Subject Classification (2010):} #1}}}
\def\keywords#1{\par\medskip
\noindent\textbf{Keywords.} #1}

\numberwithin{equation}{section}
\newcommand{\beq}{\begin{equation}}
\newcommand{\eeq}{\end{equation}}
\newcommand{\bea}{\begin{eqnarray}}
\newcommand{\eea}{\end{eqnarray}}
\newcommand{\beas}{\begin{eqnarray*}}
\newcommand{\eeas}{\end{eqnarray*}}

%
%
\newtheorem{theorem}{Theorem}[section]
\newtheorem{definition}[theorem]{Definition}
\newtheorem{proposition}[theorem]{Proposition}
\newtheorem{corollary}[theorem]{Corollary}
\newtheorem{lemma}[theorem]{Lemma}
\newtheorem{remark}[theorem]{Remark}
\newtheorem{example}[theorem]{Example}
\newtheorem{examples}[theorem]{Examples}
\newtheorem{foo}[theorem]{Remarks}

\newtheorem{assumption}[theorem]{Hypothesis}

\frenchspacing

\textwidth=15cm
\textheight=23cm
\parindent=16pt
\oddsidemargin=-0.5cm
\evensidemargin=-0.5cm
\topmargin=-0.5cm






\newcommand{\bG}{\mathbb G}

\newcommand{\p}{\partial}
\newcommand{\ee}{\ell}

\newcommand{\bM}{\mathbb M}

\newcommand{\Rn}{\mathbb R^n}
\newcommand{\Om}{\Omega}

\newcommand{\Ho}{\mathcal H}

\newcommand{\di}{\mathfrak h}
\newcommand{\M}{\mathbb M}

\newcommand{\R}{\mathbb R}

\newcommand{\ve}{\varepsilon}


\begin{document}


\baselineskip=17pt


\titlerunning{Curvature-dimension inequalities, etc.}

\title{Curvature-dimension inequalities and Ricci lower bounds for sub-Riemannian manifolds with transverse symmetries}

\author{Fabrice Baudoin
\and 
Nicola Garofalo}

\date{}

\maketitle

\address{Fabrice Baudoin: Department of Mathematics, Purdue University, West Lafayette, IN 47907; \email{fbaudoin@purdue.edu}
\and
Nicola Garofalo: Dipartimento d'Ingegneria Civile e Ambientale (DICEA), Universit\`a di Padova via Trieste 63, 35131 Padova, Italy; \email{nicola.garofalo@unipd.it}}

\subjclass{Primary 53C12; Secondary 53C17}


\begin{abstract}
Let $\M$ be a smooth connected manifold endowed with a smooth measure $\mu$ and a smooth locally subelliptic diffusion operator $L$ satisfying $L1=0$, and which is symmetric with respect to $\mu$. Associated with $L$ one has the \textit{carr\'e du champ} $\Gamma$ and a canonical distance $d$, with respect to which we suppose that $(\M,d)$ be complete. We assume that $\M$ is also equipped with another first-order differential bilinear form $\Gamma^Z$ and we assume that $\Gamma$ and $\Gamma^Z$ satisfy the  Hypothesis \ref{A:exhaustion}, \ref{A:main_assumption}, and \ref{A:regularity} below. With these forms we introduce in \eqref{cdi} below a generalization of the curvature-dimension inequality from Riemannian geometry, see Definition \ref{D:cdi}. In our main results we prove that, using solely \eqref{cdi}, one can develop a theory which parallels the celebrated works of Yau, and Li-Yau on complete manifolds with Ricci curvature bounded from below. We also obtain an analogue of the Bonnet-Myers theorem. In Section \ref{S:appendix} we construct large classes of sub-Riemannian manifolds with transverse symmetries which satisfy the generalized curvature-dimension inequality \eqref{cdi}. Such classes include all Sasakian manifolds whose horizontal Webster-Tanaka-Ricci curvature is bounded from below, all Carnot groups with step two, and wide subclasses of principal bundles over Riemannian manifolds whose Ricci curvature is bounded from below. 
\keywords{ Sub-Riemannian geometry, Curvature dimension inequalities}
\end{abstract}

\section{Introduction}\label{S:intro}

In the present paper we introduce a generalization of the curvature-dimension inequality from Riemannian geometry which, as we show, is appropriate for some sub-Riemannian geometries. The central objective of our work is developing a program which, through a systematic use of such curvature-dimension inequality, connects the geometry of the ambient manifold, expressed in terms of lower bounds on a generalization of the Ricci tensor, to global properties of solutions of a certain canonical second order diffusion (non elliptic) partial differential operator, a sub-Laplacian, and of its associated heat semigroup. 


In Riemannian geometry the Ricci tensor plays a fundamental role. Its connection with the Laplace-Beltrami operator is provided by the celebrated identity of Bochner which states that if $\bM$ is a $n$-dimensional Riemannian manifold with Laplacian $\Delta$, for any $f\in C^\infty(\bM)$ one has
\begin{equation}\label{rb}
\Delta(|\nabla f|^2) = 2 ||\nabla^2 f||^2 + 2 <\nabla f,\nabla(\Delta f)> + 2 \text{Ric}(\nabla f,\nabla f).
\end{equation}
Consider the following differential forms on
functions $f, g \in C^\infty(\bM)$,
\begin{equation*}
\Gamma(f,g) =\frac{1}{2}(\Delta(fg)-f\Delta g-g\Delta f)=( \nabla f
, \nabla g ),
\end{equation*}
and
\begin{equation*}
\Gamma_{2}(f,g) = \frac{1}{2}\big[\Delta\Gamma(f,g) - \Gamma(f,
\Delta g)-\Gamma (g,\Delta f)\big].
\end{equation*}
When $f=g$, we simply write $\Gamma(f) = \Gamma(f,f)$, $\Gamma_2(f) = \Gamma_2(f,f)$. The functional calculus of these forms was introduced and developed in \cite{Bakry-Emery}.
As an application of the Bochner's formula, which in terms of these functionals can be reformulated as
\[ \Delta \Gamma(f) = 2 ||\nabla^2 f||^2 + 2 \Gamma(f,\Delta f) +
2\ \text{Ric}(\nabla f,\nabla f), \] one obtains
\[
\Gamma_{2}(f)= \| \nabla^2 f \|_2^2 +\text{Ric}(\nabla f, \nabla
f).
\]
Using the Cauchy-Schwarz inequality, which gives $\| \nabla^2 f
\|_2^2\ge \frac{1}{n} (\Delta f)^2$, we thus see that the assumption that the
Riemannian Ricci tensor on $\bM$ be bounded from below by $\rho_1 \in
\mathbb{R}$ implies the so-called \emph{curvature-dimension
inequality} CD$(\rho_1,n)$:
\begin{equation}\label{CDi}
\Gamma_{2}(f) \ge \frac{1}{n} (\Delta f)^2 + \rho_1
\Gamma(f),\ \ \ \ f\in C^\infty(\bM).
\end{equation}
In the hands of D. Bakry, M. Ledoux and their co-authors the inequality  \eqref{CDi} 
has proven a powerful tool in combination with a systematic use of fine properties of the heat semigroup. Among other things, these authors have succeeded in re-deriving, from a purely analytical perspective, several of the
well-known fundamental results which, in Riemannian geometry, are obtained
under the assumption that the Ricci curvature be bounded from below, see for instance \cite{bakry-stflour},  \cite{Bakry-Ledoux1} \cite{ledoux-zurich},
\cite{li}.
It is remarkable that the curvature dimension inequality \eqref{CDi} perfectly  captures the
notion of Ricci curvature lower bound. It was in fact proved by Bakry in Proposition 6.2 in \cite{bakry-stflour} that: \emph{on a $n$-dimensional Riemannian manifold $\bM$ the inequality \emph{CD}$(\rho_1,n)$ implies} Ric $\ge \rho_1$. In conclusion, Ric$\ge \rho_1$ $\Longleftrightarrow$ \text{CD}$(\rho_1,n)$.

Inspired by the ideas contained in the above mentioned works, in the present paper we introduce a generalization of the curvature-dimension inequality \eqref{CDi} which can be successfully used in sub-Riemannian geometry. At this point, we feel it is important to say few words concerning the organization of the paper. The essential contribution of the present work is based on ideas and tools which are purely analytical in nature: as we have mentioned above, we systematically use the heat semigroup to derive new results in sub-Riemannian geometry. On the other hand, an equally important aspect of the present work is the construction of the examples from geometry: as the title indicates, the main class studied in this paper is that of sub-Riemannian manifolds with transverse symmetries. We show that such class is quite large, as it incorporates (but is not limited to) examples which are geometrically as diverse as CR manifolds with vanishing Tanaka-Webster torsion (Sasakian manifolds), graded nilpotent Lie groups of step two, orthonormal frame bundles. To facilitate the perusal of this paper by an audience of analysts we have strived as much as possible to separate the presentation of the analytical part of our work from the geometrical discussion of the examples. With this objective in mind, we have chosen to present the analytical part of the paper in an axiomatic way. By this we mean that all that is asked to a reader less inclined toward geometry is to accept a set of four ``abstract'' assumptions, which are listed as Hypothesis \ref{A:exhaustion}, \ref{A:main_assumption}, Definition \ref{D:cdi} and Hypothesis \ref{A:regularity} below. The geometrical relevance, and the motivation, of such assumptions is unraveled in Section \ref{S:appendix}, where we discuss the examples and we develop the geometric setup.

With this being said, we now introduce the relevant setting. We consider a smooth, connected manifold $\bM$ endowed with a smooth measure $\mu$ and a smooth second-order diffusion operator $L$ with real coefficients satisfying $L1=0$, and which is symmetric with respect to $\mu$ and non-positive. By this we mean that 
\begin{equation}\label{sa}
\int_\bM f L g d\mu=\int_\bM g Lf d\mu,\ \ \ \ \ \ \int_\bM f L f d\mu \le 0,
\end{equation}
for every $f , g \in C^ \infty_0(\bM)$. We make the technical assumption that $L$ be locally subelliptic in the sense of \cite{FP1}.
We associate with $L$ the following symmetric, first-order, differential bilinear form: 
\begin{equation}\label{gamma}
\Gamma(f,g) =\frac{1}{2}(L(fg)-fLg-gLf), \quad f,g \in C^\infty(\bM).
\end{equation}
The expression $\Gamma(f) = \Gamma(f,f)$ is known as the \textit{ carr\'e du champ}. Furthermore,
using the results in \cite{PS1}, locally in the neighborhood of every point $x\in \M$ we can write
\begin{equation}\label{Lrep}
L =- \sum_{i=1}^m X_i^* X_i,
\end{equation}
where the vector fields $X_i$ are Lipschitz continuous (such representation is not unique, but this fact is of no consequence for us). Therefore, for any $x\in \M$ there exists an open neighborhood $U_x$ such that for any $f\in C^\infty(\M)$ we have in $U_x$  
\begin{equation}\label{Grep}
\Gamma(f)  = \sum_{i=1}^m (X_i f)^2.
\end{equation}
This shows that $\Gamma(f)\ge 0$ and it actually only involves differentiation of order one.

 Furthermore, as it is clear from (\ref{gamma}), the value of $\Gamma(f)(x)$ does not depend  on the particular representation \eqref{Lrep} of $L$.

With the operator $L$ we can also associate a canonical distance:
\begin{equation}\label{di}
d(x,y)=\sup \left\{ |f(x) -f(y) | \mid f \in  C^\infty(\bM) , \| \Gamma(f) \|_\infty \le 1 \right\},\ \ \  \ x,y \in \bM,
\end{equation}
where for a function $g$ on $\bM$ we have let $||g||_\infty = \underset{\bM}{\text{ess} \sup} |g|$.  A tangent vector $v\in T_x\M$ is called \emph{subunit} for $L$ at $x$ if   
$v = \sum_{i=1}^m a_i X_i(x)$, with $\sum_{i=1}^m a_i^2 \le 1$, see \cite{FP1}. It turns out that the notion of subunit vector for $L$ at $x$ does not depend on the local representation \eqref{Lrep} of $L$. A Lipschitz path $\gamma:[0,T]\to \M$ is called subunit for $L$ if $\gamma'(t)$ is subunit for $L$ at $\gamma(t)$ for a.e. $t\in [0,T]$. We then define the subunit length of $\gamma$ as $\ell_s(\gamma) = T$. Given $x, y\in \M$, we indicate with 
\[
S(x,y) =\{\gamma:[0,T]\to \M\mid \gamma\ \text{is subunit for}\ L, \gamma(0) = x,\ \gamma(T) = y\}.
\]
In this paper we assume that 
\[
S(x,y) \not= \varnothing,\ \ \ \ \text{for every}\ x, y\in \M.
\]
Under such assumption  it is easy to verify that
\begin{equation}\label{ds}
d_s(x,y) = \inf\{\ell_s(\gamma)\mid \gamma\in S(x,y)\},
\end{equation}
defines a true distance on $\M$. Furthermore, thanks to Lemma 5.43 in \cite{CKS} we know that
\[
d(x,y) = d_s(x,y),\ \ \ x, y\in \mathbb M,
\]
hence we can work indifferently with either one of the distances $d$
or $d_s$. 
Throughout this paper we assume that the metric space $(\M,d)$ be complete.

We also suppose given on $\bM$ a symmetric, first-order differential bilinear form $\Gamma^Z:C^\infty(\bM)\times C^\infty(\bM) \to \R$. Hereafter in this, the term symmetric first-order differential form means that $\Gamma^Z(f,g)=\Gamma^Z(g,f)$ and
\begin{equation}\label{Zleib}
\Gamma^Z(fg,h) = f\Gamma^Z(g,h) + g \Gamma^Z(f,h).
\end{equation}
In particular, we have $\Gamma^Z(1) = 0$, where, as for $\Gamma$, we have set $\Gamma^Z(f) = \Gamma^Z(f,f)$. We assume that $\Gamma^Z(f)\ge 0$.

We will work with four general assumptions. The former three will be listed as Hypotheses \ref{A:exhaustion}, \ref{A:main_assumption} and Definition \ref{D:cdi}, the fourth one will be introduced in Hypothesis \ref{A:regularity} below. 

\begin{assumption}\label{A:exhaustion}
There exists an increasing
sequence $h_k\in C^\infty_0(\bM)$   such that $h_k\nearrow 1$ on
$\bM$, and \[
||\Gamma (h_k)||_{\infty} +||\Gamma^Z (h_k)||_{\infty}  \to 0,\ \ \text{as} \ k\to \infty.
\]
\end{assumption}
We will also assume that the following commutation relation be satisfied.

\begin{assumption}\label{A:main_assumption} 
For any $f \in C^\infty(\bM)$ one has
\[
\Gamma(f, \Gamma^Z(f))=\Gamma^Z( f, \Gamma(f)).
\]
\end{assumption}

Let us notice explicitly that when $\bM$ is a Riemannian manifold, $\mu$ is the Riemannian volume on $\bM$, and $L = \Delta$, then $d(x,y)$ in \eqref{di} is equal to the Riemannian distance on $\bM$. In this situation if  we take $\Gamma^Z\equiv 0$, then Hypothesis \ref{A:exhaustion}, \ref{A:main_assumption} are fulfilled. In fact, Hypothesis \ref{A:main_assumption} is trivially satisfied, whereas Hypothesis \ref{A:exhaustion} is equivalent to assuming that $(\bM,d)$ be a complete metric space, which we are assuming anyhow. More generally, in all the examples of  Section \ref{S:appendix}, Hypothesis \ref{A:exhaustion} is equivalent to assuming that $(\bM,d)$ be a complete metric space (the reason is that in those examples $\Gamma+\Gamma^Z$ is the \textit{carr\'e du champ} of the Laplace-Beltrami operator of a Riemannian structure whose completeness is equivalent to the completeness of  $(\bM,d)$). On the other hand, Hypothesis \ref{A:main_assumption} is also verified as a consequence of the assumptions about the existence of transverse symmetries that we make. 

Before we proceed with the discussion, we pause to stress that, in the generality in which we work the bilinear differential form $\Gamma^Z$,  unlike $\Gamma$, is not a priori canonical. Whereas $\Gamma$ is determined once $L$ is assigned, the form $\Gamma^Z$ in general is not intrinsically associated with $L$. However, in the geometric examples described in this paper (for this see the discussion below and Section \ref{S:appendix}) the choice of $\Gamma^Z$ will be natural and even canonical, up to a constant. This is the case, for instance, of the important example of CR Sasakian manifolds. The reader should think of $\Gamma^Z$ as an orthogonal complement of $\Gamma$: the bilinear form $\Gamma$ represents the square of the length of the gradient in the horizontal directions, whereas $\Gamma^Z$ represents the square of the length of the gradient along the vertical directions. 


Given the sub-Laplacian $L$ and the first-order bilinear forms $\Gamma$ and $\Gamma^Z$ on $\bM$, we now introduce the following second-order differential forms:
\begin{equation}\label{gamma2}
\Gamma_{2}(f,g) = \frac{1}{2}\big[L\Gamma(f,g) - \Gamma(f,
Lg)-\Gamma (g,Lf)\big],
\end{equation}
\begin{equation}\label{gamma2Z}
\Gamma^Z_{2}(f,g) = \frac{1}{2}\big[L\Gamma^Z (f,g) - \Gamma^Z(f,
Lg)-\Gamma^Z (g,Lf)\big].
\end{equation}
Observe that if $\Gamma^Z\equiv 0$, then $\Gamma^Z_2 \equiv 0$ as well. As for $\Gamma$ and $\Gamma^Z$, we will use the notations  $\Gamma_2(f) = \Gamma_2(f,f)$, $\Gamma_2^Z(f) = \Gamma^Z_2(f,f)$.


We are ready to introduce the central character of our paper, a generalization of the above mentioned curvature-dimension inequality \eqref{CDi}.

\begin{definition}\label{D:cdi}
We shall say that $\M$ satisfies the \emph{generalized curvature-dimension inequality} \emph{CD}$(\rho_1,\rho_2,\kappa,d)$ with respect to $L$ and $\Gamma^Z$ if there exist constants $\rho_1 \in \mathbb{R}$, $\rho_2 >0$, $\kappa \ge 0$, and $0<d\le \infty$ such that the inequality 
\begin{equation}\label{cdi}
\Gamma_2(f) +\nu \Gamma_2^Z(f) \ge \frac{1}{d} (Lf)^2 +\left( \rho_1 -\frac{\kappa}{\nu} \right) \Gamma(f) +\rho_2 \Gamma^Z(f)
\end{equation}
 hold for every  $f\in C^\infty(\bM)$ and every $\nu>0$.
\end{definition}

It is worth observing explicitly that if in Definition \ref{D:cdi} we choose $L = \Delta$, $\Gamma^Z \equiv 0$,  $d = n=$ dim($\bM$), and $\kappa = 0$, we obtain the Riemannian curvature-dimension inequality CD$(\rho_1,n)$ in \eqref{CDi} above. Thus, the case of Riemannian manifolds is trivially encompassed by Definition \ref{D:cdi}. 
We also remark that, changing $\Gamma^Z$ into $a \Gamma^Z$, where $a>0$, changes the inequality  CD$(\rho_1,\rho_2,\kappa,d)$ into CD$(\rho_1,a \rho_2, a \kappa,d)$. We express this fact by saying that the quantity $\frac{\kappa}{\rho_2}$ is intrinsic. 
Hereafter, when we say that  $\M$ satisfies the curvature dimension inequality CD$(\rho_1,\rho_2,\kappa,d)$ (with respect to $L$ and $\Gamma^Z$), we will routinely avoid repeating at each occurrence the sentence ``for some $\rho_2>0$, $\kappa\ge 0$ and $d >0$''. Instead, we will explicitly mention whether $\rho_1 = 0$, or $>0$, or simply $\rho_1\in \R$. 
The reason for this is that the parameter $\rho_1$ in the inequality \eqref{cdi} has a special relevance since, in the geometric examples in Section \ref{S:appendix}, it represents the lower bound on a sub-Riemannian generalization of the Ricci tensor. Thus, $\rho_1 = 0$ is, in our framework, the counterpart of  the Riemannian Ric\ $\ge 0$, whereas when $\rho_1 >0$ $(<0)$, we are dealing with the counterpart of the case Ric\ $>0$ (Ric  bounded from below by a negative constant). 

Since, as we have stressed above, we wish to present our results in an axiomatic way, we will also need the following assumption which is necessary to rigorously justify computations on functionals of the heat semigroup. Hereafter, we will denote  by $P_t = e^{tL}$ the semigroup generated by the diffusion operator $L$, see the discussion below, and Section \ref{S:comp}. 
\begin{assumption}\label{A:regularity}
The semigroup $P_t$ is stochastically complete that is, for $t \ge 0$, $P_t 1=1$ and  for every $f \in C_0^\infty(\bM)$ and $T \ge 0$, one has 
\[
\sup_{t \in [0,T]} \| \Gamma(P_t f)  \|_{ \infty}+\| \Gamma^Z(P_t f) \|_{ \infty} < +\infty.
\]

\end{assumption}

In the Riemannian setting ($L = \Delta$ and $\Gamma^Z \equiv 0$), Hypothesis \ref{A:regularity} is satisfied if one assumes the lower bound Ricci$\ \ge \rho$, for some $\rho \in \R$. This can be derived from the paper by Yau \cite{Yau2} and Bakry's note \cite{bakry-CRAS}. It thus follows that, in the Riemannian case, the Hypothesis \ref{A:regularity} is not needed since it can be derived as a consequence of the curvature-dimension inequality CD$(\rho_1,n)$ in \eqref{CDi} above. In this paper we will prove that, more generally, this situation occurs in the sub-Riemannian setting of our work. As a consequence of the results in Section \ref{S:appendix} below, in Theorem \ref{T:on revient tojour a son premiere amour} we prove that, in every sub-Riemannian manifold with transverse symmetries  of Yang-Mills type (for the relevant definitions see Sections \ref{S:appendix} and 3 below), the Hypothesis \ref{A:regularity} is not needed since it follows (in a non-trivial way) from the generalized curvature-dimension inequality CD$(\rho_1,\rho_2,\kappa,d)$ in Definition \ref{D:cdi} above.  

In this connection it is worth observing that, even in the abstract framework of the present work, if we assume that $\Gamma^Z = 0$, then the Hypothesis \ref{A:regularity} becomes redundant since it can be actually obtained a consequence of  CD$(\rho_1,n)$. This can be seen from the results in Chapter 5 of \cite{sobolog}. Whether it is possible to generalize this fact to the genuinely non-Riemannian situation of $\Gamma^Z \not= 0$, we must leave to a future study. Concerning our axiomatic presentation, we  finally mention that, had we chosen to do so, we could have developed our results in an even more abstract setting, as Bakry and Ledoux often do in their works. We could have worked with abstract Markov diffusion generators on measure spaces and replaced Hypothesis \ref{A:exhaustion} and Hypothesis \ref{A:regularity} with the existence of a nice  algebra of functions which is dense in the domain of $L$ (see Definition 2.4.2 in \cite{sobolog} for the precise properties that should be satisfied by this algebra  when $\Gamma^Z=0$). However assuming  the existence of such algebra is a strong assumption that may be difficult to verify in some concrete situations.
 
The above discussion prompts us to underline the distinctive aspect of the theory developed in the present paper: \emph{for the class of complete sub-Riemannian manifolds with transverse symmetries of Yang-Mills type that we study in Section 3, all our results are solely deduced from the curvature-dimension inequality \emph{CD}$(\rho_1,\rho_2,\kappa,d)$ in \eqref{cdi}}.

To introduce our results we recall that in their celebrated work  \cite{LY} Li and Yau, generalizing to the heat equation some fundamental works of Yau, see for instance \cite{Yau},  obtained various a priori gradient bounds for positive solutions of the heat equation on a complete $n$-dimensional Riemannian manifold $\M$. When Ric $\ge 0$ the Li-Yau inequality states that if $u>0$ is a solution of $\Delta u - u_t = 0$ in $\bM\times (0,\infty)$, then
\begin{equation}\label{RLY}
\frac{|\nabla u|^2}{u^2} - \frac{u_t}{u} \le \frac{n}{2t}.
\end{equation}
Notice that in the flat $\Rn$ the Gauss-Weierstrass kernel $u(x,t) = (4\pi t)^{-n/2} \exp(-|x|^2/4t)$ satisfies \eqref{RLY} with equality. The inequality \eqref{RLY} was the central tool for obtaining a scale invariant Harnack inequality for the heat equation and optimal off-diagonal upper bounds for the heat kernel on $\M$, see Corollary 3.1 and Theorem 4.1 in \cite{LY}. The proof of \eqref{RLY} hinges crucially on Bochner's identity \eqref{rb} above, and on the Laplacian comparison theorem which, for a manifold with Ric\ $\ge 0$, states that, given a base point $x_0\in \bM$, and denoted with $\rho(x)$ the Riemannian distance from $x$ to $x_0$, then
\begin{equation}\label{lct}
\Delta \rho(x) \le \frac{n-1}{\rho(x)},
\end{equation}
outside of the cut-locus of $x_0$ (and  globally in $\mathcal D'(\bM)$). 
As it is well-known, see for instance \cite{CLN}, the proof of \eqref{lct} exploits the theory of Jacobi fields. In sub-Riemannian geometry the exponential map is not a local diffeomorphism. As a consequence of this obstacle, a general sub-Riemannian comparison theorem such as \eqref{lct} presently represents \emph{terra incognita}. 

The main thrust of the present work is that, despite such obstructions, we have succeeded in establishing a sub-Riemannian generalization of the Li-Yau inequalities. In our approach, we completely avoid those tools from geometry that appear typically Riemannian, and instead base our analysis on a systematic use of some entropic inequalities for the heat semigroup that are inspired by the works  \cite{bakry-baudoin},  \cite{Bakry-Ledoux}, \cite{baudoin-bonnefont}, and which, as we have stressed above, in the geometric framework of this paper we solely derive from our generalized curvature-dimension inequality  CD$(\rho_1,\rho_2,\kappa,d)$ in \eqref{cdi}. 

More precisely, let $P_t = e^{t L}$ indicate the heat semigroup on $\bM$ associated with the operator $L$. It is well-known that $P_t$ is sub-Markovian, i.e., $P_t 1\le 1$, and it has a positive and symmetric kernel $p(x,y,t)$. If $f\in C^\infty_0(\bM)$ the function 
\[
u(x,t) = P_t f(x) = \int_\bM p(x,y,t) f(y) d\mu(y),
\]
solves the Cauchy problem 
\[
\begin{cases}
\frac{\p u}{\p t} - Lu = 0, \ \ \ \ \text{in}\ \bM\times (0,\infty),
\\
u(x,0) = f(x),\ \ \ \ \ x\in \bM.
\end{cases}
\]
For fixed $x\in \bM$ and $T>0$ we introduce  the functionals
\[
\Phi_1 (t)=P_t \left( (P_{T-t} f) \Gamma (\ln P_{T-t}f) \right)(x),
\]
\[
\Phi_2 (t)=P_t \left( (P_{T-t} f) \Gamma^Z (\ln P_{T-t}f) \right)(x),
\]
which are defined for $0\le t<T$.
The fundamental observation is that, in our framework, the inequality CD$(\rho_1,\rho_2,\kappa,d)$ in \eqref{cdi} leads to the following differential inequality
\begin{equation}\label{ei}
\left(- \frac{b'}{2\rho_2} \Phi_1 +b \Phi_2 \right)' \ge  -\frac{2b' \gamma}{d\rho_2} LP_Tf + \frac{b'  \gamma^2}{d\rho_2}  P_T f,
\end{equation}
where $b$ is any smooth, positive and decreasing function on the time
interval $[0,T]$ and
\[
\gamma=\frac{d}{4} \left( \frac{b''}{b'} +\frac{\kappa}{\rho_2} \frac{b'}{b} +2\rho_1 \right).
\]
Depending on the value of $\rho_1$, a good choice of the function $b$ leads to a generalized Li-Yau type inequality, see Theorem \ref{T:ge} below. In the special case $\rho_1 = 0$ (i.e., our Ric\ $\ge 0$), the latter becomes 
\begin{equation}\label{srLY}
\Gamma (\ln P_t f) +\frac{2 \rho_2}{3}  t \Gamma^Z (\ln P_t f) \le
\left(1+\frac{3\kappa}{2\rho_2}\right) \frac{LP_t f}{P_t f} +\frac{d\left(
1+\frac{3\kappa}{2\rho_2}\right)^2}{2t},
\end{equation}
for every sufficiently nice function $f\ge 0$ on $\bM$.
In the Riemannian case, when $\Gamma^Z \equiv 0$, and $\kappa = 0$, the inequality \eqref{srLY} is precisely the Li-Yau inequality \eqref{RLY}, except that our inequality holds for positive solutions of the heat equation of the type $u = P_t f$, i.e., they arise from an initial datum $f$, whereas in the original Li-Yau inequality \eqref{RLY} such limitation is not present. 

It is worth emphasizing at this point that, even in the Riemannian case, our approach, based on a systematic use of the  entropic inequality \eqref{ei} above, provides a new and elementary proof of several fundamental results for complete manifolds with Ric $\ge 0$. In this framework, in fact, besides the already mentioned Li-Yau gradient estimates, with the ensuing scale invariant Harnack inequality and the Liouville theorem of Yau, see \cite{Yau}, we also obtain an elementary proof of the fundamental monotonicity of Perelman's entropy for the heat equation, see \cite{Per}, and of the volume doubling property on Riemannian manifolds (for the statement of this classical result see for instance \cite{Chavel}). For these aspects we refer the reader to the recent note \cite{BGjga}. The reader more oriented toward analysis and pde's might in fact find somewhat surprising that we can develop the whole local regularity theory for solutions of the relevant heat equation starting from a global object such as the heat semigroup. By this we mean that, at the end of our process, we are able to replace the functions $P_t f$ in \eqref{srLY} with \emph{any} positive solution $u$ of the heat equation. This in a sense reverses the way one normally proceeds, starting from local solutions, and then moving from local to global. 

We are now ready to provide a brief account of our main results.  
\begin{itemize}
\item[1)] \emph{Li-Yau type inequalities} (Theorem \ref{T:ge}): assume Hypothesis \ref{A:exhaustion}, \ref{A:main_assumption} and \ref{A:regularity} hold. If $\M$ satisfies CD$(\rho_1,\rho_2,\kappa,d)$ in \eqref{cdi} with $\rho_1\in \R$, then for any  $f \in C_0^\infty(\bM)$,  $f  \ge 0$, $f \neq 0$, the following inequality holds for $t>0$:
\begin{align*}
 & \Gamma (\ln P_t f) +\frac{2 \rho_2}{3}  t \Gamma^Z (\ln P_t f) \\
  \le & \left(1+\frac{3\kappa}{2\rho_2}-\frac{2\rho_1}{3} t\right)
\frac{LP_t f}{P_t f} +\frac{d\rho_1^2}{6} t -\frac{d \rho_1}{2}\left(
1+\frac{3\kappa}{2\rho_2}\right) +\frac{d\left(
1+\frac{3\kappa}{2\rho_2}\right)^2}{2t}.
\end{align*}
\item[2)] \emph{Scale-invariant parabolic Harnack inequality} (Theorem \ref{T:harnack}): assume Hypothesis \ref{A:exhaustion}, \ref{A:main_assumption} and \ref{A:regularity}. If $\M$ satisfies CD$(\rho_1,\rho_2,\kappa,d)$ with $\rho_1 \ge 0$, then for every $(x,s), (y,t)\in \bM\times (0,\infty)$ with
$s<t$ one has 
\begin{equation*}
u(x,s) \le u(y,t) \left(\frac{t}{s}\right)^{\frac{D}{2}} \exp\left(
\frac{D}{d} \frac{d(x,y)^2}{4(t-s)} \right),
\end{equation*}
with $u(x,t) =
P_t f(x)$, and $f\in
C^\infty(\bM)$ such that $f\ge 0$ and bounded. The number $D>0$, which solely depends on $\rho_2, \kappa$ and $d$, is defined in  \eqref{D} below.
\item[3)] \emph{Off-diagonal Gaussian upper bounds} (Theorem \ref{T:ub}): assume Hypothesis \ref{A:exhaustion}, \ref{A:main_assumption} and \ref{A:regularity}. If $\bM$ satisfies CD$(\rho_1,\rho_2,\kappa,d)$ with $\rho_1 \ge 0$, then for any $0<\ve <1$
there exists a constant $C(\rho_2,\kappa,d,\ve)>0$, which tends
to $\infty$ as $\ve \to 0^+$, such that for every $x,y\in \bM$
and $t>0$ one has
\[
p(x,y,t)\le \frac{C(\rho_2,\kappa,d,\ve)}{\mu(B(x,\sqrt
t))^{\frac{1}{2}}\mu(B(y,\sqrt t))^{\frac{1}{2}}} \exp
\left(-\frac{d(x,y)^2}{(4+\ve)t}\right).
\]
\item[4)] \emph{Liouville type theorem} (Theorem \ref{T:liouville}): assume Hypothesis \ref{A:exhaustion}, \ref{A:main_assumption} and \ref{A:regularity}. If $\M$ satisfies CD$(\rho_1,\rho_2,\kappa,d)$ with $\rho_1 \ge 0$, then there exists no entire bounded solution of $Lf=0$. 
\item[5)]  \emph{Bonnet-Myers type theorem} (Theorem \ref{T:BM}): assume Hypothesis \ref{A:exhaustion}, \ref{A:main_assumption}, \ref{A:regularity}, and suppose that $\M$ satisfy CD$(\rho_1,\rho_2,\kappa,d)$ with $\rho_1 > 0$. 
Then, the
metric space $(\mathbb{M},d)$ is compact in the metric topology,  and we have \[ \text{diam}\ \bM \le 2\sqrt{3} \pi \sqrt{
\frac{\kappa+\rho_2}{\rho_1\rho_2} \left(
1+\frac{3\kappa}{2\rho_2}\right)d }.
\]
\end{itemize}
Concerning the Gaussian upper bound in 3), we mention that a similar bound was obtained for sub-Laplacians on Lie groups \cite{VSC}. Our approach  is totally different since it does not use the uniform doubling condition on the volume of the metric balls which is a key assumption in that work. We should also mention that in the sequel paper \cite{BBG} we have in fact established a uniform global doubling condition under non negative lower bound on the sub-Riemannian Ricci tensor $(\rho_1 \ge 0)$.

Concerning the sub-Riemannian Bonnet-Myers theorem in 5) we emphasize that, similarly to the Laplacian comparison theorem \eqref{lct}, the proof of its classical Riemannian predecessor is based on the theory of Jacobi fields. Our proof of Theorem \ref{T:BM} is, instead, purely analytical and exploits in a subtle way some sharp entropic inequalities which, in the case $\rho_1>0$, we are able to derive from the inequality CD$(\rho_1,\rho_2,\kappa,d)$ in \eqref{cdi}.

Having presented the main results of the paper, we now turn to the fundamental question of the examples. This aspect is dealt with in Section \ref{S:appendix}, which is devoted to constructing large classes of sub-Riemannian manifolds to which our general results apply. 
We begin with a discussion in Section \ref{SS:3dimmod} of a class of Lie groups which carry a natural CR structure, and which, in our framework, 
are the $3$-dimensional sub-Riemannian CR Sasakian model spaces  with constant curvature (see Hughen \cite{Hu} for a precise meaning of the notion of model spaces). Entropic inequalities on such model spaces were studied in \cite{bakry-baudoin}, and these inequalities constituted a first motivation for our theory. 

Given a $\rho_1\in \R$ we consider a Lie group $\bG(\rho_1)$ whose Lie algebra $\mathfrak g$ admits a basis of generators $X, Y, Z$ satisfying the commutation relations
\[
[X,Y] = Z,\ \ \ [X,Z] = - \rho_1 Y,\ \ \ [Y,Z] = \rho_1 X.
\]
The group $\bG(\rho_1)$ can be endowed with a natural CR structure $\theta$ with respect to which the Reeb vector field is given by $-Z$. A sub-Laplacian  on $\bG(\rho_1)$ with respect to such structure is thus given by $L = X^2 + Y^2$. The pseudo-hermitian Tanaka-Webster torsion of $\bG(\rho_1)$ vanishes (see Definition \ref{D:sasakian} below), and thus $(\bG(\rho_1),\theta)$ is a Sasakian manifold. In the smooth manifold $\bM = \bG(\rho_1)$ with sub-Laplacian $L$ we introduce the differential forms $\Gamma$ and $\Gamma^Z$  defined by 
\[
\Gamma(f,g) = Xf Xg + Yf Yg,\ \ \ \ \ \ \ \Gamma^Z(f,g) = Zf Zg.
\]
These forms satisfy the Hypothesis \eqref{A:exhaustion}, \eqref{A:main_assumption}. It is worth observing that, since $-Z$ is the Reeb vector field of the CR structure $\theta$, then the above choice of $\Gamma^Z$ is canonical. It is also worth remarking at this point that for the CR manifold $(\bG(\rho_1),\theta)$ the Tanaka-Webster horizontal sectional curvature is constant and equals $\rho_1$. Having noted these facts, in Section \ref{SS:3dimmod} we prove the following proposition. 

\begin{proposition}\label{P:Grho}
The sub-Laplacian $L$ on the Lie group $\mathbb{G}(\rho_1)$ satisfies the  generalized curvature-dimension inequality \emph{CD}$(\rho_1,\frac{1}{2},1,2)$.
\end{proposition}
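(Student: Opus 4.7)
The strategy is a direct computation in the basis $\{X,Y,Z\}$, followed by a sum-of-squares decomposition. First, I would exploit the commutation relations to compute the two Bochner-type tensors. The bracket $[L,Z]=[X^2,Z]+[Y^2,Z]$ vanishes since $[X,Z]=-\rho_1 Y$ and $[Y,Z]=\rho_1 X$ produce cancelling contributions of the form $XY+YX$. Consequently $L(Zf)=ZLf$, and the definition \eqref{gamma2Z} collapses to
\[
\Gamma_2^Z(f) = \Gamma(Zf) = (XZf)^2+(YZf)^2.
\]
For the horizontal form, the identity $L(u^2)=2uLu+2\Gamma(u)$ gives $\tfrac12 L\Gamma(f) = (Xf)L(Xf)+\Gamma(Xf)+(Yf)L(Yf)+\Gamma(Yf)$. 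A short computation using the commutation relations yields $L(Xf)-XLf=-(YZ+ZY)f$ and $L(Yf)-YLf=(XZ+ZX)f$, so that
\[
\Gamma_2(f) = \Gamma(Xf)+\Gamma(Yf) -(Xf)(YZ+ZY)f + (Yf)(XZ+ZX)f.
\]

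Next I would introduce the symmetrized Hessian entries $a=X^2f$, $c=Y^2f$, $b=\tfrac12(XY+YX)f$, noting that $XYf-YXf=Zf$. Then $XYf=b+\tfrac12 Zf$ and $YXf=b-\tfrac12 Zf$, so
\[
\Gamma(Xf)+\Gamma(Yf) = a^2+2b^2+c^2+\tfrac12(Zf)^2.
\]
Applying $YZ=ZY+\rho_1 X$ and $XZ=ZX-\rho_1 Y$ once to reduce $YZ+ZY$ and $XZ+ZX$, and a second time to replace $ZXf, ZYf$ by $XZf, YZf$, the cross terms simplify to $2[Yf\cdot XZf - Xf\cdot YZf]+\rho_1\Gamma(f)$. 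Altogether,
\[
\Gamma_2(f) = a^2+2b^2+c^2+\tfrac12(Zf)^2 + 2\bigl[Yf\cdot XZf - Xf\cdot YZf\bigr] + \rho_1 \Gamma(f).
\]

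To conclude, I would substitute into the target inequality CD$(\rho_1,\tfrac12,1,2)$. The $\tfrac12\Gamma^Z(f)=\tfrac12(Zf)^2$ terms cancel on the two sides, and the bound $a^2+c^2\ge\tfrac12(a+c)^2=\tfrac12(Lf)^2$, together with $2b^2\ge 0$, absorbs the $\tfrac12(Lf)^2$ term. What remains to show is
\[
\nu\bigl[(XZf)^2+(YZf)^2\bigr] + 2\bigl[Yf\cdot XZf - Xf\cdot YZf\bigr] + \tfrac{1}{\nu}\Gamma(f) \;\ge\; 0,
\]
which is immediate from the identity
\[
\Bigl(\sqrt{\nu}\,XZf + \tfrac{1}{\sqrt{\nu}}Yf\Bigr)^2 + \Bigl(\sqrt{\nu}\,YZf - \tfrac{1}{\sqrt{\nu}}Xf\Bigr)^2 \;\ge\; 0.
\]
This yields the inequality for every $\nu>0$ and every $f\in C^\infty(\bM)$. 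The only non-mechanical step is the sum-of-squares ansatz at the very end; the rest is careful commutator bookkeeping, and I expect the main obstacle to be tracking signs when converting between $ZX, ZY$ and $XZ, YZ$ (since the shift produces the $\rho_1\Gamma(f)$ contribution that calibrates the final parameter).
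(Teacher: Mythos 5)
Your computation is correct and follows essentially the same route as the paper: explicit evaluation of $\Gamma_2$ and $\Gamma_2^Z$ via the commutation relations, identification of the symmetrized horizontal Hessian so that $a^2+2b^2+c^2\ge\tfrac12(Lf)^2$, and absorption of the cross term $2(Yf\,XZf-Xf\,YZf)$ by $\nu\Gamma_2^Z(f)+\tfrac1\nu\Gamma(f)$. The paper invokes Cauchy--Schwarz where you write the explicit sum of squares, which is the same estimate.
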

The relevance of the model space $\bG(\rho_1)$ is illustrated by the Lie groups:
\begin{itemize}
\item[(i)] $\mathbb{SU}(2)$;
\item[(ii)] the ``flat'' Heisenberg group $\mathbb H^1$;
\item[(iii)]  $\mathbb{SL}(2,\R)$.
\end{itemize}
In Section \ref{SS:3dimmod} we note that the Lie groups (i)-(iii) are special instances of the model CR manifold $\bG(\rho_1)$ corresponding, respectively, to the cases $\rho_1 = 1,$ $\rho_1 = 0$ and $\rho_1 = -1$. 

After introducing these motivating examples, in Section \ref{SS:subR} we turn our attention to the construction of a large class of $C^\infty$ manifolds carrying a natural sub-Riemannian structure for which our generalized curvature-dimension inequality \eqref{cdi} holds. As a consequence, in these spaces all the above mentioned results 1)-6) are valid as well.  Let $\M$ be a smooth, connected  manifold equipped with a bracket generating distribution $\mathcal{H}$ of dimension $d$ and a fiberwise inner product $g$ on $\mathcal H$. The distribution $\mathcal{H}$ will be referred to as the set of \emph{horizontal directions}. 

We indicate with $\mathfrak{iso}$ the finite-dimensional Lie algebra of all sub-Riemannian Killing vector fields on $\bM$. It is readily seen that $Z\in \mathfrak{iso}$ if and only if:
 \begin{itemize}
 \item[(1)] For every $x\in \bM$, and any $u,v \in \mathcal{H}(x)$, $\mathcal{L}_Z g (u,v)=0$;
 \item[(2)] If $X\in \mathcal H$, then $[Z, X]\in \mathcal H$.
 \end{itemize}
 In (1) we have denoted by $\mathcal L_Z g$ the Lie derivative of $g$ with respect to $Z$.  Our main geometric assumption is the following:
 
 \begin{assumption}
 There exists a Lie sub-algebra $\mathcal{V} \subset \mathfrak{iso}$, such that for every $x \in \bM$, 
 \[
 T_x \bM= \mathcal{H}(x) \oplus \mathcal{V}(x).
 \]
 \end{assumption}
 
 The sub-bundle of transverse symmetries will be referred  to as the set of \emph{vertical directions}. The dimension of $\mathcal{V}$ will be denoted by $\di$. 

The horizontal distribution $\mathcal H$ with its fiberwise inner product $g$, plus the Lie algebra $\mathcal V$ are the essential data of the construction in Section \ref{SS:subR}. By this we mean that the relevant geometric objects that we introduce, namely the sub-Laplacian, the canonical connection $\nabla$ and the  tensor $\mathcal R$, respectively defined in Section \ref{SSS:cancon} and equation \eqref{Ro} in Definition \ref{D:RS} below,   
solely depend on $(\mathcal H,g)$ and $\mathcal V$, but not on the choice of an inner product on $\mathcal V$. As a consequence, in those situations in which the choice of $\mathcal V$ is canonical, then our analysis will depend only on the choice of $(\mathcal H,g)$. This is the case, for instance, of the basic example of Sasakian manifolds. 


Our ultimate objective in Section \ref{SS:subR} is proving that the smooth manifold $\bM$, with a given sub-Riemannian geometry $(\mathcal H,g)$ and a vertical distribution of transverse symmetries $\mathcal V$, satisfies a generalized curvature-dimension inequality such as \eqref{cdi} as soon as some intrinsic geometric conditions are satisfied. To achieve this objective we find it expedient introducing in Section \ref{SSS:cancon} a canonical connection $\nabla$. By means of such connection we define in Definition \ref{D:RS} a generalization of the Riemannian Ricci tensor, which we denote by $\mathcal R$. In Theorem \ref{T:bochner} we prove two Bochner identities which intertwine the tensor $\mathcal R$ with the forms $\Gamma$ and $\Gamma^Z$. With such Bochner identities in hand in Theorem \ref{T:cd} we finally show that, under the geometric assumptions in \eqref{riccibounds}, the manifold $\bM$ satisfies the generalized curvature-dimension inequality CD$(\rho_1,\rho_2,\kappa,d)$. In Proposition \ref{P:tres_beau} we prove that, remarkably, the generalized curvature-dimension inequality implies the geometric bounds \eqref{riccibounds}, and therefore: \emph{on any sub-Riemannian manifold with transverse symmetries we have} CD$(\rho_1,\rho_2,\kappa,d)$ $\Longleftrightarrow$ \eqref{riccibounds}.

The remaining part of Section \ref{S:appendix} is devoted to presenting some basic examples of manifolds which fall within the geometric framework of Section \ref{SS:subR}. In Section \ref{SS:carnot} we prove that all Carnot groups of step two satisfy the curvature-dimension inequality CD$(0,\rho_2,\kappa,d)$, for some appropriate values of $\rho_2$ and $\kappa$, see Proposition \ref{P:carnotCD}. Here, $d$ indicates the dimension of the bracket-generating layer of their Lie algebra. This result shows, in particular, that in our framework \emph{all} Carnot groups of step two are sub-Riemannian manifolds of nonnegative Ricci tensor, since $\rho_1 = 0$.  In Section \ref{SS:sasakian} we analyze another important class of manifolds which falls within the scope of our work, namely Sasakian manifolds endowed with their CR sub-Laplacian. These are CR manifolds of real hypersurface type for which the Tanaka-Webster pseudo-hermitian torsion vanishes in an appropriate sense. Concerning Sasakian manifolds we prove the following basic result.

\begin{theorem}\label{T:sasakiani}
Let $(\bM,\theta)$ be a complete \emph{CR} manifold  with real dimension $2n+1$ and vanishing Tanaka-Webster torsion, i.e., a Sasakian manifold. If 
for every $x\in \bM$ the Tanaka-Webster Ricci tensor satisfies the bound  
\[
\emph{Ric}_x(v,v)\ \ge \rho_1|v|^2,
\]
for every horizontal vector $v\in \mathcal H_x$,
then, for the \emph{CR} sub-Laplacian of $\bM$  the curvature-dimension inequality \emph{CD}$(\rho_1,\frac{d}{4},1,d)$ holds, with $d = 2n$ and the Hypothesis  \ref{A:exhaustion}, \ref{A:main_assumption} and \ref{A:regularity} are satisfied..
\end{theorem}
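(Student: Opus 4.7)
The plan is to realize the Sasakian manifold as an instance of the sub-Riemannian manifold with transverse symmetries studied in Section \ref{SS:subR}, and then apply the equivalence CD$(\rho_1,\rho_2,\kappa,d) \Longleftrightarrow$ \eqref{riccibounds} of Proposition \ref{P:tres_beau}. First, given $(\bM,\theta)$ Sasakian, the Reeb vector field $T$ generates a one-dimensional vertical sub-bundle $\V = \R T$. The vanishing of the Tanaka-Webster torsion is exactly equivalent to $T$ being a sub-Riemannian Killing field: $[T,X]\in\Ho$ for $X\in\Ho$ and $\mathcal{L}_T g = 0$ on $\Ho$. Hence $\V\subset\mathfrak{iso}$ and $T\bM = \Ho\oplus\V$, placing us in the framework of Section \ref{SS:subR} with $\dim\Ho = d = 2n$ and $\di = 1$. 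The carr\'e du champ of the CR sub-Laplacian is $\Gamma(f) = |\nabla_\Ho f|^2$, while the canonical vertical form (unique up to constant since $T$ is intrinsically determined by $\theta$) is $\Gamma^Z(f) = (Tf)^2$.

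Next I would identify the canonical connection $\nabla$ of Section \ref{SSS:cancon} with the Tanaka-Webster connection. Both are characterized by being metric along $\Ho$, parallelizing $T$, and having the torsion prescription dictated by the transverse symmetry. Under this identification, the horizontal restriction of the tensor $\mathcal{R}$ of Definition \ref{D:RS} coincides with the horizontal Tanaka-Webster Ricci tensor.

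The main algebraic step is extracting the specific constants $\rho_2 = d/4$ and $\kappa = 1$ in \eqref{riccibounds}. The Sasakian structure equips $\Ho$ with an almost complex structure $J\colon\Ho\to\Ho$ satisfying $J^2 = -\mathrm{Id}$, and the horizontal part of the Lie bracket into $\V$ is expressed through $J$ via the Levi form. Substituting these identities into the two Bochner formulas provided by Theorem \ref{T:bochner}, the cross term coupling $\Gamma$ and $\Gamma^Z$ in the mixed tensor is controlled pointwise by the isometry property $|Jv|^2 = |v|^2$, yielding the sharp constant $\kappa = 1$, while the trace identity $\mathrm{tr}(J^\top J) = d$ produces the vertical lower bound $d/4$. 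Combined with the assumed horizontal bound $\mathrm{Ric} \ge \rho_1$, this gives \eqref{riccibounds}, and Theorem \ref{T:cd} then delivers CD$(\rho_1, d/4, 1, d)$. This matching of constants, in particular the saturation $\kappa = 1$ coming from the vanishing-torsion identities, is the hardest part of the argument; it is also consistent with the three-dimensional model computation of Proposition \ref{P:Grho}.

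Finally I would verify the three hypotheses. Hypothesis \ref{A:exhaustion} follows from completeness of $(\bM,d)$ via the adapted Riemannian extension $g_\theta = g|_\Ho + \theta\otimes\theta$, which is complete in the Riemannian sense; a standard cutoff of its distance function produces a sequence $h_k$ whose Riemannian gradient controls simultaneously $\Gamma(h_k)$ and $\Gamma^Z(h_k)$. Hypothesis \ref{A:main_assumption} is a direct consequence of $T$ being a Killing field for the sub-Riemannian structure: the flow of $T$ preserves $L$ and $\Gamma$, so differentiating the identity $\Gamma(f)\circ\phi_t^T = \Gamma(f\circ\phi_t^T)$ twice in distinct orders yields $\Gamma(f,\Gamma^Z(f)) = \Gamma^Z(f,\Gamma(f))$. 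Hypothesis \ref{A:regularity} follows from Theorem \ref{T:on revient tojour a son premiere amour}: since $\di = 1$ the Yang-Mills condition is automatic, so stochastic completeness and the uniform bound on $\Gamma(P_t f) + \Gamma^Z(P_t f)$ are deducible from the CD inequality already established, completing the proof.
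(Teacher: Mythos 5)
Your proposal is correct and follows essentially the same route as the paper: realize the Sasakian manifold as a sub-Riemannian manifold with transverse symmetries generated by the Reeb field, identify the canonical connection with the Tanaka--Webster connection, use $\text{T}(X,Y)=\langle JX,Y\rangle Z$ together with $\nabla J=0$ to compute $\mathcal R(f)=\text{Ric}(\nabla_{\mathcal H}f,\nabla_{\mathcal H}f)+\frac{d}{4}\Gamma^Z(f)$ and $\mathcal T(f)=\|J\nabla_{\mathcal H}f\|^2=\Gamma(f)$, and then invoke Theorem \ref{T:cd}; the verification of Hypotheses \ref{A:exhaustion}, \ref{A:main_assumption} and \ref{A:regularity} also proceeds as in the paper. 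One small correction: the Yang--Mills condition needed for Theorem \ref{T:on revient tojour a son premiere amour} is not automatic from $\di=1$ --- for a general one-dimensional vertical distribution with torsion $\text{T}(X,Y)=\gamma(X,Y)Z$ one only gets $\delta \text{T}(X)=\bigl(\sum_{\ell}(\nabla_{X_\ell}\gamma)(X_\ell,X)\bigr)Z$, which need not vanish; in the Sasakian case it vanishes because $\gamma=\langle J\cdot,\cdot\rangle$ and the Tanaka--Webster connection satisfies $\nabla J=0$ (and $\nabla Z=0$), which is the justification the paper relies on.
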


Thanks to this result, the above listed results 1)-5) are valid for \emph{all} Sasakian manifolds.

We close this introduction by mentioning that, for general metric measure spaces, a different notion of lower bounds on the Ricci tensor based on the theory of optimal transport has been recently proposed independently by Sturm \cite{sturm1}, \cite{sturm2}, and by Lott-Villani \cite{villani-lott}, see also the paper by  Ollivier \cite{ollivier}.  However, as pointed out by Juillet in \cite{juillet}, the remarkable theory developed in these papers does not appear to be suited for sub-Riemannian manifolds. For instance, in this theory the flat Heisenberg group $\mathbb H^1$ has curvature $= - \infty$.  In their preprint \cite{AL}  Agrachev and Lee have used a notion of Ricci tensor, denoted by $\mathfrak{Ric}$, which was introduced by the first author in \cite{A}. They study three-dimensional contact manifolds and, under the assumption that the manifold be Sasakian, they prove that a lower bound on $\mathfrak{Ric}$ implies the so-called measure-contraction property. In particular, when $\mathfrak{Ric} \ge 0$, then the manifold $\M$ satisfies a global volume growth similar to the Riemannian Bishop-Gromov theorem. An analysis shows that, interestingly, our notion of Ricci tensor coincides, up to a scaling factor, with theirs. 

We also mention that for three-dimensional contact manifolds, the sub-Riemannian geometric  invariants were computed by Hughen in his unpublished Ph.D. dissertation, see \cite{Hu}. In particular, with his notations, the  CR Sasakian structure corresponds to the case $a_1^2+a_2^2=0$ and, up to a scaling factor, his $K$ is the Tanaka-Webster Ricci curvature. In such respect, the Bonnet-Myers type theorem obtained  by Hughen (Proposition 3.5 in \cite{Hu})  is the exact analogue (with a better constant) of our Theorem \ref{T:BM}, applied to the case of three-dimensional  Sasakian manifolds. Let us finally mention that a Bonnet-Myers type theorem on general three-dimensional CR manifolds was first obtained by Rumin in \cite{rumin}. The methods of Rumin and Hughen are close as they both rely on the analysis of the second-variation formula for sub-Riemannian geodesics.

\


\

\textbf{Acknowledgments:} The authors would like to thank F.Y. Wang for pointing to our attention an oversight in a previous version of the paper. His constructive criticism has led us to improve the presentation and also add new results. We would also like to thank the anonymous referees for their careful reading of the manuscript and for several helpful comments.

\section{Examples}\label{S:appendix}

In this section we present several classes of sub-Riemannian spaces satisfying the generalized curvature-dimension inequality in Definition \ref{D:cdi}. These examples constitute the central motivation of the present work.

\subsection{Riemannian manifolds}

As we have mentioned in the introduction, when $\bM$ is a $n$-dimensional complete Riemannian manifold with Riemannian distance $d_R$, Levi-Civita connection $\nabla$ and Laplace-Beltrami operator $\Delta$, our main assumptions hold trivially. It suffices in fact to choose $\Gamma^Z = 0$ to satisfy Hypothesis \ref{A:main_assumption} in a trivial fashion. Hypothesis \ref{A:exhaustion} is also satisfied since it is equivalent to assuming that $(\bM,d_R)$ be complete, see \cite{GW} (observe in passing that the distance \eqref{di} coincides with $d_R$). Finally, with the choice $\kappa = 0$ the curvature-dimension inequality \eqref{cdi} reduces to \eqref{CDi}, which, as we have shown, is implied by (and it is in fact equivalent to) the assumption Ric $\ \ge \rho_1$.

\subsection{The three-dimensional Sasakian models}\label{SS:3dimmod}

The purpose of this section is providing a first basic sub-Riemannian example which fits the framework of the present paper. This example was first studied in \cite{bakry-baudoin}. Given a number $\rho_1\in \R$, suppose that $\bG(\rho_1)$ be a three-dimensional Lie group whose Lie algebra $\mathfrak{g}$ has a
 basis $\left\{ X,Y ,Z \right\}$ satisfying:
\begin{itemize}
\item[(i)] $[X,Y]=Z$,
\item[(ii)] $[X,Z]= -\rho_1 Y$,
\item[(iii)] $[Y,Z]=\rho_1 X$.
\end{itemize} 
A sub-Laplacian on  $\bG(\rho_1)$ is  the left-invariant, second-order differential operator
\begin{equation}\label{slmodel}
L= X^{2}  + Y^{2}.
\end{equation}
In view of (i)-(iii) H\"ormander's theorem, see \cite{Ho}, implies that $L$ be hypoelliptic, although it fails to be elliptic at every point of $\bG(\rho_1)$. 
From \eqref{gamma} we find in the present situation 
 \[
 \Gamma(f) =\frac{1}{2}\big(L(f^2)-2fLf)= (Xf)^{2}+ (Yf)^{2}.
 \] 
If we define 
\[
\Gamma^Z(f,g)=Zf Zg,
\]
then from (i)-(iii) we easily verify that
\[
\Gamma(f,\Gamma^Z(f)) = \Gamma^Z(f,\Gamma(f)).
\]
We conclude that the Hypothesis \ref{A:main_assumption} is satisfied. It is not difficult to show that the Hypothesis \ref{A:exhaustion} is also fulfilled.

Using (i)-(iii) we  leave it to the reader to verify that
\begin{equation}\label{commLZ} 
[L,Z] = 0.
\end{equation}
By means of \eqref{commLZ} we easily find 
\begin{align*}
\Gamma_2^Z(f) &= \frac 12 L(\Gamma^Z(f)) - \Gamma^Z(f,Lf) = Zf [L,Z]f + (XZf)^2 +(YZf)^2
\\
& =  (XZf)^2 +(YZf)^2.
\end{align*}
Finally, from definition \eqref{gamma2} and from (i)-(iii) we obtain
\begin{align*}\label{Gamma2_3f}
\Gamma_2 (f) & = \frac 12 L(\Gamma(f)) - \Gamma(f,Lf)
\\
& = \rho_1 \Gamma(f) + (X^2 f)^2 + (YXf)^2 + (XYf)^2 +(Y^2 f)^2
\\
& + 2 Yf (XZf) - 2 Xf (YZ f).
\end{align*}
We now notice that 
\[
(X^2f)^2 + (YXf)^2 + (XYf)^2 +(Y^2 f)^2 = ||\nabla^2_H f||^2 + \frac 12 \Gamma^Z(f),
\]
where we have denoted by 
\[
\nabla^2_H f= \begin{pmatrix} X^2 f & \frac 12 (XY f + YXf)
\\
\frac 12 (XY f + YXf) & Y^2 f
\end{pmatrix}
\]
the symmetrized Hessian of $f$ with respect to the horizontal distribution generated by $X, Y$. 
Substituting this information in the above formula we find
\[
\Gamma_2 (f)  =  ||\nabla^2_H f||^2 + \rho_1 \Gamma(f) + \frac 12 \Gamma^Z(f) + 2 \big(Yf (XZf) - Xf (YZ f)\big).
\]
By the above expression for $\Gamma_2^Z(f)$, using Cauchy-Schwarz inequality, we obtain for every $\nu >0$
\[
|2 Yf (XZf) - 2 Xf (YZ f)| \le \nu \Gamma^Z_2(f) + \frac 1\nu \Gamma(f).
\]
Similarly, one easily recognizes that
\[
||\nabla^2_H f||^2 \ge \frac 12 (Lf)^2.
\]
Combining these inequalities,
we conclude that we have proved the following result. 
\begin{proposition}\label{P:cdmodels}
For every $\rho_1\in \R$ the Lie group $\mathbb{G}(\rho_1)$, with the sub-Laplacian $L$ in \eqref{slmodel}, satisfies the generalized  curvature dimension inequality \emph{CD}$(\rho_1,\frac{1}{2},1,2)$. Precisely, for every $f\in C^\infty(\bG(\rho_1))$ and any $\nu>0$ one has:
 \[
 \Gamma_{2}(f)+\nu \Gamma^Z_{2}(f) \ge \frac{1}{2} (Lf)^2 +\left(\rho_1 -\frac{1}{\nu}\right)  \Gamma (f)
 +\frac{1}{2} \Gamma^Z (f).
 \]
\end{proposition}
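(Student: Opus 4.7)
The plan is to prove the curvature-dimension inequality by direct computation of $\Gamma_2$ and $\Gamma_2^Z$ in the Lie-algebraic basis $\{X,Y,Z\}$, and then to dispose of the indefinite cross term by a Cauchy--Schwarz estimate with a free parameter $\nu$.

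First I would compute the commutator $[L,Z]$. Using $[X,Z]=-\rho_1 Y$ and $[Y,Z]=\rho_1 X$, a two-line calculation gives
\[
[L,Z]=[X^2,Z]+[Y^2,Z]=-\rho_1(XY+YX)+\rho_1(YX+XY)=0.
\]
From this and $\Gamma^Z(f)=(Zf)^2$, the defining formula \eqref{gamma2Z} immediately yields the clean expression $\Gamma_2^Z(f)=(XZf)^2+(YZf)^2$, since the commutator term $Zf\,[L,Z]f$ vanishes.

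Next I would compute $\Gamma_2(f)$ by expanding $\tfrac{1}{2}L\Gamma(f)-\Gamma(f,Lf)$. The expansion produces four pure second-order squares $(X^2f)^2,(YXf)^2,(XYf)^2,(Y^2f)^2$, and several commutator terms coming from $[X,Y]=Z$, $[X,Z]=-\rho_1Y$, $[Y,Z]=\rho_1X$. After collecting terms, the pure squares can be rewritten, via the identity $(XYf)^2+(YXf)^2=\tfrac{1}{2}((XY+YX)f)^2+\tfrac{1}{2}([X,Y]f)^2$, as
\[
(X^2f)^2+(YXf)^2+(XYf)^2+(Y^2f)^2=\|\nabla_H^2 f\|^2+\tfrac{1}{2}\Gamma^Z(f),
\]
with $\nabla_H^2 f$ the symmetrized horizontal Hessian defined in the text. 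The commutator terms collapse to $\rho_1\Gamma(f)$ plus the single indefinite contribution $2\bigl(Yf\,(XZf)-Xf\,(YZf)\bigr)$. This step is where the bookkeeping matters and is the main place where an oversight is easy; I would carry it out methodically by grouping terms according to whether they originate from $X^2$ or $Y^2$ and carefully applying each structural relation. The upshot is
\[
\Gamma_2(f)=\|\nabla_H^2 f\|^2+\rho_1\Gamma(f)+\tfrac{1}{2}\Gamma^Z(f)+2\bigl(Yf\,(XZf)-Xf\,(YZf)\bigr).
\]

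Finally, I would estimate the two remaining obstructions. For the indefinite cross term, Cauchy--Schwarz with parameter $\nu>0$ yields
\[
\bigl|2Yf(XZf)-2Xf(YZf)\bigr|\le \nu\bigl[(XZf)^2+(YZf)^2\bigr]+\tfrac{1}{\nu}\bigl[(Xf)^2+(Yf)^2\bigr]=\nu\,\Gamma_2^Z(f)+\tfrac{1}{\nu}\Gamma(f).
\]
For the Hessian term, since $Lf=X^2f+Y^2f$ is the trace of the $2{\times}2$ symmetric matrix $\nabla_H^2f$, Cauchy--Schwarz gives $(Lf)^2\le 2\|\nabla_H^2 f\|^2$, i.e.\ $\|\nabla_H^2 f\|^2\ge\tfrac{1}{2}(Lf)^2$. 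Substituting both bounds into the formula for $\Gamma_2(f)$ and regrouping yields exactly
\[
\Gamma_2(f)+\nu\,\Gamma_2^Z(f)\ge \tfrac{1}{2}(Lf)^2+\bigl(\rho_1-\tfrac{1}{\nu}\bigr)\Gamma(f)+\tfrac{1}{2}\Gamma^Z(f),
\]
which is $\mathrm{CD}(\rho_1,\tfrac{1}{2},1,2)$. The only genuinely delicate step is the derivation of the formula for $\Gamma_2(f)$; once that is in hand, the rest is two applications of Cauchy--Schwarz.
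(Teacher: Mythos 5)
Your proposal is correct and follows essentially the same route as the paper's own proof: the same computation of $[L,Z]=0$ and $\Gamma_2^Z$, the same explicit formula for $\Gamma_2(f)$ with the cross term $2\bigl(Yf\,(XZf)-Xf\,(YZf)\bigr)$, and the same two Cauchy--Schwarz estimates to conclude. No gaps.
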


Proposition \ref{P:cdmodels} provides a basic motivation for Definition \ref{D:cdi}. It is also important to observe at this point that the Lie group $\bG(\rho_1)$ can be endowed with a natural CR structure. Denoting in fact with $\mathcal H$ the subbundle of $T\bG(\rho_1)$ generated by the vector fields $X$ and $Y$, the endomorphism $J$ of $\mathcal H$ defined by 
\[
J(Y) = X,\ \ \ \ J(X) = - Y,
\]
satisfies $J^2 = - I$, and thus defines a complex structure on $\bG(\rho_1)$. By choosing $\theta$ as the form such that \[
\text{Ker}\ \theta = \mathcal H,\   \  \ \text{and}\ \  \  d\theta(X,Y) = 1,
\]
we obtain a CR structure on $\bG(\rho_1)$ whose Reeb vector field is $-Z$. Thus, the above choice of $\Gamma^Z$ is canonical.

The pseudo-hermitian Tanaka-Webster torsion of $\bG(\rho_1)$ vanishes (see Definition \ref{D:sasakian} below), and thus $(\bG(\rho_1),\theta)$ is a Sasakian manifold. It is also easy to verify that for the CR manifold $(\bG(\rho_1),\theta)$ the Tanaka-Webster horizontal sectional curvature is constant and equals $\rho_1$. 
The following three model spaces correspond respectively to the cases $\rho_1 = 1, \rho_1 = 0$ and $\rho_1 = -1$.  

\begin{example}
The Lie group $\mathbb{SU} (2)$ is the group of
$2 \times 2$, complex, unitary matrices of determinant $1$. Its
Lie algebra $\mathfrak{su} (2)$ consists of $2 \times 2$, complex,
skew-hermitian matrices with trace $0$. A basis of $\mathfrak{su} (2)$
is formed by the following matrices  $X = \frac{i}{2} \sigma_1$, $Y = \frac i2 \sigma_2$, $Z = \frac i2 \sigma_3$, where $\sigma_k$, $k=1,2,3$, are the Pauli matrices:
 \[
\text{ }X=\frac{1}{2}\left(
\begin{array}{cc}
~0~ & ~1~ \\
-1~& ~0~
\end{array}
\right),\ \text{ }Y=\frac{1}{2}\left(
\begin{array}{cc}
~0~ & ~i~ \\
~i~ & ~0~
\end{array}
\right),\ 
Z=\frac{1}{2}\left(
\begin{array}{cc}
~i~ & ~0~ \\
~0~ & -i~
\end{array}
\right).
\]
One easily verifies 
\begin{align}\label{Liestructure}
[X,Y]=Z, \quad [X,Z]=-Y, \quad [Y,Z]=X,
\end{align}
and thus $\rho_1 = 1$.
\end{example}

\begin{example}
The Heisenberg group $\mathbb{H}$ is the group of $3\times3$ matrices:
\[
\left(
\begin{array}
[c]{ccc}
~1~ & ~x~   & ~z ~\\
~0~ & ~1~   & ~y ~\\
~0~ & ~0~   & ~1 ~
\end{array}
\right)  ,\text{ \ }x,y,z\in\mathbb{R}.
\]
The Lie algebra of $\mathbb{H}$ is spanned by the matrices
\[
X=\left(
\begin{array}
[c]{ccc}
~0~ & ~1~ & ~0~\\
~0~ & ~0~ & ~0~\\
~0~ & ~0~ & ~0~
\end{array}
\right),\ Y=\left(
\begin{array}
[c]{ccc}%
~0~ & ~0~ & ~0~\\
~0~ & ~0~ & ~1~\\
~0~ & ~0~ & ~0~
\end{array}
\right),\  Z=\left(
\begin{array}
[c]{ccc}%
~0~ & ~0~ & ~1~\\
~0~ & ~0~ & ~0~\\
~0~ & ~0~ & ~0~
\end{array}
\right)  ,
\]
for which the following commutation relations hold
\[
\lbrack X,Y]=Z,\text{ }[X,Z]=[Y,Z]=0.
\]
We thus have $\rho_1 = 0$ in this case.
\end{example}

\begin{example}
The Lie group $\mathbb{SL} (2)$ is the group of
$2 \times 2$, real matrices of determinant $1$. Its
Lie algebra $\mathfrak{sl} (2)$ consists of $2 \times 2$ matrices of trace $0$. 
A basis of $\mathfrak{sl} (2)$ is formed by the matrices:
 \[
X=\frac{1}{2}\left(
\begin{array}{cc}
~1~ & ~0~ \\
~0~ & -1~
\end{array}
\right) 
,\text{ }Y=\frac{1}{2}\left(
\begin{array}{cc}
~0~ & ~1~ \\
~1~ & ~0~
\end{array}
\right),
\text{ }Z=\frac{1}{2}\left(
\begin{array}{cc}
~0~ & ~1~ \\
-1~& ~0~
\end{array}
\right) 
,
\]
for which the following commutation relations hold
\begin{align}\label{Liestructure}
[X,Y]=Z, \quad [X,Z]=Y, \quad [Y,Z]=-X.
\end{align}
We thus have $\rho_1 = -1$ in this case.
\end{example}

\subsection{Sub-Riemannian manifolds with transverse symmetries}\label{SS:subR}

We now turn our attention to a large class of sub-Riemannian manifolds, encompassing the three-dimensional model spaces discussed in the previous section. The central objective of the present section is proving Theorem \ref{T:cd} below. The latter states that for these sub-Riemannian manifolds the generalized curvature-dimension inequality \eqref{cdi} does hold under some natural geometric assumptions which, in the Riemannian case, reduce to requiring a lower bound for the Ricci tensor. To achieve this result, we will need to establish some new Bochner type identities. This is done in Theorem \ref{T:bochner} below. 

Let $\M$ be a smooth, connected  manifold. We assume that $\bM$ is equipped with a bracket generating distribution $\mathcal{H}$ of dimension $d$ and a fiberwise inner product $g$ on that distribution. The distribution $\mathcal{H}$ will be referred to as the set of \emph{horizontal directions}. 

We indicate with $\mathfrak{iso}$ the finite-dimensional Lie algebra of all sub-Riemannian Killing vector fields on $\bM$ (see \cite{Strichartz}). A vector field $Z\in \mathfrak{iso}$ if the one-parameter flow generated by it locally preserves the sub-Riemannian geometry defined by $(\mathcal{H},g )$. This amounts to saying that:
 \begin{itemize}
 \item[(1)] For every $x\in \bM$, and any $u,v \in \mathcal{H}(x)$, $\mathcal{L}_Z g (u,v)=0$;
 \item[(2)] If $X\in \mathcal H$, then $[Z, X]\in \mathcal H$.
 \end{itemize}
 In (1) we have denoted by $\mathcal L_Z g$ the Lie derivative of $g$ with respect to $Z$.  Our main geometric assumption is the following:
 
 \begin{assumption}
 There exists a Lie sub-algebra $\mathcal{V} \subset \mathfrak{iso}$, such that for every $x \in \bM$, 
 \[
 T_x \bM= \mathcal{H}(x) \oplus \mathcal{V}(x).
 \]
 \end{assumption}
 
 The distribution $\mathcal{V}$ will be referred  to as the set of \emph{vertical directions}. The dimension of $\mathcal{V}$ will be denoted by $\di$. 

The choice of an inner product on the Lie algebra $\mathcal{V}$ naturally endows $\bM$ with a Riemannian extension $g_R$ of $g$ that makes the decomposition $\mathcal{H}(x) \oplus \mathcal{V}(x)$ orthogonal. Although  $g_R$ will be useful for the purpose of computations, the geometric objects that we will introduce, like the sub-Laplacian $L$, the canonical connection $\nabla$ and the "Ricci" tensor $\mathcal R$, ultimately will not depend  on the choice of an inner product on $\mathcal{V}$. 

The Riemannian measure of $(\bM,g_R)$ will  be denoted by $\mu$ and, for notational convenience, we will often use the notation $\langle \cdot, \cdot \rangle$ instead of $g_R$.

 \begin{remark}
 If the  Lie group $\mathbb{V}$ generated by $\mathcal{V}$ acts properly on $\bM$, then we have a natural Riemannian submersion  $\bM \to \bM / \mathbb{V}$. In the case $\mathbb{SU}(2)$ studied in the previous section, we obtain the Hopf fibration $\mathbb{S}^3 \to \mathbb{S}^2$, see \cite{Montgomery}.
 \end{remark}
 
 The above assumptions imply that, in a sufficiently small neighborhood of every point $x\in \bM$, we can find a frame of vector fields $\{X_1,\cdots,X_d, Z_1, \cdots, Z_\di\}$ such that: 
 \begin{itemize}
 \item[(a)] $Z_1, \cdots, Z_\di \in \mathcal{V}$;
\item[(b)] $\{X_1(x),\cdots,X_d(x)\}$ is an orthonormal basis of $\mathcal{H}(x)$;
\item[(c)] $\{Z_1(x), \cdots, Z_\di(x)\}$ is an orthonormal basis of $\mathcal{V}(x)$;
\item[(d)]  the following commutation relations hold:
\begin{equation}\label{bra1}
[X_i,X_j]=\sum_{\ee=1}^d \omega_{ij}^\ee X_\ee +\sum_{m=1}^{\di}
\gamma_{ij}^{m} Z_{m},
\end{equation}
\begin{equation}\label{bra2} 
[X_i,Z_{m}]=\sum_{\ee=1}^d \delta_{im}^\ee X_\ee,
\end{equation}
\end{itemize}
for 
smooth functions $ \omega_{ij}^\ee $, $ \gamma_{ij}^{m}$ and
$\delta_{im}^\ee $ such that
\begin{equation}\label{deltas}
\delta_{im}^\ee=-\delta_{\ee m}^i,\ \ i, \ee = 1,...,d,\
\text{and}\ m=1,...,\di.
\end{equation}
We mention explicitly that the equation \eqref{deltas} follows from the property of $Z_m$ being sub-Riemannian Killing, see conditions $(1)$ and $(2)$ above.
By convention,
$\omega_{ij}^\ee =-\omega_{ji}^\ee$ and
$\gamma_{ij}^{m}=-\gamma_{ji}^{m}.$ 

\begin{definition}\label{D:adapted}
A local frame such as in (a)-(d) above will  be called an \emph{adapted frame}.
\end{definition}
 
We define the horizontal gradient $\nabla_\mathcal{H} f$ of a function $f$ as the projection of the Riemannian gradient of $f$ on the horizontal bundle. Similarly, we define the vertical gradient $\nabla_\mathcal{V} f$ of a function $f$ as the projection of the Riemannian gradient of $f$ on the vertical bundle.
In an adapted frame,
\[
\nabla_\mathcal{H} f=\sum_{i=1}^d (X_i f) X_i,
\]
\[
\nabla_\mathcal{V} f =\sum_{m=1}^\di (Z_m f) Z_m.
\]
The canonical sub-Laplacian in this structure is, by definition, the diffusion operator $L$ on $\bM$ which is symmetric  on $C^\infty_0 (\bM)$ with respect  to the measure $\mu$ and such that (see \eqref{gamma}):
\[
\Gamma(f,g) = \frac{1}{2}(L(fg)-fL g-gL f)= \langle \nabla_\mathcal{H} f , \nabla_\mathcal{H} g \rangle.
\]
It is readily seen that in an adapted frame, one has
\begin{equation*}
L= -\sum_{i=1}^d X_i^*X_i,
\end{equation*}
where $X_i^*$ is the formal adjoint of $X_i$ with respect to the measure $\mu$. From the commutation relations in an adapted frame, we obtain that
\[
X_i^*=-X_i+\sum_{k=1}^d \omega_{ik}^k,
\]
so that, in an adapted frame
\begin{align}\label{L}
L=\sum_{i=1}^d X_i^2 +X_0,
\end{align}
where
\begin{equation}\label{X0}
X_0=-\sum_{i,k=1}^d \omega_{ik}^k X_i.
\end{equation}
We also note that since $\mathcal{H}$ is supposed to be bracket generating, from H\"ormander's theorem, $L$ is  a hypoelliptic operator.

In the present setting, from the very definition of $L$, one readily recognizes that the canonical bilinear form introduced in \eqref{gamma} above is given by
\[
\Gamma(f,g)= \langle \nabla_\mathcal{H} f , \nabla_\mathcal{H} g \rangle.
\]

\begin{definition}\label{D:gammaZ}
We define for every $f, g\in C^\infty(\bM)$
\[
\Gamma^Z(f,g)= \langle \nabla_\mathcal{V} f , \nabla_\mathcal{V} g \rangle.
\]
\end{definition}
Our first step is verifying that the differential forms $\Gamma$ and $\Gamma^Z$ satisfy the Hypothesis \ref{A:main_assumption} in the introduction. This is  the content of the next result.

\begin{lemma} For $f,g \in C^\infty(\bM)$,
 \[
 \Gamma( f, \Gamma^Z(f))=\Gamma^Z( f, \Gamma(f)).
 \]
 \end{lemma}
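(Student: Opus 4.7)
The plan is to establish the identity pointwise by computing both sides in an adapted frame $\{X_1,\dots,X_d,Z_1,\dots,Z_\di\}$ as in Definition \ref{D:adapted}, and then noting that the difference collapses via the skew-symmetry relation \eqref{deltas}, which is precisely the analytic imprint of the Killing property of the vertical vector fields.

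First I would write, in an adapted frame,
\[
\Gamma(f)=\sum_{i=1}^d (X_i f)^2,\qquad \Gamma^Z(f)=\sum_{m=1}^{\di}(Z_m f)^2,
\]
and expand
\[
\Gamma(f,\Gamma^Z(f))=\sum_i X_i f\, X_i\Gamma^Z(f)=2\sum_{i,m}(X_i f)(Z_m f)(X_i Z_m f),
\]
and similarly
\[
\Gamma^Z(f,\Gamma(f))=2\sum_{i,m}(X_i f)(Z_m f)(Z_m X_i f).
\]
Subtracting, the mixed second derivatives recombine into the commutator $[X_i,Z_m]f$, which by \eqref{bra2} equals $\sum_\ee \delta_{im}^\ee X_\ee f$. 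Thus
\[
\Gamma(f,\Gamma^Z(f))-\Gamma^Z(f,\Gamma(f))=2\sum_{m}(Z_m f)\sum_{i,\ee}\delta_{im}^\ee (X_i f)(X_\ee f).
\]

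The key step is then to observe that, by \eqref{deltas}, for each fixed $m$ the tensor $A^m_{i\ee}:=\delta_{im}^\ee$ is skew-symmetric in the pair $(i,\ee)$, i.e.\ $A^m_{i\ee}=-A^m_{\ee i}$, whereas the quadratic form $(X_i f)(X_\ee f)$ is symmetric in $(i,\ee)$. Consequently the inner sum $\sum_{i,\ee}\delta_{im}^\ee (X_i f)(X_\ee f)$ vanishes identically, and the required equality follows pointwise on each adapted coordinate patch. Since such patches cover $\bM$ and the objects $\Gamma(f,\Gamma^Z(f))$ and $\Gamma^Z(f,\Gamma(f))$ are intrinsically defined, this suffices globally.

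I do not anticipate a genuine obstacle here: the only nontrivial ingredient is the anti-symmetry \eqref{deltas}, and this is already recorded earlier as a direct consequence of the conditions $(1)$--$(2)$ defining a sub-Riemannian Killing field. All other manipulations are bookkeeping with the commutation relations \eqref{bra1}--\eqref{bra2} of the adapted frame; notice that the structure functions $\omega_{ij}^\ee$ and $\gamma_{ij}^m$ of \eqref{bra1} play no role whatsoever in the argument, which confirms that the commutation Hypothesis \ref{A:main_assumption} is really a consequence of the vertical directions being infinitesimal sub-Riemannian isometries.
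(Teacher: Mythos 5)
Your proof is correct and follows essentially the same route as the paper's: both compute the two sides in a local adapted frame, recombine the mixed second derivatives into the commutator $[X_i,Z_m]f=\sum_\ee\delta_{im}^\ee X_\ee f$ via \eqref{bra2}, and then observe that the contraction of the tensor $\delta_{im}^\ee$, skew-symmetric in $(i,\ee)$ by \eqref{deltas}, against the symmetric product $(X_i f)(X_\ee f)$ vanishes. Your closing remark that only the Killing-field identity \eqref{deltas} matters, and not the structure functions $\omega_{ij}^\ee$ or $\gamma_{ij}^m$, is an accurate reading of why Hypothesis \ref{A:main_assumption} holds in this setting.
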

 
 \begin{proof}
 It is readily checked in a local adapted frame $\{X_1,...,X_d,Z_1,...,Z_\di\}$.
 We have
 \begin{align*}
 \Gamma^Z(f,\Gamma(f)) & = 2 \sum_{m = 1}^\di Z_m f \sum_{i=1}^d X_i f Z_m(X_i f) 
 \\
 & = 2 \sum_{i=1}^d X_i f \sum_{m = 1}^\di Z_m f X_i(Z_m f) - 2  \sum_{i=1}^d X_i f \sum_{m = 1}^\di Z_m f [X_i,Z_m] f
 \\
 & = \Gamma(f,\Gamma^Z(f)) - 2 \sum_{m = 1}^\di Z_m f \sum_{i,\ell=1}^d \delta_{im}^\ell X_i f X_\ell f
 \\
 & = \Gamma(f,\Gamma^Z(f)),
 \end{align*}
 where in the last two equalities we have used \eqref{bra2} and \eqref{deltas}.

 \end{proof}

Another property that will be important for us is that  $\mathcal{V}$ is a Lie algebra of symmetries for the sub-Laplacian $L$.
\begin{lemma}\label{L:comm}
For any $Z \in \mathcal{V}$ one has $[L,Z] = 0$.
\end{lemma}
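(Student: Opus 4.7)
The strategy is to exploit the characterization $L = \mathrm{div}_\mu \nabla_{\mathcal H}$ together with the sub-Riemannian Killing property of $Z \in \mathcal V$. I will establish three intermediate identities:
\[
\mathrm{(i)}\ \nabla_{\mathcal H}(Zf) = [Z,\nabla_{\mathcal H}f], \qquad
\mathrm{(ii)}\ \mathrm{div}_\mu[Z,V] = Z(\mathrm{div}_\mu V) - V(\mathrm{div}_\mu Z),
\]
and $\mathrm{(iii)}$ $\mathrm{div}_\mu Z$ is constant on $\bM$. Granting these, applying $\mathrm{(ii)}$ with $V = \nabla_{\mathcal H}f$ and substituting $\mathrm{(i)}$ yields $L(Zf) = \mathrm{div}_\mu[Z,\nabla_{\mathcal H}f] = Z(Lf) - \Gamma(f,\mathrm{div}_\mu Z) = Z(Lf)$ by $\mathrm{(iii)}$, whence $[L,Z] = 0$.

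The ``Killing intertwining'' identity $\mathrm{(i)}$ is proved by pairing both sides against an arbitrary horizontal vector $Y$. The right-hand side is itself horizontal by property $(2)$ of the Killing condition, since $[Z, \nabla_{\mathcal H}f] \in \mathcal H$. Using the horizontal Killing equation
\[
Z\,g(Y,\nabla_{\mathcal H}f) = g([Z,Y],\nabla_{\mathcal H}f) + g(Y,[Z,\nabla_{\mathcal H}f]),
\]
valid because $Y$ and $\nabla_{\mathcal H}f$ both lie in $\mathcal H$, together with $Y(Zf) = Z(Yf) + [Y,Z]f$ and $[Y,Z]f = -g([Z,Y],\nabla_{\mathcal H}f)$ (since $[Y,Z] \in \mathcal H$), both sides collapse to $g(Y,[Z,\nabla_{\mathcal H}f])$. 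Identity $\mathrm{(ii)}$ is the standard consequence of $\mathcal L_{[Z,V]} = [\mathcal L_Z, \mathcal L_V]$ applied to $\mu$, combined with $\mathcal L_V \mu = (\mathrm{div}_\mu V) \mu$.

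The principal obstacle is $\mathrm{(iii)}$: although $Z$ preserves the sub-Riemannian structure, it need not preserve $\mu$, which involves the auxiliary choice of inner product on $\mathcal V$. The key observation is that the non-preservation is by a \emph{point-independent} scalar. Since $\mathcal V$ has finite dimension $\di$ and the transversality assumption $T_x\bM = \mathcal H(x) \oplus \mathcal V(x)$ forces the evaluation map $\mathcal V \to \mathcal V(x)$ to be an isomorphism at every $x$, one may pick a global basis $Z_1,\dots,Z_{\di}$ of $\mathcal V$ whose brackets $[Z_m,Z_n] = \sum_p c_{mn}^p Z_p$ have \emph{constant} structure coefficients $c_{mn}^p \in \mathbb R$. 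The flow $\phi^Z_t$ preserves the splitting $\mathcal H \oplus \mathcal V$, acts isometrically on $\mathcal H$, and acts fiberwise on $\mathcal V$ by the adjoint representation $\mathrm{Ad}(\exp(-tZ)) = e^{-t\,\mathrm{ad}(Z)}$; hence $(\phi^Z_t)^*\mu = e^{-t\,\mathrm{tr}(\mathrm{ad}(Z))}\mu$ and $\mathrm{div}_\mu Z = -\mathrm{tr}(\mathrm{ad}(Z))$, a number determined only by the Lie algebra. Equivalently, a direct Koszul computation in an adapted frame shows that the horizontal contribution $\sum_i g(\nabla^R_{X_i}Z,X_i)$ vanishes by the skew-symmetry $\delta_{in}^\ell = -\delta_{\ell n}^i$ (itself a reflection of Killing), while the vertical contribution reduces to $\sum_m c_{mn}^m$, again constant. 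Being constant, $\mathrm{div}_\mu Z$ is annihilated by $\nabla_{\mathcal H}$, so $\Gamma(f,\mathrm{div}_\mu Z) \equiv 0$, and this closes the argument.
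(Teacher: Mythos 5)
Your proof is correct, but it follows a genuinely different route from the paper's. The paper argues abstractly: since $Z$ is Killing, the principal (second-order) part of $[L,Z]$ cancels, so $[L,Z]$ is a vector field; since $L$ is symmetric and $Z^*=-Z+c$ with $c$ constant, $[L,Z]$ is formally self-adjoint; and a symmetric vector field must vanish identically. You instead compute $L(Zf)$ directly from the representation $L=\mathrm{div}_\mu\circ\nabla_{\mathcal{H}}$, using the intertwining $\nabla_{\mathcal{H}}(Zf)=[Z,\nabla_{\mathcal{H}}f]$ (which is exactly the content of the Killing hypothesis, properties (1) and (2)), the Lie-derivative identity for $\mathrm{div}_\mu[Z,V]$, and the constancy of $\mathrm{div}_\mu Z$. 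Both arguments ultimately rest on the same two inputs --- the Killing skew-symmetry $\delta_{im}^\ell=-\delta_{\ell m}^i$ and the fact that $\mathrm{div}_\mu Z$ is a constant --- but your version is more self-contained: it supplies a proof of the assertion $Z^*=-Z+c$ (via the adjoint action on the global vertical frame, or the Koszul computation), which the paper simply states, and it makes explicit the first-order cancellation that the paper compresses into the phrase ``$[L,Z]$ is a first-order differential operator.'' What the paper's route buys is brevity and the reusable observation that a symmetric vector field vanishes; what yours buys is an explicit identity, $L(Zf)=Z(Lf)=\mathrm{div}_\mu[Z,\nabla_{\mathcal{H}}f]$, that also re-proves the intertwining used elsewhere (e.g.\ in the verification of Hypothesis \ref{A:main_assumption}). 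One small point of care: when you invoke a global basis $Z_1,\dots,Z_{\mathfrak h}$ of $\mathcal{V}$ with constant structure coefficients, you should note (as you do) that the transversality assumption makes the evaluation map $\mathcal{V}\to\mathcal{V}(x)$ an isomorphism, so that the inner product chosen on the Lie algebra $\mathcal{V}$ yields a globally orthonormal vertical frame; this is what makes both the vertical Koszul terms and the structure constants point-independent.
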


\begin{proof}
Since $Z$ is a Killing vector field, $[L,Z] $ is a first-order differential operator and therefore a vector field. Since $Z^*=-Z+c$, where $Z^*$ denotes the formal adjoint of $Z$ and $c$ a constant, we obtain that $[L,Z]^*=[L,Z]$. Since a symmetric vector field must vanish identically, we obtain the desired conclusion.
\end{proof}

\subsubsection{\textbf{The canonical connection}}\label{SSS:cancon}

Our ultimate objective (see Theorem \ref{T:cd} in Section \ref{SSS:generalizedCD}) will be establishing natural geometric conditions under which the manifold $\bM$, endowed with the above defined sub-Laplacian $L$, and with the differential bilinear form $\Gamma^Z$, satisfy the generalized curvature-dimension inequality CD$(\rho_1,\rho_2,\kappa,d)$ in Definition \ref{D:cdi}. A useful ingredient in the realization of this objective is the existence of a canonical connection on $\bM$.

\begin{proposition}\label{P:horconn}
There exists a unique affine connection $\nabla$ on $\mathbb{M}$ satisfying the following properties:
\begin{itemize}
\item[(i)] $\nabla g =0$;
\item[(ii)] if $X$ and $Y$ are horizontal vector fields, $\nabla_{X} Y$ is horizontal;
\item[(iii)] if $Z \in \mathcal{V}$, $\nabla Z=0$;
\item[(iv)] if $X,Y$ are horizontal vector fields and $Z \in \mathcal{V}$,
the torsion vector field \emph{T}$(X,Y)$ is vertical and \emph{T}$(X, Z)=0$.
\end{itemize}
\end{proposition}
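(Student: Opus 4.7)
The plan is to establish both uniqueness and existence through a local analysis in adapted frames, exploiting the rigidity of the four conditions.

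First I would fix $x\in\bM$ and work in an adapted frame $\{X_1,\dots,X_d,Z_1,\dots,Z_\di\}$ near $x$. Property (iii) immediately forces $\nabla_W Z_m = 0$ for every vector field $W$ and every $m$. Combining this with the torsion condition $T(X_i,Z_m)=0$ from (iv) yields
\[
\nabla_{Z_m} X_i \;=\; -[X_i,Z_m] \;=\; [Z_m,X_i],
\]
which by \eqref{bra2} is horizontal. The only remaining unknowns are the coefficients $\Gamma_{ij}^k$ defined by $\nabla_{X_i}X_j = \sum_k \Gamma_{ij}^k X_k$, horizontal by (ii).

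Next I would pin down the $\Gamma_{ij}^k$ from (i) and (iv). The vertical-torsion requirement $T(X_i,X_j)\in \mathcal V$, combined with \eqref{bra1}, annihilates the horizontal part of $\nabla_{X_i}X_j - \nabla_{X_j}X_i - [X_i,X_j]$ and gives
\[
\Gamma_{ij}^k - \Gamma_{ji}^k \;=\; \omega_{ij}^k,
\]
while property (i), applied to the local orthonormality of $\{X_k\}$, yields the antisymmetry
\[
\Gamma_{ij}^k + \Gamma_{ik}^j \;=\; 0.
\]
A standard Koszul-type cyclic permutation of these two relations solves the system uniquely, giving the explicit formula $\Gamma_{ij}^k = \tfrac{1}{2}(\omega_{ij}^k + \omega_{ki}^j - \omega_{jk}^i)$. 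Since adapted frames cover $\bM$, this proves uniqueness of $\nabla$ globally.

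For existence, I would take the above prescriptions as the definition of $\nabla$ in each adapted chart, and verify (i)--(iv) by direct computation: (ii) and (iii) are built in; (iv) follows from $T(X_i,X_j) = -\sum_m \gamma_{ij}^m Z_m$ and $T(X_i,Z_m)=0$; and (i) follows by splitting the differentiation direction into horizontal and vertical parts, using the antisymmetry of $\Gamma_{ij}^k$ in $(j,k)$ in the horizontal case and the sub-Riemannian Killing identity $\mathcal L_{Z_m} g = 0$ in the vertical case. That the locally defined connections agree on overlaps, and hence patch to a global $\nabla$, is a consequence of the uniqueness already established: both local connections satisfy (i)--(iv) on the overlap, hence coincide pointwise. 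The main obstacle is the verification of metric compatibility along the vertical directions, where the hypothesis that $\mathcal V$ consists of sub-Riemannian Killing fields is indispensable; without the identity $\mathcal L_{Z_m} g = 0$, the formula for $\Gamma_{ij}^k$ forced by the horizontal conditions would be incompatible with $\nabla g = 0$.
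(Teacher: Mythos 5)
Your proof is correct, but it takes a different route from the paper's. The paper constructs $\nabla$ globally and coordinate-freely from the Levi-Civita connection $\nabla^R$ of the Riemannian extension $g_R$, by prescribing $\nabla_X Y=\pi_{\mathcal H}(\nabla^R_X Y)$, $\nabla_Z X=[Z,X]$ and $\nabla Z=0$ for $X,Y\in\mathcal H$, $Z\in\mathcal V$, and then dismisses uniqueness as following ``in a standard fashion.'' You instead run the Koszul-type argument in an adapted frame: conditions (iii) and (iv) force $\nabla_{Z_m}X_i=[Z_m,X_i]$, and the pair of relations $\Gamma_{ij}^k-\Gamma_{ji}^k=\omega_{ij}^k$ (vertical torsion) and $\Gamma_{ij}^k+\Gamma_{ik}^j=0$ (metric compatibility in an orthonormal horizontal frame) is solved by cyclic permutation, recovering exactly the formulas \eqref{christoffel1}--\eqref{christoffel2} that the paper only records afterwards in Remark \ref{R:nablas}. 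This buys you an explicit local description, a uniqueness proof that is actually written out, and a transparent identification of where the Killing hypothesis enters (the identity \eqref{deltas}, needed for $\nabla g=0$ in the vertical directions); the price is the local-to-global patching step, which you correctly resolve by invoking the uniqueness already established on overlaps. The paper's construction is shorter and manifestly global but leaves both the uniqueness and the role of \eqref{deltas} implicit. One small point worth making explicit in your write-up: condition (iii) concerns only the elements of the Lie algebra $\mathcal V$, which are constant linear combinations of $Z_1,\dots,Z_{\di}$, so $\nabla Z=0$ for all $Z\in\mathcal V$ does follow by linearity from $\nabla Z_m=0$; it would fail for variable-coefficient combinations, but those are not in $\mathcal V$.
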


\begin{proof}
If we indicate with $\nabla^R$ the Riemannian Levi-Civita connection on $\bM$, the existence of the connection $\nabla$ follows by prescribing the relations
\[
\nabla_Z X=[Z,X], \quad  \nabla_X Y=\pi_\mathcal{H} (\nabla^R_X Y), \quad  \nabla Z=0,
\]
where $X,Y\in \mathcal H$, $Z \in \mathcal{V}$,  and $\pi_\mathcal{H}$ the projection onto the horizontal bundle. The uniqueness of $\nabla$ follows in a standard fashion.

\end{proof}

\begin{remark}
It is worth noting that the connection $\nabla$ does not depend on the choice of  the inner product on $\mathcal{V}$.
\end{remark}

\begin{remark}\label{R:riemsasak}
It is also worth observing here that in the Riemannian case we simply have $\mathcal H = T\bM$, and $\nabla$ is just the Levi-Civita connection on $\bM$.
\end{remark}

\begin{remark}\label{R:nablas}
For later use we observe that, in a local adapted frame, one has:
\begin{equation}\label{christoffel1}
\nabla_{X_i} X_j= \sum_{k=1}^d
\frac{1}{2} \left(\omega_{ij}^k +\omega_{ki}^j +\omega_{kj}^i
\right) X_k,
\end{equation}
\begin{equation}\label{christoffel2}
\nabla_{Z_{m}} X_i=-\sum_{\ee=1}^d \delta_{im}^\ee X_\ee,
\end{equation}
\begin{equation}\label{christoffel3}
\nabla Z_{m}=0.
\end{equation}
\end{remark}

We also note that, thanks to \eqref{L} and \eqref{X0}, in a local adapted frame we have
\[
L=\sum_{i=1}^d X^2_i-\nabla_{X_i} X_i,
\]
so that
\[
Lf=\text{div}(\nabla_\mathcal{H} f).
\]

\subsubsection{\textbf{Generalized Bochner identities}}\label{SS:bochner}

As we have recalled in the opening of the present paper at the hearth of the Riemannian curvature-dimension inequality CD$(\rho_1,n)$ there is the Bochner identity. It is then only natural that our first step in the formulation of the generalized curvature-dimension inequality in Definition \ref{D:cdi} above was understanding appropriate versions of the identity of Bochner. This is accomplished by Theorem \ref{T:bochner} below, which represents the central result of this section. This result contains two Bochner identities: one for the horizontal directions, see \eqref{bochner}, and the other for the vertical ones, see \eqref{vbf} below. One of the essential points of the program laid in this paper is that, to formulate a notion of Ricci that works well for sub-Riemannian spaces, one needs to appropriately intertwine these identities. As a final comment we mention that, as it will be clear from the proof of Theorem \ref{T:bochner}, the vertical Bochner formula is incredibly easier than the horizontal one, but this is in the nature of things, and should come as no surprise.  

We are ready to introduce the relevant geometric quantities. 

\begin{definition}\label{D:RS}
Let $\nabla$ be the affine connection introduced by Proposition \ref{P:horconn}, and indicate with \emph{Ric} and \emph{T} respectively the Ricci and torsion tensors with respect to $\nabla$. 
For $f \in C^\infty(\bM)$ we define:
\begin{align}\label{Ro}
\mathcal{R} (f)=  \emph{Ric}(\nabla_\mathcal{H} f , \nabla_\mathcal{H} f) +\sum_{\ee,k=1}^d \left( -( (\nabla_{X_\ee} \emph{T}) (X_\ee,X_k)f )(X_k f)  +\frac{1}{4}
\left( \emph{T}(X_\ee,X_k)f\right)^2 \right).
\end{align}
where $\{X_1,\cdots, X_d\}$ is a local frame of horizontal vector fields. We also define the following second-order differential form  by the formula:
\begin{align}\label{S}
\mathcal{S}(f)=-2\sum_{i=1}^d \langle \nabla_{X_i} \nabla_\mathcal{V} f, \emph{T}(X_i, \nabla_\mathcal{H} f)\rangle.
\end{align}
\end{definition}

\begin{remark}\label{R:RS}
The expressions \eqref{Ro}, \eqref{S} do not depend on the choice of the frame, thus they define intrinsic differential forms on $\bM$. Also, we observe that since the connection $\nabla$ does not depend on the choice of an inner product on $\mathcal{V}$, it is easy to check that $ \mathcal{R}$  and $\mathcal{S}$ do not depend on this choice either. We note explicitly that in the Riemannian case we have $\mathcal H = TM$, $\nabla$ is just the Levi-Civita connection of $\bM$, and therefore \emph{T}$\equiv 0$. In such case, $\mathcal R(f) =$\ \emph{Ric}$(\nabla f,\nabla f)$, where now \emph{Ric} is the Riemannian Ricci tensor.
\end{remark}

The following lemma provides a useful expression of the differential forms $\mathcal R(f)$ and $\mathcal S(f)$ in a local adapted frame.

\begin{lemma}\label{L:RS}
Let $\{X_1,...,X_d,Z_1,...,Z_\di\}$ be a local adapted frame. Then, we have:
\begin{align}\label{R2}
\mathcal R(f) & = \sum_{k,\ee=1}^d \bigg\{\bigg(\sum_{j=1}^d
\sum_{m=1}^\di \gamma_{kj}^{m} \delta_{jm}^\ee\bigg) +
\sum_{j=1}^d
(X_\ee\omega^j_{kj} - X_j\omega^k_{\ee j}) \\
& + \sum_{i,j=1}^d \omega_{ji}^i \omega^\ee_{k j} - \sum_{i=1}^d
\omega_{k i}^i \omega_{\ee i}^i + \frac{1}{2} \sum_{1\le i<j\le d}
\bigg(\omega^\ee_{ij} \omega^k_{ij} - (\omega_{\ee j}^i +\omega_{\ee
i}^j)(\omega^i_{kj} + \omega^j_{ki})\bigg)\bigg\}X_k f X_\ee f
\notag\\
&   +  \sum_{k=1}^d \sum_{m=1}^\di \bigg(\sum_{\ell,j=1}^d
\omega_{j\ee}^\ee \gamma_{kj}^{m} + \sum_{1\le \ee<j\le d}
\omega^k_{\ee j} \gamma^{m}_{\ee j}  - \sum_{j=1}^d X_j
\gamma^{m}_{kj}\bigg) Z_{m} f X_k f \notag\\
& + \frac{1}{2} \sum_{1\le \ee<j\le d}\bigg(\sum_{m=1}^\di
\gamma^{m}_{\ee j} Z_{m} f\bigg)^2, \notag
\end{align}
and
\begin{equation}\label{S2}
\mathcal{S}(f)=-2 \sum_{i,j=1}^d \sum_{m=1}^\di \gamma_{ij}^{m} (X_j Z_{m} f) (X_i f).
\end{equation}
\end{lemma}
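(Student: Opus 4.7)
The plan is to verify both identities by explicit computation in a fixed local adapted frame $\{X_1,\ldots,X_d,Z_1,\ldots,Z_\di\}$, using the explicit Christoffel formulas \eqref{christoffel1}--\eqref{christoffel3} together with the commutation relations \eqref{bra1}--\eqref{deltas}. A preliminary observation, valid in any adapted frame, is that the torsion evaluated on two horizontal frame vectors is purely vertical:
\[
T(X_i,X_j) \;=\; \nabla_{X_i}X_j - \nabla_{X_j}X_i - [X_i,X_j] \;=\; -\sum_{m=1}^\di \gamma_{ij}^{m} Z_m,
\]
because the symmetrization in \eqref{christoffel1} cancels the $\omega_{ki}^j, \omega_{kj}^i$ contributions in $\nabla_{X_i}X_j-\nabla_{X_j}X_i$ and leaves exactly $\sum_k \omega_{ij}^k X_k$, which in turn cancels the horizontal part of $[X_i,X_j]$ given by \eqref{bra1}.

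I would handle $\mathcal{S}(f)$ first, since it is short. Using $\nabla Z_m = 0$ from \eqref{christoffel3}, one immediately gets $\nabla_{X_i}\nabla_\mathcal{V}f = \sum_m X_i(Z_m f)\,Z_m$. The tensoriality of $T$ in its second argument combined with the formula above gives $T(X_i,\nabla_\mathcal{H}f) = -\sum_{j,m}\gamma_{ij}^{m}(X_j f)\,Z_m$. Since the $Z_m$ are orthonormal with respect to $g_R$ in the adapted frame, the inner product collapses to $\langle\nabla_{X_i}\nabla_\mathcal{V}f,T(X_i,\nabla_\mathcal{H}f)\rangle = -\sum_{j,m}\gamma_{ij}^{m}(X_j f)X_i(Z_m f)$. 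Inserting in the definition of $\mathcal S$, then relabeling $i\leftrightarrow j$ and using the antisymmetry $\gamma_{ji}^m = -\gamma_{ij}^m$, produces exactly \eqref{S2}.

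The identity \eqref{R2} is the substantial calculation. I would expand
\[
\mathrm{Ric}(\nabla_\mathcal{H}f,\nabla_\mathcal{H}f) \;=\; \sum_{k,\ell=1}^d (X_k f)(X_\ell f) \sum_{i=1}^d \langle R(X_i,X_k)X_\ell, X_i\rangle,
\]
with $R(X_i,X_k)X_\ell = \nabla_{X_i}\nabla_{X_k}X_\ell - \nabla_{X_k}\nabla_{X_i}X_\ell - \nabla_{[X_i,X_k]}X_\ell$, substituting \eqref{christoffel1} everywhere and breaking $\nabla_{[X_i,X_k]}X_\ell$ into a horizontal piece (again handled by \eqref{christoffel1}) and a vertical piece $\sum_m \gamma_{ik}^m \nabla_{Z_m}X_\ell$ handled by \eqref{christoffel2}. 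For the torsion correction I would expand
\[
(\nabla_{X_\ell}T)(X_\ell,X_k) \;=\; \nabla_{X_\ell}\bigl(T(X_\ell,X_k)\bigr) - T(\nabla_{X_\ell}X_\ell,X_k) - T(X_\ell,\nabla_{X_\ell}X_k),
\]
using the torsion formula $T(X_\ell,X_k) = -\sum_m \gamma_{\ell k}^m Z_m$ together with $\nabla_{X_\ell}Z_m = 0$ to obtain $\nabla_{X_\ell}(T(X_\ell,X_k)) = -\sum_m (X_\ell\gamma_{\ell k}^m)Z_m$, and then re-inserting \eqref{christoffel1} in the remaining two terms; finally the square term $\tfrac14(T(X_\ell,X_k)f)^2 = \tfrac14(\sum_m \gamma_{\ell k}^m Z_m f)^2$ gives directly the last line of \eqref{R2}. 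Grouping by the type of factor ($X_k f \cdot X_\ell f$ with only $\omega$ and $\gamma\delta$ coefficients, $X_k f \cdot Z_m f$ cross terms, and $(Z_m f)(Z_{m'}f)$ pure vertical squares), and applying the antisymmetries $\omega_{ij}^k = -\omega_{ji}^k$, $\gamma_{ij}^m = -\gamma_{ji}^m$, and the Killing identity \eqref{deltas} ($\delta_{jm}^\ell = -\delta_{\ell m}^j$), matches each block to the corresponding sum in \eqref{R2}.

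The main obstacle is purely organizational: the curvature expansion generates on the order of a dozen terms of type $\omega\cdot\omega$, $X\omega$, and $\omega\cdot\gamma$, and they must be paired correctly against the torsion-derivative terms so that the precise symmetrized form $(\omega_{\ell j}^i + \omega_{\ell i}^j)(\omega^i_{kj} + \omega^j_{ki})$ and the one-index sums $\omega_{ki}^i\omega_{\ell i}^i$ emerge, with the vertical Killing antisymmetry \eqref{deltas} applied exactly where needed to turn $\gamma\cdot\delta$ products into the form written in \eqref{R2}. Independence of the final expressions from the particular adapted frame is guaranteed \emph{a posteriori} by Remark \ref{R:RS}.
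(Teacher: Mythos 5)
Your proposal is correct and follows exactly the route the paper intends: the paper's own proof of this lemma is the single sentence ``It is a standard but lengthy computation using an adapted frame,'' and your outline is precisely that computation, with the key preliminary identity $T(X_i,X_j)=-\sum_m\gamma_{ij}^m Z_m$ and the derivation of \eqref{S2} (via $\nabla Z_m=0$, tensoriality of $T$, and the relabeling $i\leftrightarrow j$ with $\gamma_{ji}^m=-\gamma_{ij}^m$) carried out correctly. The expansion of $\mathcal R(f)$ is only sketched, but the decomposition into the curvature, torsion-derivative, and torsion-square blocks is the right organization and matches the term structure of \eqref{R2}.
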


\begin{proof}
It is a standard but lengthy computation using an adapted frame.

\end{proof}

In the following we denote by $\| \nabla_\mathcal{H}^2 f \|^2$ the Hilbert-Schmidt norm of the symmetrized horizontal Hessian of a function $f$. In a local adapted frame
\[
\| \nabla_\mathcal{H}^2 f \|^2 = \sum_{\ee=1}^d \left( X^2_\ee f-\sum_{i=1}^d
\omega_{i\ee}^\ee X_i f \right)^2 +  2 \sum_{1 \le \ee<j \le d}
\left( \frac  {X_j X_\ee + X_\ee X_j}{2}f -\sum_{i=1}^d \frac{\omega_{ij}^\ee
+\omega_{i\ee}^j}{2} X_i f \right)^2.
\]
Also, we will denote $\| \nabla_\mathcal{H} \nabla_\mathcal{V} f \|^2=\sum_{i=1}^d \sum_{m=1}^{\di}  (X_i Z_m f)^2$, an expression which is seen to be independent from the local adapted frame.
The next theorem constitutes one of the central results of Section \ref{SS:subR}.

\begin{theorem}\label{T:bochner}
For every $f\in C^\infty(\bM)$ the following formulas hold:
\begin{align}\label{bochner}
\Gamma_{2}(f)=\| \nabla_\mathcal{H}^2 f \|^2 +\mathcal{R}(f)+\mathcal{S}(f);\ \ \ \ \emph{(Horizontal Bochner formula)}
\end{align}
\begin{equation}\label{vbf}
\Gamma^Z_{2}(f) = \| \nabla_\mathcal{H} \nabla_\mathcal{V} f \|^2.  \ \ \ \ \ \ \ \ \ \ \ \emph{(Vertical Bochner formula)}
\end{equation}
\end{theorem}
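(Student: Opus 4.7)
The plan is to work entirely in a local adapted frame $\{X_1,\dots,X_d,Z_1,\dots,Z_\di\}$ as in Definition~\ref{D:adapted}, so that $\Gamma(f)=\sum_i (X_i f)^2$, $\Gamma^Z(f)=\sum_m (Z_m f)^2$, and $Lf = \sum_i X_i^2 f + X_0 f$ with $X_0 = -\sum_{i,k}\omega_{ik}^k X_i$ by \eqref{L}--\eqref{X0}. Both identities then follow by expanding the defining formulas \eqref{gamma2} and \eqref{gamma2Z} and repeatedly invoking the commutation relations \eqref{bra1}, \eqref{bra2}, \eqref{deltas}. I would dispose of the vertical formula first since it is structurally much simpler, and then turn to the horizontal one, which carries all of the real combinatorial weight.

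For the vertical Bochner identity \eqref{vbf}, the decisive input is Lemma~\ref{L:comm}: since each $Z_m$ belongs to $\mathcal V\subset \mathfrak{iso}$, we have $[L,Z_m]=0$, hence $Z_m L f = L Z_m f$. Using the general identity $L(g^2) = 2g Lg + 2\Gamma(g)$ (which follows at once from \eqref{gamma}) applied to $g = Z_m f$, I would compute
\[
L\Gamma^Z(f) \;=\; \sum_m L(Z_m f)^2 \;=\; 2\sum_m (Z_m f)(L Z_m f) + 2\sum_m \Gamma(Z_m f),
\]
while $\Gamma^Z(f,Lf) = \sum_m (Z_m f)(Z_m Lf) = \sum_m (Z_m f)(L Z_m f)$ by commutativity. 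Subtracting and halving in accordance with \eqref{gamma2Z} gives $\Gamma_2^Z(f) = \sum_m \Gamma(Z_m f) = \sum_{i,m} (X_i Z_m f)^2 = \|\nabla_\mathcal H \nabla_\mathcal V f\|^2$.

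For the horizontal Bochner identity \eqref{bochner}, I would proceed by brute expansion: compute $L\Gamma(f) = \sum_i L(X_i f)^2 = 2\sum_i (X_i f)(L X_i f) + 2\sum_i \Gamma(X_i f)$, then compute $\Gamma(f,Lf) = \sum_i (X_i f) X_i(Lf)$, and form $\Gamma_2(f) = \sum_i \Gamma(X_i f) + \sum_i (X_i f)[L,X_i]f$. The first piece $\sum_i \Gamma(X_i f) = \sum_{i,j}(X_j X_i f)^2$ contains the symmetrized Hessian $\|\nabla_\mathcal H^2 f\|^2$ together with leftover first-order corrections coming from the connection coefficients in \eqref{christoffel1}; a Cauchy-like regrouping of these corrections recovers exactly the $\omega$-squared terms in the formula \eqref{R2} for $\mathcal R(f)$. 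The second piece $\sum_i (X_i f)[L,X_i]f$ must be unfolded using both \eqref{bra1} and \eqref{bra2}: the $\omega$-terms produce the remaining $X_\ell\omega$ and $\omega\cdot\omega$ summands of \eqref{R2}, the $\gamma_{ij}^m$-terms produce the pure $\gamma$-term $\tfrac12\sum_{\ell<j}(\sum_m\gamma_{\ell j}^m Z_m f)^2$ together with the mixed $Z_m f\,X_k f$ summand of \eqref{R2}, and the commutators $[X_i,Z_m]$ combined with \eqref{deltas} produce precisely $\mathcal S(f)$ as written in \eqref{S2}. Matching these three blocks against \eqref{R2} and \eqref{S2} then yields \eqref{bochner}.

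The hard step is unquestionably the horizontal computation: one must track several kinds of first-order remainders, identify the torsion contributions (which, via the canonical connection of Proposition~\ref{P:horconn}, are encoded in the $\gamma_{ij}^m$ through \eqref{bra1}), and verify that the Ricci tensor of $\nabla$ paired with $(\nabla_\mathcal H f,\nabla_\mathcal H f)$ agrees with the $\omega$-terms produced by the expansion. Since Lemma~\ref{L:RS} already packages the intrinsic expressions \eqref{Ro} and \eqref{S} into the explicit adapted-frame formulas \eqref{R2} and \eqref{S2}, the proof reduces to checking this term-by-term match, which is routine but lengthy. I would therefore organize the argument as three aligned columns (Hessian block, $\mathcal R$-block, $\mathcal S$-block) and verify agreement coefficient by coefficient, rather than try to produce the identity in a single manipulation.
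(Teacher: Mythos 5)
Your proposal is correct and follows essentially the same route as the paper: the vertical identity via $[L,Z_m]=0$ exactly as you write it, and the horizontal identity by expanding $\Gamma_2(f)=\sum_{i,j}(X_jX_if)^2+\sum_i X_if\,[L,X_i]f$ in a local adapted frame, completing squares to extract $\|\nabla_\mathcal{H}^2f\|^2$, and matching the remainder against \eqref{R2} and \eqref{S2} of Lemma \ref{L:RS} (the paper likewise suppresses the intermediate bookkeeping). The only caveat is that a few of your attributions of where individual summands arise are slightly off --- e.g.\ the term $\tfrac12\sum_{i<j}(\sum_m\gamma_{ij}^mZ_mf)^2$ comes from the square block $\sum_{i,j}(X_jX_if)^2$ rather than from the commutator block, and $\mathcal S(f)$ arises from the $\gamma$-part of $[X_i,X_j]$ after swapping $Z_mX_j$ to $X_jZ_m$ --- but these are exactly the details your term-by-term verification would catch.
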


\begin{proof}
It is enough to prove \eqref{bochner} and \eqref{vbf} in a local adapted frame $\{X_1,...,X_d,Z_1,...,Z_\di\}$. We begin with the vertical Bochner formula \eqref{vbf}, which is quite simple. Such formula follows immediately by a direct computation starting from the definition \eqref{gamma2Z} of $\Gamma^Z_2$, and using the fact that $L$ and $Z_m$ commute, see Lemma \ref{L:comm}.

The proof of the horizontal Bochner formula \eqref{bochner} is not as straightforward. In order to avoid long and cumbersome computations we will omit the intermediate details and only provide the essential identities. With such identities the interested reader should be able to fill in the gaps.
Let us preliminarily observe that 
\[
X_i X_j f = f_{,ij} + \frac 12 [X_i,X_j]f,
\]
where we have let
\begin{equation}\label{symmder}
f_{,ij} = \frac 12 (X_i X_j + X_j X_i) f.
\end{equation}
Using \eqref{bra1}, we find 
\begin{equation}\label{nonsym}
X_iX_jf = f_{,ij} + \frac{1}{2} \sum_{\ee=1}^d \omega^\ee_{ij} X_\ee
f + \frac{1}{2} \sum_{m=1}^\di \gamma^{m}_{ij} Z_{m} f.
\end{equation}
Now, starting from the definition \eqref{gamma2} of $\Gamma_2(f)$, we obtain
\begin{align*}
\Gamma_2(f) & =  \sum_{i=1}^d X_i f [X_0,X_i]f - 2
\sum_{i,j=1}^d X_if [X_i,X_j]X_j f 
\\
& + \sum_{i,j=1}^d X_i f
[[X_i,X_j],X_j]f  +  \sum_{i,j=1}^d (X_jX_i f)^2,
\end{align*}
where $X_0$ is defined by \eqref{X0}. From \eqref{nonsym} we have
\begin{align*}
\sum_{i,j=1}^d (X_jX_i f)^2  & =  \sum_{i,j=1}^d f_{,ij}^2 + \frac{1}{2} \sum_{1\le
i<j\le d}\left(\sum_{\ee=1}^d \omega^\ee_{ij} X_\ee f\right)^2 +
\frac{1}{2} \sum_{1\le i<j\le d}\left(\sum_{m=1}^\di
\gamma^{m}_{ij} Z_{m} f\right)^2 \\
& + \sum_{1\le i<j\le d}\sum_{\ee=1}^d \sum_{m=1}^\di
\omega^\ee_{ij} \gamma^{m}_{ij} Z_{m}f X_\ee f. 
\notag
\end{align*}
and therefore, 
\begin{align}\label{bochner1}
\Gamma_2(f) & =  \sum_{i,j=1}^d f_{,ij}^2- 2
\sum_{i,j=1}^d X_if [X_i,X_j]X_j f + \sum_{i,j=1}^d X_i f
[[X_i,X_j],X_j]f 
\\
& + \sum_{i=1}^d X_i f [X_0,X_i]f  + \frac{1}{2} \sum_{1\le
i<j\le d}\left(\sum_{\ee=1}^d \omega^\ee_{ij} X_\ee f\right)^2 +
\frac{1}{2} \sum_{1\le i<j\le d}\left(\sum_{m=1}^\di
\gamma^{m}_{ij} Z_{m} f\right)^2 
\notag\\
& + \sum_{1\le i<j\le d}\sum_{\ee=1}^d \sum_{m=1}^\di
\omega^\ee_{ij} \gamma^{m}_{ij} Z_{m}f X_\ee f. 
\notag
\end{align}
To complete the proof we need to
recognize that the right-hand side in \eqref{bochner1} coincides
with that in \eqref{bochner}.
 With this objective in mind, using
\eqref{bra1}  we obtain after a computation
\begin{align*}
& \sum_{i,j=1}^d f_{,ij}^2  - 2 \sum_{i,j=1}^d X_i f [X_i,X_j]X_j f
\\
& =  \sum_{\ell=1}^d \left(f_{,\ell\ell}^2 - 2 \left(\sum_{i=1}^d
\omega^\ee_{i\ee} X_i f\right) f_{,\ee \ee}\right)
\\
& + 2  \sum_{1 \le \ee<j \le d}\left( f_{,j\ell}^2  - 2 \sum_{1\le
\ee<j\le d} \left(\sum_{i=1}^d \frac{\omega_{ij}^\ee +
\omega_{i\ee}^j}{2}
X_i f\right) f_{,\ee j} \right)
\\
&  -  \sum_{i,j=1}^d \sum_{\ee, k=1}^d \omega_{ij}^\ee \omega^k_{\ee
j} X_k f X_i f -  \sum_{i,j=1}^d \sum_{\ee=1}^d \sum_{m=1}^\di
\omega_{ij}^\ee \gamma^{m}_{\ee j} Z_{m} f\ X_i f
\\
& - 2 \sum_{i,j=1}^d  \sum_{m=1}^\di \gamma_{ij}^{m} Z_{m}X_j f\
X_i f.
\end{align*}
Completing the squares in the latter expression we find
\begin{align*}
& \sum_{i,j=1}^d f_{,ij}^2  - 2 \sum_{i,j=1}^d X_i f [X_i,X_j]X_j f
\\
& = \sum_{\ee=1}^d \left( f_{,\ell\ell} -\sum_{i=1}^d
\omega_{i\ee}^\ee X_i f \right)^2 +  2 \sum_{1 \le \ee<j \le d}
\left( f_{,j\ell} -\sum_{i=1}^d \frac{\omega_{ij}^\ee
+\omega_{i\ee}^j}{2} X_i f \right)^2
\notag\\
& - \sum_{\ee=1}^d \left(\sum_{i=1}^d \omega_{i\ee}^\ee X_i f
\right)^2 -  2 \sum_{1 \le \ee<j \le d} \left(\sum_{i=1}^d
\frac{\omega_{ij}^\ee +\omega_{i\ee}^j}{2} X_i f \right)^2
\notag\\
&  -  \sum_{i,j,k,\ee=1}^d  \omega_{ij}^\ee \omega^k_{\ee j} X_k f
X_i f -  \sum_{i,j=1}^d \sum_{\ee=1}^d \sum_{m=1}^\di
\omega_{ij}^\ee \gamma^{m}_{\ee j} Z_{m} f\ X_i f
\notag\\
& - 2 \sum_{i,j=1}^d \sum_{m=1}^\di \gamma_{ij}^{m} X_j Z_{m}f\
X_i f - 2 \sum_{i,j=1}^d  \sum_{m=1}^\di \gamma_{ij}^{m}
[Z_{m},X_j] f\ X_i f. \notag
\end{align*}
Next, we have from \eqref{X0}
\begin{align*}
\sum_{i=1}^d X_i f [X_0,X_i]f  & =  \sum_{i,j,k,\ee=1}^d
\omega^k_{jk} \omega^\ee_{ij} X_\ee f X_i f
\\
& + \sum_{i=1}^d \sum_{j,k=1}^d \sum_{m=1}^\di \omega_{jk}^k
\gamma_{ij}^{m} Z_{m} f  X_i f + \sum_{i=1}^d \sum_{j,k=1}^d (X_i
\omega^k_{jk}) X_if X_j f. 
\notag
\end{align*} 
Using \eqref{bra1} we find
\begin{align*}
\sum_{i,j=1}^d X_i f [[X_i,X_j],X_j]f  & = \sum_{i,j=1}^d
\sum_{\ee,k=1}^d \omega^\ee_{ij} \omega^k_{\ee j} X_i f X_kf +
\sum_{i,j,\ee=1}^d \sum_{m=1}^\di \omega^\ee_{ij} \gamma^{m}_{\ee
j} Z_{m}f X_i f
\\
& + \sum_{i,j=1}^d \sum_{m=1}^\di \gamma^{m}_{ij} [Z_{m},X_j]f
X_i f - \sum_{i,j=1}^d \sum_{m=1}^\di (X_j \gamma^{m}_{ij})
Z_{m} f
X_i f \notag\\
& - \sum_{i,j=1}^d \sum_{\ee=1}^d (X_j\omega^\ee_{ij}) X_if X_\ee f.
\notag
\end{align*}
Substituting the latter three equations in \eqref{bochner1} we thus obtain 
\begin{align*}
\Gamma_2(f) & = \sum_{\ee=1}^d \left( f_{,\ell\ell} -\sum_{i=1}^d
\omega_{i\ee}^\ee X_i f \right)^2 +  2 \sum_{1 \le \ee<j \le d}
\left( f_{,j\ell} -\sum_{i=1}^d \frac{\omega_{ij}^\ee
+\omega_{i\ee}^j}{2} X_i f \right)^2
\\
& - 2 \sum_{i,j=1}^d  \sum_{m=1}^\di \gamma_{ij}^{m} X_j Z_{m}f\
X_i f + \mathfrak{M}
\end{align*}
where we have let
\begin{align}\label{Monster}
\mathfrak{M} & = - \sum_{\ee=1}^d \left(\sum_{i=1}^d
\omega_{i\ee}^\ee X_i f \right)^2 -  2 \sum_{1 \le \ee<j \le d}
\left(\sum_{i=1}^d \frac{\omega_{ij}^\ee +\omega_{i\ee}^j}{2} X_i f
\right)^2
\\
&  + \sum_{i,j,k,\ee=1}^d \omega^k_{jk} \omega^\ee_{ij} X_\ee f X_i
f -  \sum_{i,j,k,\ee=1}^d  \omega_{ij}^k \omega^\ee_{k j} X_\ee f
X_i f -  \sum_{i,j=1}^d \sum_{\ee=1}^d \sum_{m=1}^\di
\omega_{ij}^\ee \gamma^{m}_{\ee j} Z_{m} f\ X_i f
\notag\\
&  - \sum_{i,j=1}^d  \sum_{m=1}^\di \gamma_{ij}^{m} [Z_{m},X_j]
f\ X_i f  + \sum_{i=1}^d \sum_{j,k=1}^d \sum_{m=1}^\di
\omega_{jk}^k
\gamma_{ij}^{m} Z_{m} f  X_i f \notag\\
& + \sum_{i=1}^d \sum_{j,k=1}^d (X_i \omega^k_{jk}) X_if X_j f +
\sum_{i,j=1}^d \sum_{\ee,k=1}^d \omega^\ee_{ij} \omega^k_{\ee j} X_i
f X_kf + \sum_{i,j,\ee=1}^d \sum_{m=1}^\di \omega^\ee_{ij}
\gamma^{m}_{\ee j} Z_{m}f X_i f
\notag\\
&  - \sum_{i,j=1}^d \sum_{m=1}^\di (X_j \gamma^{m}_{ij}) Z_{m} f
X_i f  - \sum_{i,j=1}^d \sum_{\ee=1}^d (X_j\omega^\ee_{ij}) X_if
X_\ee f
\notag\\
& + \frac{1}{2} \sum_{1\le i<j\le d}\left(\sum_{\ee=1}^d
\omega^\ee_{ij} X_\ee f\right)^2 + \frac{1}{2} \sum_{1\le i<j\le
d}\left(\sum_{m=1}^\di
\gamma^{m}_{ij} Z_{m} f\right)^2 \notag\\
& + \sum_{1\le i<j\le d}\sum_{\ee=1}^d \sum_{m=1}^\di
\omega^\ee_{ij} \gamma^{m}_{ij} Z_{m}f X_\ee f.\notag
\end{align}
Simplifying the latter expression we obtain
\begin{align}\label{Monster2}
\mathfrak{M} & = - \sum_{k,\ee=1}^d \sum_{i=1}^d \omega_{k i}^i
\omega_{\ee i}^i X_k f X_\ee f \notag\\
& -  \frac{1}{2} \sum_{k,l=1}^d \sum_{1 \le i<j \le d} (\omega_{\ee
j}^i +\omega_{\ee i}^j)(\omega^i_{kj} + \omega^j_{ki}) X_k f X_\ee f
\\
& + \sum_{k,\ee=1}^d \sum_{j=1}^d (X_\ee\omega^j_{kj} -
X_j\omega^k_{\ee j}) X_kf X_\ee f    +  \sum_{i,j,k,\ee=1}^d
\omega_{ji}^i \omega^\ee_{k j}  X_k f X_\ee f \notag\\
& + \frac{1}{2} \sum_{k,\ee=1}^d \sum_{1\le i<j\le d}
\omega^\ee_{ij} \omega^k_{ij} X_k f X_\ee f + \sum_{k,j=1}^d
\sum_{m=1}^\di \gamma_{kj}^{m} [X_j,Z_{m}] f\ X_k f
\notag\\
&   + \sum_{i=1}^d \sum_{j,k=1}^d \sum_{m=1}^\di \omega_{jk}^k
\gamma_{ij}^{m} Z_{m} f  X_i f + \sum_{1\le i<j\le
d}\sum_{\ee=1}^d \sum_{m=1}^\di \omega^\ee_{ij} \gamma^{m}_{ij}
Z_{m}f X_\ee f
\notag\\
&  - \sum_{i,j=1}^d \sum_{m=1}^\di (X_j \gamma^{m}_{ij}) Z_{m} f
X_i f  + \frac{1}{2} \sum_{1\le i<j\le d}\left(\sum_{m=1}^\di
\gamma^{m}_{ij} Z_{m} f\right)^2. \notag
\end{align}
At this point, using \eqref{bra2}, it is easy to recognize in view of \eqref{Ro} in Lemma \ref{L:RS} that
\[
\mathfrak M = \mathcal R(f).
\]
 To complete the proof of \eqref{bochner} it now suffices to:
 \begin{itemize}
 \item[1)] use the equation \eqref{S} in Lemma \ref{L:RS};
 \item[2)] recognize by a computation that, in a local horizontal frame, the square of the Hilbert-Schmidt norm of the horizontal Hessian $\nabla_\mathcal{H}^2 f$ is given by
\begin{align}\label{hessian}
\| \nabla_\mathcal{H}^2 f \|^2 & = \sum_{\ee=1}^d \left( f_{,\ell\ell} -\sum_{i=1}^d
\omega_{i\ee}^\ee X_i f \right)^2 +  2 \sum_{1 \le \ee<j \le d}
\left( f_{,j\ell} -\sum_{i=1}^d \frac{\omega_{ij}^\ee
+\omega_{i\ee}^j}{2} X_i f \right)^2.
\end{align}
\end{itemize}

\end{proof}

\subsubsection{\textbf{The generalized curvature-dimension inequality}}\label{SSS:generalizedCD}

In this final part of Section \ref{SS:subR} we establish the main result of the whole section, namely Theorem \ref{T:cd} below. This result shows that, under suitable geometric bounds, see \eqref{riccibounds} below, which are natural in sub-Riemannian geometry (by this we mean that they are satisfied by large classes of significant examples), the sub-Riemannian manifold $\bM$, with its canonical sub-Laplacian $L$ and the Lie subalgebra of transverse symmetries $\mathcal V$, satisfies the curvature-dimension inequality in \eqref{cdi}.

We need to introduce the last intrinsic first-order differential quadratic form, which in a local horizontal frame $\{X_1,...,X_d\}$, we defined  as
\[
\mathcal{T}(f)=\sum_{i=1}^d \| \text{T}(X_i, \nabla_{\mathcal{H}} f) \|^2.
\]
A computation shows that in a  local adapted frame we obtain:
\begin{equation}\label{To}
\mathcal{T} (f)= \sum_{j=1}^d \sum_{m=1}^\di \left(\sum_{i=1}^d
\gamma_{ij}^{m} X_i f \right)^2.
\end{equation}

It is worth remarking that, as we have already observed, in the Riemannian case $\nabla$ is the Levi-Civita connection. As a consequence, in such case $\mathcal T(f) = 0$ for every $f\in C^\infty(\bM)$. 

\begin{theorem}\label{T:cd}
Suppose that there exist constants $\rho_1
\in \mathbb{R}$, $\rho_2
>0$ and $\kappa \ge 0$ such that for every $f\in C^\infty(\bM)$:
\begin{equation}\label{riccibounds}
\begin{cases}
\mathcal{R}(f) \ge \rho_1 \Gamma (f) +\rho_2 \Gamma^Z (f),
\\
\mathcal{T}(f) \le \kappa \Gamma (f).
\end{cases}
\end{equation}
Then, the sub-Riemannian manifold $\bM$ satisfies the generalized curvature-dimension inequality \emph{CD}$(\rho_1,\rho_2,\kappa,d)$ in \eqref{cdi} with respect to the sub-Laplacian $L$ and the differential form $\Gamma^Z$.
\end{theorem}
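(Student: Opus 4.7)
The plan is to combine the two Bochner identities of Theorem \ref{T:bochner} with two Cauchy--Schwarz estimates: a trace inequality to produce the $\frac{1}{d}(Lf)^2$ term, and a weighted Cauchy--Schwarz on the cross-term $\mathcal{S}(f)$ to absorb the horizontal/vertical coupling into a multiple of $\Gamma_2^Z(f)$.

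Concretely, I would start from the horizontal Bochner formula
\[
\Gamma_2(f) = \|\nabla_\mathcal{H}^2 f\|^2 + \mathcal{R}(f) + \mathcal{S}(f),
\]
and first handle $\|\nabla_\mathcal{H}^2 f\|^2$. Using the explicit expression \eqref{hessian} in an adapted frame, the diagonal entries of the symmetrized horizontal Hessian are $f_{,\ell\ell} - \sum_i \omega_{i\ell}^\ell X_i f$, so summing in $\ell$ and recalling \eqref{L}, \eqref{X0}, the trace of $\nabla_\mathcal{H}^2 f$ equals $Lf$. Since $\nabla_\mathcal{H}^2 f$ is a symmetric $d \times d$ matrix, Cauchy--Schwarz applied to its trace yields
\[
\|\nabla_\mathcal{H}^2 f\|^2 \;\ge\; \frac{1}{d}(Lf)^2.
\]

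Next, I would estimate the cross term $\mathcal{S}(f)$. By definition
\[
\mathcal{S}(f) = -2 \sum_{i=1}^d \langle \nabla_{X_i} \nabla_\mathcal{V} f, \, \text{T}(X_i, \nabla_\mathcal{H} f) \rangle,
\]
so by the arithmetic-geometric inequality with weight $\nu > 0$,
\[
|\mathcal{S}(f)| \;\le\; \nu \sum_{i=1}^d \|\nabla_{X_i} \nabla_\mathcal{V} f\|^2 + \frac{1}{\nu} \sum_{i=1}^d \|\text{T}(X_i, \nabla_\mathcal{H} f)\|^2.
\]
The first sum on the right equals $\|\nabla_\mathcal{H} \nabla_\mathcal{V} f\|^2$, which by the vertical Bochner formula \eqref{vbf} is exactly $\Gamma_2^Z(f)$. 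The second sum is $\mathcal{T}(f)$, which by hypothesis is bounded above by $\kappa \Gamma(f)$. Therefore
\[
\mathcal{S}(f) \;\ge\; -\nu\, \Gamma_2^Z(f) - \frac{\kappa}{\nu} \Gamma(f).
\]

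Finally, I combine everything: using the Ricci-type bound $\mathcal{R}(f) \ge \rho_1 \Gamma(f) + \rho_2 \Gamma^Z(f)$ from the hypothesis,
\[
\Gamma_2(f) \;\ge\; \frac{1}{d}(Lf)^2 + \rho_1 \Gamma(f) + \rho_2 \Gamma^Z(f) - \nu\, \Gamma_2^Z(f) - \frac{\kappa}{\nu}\Gamma(f),
\]
which upon moving the $\nu\, \Gamma_2^Z(f)$ term to the left-hand side is precisely CD$(\rho_1,\rho_2,\kappa,d)$. The only substantive step is recognizing that the cross-term $\mathcal{S}(f)$ is exactly the right shape to be split by a weighted Cauchy--Schwarz into a multiple of $\Gamma_2^Z(f)$ plus a multiple of $\mathcal{T}(f)$; this is what forces the $-\kappa/\nu$ correction in the coefficient of $\Gamma(f)$ and explains why the parameter $\nu$ appears in Definition \ref{D:cdi} in the first place. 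No hard analysis is needed beyond these elementary inequalities once Theorem \ref{T:bochner} is in hand.
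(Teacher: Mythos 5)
Your proposal is correct and follows essentially the same route as the paper's proof: the trace (Cauchy--Schwarz) inequality $\|\nabla_{\mathcal H}^2 f\|^2 \ge \frac{1}{d}(Lf)^2$, the horizontal and vertical Bochner identities, and the weighted Cauchy--Schwarz bound $|\mathcal S(f)| \le \nu\,\Gamma_2^Z(f) + \frac{1}{\nu}\mathcal T(f)$ combined with the hypotheses \eqref{riccibounds}. The only cosmetic difference is that you split $\mathcal S(f)$ termwise from its intrinsic definition \eqref{S} rather than from the frame expression \eqref{S2}, which yields the identical estimate.
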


\begin{proof}
We need to show that for every $f\in C^\infty(\bM)$ and any $\nu >0$ one has: 
\[
\Gamma_{2}(f)+\nu \Gamma^Z_{2}(f) \ge \frac{1}{d} (Lf)^2 + \left( \rho_1 -\frac{\kappa}{\nu}\right)  \Gamma (f) + \rho_2 \Gamma^Z (f).
\]
Let $\{X_1,...,X_d,Z_1,...,Z_\di\}$ be a local adapted frame. 
From \eqref{L} and \eqref{X0} and Schwarz inequality we find
\[
Lf = \sum_{\ee=1}^d \left(X_\ee^2 f - \sum_{i=1}^d
\omega^\ee_{i\ee} X_i f\right) \leq \sqrt d \left(\sum_{\ee=1}^d
\left(X_\ee^2 f - \sum_{i=1}^d \omega^\ee_{i\ee} X_i
f\right)^2\right)^{1/2}
\]
This inequality and \eqref{hessian} readily give
\[
\frac{1}{d} (Lf)^2 \le ||\nabla^2_{\mathcal H} f||^2.
\]
From this estimate and from \eqref{bochner} in Theorem \ref{T:bochner} we obtain
\begin{align}\label{1d}
\frac{1}{d} (Lf)^2 & \le \Gamma_2(f) - \mathcal R(f) + \mathcal S(f)
\\
& \le \Gamma_2(f) - \rho_1 \Gamma(f) - \rho_2 \Gamma^Z(f) + \mathcal S(f),
\notag
\end{align}
where in the last inequality we have used the lower bound on $\mathcal R(f)$ in the hypothesis \eqref{riccibounds}. Using \eqref{S2} and the Cauchy-Schwarz inequality we now find for every $\nu>0$
\begin{align*}
|\mathcal{S}(f)| & \le 2 \left(\sum_{j=1}^d \sum_{m=1}^\di \left(\sum_{i=1}^d \gamma_{ij}^{m} X_i f\right)^2\right)^{1/2} \left(\sum_{j=1}^d \sum_{m=1}^\di \left(X_j Z_{m} f\right)^2\right)^{1/2}
\\
& = 2 \mathcal T(f)^{1/2} \Gamma_2^Z(f)^{1/2} \le \frac{1}{\nu} \mathcal T(f) + \nu \Gamma^Z_2(f),
\end{align*}
where in the second to the last equality we have used \eqref{vbf} and \eqref{To}.
Substituting the latter inequality in \eqref{1d} we find
\[
\frac{1}{d} (Lf)^2 \leq \Gamma_2(f) + \nu \Gamma^Z_{2}(f) +
\frac{1}{\nu} \mathcal T(f) - \rho_1 \Gamma(f) - \rho_2 \Gamma^Z(f).
\]
At this point it suffices to use the bound from above on $\mathcal T(f)$ in \eqref{riccibounds} to reach the desired conclusion.

\end{proof}

The next result shows that, remarkably, the generalized curvature-dimension inequality \eqref{cdi} in Definition \ref{D:cdi} is equivalent to the geometric bounds \eqref{riccibounds} above.

\begin{theorem}\label{P:tres_beau}
Suppose that there exist constants $\rho_1
\in \mathbb{R}$, $\rho_2
>0$ and $\kappa \ge 0$ such that $\M$ satisfy the generalized curvature-dimension inequality \emph{CD}$(\rho_1,\rho_2,\kappa,d)$. Then, $\M$ satisfies the geometric bounds \eqref{riccibounds}. As a consequence of this fact and of Theorem \ref{T:cd} we conclude that
\[
\emph{CD}(\rho_1,\rho_2,\kappa,d) \Longleftrightarrow \begin{cases}
\mathcal{R}(f) \ge \rho_1 \Gamma (f) +\rho_2 \Gamma^Z (f),
\\
\mathcal{T}(f) \le \kappa \Gamma (f).
\end{cases}
\]
\end{theorem}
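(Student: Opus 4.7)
The plan is to prove the converse (the forward direction is Theorem \ref{T:cd}) by testing the inequality CD$(\rho_1,\rho_2,\kappa,d)$ against a two-parameter family of carefully chosen functions at an arbitrary point $x_0 \in \mathbb M$. The key observation is that, by Lemma \ref{L:RS} and formula \eqref{To}, the quantities $\mathcal R(f)(x_0)$, $\mathcal T(f)(x_0)$, $\Gamma(f)(x_0)$, $\Gamma^Z(f)(x_0)$ are pointwise functions only of the values $X_i f(x_0), Z_m f(x_0)$ in an adapted local frame, whereas $\mathcal S(f)(x_0)$ and $\|\nabla_{\mathcal H}\nabla_{\mathcal V} f\|^2(x_0)$ also involve the mixed derivatives $X_j Z_m f(x_0)$. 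In local coordinates in which $X_1,\ldots,X_d,Z_1,\ldots,Z_{\mathfrak h}$ form a basis dual to the coordinate 1-forms at $x_0$, the values $X_i f(x_0), Z_m f(x_0), X_i X_j f(x_0), X_j Z_m f(x_0)$ can be prescribed independently (up to obvious symmetries).

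First, substitute the Bochner identities \eqref{bochner} and \eqref{vbf} into CD$(\rho_1,\rho_2,\kappa,d)$ to obtain the equivalent pointwise form
\[
\|\nabla_{\mathcal H}^2 f\|^2 + \mathcal R(f) + \mathcal S(f) + \nu\,\|\nabla_{\mathcal H}\nabla_{\mathcal V} f\|^2 \;\ge\; \tfrac{1}{d}(Lf)^2 + \bigl(\rho_1 - \tfrac{\kappa}{\nu}\bigr)\Gamma(f) + \rho_2 \Gamma^Z(f),
\]
holding at every point, for every $\nu>0$ and $f\in C^\infty(\mathbb M)$. For the first geometric bound, fix arbitrary $a_i, c_m \in \mathbb R$ and pick $f$ with $X_i f(x_0)=a_i$, $Z_m f(x_0)=c_m$, $X_j Z_m f(x_0)=0$, and with the symmetric horizontal Hessian entries chosen so that $f_{,\ell\ell}(x_0) = \sum_i \omega_{i\ell}^\ell a_i$ and $f_{,j\ell}(x_0) = \tfrac12 \sum_i(\omega_{ij}^\ell+\omega_{i\ell}^j)a_i$ for $j\neq\ell$. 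These choices force simultaneously $\|\nabla_{\mathcal H}^2 f\|^2(x_0)=\tfrac{1}{d}(Lf)^2(x_0)=0$ and $\mathcal S(f)(x_0)=0=\|\nabla_{\mathcal H}\nabla_{\mathcal V} f\|^2(x_0)$. The displayed inequality then reduces to $\mathcal R(f)(x_0)\ge(\rho_1-\kappa/\nu)\Gamma(f)(x_0)+\rho_2\Gamma^Z(f)(x_0)$; letting $\nu\to\infty$ gives $\mathcal R(f)(x_0)\ge\rho_1\Gamma(f)(x_0)+\rho_2\Gamma^Z(f)(x_0)$, which is the first bound in \eqref{riccibounds}.

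For the second (and more delicate) bound, at the same $x_0$ pick $a_i\in\mathbb R$, set $c_m=0$, introduce a scalar $t\in\mathbb R$, and choose a test function with $X_i f(x_0)=a_i$, $Z_m f(x_0)=0$, the horizontal Hessian again set so that $\|\nabla_{\mathcal H}^2 f\|^2(x_0)=\tfrac{1}{d}(Lf)^2(x_0)=0$, and with the prescribed mixed derivatives
\[
X_j Z_m f(x_0) \;=\; -\,t\sum_{i=1}^d \gamma_{ij}^{m} a_i.
\]
A direct computation from \eqref{S2} together with \eqref{To} yields $\mathcal S(f)(x_0)=2t\,\mathcal T(f)(x_0)$ and $\|\nabla_{\mathcal H}\nabla_{\mathcal V} f\|^2(x_0)=t^2\,\mathcal T(f)(x_0)$. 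Plugging into the rewritten CD inequality gives, for all $t\in\mathbb R$ and $\nu>0$,
\[
\mathcal R(f)(x_0) + 2t\,\mathcal T(f)(x_0) + \nu t^2\,\mathcal T(f)(x_0) \;\ge\; \bigl(\rho_1-\tfrac{\kappa}{\nu}\bigr)\Gamma(f)(x_0).
\]
Minimizing the quadratic in $t$ at $t=-1/\nu$ (assuming $\mathcal T(f)(x_0)>0$; the case $=0$ is trivial) produces
\[
\mathcal R(f)(x_0) - \tfrac{1}{\nu}\mathcal T(f)(x_0) \;\ge\; \bigl(\rho_1-\tfrac{\kappa}{\nu}\bigr)\Gamma(f)(x_0),
\]
i.e.\ $\mathcal R(f)(x_0)-\rho_1\Gamma(f)(x_0) \ge \tfrac{1}{\nu}\bigl(\mathcal T(f)(x_0)-\kappa\Gamma(f)(x_0)\bigr)$. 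The left-hand side is independent of $\nu$; letting $\nu\to 0^+$ forces $\mathcal T(f)(x_0)-\kappa\Gamma(f)(x_0)\le 0$, which is the second bound in \eqref{riccibounds}. Since $x_0$ and $(a_i,c_m)$ were arbitrary, combining the two steps with Theorem \ref{T:cd} gives the asserted equivalence.

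The only point demanding care is the test-function construction, but the existence of smooth $f$ with arbitrary prescribed $1$-jet and second-order mixed data is routine in a chart adapted to the frame; the elegant ingredient is the choice $X_j Z_m f(x_0) = -t\sum_i \gamma_{ij}^m a_i$, which couples the linear $\mathcal S$ term to $\mathcal T$ in exactly the way that the quadratic $\|\nabla_{\mathcal H}\nabla_{\mathcal V} f\|^2$ term does, so that a single one-parameter minimization extracts the sharp bound $\mathcal T\le\kappa\Gamma$.
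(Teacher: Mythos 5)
Your proposal is correct and follows essentially the same route as the paper: both proofs test the CD inequality against a function with prescribed first derivatives, vanishing horizontal Hessian, and mixed derivatives $X_jZ_mf(x_0)$ chosen proportional to $\sum_i\gamma_{ij}^m u_i$ so that, via the Bochner identities, $\mathcal S$ and $\nu\Gamma_2^Z$ collapse to multiples of $\mathcal T$, after which the limits $\nu\to\infty$ and $\nu\to 0^+$ yield the two bounds in \eqref{riccibounds}. The only (harmless) differences are that you split the argument into two test-function families and recover the paper's specific choice $t=-1/\nu$ by minimizing a quadratic in $t$ rather than writing it down directly.
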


\begin{proof}
 Let us fix $x_0 \in \mathbb{M}$, $u \in \mathcal{H}_{x_0}(\mathbb{M})$ and $v \in \mathcal{V}_{x_0}(\mathbb{M})$.  Let also $\nu >0$. Let $\{X_1,...,X_d,Z_1,...,Z_\di\}$ be a local adapted frame around $x_0$.
We claim that we can find a function $f\in C^\infty(\M)$ such that:
\begin{itemize}
\item[(i)] $\nabla_\mathcal{H} f (x_0)=u$,
\item[(ii)] $\nabla_\mathcal{V} f (x_0)=v$,
\item[(iii)] $\nabla^2_\mathcal{H} f (x_0)=0$,
\item[(iv)] $X_jZ_m f (x_0)=\frac{1}{\nu} \sum_{i=1}^d \gamma_{ij}^m(x_0) u_i $.
\end{itemize}
To see this, we denote as before by $\nabla^R$ the Levi-Civita connection of the Riemannian metric on $\bM$. Since $\{X_1,...,X_d,Z_1,...,Z_\di\}$ is a local frame, we can find a local chart $(U,\phi)$ at $x_0$, such that $\phi(0)=x_0$ and in $U$ we have $X_j=\frac{\partial }{\partial x_j}$, $j=1,...,d$, $Z_m=\frac{\partial }{\partial z_m}$, $m=1,...,\di$. We first observe that there exists a function $f_1\in C^\infty(\M)$ such that
\[
\begin{cases}
\nabla^R f_1 (x_0)=u+v,
\\
\nabla^R \nabla^Rf_1 (x_0)=0.
\end{cases}
\]
For the explicit construction of such function $f_1$, see for instance the proof of Theorem 3.1 and Lemma 3.2 in \cite{SVR}. 
 We can also find a function $f_2\in C^\infty(\M)$ such that 
\[
\begin{cases}
\nabla^R f_2 (x_0)=0,
\\
X_jZ_m f_2 (x_0) = \frac{1}{\nu} \sum_{i=1}^d \gamma_{ij}^m(x_0) u_i-X_jZ_mf_1(x_0).
\end{cases}
\]
Indeed, it will suffice to take as $f_2$ the function that in the local coordinates 
\[
(x,z) =(x_1,...,x_d,~z_1,...,z_\di)\]
is expressed in the form 
\[
f_2(x,z) =\sum_{j=1}^d \sum_{m=1}^\di \left(  \frac{1}{\nu} \sum_{i=1}^d \gamma_{ij}^m(x_0) u_i-X_jZ_mf_1(x_0) \right)  x_j z_m.
\]
It is readily verified that such $f_2$ satisfies the two above conditions.
With this being done, we now take $f=f_1+f_2$. It is clear that such $f$ satisfies (i)-(iv) above.
Now, using CD$(\rho_1,\rho_2,\kappa,d)$ on the function $f$, in combination with (i)-(iii) above, we find at the point $x_0$,
\[
\Gamma_2(f)(x_0)+\nu \Gamma^Z_2(f)(x_0) \ge \left(\rho_1 -\frac{\kappa}{\nu}  \right) \|u \|^2 + \rho_2 \|v \|^2.
\]
But, from \eqref{bochner} in Theorem \ref{T:bochner} and (iii) we have
\begin{align*}\label{bochnerbis}
\Gamma_{2}(f)(x_0)= \mathcal{R}(f)(x_0)+\mathcal{S}(f)(x_0).
\end{align*}
By \eqref{S2} and (iii) we find
\begin{align*}
\mathcal{S}(f)(x_0) & = - 2 \sum_{i,j=1}^d \sum_{m=1}^\di \gamma_{ij}^{m}(x_0) X_j Z_{m} f(x_0) X_i f(x_0) = - \frac{2}{\nu} \sum_{j =1}^d \sum_{m=1}^\di \sum_{i,\ell=1}^d \gamma_{ij}^{m}(x_0)\gamma^m_{\ell j}(x_0) u_\ell u_i
\\
& = - \frac{2}{\nu} \sum_{j =1}^d \sum_{m=1}^\di  \left(\sum_{i=1}^d \gamma_{ij}^m(x_0) u_i\right)^2 = - \frac{2}{\nu} \mathcal T(f)(x_0),
\end{align*}
where in the last equality we have used \eqref{To}.
On the other hand, \eqref{vbf} and (iii) give
\begin{equation*}\label{vbfbis}
\Gamma^Z_{2}(f) (x_0)= \| \nabla_\mathcal{H} \nabla_\mathcal{V} f(x_0) \|^2 = \frac{1}{\nu^2} \sum_{j=1}^d \sum_{m=1}^\di \left(\sum_{i=1}^d \gamma_{ij}^m(x_0) u_i\right)^2 = \frac{1}{\nu^2} \mathcal T(f)(x_0), 
\end{equation*}
where in the last equality we have used \eqref{To} again. In conclusion,
\[
\Gamma_2(f)(x_0)+\nu \Gamma^Z_2(f)(x_0)=\mathcal{R}(f)(x_0)+\mathcal{S}(f)(x_0)+\nu \| \nabla_\mathcal{H} \nabla_\mathcal{V} f(x_0) \|^2=\mathcal{R}(f)(x_0) -\frac{1}{\nu} \mathcal{T}(f)(x_0).
\]
We thus infer from CD$(\rho_1,\rho_2,\kappa,d)$ 
\[
\mathcal{R}(f)(x_0) -\frac{1}{\nu} \mathcal{T}(f)(x_0) \ge  \left(\rho_1 -\frac{\kappa}{\nu}  \right) \|u \|^2 + \rho_2 \|v \|^2
\]
We finally note that \eqref{R2} in Lemma \ref{L:RS} gives 
\begin{align*}
\mathcal R(f) (x_0)& = \sum_{k,\ee=1}^d \bigg\{\bigg(\sum_{j=1}^d
\sum_{m=1}^\di \gamma_{kj}^{m} \delta_{jm}^\ee\bigg) +
\sum_{j=1}^d
(X_\ee\omega^j_{kj} - X_j\omega^k_{\ee j}) \\
& + \sum_{i,j=1}^d \omega_{ji}^i \omega^\ee_{k j} - \sum_{i=1}^d
\omega_{k i}^i \omega_{\ee i}^i + \frac{1}{2} \sum_{1\le i<j\le d}
\bigg(\omega^\ee_{ij} \omega^k_{ij} - (\omega_{\ee j}^i +\omega_{\ee
i}^j)(\omega^i_{kj} + \omega^j_{ki})\bigg)\bigg\}u_k u_l
\notag\\
&   +  \sum_{k=1}^d \sum_{m=1}^\di \bigg(\sum_{\ell,j=1}^d
\omega_{j\ee}^\ee \gamma_{kj}^{m} + \sum_{1\le \ee<j\le d}
\omega^k_{\ee j} \gamma^{m}_{\ee j}  - \sum_{j=1}^d X_j
\gamma^{m}_{kj}\bigg) v_m u_k  \notag\\
& + \frac{1}{2} \sum_{1\le \ee<j\le d}\bigg(\sum_{m=1}^\di
\gamma^{m}_{\ee j} v_m\bigg)^2, \notag \\
&:=\mathcal R(u,v),
\end{align*}
and that \eqref{To} gives
\begin{align*}
\mathcal{T} (f)(x_0)&= \sum_{j=1}^d \sum_{m=1}^\di \left(\sum_{i=1}^d
\gamma_{ij}^{m}u_i\right)^2\\
&: = \mathcal T(u).
\end{align*}
In conclusion, we have proved that for every  $u \in \mathcal{H}_{x_0}(\mathbb{M})$ and $v \in \mathcal{V}_{x_0}(\mathbb{M})$ and  $\nu >0$,
\[
\mathcal R(u,v) -\frac{1}{\nu}\mathcal T(u) \ge  \left(\rho_1 -\frac{\kappa}{\nu}  \right) \|u \|^2 + \rho_2 \|v \|^2.
\]
By first letting $\nu \to \infty$, we obtain 
\[
\mathcal R(u,v) \ge  \rho_1  \|u \|^2 + \rho_2 \|v \|^2.
\]
If instead we let $ \nu \to 0$, we find 
$\mathcal T(u) \le k\|u \|^2$. This completes the proof.
\end{proof}

\subsection{Carnot groups of step two}\label{SS:carnot}

Carnot groups of step 2 provide  a natural reservoir of sub-Riemannian manifolds with transverse symmetries. 
Let $\mathfrak{g}$ be a graded nilpotent Lie
algebra of step two.  This means that $\mathfrak{g}$ admits a splitting $\mathfrak g = V_1 \oplus V_2$, where $[V_1,V_1] = V_2$, and $[V_1,V_2]=\{0\}$. We endow $\mathfrak{g}$ with an inner product $\langle\cdot, \cdot \rangle$ with respect to which the decomposition $V_1 \oplus V_2$ is orthogonal.
  We denote by $e_1,...,e_d$  an orthonormal basis of $V_1$ and by $\varepsilon_1,...,\varepsilon_\di$ an orthonormal basis of $V_2$. Let $\bG$ be the connected and simply connected graded nilpotent Lie group associated with $\mathfrak{g}$. 
  Left-invariant vector fields in $V_2$ are seen to be transverse sub-Riemannian Killing vector fields of the horizontal distribution given by $V_1$.  The geometric assumptions  of the previous section are thus satisfied.

Let $L_x(y) = x y$ be the operator of
left-translation on $\bG$, and indicate with $dL_x$ its
differential. We
indicate with $X_j(x) = dL_x(e_j)$, $j=1,\cdots,d$ and $Z_m(x)=dL_x(\varepsilon_m)$, $m=1,\cdots,\di,$ the
corresponding system of left-invariant vector fields on $\bG$. Using the Baker-Campbell-Hausdorff formula, we see that in exponential coordinates
\[
X_i=\frac{\partial}{\partial x_i}-\frac{1}{2}  \sum_{m=1}^\di \sum_{\ell=1}^d \gamma^m_{i\ell} x_\ell Z_m
\]
where $\gamma_{i\ell}^m=\langle [e_i,e_\ell],\varepsilon_m \rangle$
are the group constants. From the latter equation we see that
\begin{equation}\label{commgroup}
[X_i,X_j] =  \sum_{m=1}^\di \gamma^m_{ij} Z_m.
\end{equation}
We note that $X_1,...,X_d,Z_1,...,Z_\di$ is a global adapted frame on $\bG$. 

A canonical sub-Laplacian on $\bG$ is given by 
\[
L = \sum_{i=1}^d X_i^2.  
\]
If we endow $\bG$ with a bi-invariant Haar measure $\mu$, then $X_i^* = - X_i$, see e.g. \cite{Fo}. Therefore, $L$ is symmetric with respect to $\mu$.

In the present setting we have
\[
\Gamma(f) = \sum_{i=1}^d (X_i f)^2,\ \ \ \ \ \ \ \ \ \Gamma^Z(f) = \sum_{m=1}^\di (Z_m f)^2.
\]
If we use Lemma \ref{L:RS}, then we easily see that
\[
\mathcal{R}(f) = \frac{1}{4} \sum_{i,j=1}^d \left(\sum_{m=1}^\di \gamma^m_{ij} Z_{m}f\right)^2.
\]
From this expression it is clear that
\[
\mathcal R(f) \ge \rho_2 \Gamma^Z(f),
\]
with
\begin{equation}\label{rho2G}
 \rho_2=\inf_{\| z \|=1} \frac{1}{4} \sum_{i,j=1}^d \left(\sum_{m=1}^\di \gamma^m_{ij} z_m \right)^2.
\end{equation}
Furthermore, from  \eqref{To} one has
\[
\mathcal{T} (f)= \sum_{j=1}^d \sum_{m=1}^\di \left(\sum_{i=1}^d
\gamma_{ij}^{m} X_i f \right)^2,
\]
and therefore
\[
\mathcal T(f) \le \kappa \Gamma(f),
\]
with
\begin{equation}\label{kappaG}
\kappa=\sup_{\|x\|=1}  \sum_{j=1}^d \sum_{m=1}^\di \left(\sum_{i=1}^d
\gamma_{ij}^{m} x_i \right)^2.
\end{equation}
From these considerations in view of Theorem \ref{T:cd} we immediately obtain the following result.

\begin{proposition}\label{P:carnotCD}
Let $\bG$ be a Carnot group of step two, with $d$ being the dimension of the horizontal layer of its Lie algebra. Then, $\bG$ satisfies the generalized curvature-dimension inequality \emph{CD}$(0,\rho_2, \kappa, d)$ (with respect to any sub-Laplacian $L$ on $\bG$),  
with $\rho_2$  and $\kappa$ respectively given by \eqref{rho2G} and \eqref{kappaG}. 
\end{proposition}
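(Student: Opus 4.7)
The plan is to invoke Theorem \ref{T:cd}, whose hypotheses are essentially verified by the computations already set up in the preceding discussion. I would organize the argument in three short steps.

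First, I would verify that the setup fits the framework of Section \ref{SS:subR}. The horizontal distribution is generated by the left-invariant vector fields $X_1,\dots,X_d$ associated with a basis of $V_1$, and $\mathcal{V}$ is taken to be the Lie subalgebra of left-invariant vector fields lying in $V_2$. Each such $Z\in\mathcal{V}$ is a sub-Riemannian Killing field: condition (1) holds by left-invariance of $g$, and condition (2) is immediate from $[X_i, Z_m]=0$, which is a consequence of $V_2$ being central in the step-two setting. The global frame $\{X_1,\dots,X_d,Z_1,\dots,Z_\di\}$ is thus adapted in the sense of Definition \ref{D:adapted}.

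Second, I would exploit the structure of a step-two Carnot group to simplify the general expressions in Lemma \ref{L:RS}. The bracket relation \eqref{commgroup} together with the centrality of $V_2$ gives $\omega_{ij}^\ell = 0$ and $\delta_{im}^\ell = 0$ for all indices. Substituting these vanishings into \eqref{R2} causes all terms except the very last to drop out, leaving
\[
\mathcal{R}(f) = \frac{1}{4} \sum_{i,j=1}^d \left(\sum_{m=1}^\di \gamma_{ij}^m Z_m f\right)^2,
\]
which is the formula quoted in the text. Similarly, \eqref{To} yields the stated form of $\mathcal{T}(f)$ without further simplification, since it only involves the $\gamma_{ij}^m$.

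Third, the two bounds in \eqref{riccibounds} follow pointwise. By taking the infimum over unit vectors $z\in V_2$ one obtains $\mathcal{R}(f) \ge \rho_2 \Gamma^Z(f)$ with $\rho_2$ as in \eqref{rho2G}; by taking the supremum over unit vectors in $V_1$ one gets $\mathcal{T}(f) \le \kappa \Gamma(f)$ with $\kappa$ as in \eqref{kappaG}. Since no $\Gamma(f)$ term appears in the lower bound for $\mathcal{R}(f)$, we may set $\rho_1=0$. Theorem \ref{T:cd} then delivers CD$(0,\rho_2,\kappa,d)$, completing the proof.

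There is no substantive obstacle here; the argument amounts to collecting the observations already made and applying Theorem \ref{T:cd}. The only step that merits attention is the collapse of the rather lengthy formula \eqref{R2} to a single term, which is a purely algebraic check once one has recorded that all $\omega$'s and $\delta$'s vanish in the step-two setting.
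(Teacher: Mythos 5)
Your proof is correct and follows essentially the same route as the paper: identify the left-invariant fields in $V_2$ as transverse Killing symmetries, reduce the general formulas of Lemma \ref{L:RS} using the vanishing of the $\omega$'s and $\delta$'s in the step-two setting (together with the constancy of the $\gamma^m_{ij}$, which kills the $X_j\gamma^m_{kj}$ terms), and then feed the resulting bounds \eqref{riccibounds} with $\rho_1=0$ into Theorem \ref{T:cd}. The paper simply states that Lemma \ref{L:RS} "easily" gives the reduced formula for $\mathcal{R}(f)$; you have supplied the algebraic justification, which is a welcome but not substantively different elaboration.
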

 
In particular, in our framework, every Carnot group of step two is a \textit{sub-Riemannian manifold with nonnegative Ricci tensor}.

\subsubsection{\textbf{Groups of Heisenberg type}}\label{SSS:ht}
  
A significant class of Carnot groups of step two is that of groups of Heisenberg type. Such groups constitute a generalization of the Heisenberg group and they carry a natural complex structure. Groups of Heisenberg type (aka H-type groups) were first introduced by Kaplan \cite{Kaplan} in connection with the study of hypoellipticity and they were further developed in \cite{CDKR}, where the authors characterized those groups of H-type which arise as the nilpotent component in the Iwasawa decomposition of simple Lie groups of real rank one. In a Carnot group of step two $\bG$ consider the map $J:V_2 \to \text{End}(V_1)$ defined for every $\eta\in V_2$ by 
\[
<J(\eta)\xi,\xi'> = <[\xi,\xi'],\eta>,\ \ \ \ \ \xi, \xi'\in V_1, \eta\in V_2.
\]
Then, $\bG$ is said of H-type if $J(\eta)$ is an orthogonal map on $V_1$ for every $\eta\in V_2$ such that $||\eta||=1$. When $\bG$ is of H-type we thus have for $\xi, \xi'\in V_1$, $\eta\in V_2$, 
\[
<J(\eta)\xi,J(\eta)\xi'> = ||\eta||^2 <\xi,\xi'>.
\]
The $J$ map induces a complex structure since in every group of H-type one has for every $\eta, \eta'\in V_2$,
\[
J(\eta)J(\eta') + J(\eta')J(\eta) = - 2 <\eta,\eta'> I,
\]
see \cite{CDKR}. In particular,
\[
J(\eta) ^2= - ||\eta||^2 I.
\]
Since in a Carnot group of step two  we always have $[e_i,e_j] = \sum_{s=1}^\di \gamma_{ij}^s \ve_s$, we obtain 
\[
<J(\ve_m)e_i,e_j> = <[e_i,e_j],\ve_m> = \gamma_{ij}^m.
\]
When $\bG$ is of H-type we thus find for $z = \sum_{m=1}^\di z_m \ve_m$,
\begin{align*}
\frac{1}{4} \sum_{i,j=1}^d \left(\sum_{m=1}^\di \gamma^m_{ij} z_m \right)^2=\frac{1}{4} \sum_{i,j=1}^d <J(z)e_i,e_j>^2=\frac{d}{4} ||z||^2.
\end{align*}
In view of \eqref{rho2G} we conclude that, when $\bG$ is of H-type, then $\rho_2=\frac{d}{4}$. Also, for $x = \sum_{i=1}^d x_i e_i$ one has,
\begin{align*}
 \sum_{j=1}^d \sum_{m=1}^\di \left(\sum_{i=1}^d
\gamma_{ij}^{m} x_i \right)^2= \sum_{j=1}^d \sum_{m=1}^\di \langle J(\varepsilon_m)x , e_j \rangle^2= \sum_{m=1}^\di \|J(\varepsilon_m) x \|^2=\di \| x \|^2.
\end{align*}
In view of \eqref{kappaG} we conclude $\kappa=\di$. Combining these considerations with Proposition \ref{P:carnotCD}, we have thus proved the following result.
\begin{proposition}\label{P:HgroupsCD}
Let  $\bG$  be a group of $H$-type. Then,  $\bG$ satisfies the generalized curvature-dimension inequality \emph{CD}$(0,\frac{d}{4}, \di, d)$ with respect to any sub-Laplacian $L$.
\end{proposition}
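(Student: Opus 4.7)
The plan is to specialize the general Proposition \ref{P:carnotCD} to the H-type setting, so the entire task reduces to evaluating the infimum in \eqref{rho2G} and the supremum in \eqref{kappaG} using only the defining property of H-type groups. By Proposition \ref{P:carnotCD}, any step-two Carnot group satisfies CD$(0,\rho_2,\kappa,d)$, so it suffices to show that in the H-type case $\rho_2=d/4$ and $\kappa=\di$.

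First I would record the bridge between the structure constants $\gamma_{ij}^m$ and the $J$ map: since $[e_i,e_j]=\sum_s \gamma_{ij}^s \varepsilon_s$, one has directly from the definition of $J$
\[
\gamma_{ij}^m = \langle [e_i,e_j],\varepsilon_m\rangle = \langle J(\varepsilon_m) e_i, e_j\rangle.
\]
Then for $z=\sum_m z_m \varepsilon_m$, linearity of $J$ gives $\sum_{m}\gamma_{ij}^m z_m=\langle J(z)e_i, e_j\rangle$, and so
\[
\sum_{i,j=1}^d \Big(\sum_{m=1}^\di \gamma_{ij}^m z_m\Big)^2 = \sum_{i=1}^d \sum_{j=1}^d \langle J(z)e_i, e_j\rangle^2 = \sum_{i=1}^d \|J(z)e_i\|^2,
\]
using that $\{e_j\}$ is an orthonormal basis of $V_1$ and $J(z)e_i\in V_1$. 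The H-type property then yields $\|J(z)e_i\|^2 = \|z\|^2\|e_i\|^2 = \|z\|^2$, so the sum equals $d\|z\|^2$. Plugging into \eqref{rho2G} gives $\rho_2=d/4$.

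For $\kappa$, I would perform the symmetric computation: for $x=\sum_i x_i e_i\in V_1$ and any $m$,
\[
\sum_{j=1}^d \Big(\sum_{i=1}^d \gamma_{ij}^m x_i\Big)^2 = \sum_{j=1}^d \langle J(\varepsilon_m)x, e_j\rangle^2 = \|J(\varepsilon_m) x\|^2 = \|x\|^2,
\]
since $\|\varepsilon_m\|=1$ and $J(\varepsilon_m)$ is orthogonal. Summing over $m=1,\dots,\di$ gives $\di\|x\|^2$, whence \eqref{kappaG} yields $\kappa=\di$.

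There is no real obstacle here — the key algebraic identity $\gamma_{ij}^m=\langle J(\varepsilon_m)e_i,e_j\rangle$ converts both extremal problems into norm computations, and the H-type hypothesis is tailor-made to make those norms trivial. The only point that requires a line of justification is that in the first computation one legitimately picks up the factor $d$ (the dimension of $V_1$) from the sum over $i$, and in the second one picks up $\di$ (the dimension of $V_2$) from the sum over $m$; these are precisely the two dimensions appearing in the statement.
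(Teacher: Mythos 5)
Your proof is correct and follows essentially the same route as the paper: both reduce the claim to Proposition \ref{P:carnotCD} and evaluate \eqref{rho2G} and \eqref{kappaG} via the identity $\gamma_{ij}^m=\langle J(\varepsilon_m)e_i,e_j\rangle$, using orthogonality of $J(z)$ for unit $z$ to obtain $\rho_2=\frac{d}{4}$ and $\kappa=\di$. No gaps.
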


\subsection{CR Sasakian manifolds}\label{SS:sasakian}

Another interesting class of sub-Riemannian manifolds with transverse symmetries is given by the class of CR Sasakian manifolds. For all the known results cited in this section we refer the reader to the monograph \cite{CR}. Let $\mathbb{M}$ be a non degenerate CR manifold of real
hypersurface type and dimension $d +1$, where $d= 2n$. Let $\theta$
be a pseudo-hermitian form on $\mathbb{M}$ with respect to which the Levi
form $L_\theta$ is positive definite. The kernel of $\theta$ determines the horizontal bundle $\mathcal H$. Denote now  by $Z$ the Reeb vector field on $\bM$, i.e., the characteristic
direction of $\theta$. It is an immediate consequence of Theorem 1.3 on p. 25 in \cite{CR} that the canonical connection $\nabla$ introduced in Section \ref{SSS:cancon} coincides with the Tanaka-Webster connection on $\bM$. The sub-Laplacian $L$ introduced in Section \ref{SS:subR} is then the classical CR sub-Laplacian, see Definition 2.1 on p. 111 of \cite{CR}.

Like in the Riemannian case, the pseudo-hermitian torsion with respect to $\nabla$ is 
\[
\text{T}(X,Y) = \nabla_X Y - \nabla_Y X - [X,Y].
\] 

\begin{definition}\label{D:sasakian}
The \emph{CR} manifold $(\bM,\theta)$ is called Sasakian if the pseudo-hermitian torsion vanishes, in the sense that
\[
\emph{T}(Z,X) = 0,
\]
for every $X\in \mathcal H$.
\end{definition}

In every Sasakian manifold the Reeb vector field $Z$ is a sub-Riemannian Killing vector field (see Theorem 1.5 on p. 42 and Lemma 1.5 on p. 43  in \cite{CR}).  In this situation, the bilinear forms $\mathcal{R}, \mathcal{T}$ take a particularly nice form. Indeed, in the Sasakian case, the torsion T of the Tanaka-Webster connection is given, for horizontal vector fields $X$ and $Y,$ by
\[
\text{T}(X,Y)=\langle J X, Y \rangle Z,
\]
where $J$ is the complex structure on $\bM$. Since $\nabla J=0$,  we  obtain from \eqref{Ro} in Definition \ref{D:RS},
\begin{equation}\label{Rsasakian}
\mathcal{R}(f)=\text{Ric}(\nabla_\mathcal{H} f , \nabla_\mathcal{H} f ) +\frac{1}{4}\left( \sum_{l,k=1}^d \langle J X_l, X_k \rangle^2  \right) (Zf)^2.
\end{equation}
Since
\[
 \sum_{l,k=1}^d \langle J X_l, X_k \rangle^2= \sum_{k=1}^d \| J X_k \|^2=d,
\]
we conclude from \eqref{Rsasakian} that 
\[
\mathcal{R}(f)=\text{Ric}(\nabla_\mathcal{H} f , \nabla_\mathcal{H} f ) +\frac{d}{4}\Gamma^Z(f).
\]
Also, from \eqref{To}
\[
\mathcal{T}(f)=\sum_{i=1}^d \langle J \nabla_\mathcal{H} f, X_i \rangle^2= \| J \nabla_\mathcal{H} f \|^2=\Gamma(f).
\]
As a straightforward consequence of these considerations we thus obtain from Theorem \ref{T:cd}.

\begin{theorem}\label{T:sasakian}
Assume that the Tanaka-Webster Ricci tensor is bounded from below by $\rho_1 \in \mathbb{R}$ on smooth functions, that is for every $f\in C^\infty(\bM)$ 
\[
\emph{Ric}(\nabla_\mathcal{H} f , \nabla_\mathcal{H} f ) \ge \rho_1 \|  \nabla_\mathcal{H} f \|^2.
\]
Then, the Sasakian manifold $\bM$ satisfies the generalized curvature-dimension inequality  \emph{CD}$(\rho_1,\frac{d}{4},1,d)$, with $d = 2n$.
\end{theorem}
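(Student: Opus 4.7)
The plan is to reduce the statement to the geometric bounds \eqref{riccibounds} and then invoke Theorem \ref{T:cd}. Concretely, I aim to establish that on a Sasakian manifold the horizontal Ricci lower bound implies
\[
\mathcal R(f)\ \ge\ \rho_1\,\Gamma(f)\ +\ \frac{d}{4}\,\Gamma^Z(f),\qquad \mathcal T(f)\ \le\ \Gamma(f),
\]
so that Theorem \ref{T:cd} with $\rho_2 = d/4$ and $\kappa = 1$ yields the required CD$(\rho_1,\frac{d}{4},1,d)$.

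The key input, which the paper records just above the statement, is that the Tanaka-Webster connection $\nabla$ of a Sasakian manifold coincides with the canonical connection of Section \ref{SSS:cancon}, and that, for horizontal $X,Y$, the pseudo-hermitian torsion is $\mathrm{T}(X,Y) = \langle JX, Y\rangle Z$, where $J$ is the complex structure on $\mathcal H$. I would first use this in the expression \eqref{Ro} for $\mathcal R$. Since $\nabla J = 0$ on a Sasakian manifold and, by Proposition \ref{P:horconn}, $\nabla Z = 0$, the covariant derivative term $(\nabla_{X_\ell}\mathrm{T})(X_\ell, X_k)f = \langle JX_\ell, X_k\rangle (\nabla_{X_\ell} Z) f$ vanishes identically. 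What survives is the Ricci piece together with $\frac{1}{4}\sum_{\ell,k}\langle JX_\ell, X_k\rangle^2 (Zf)^2$. Because $J$ is an isometry of $\mathcal H$ with $J^2 = -I$, one has $\sum_{\ell,k}\langle JX_\ell, X_k\rangle^2 = \sum_k \|JX_k\|^2 = d$, giving $\mathcal R(f) = \mathrm{Ric}(\nabla_\mathcal H f, \nabla_\mathcal H f) + \frac{d}{4}\Gamma^Z(f)$. The Ricci hypothesis then produces the desired lower bound on $\mathcal R(f)$. The analogous computation for $\mathcal T$, using \eqref{To} and the Sasakian form of the torsion, gives $\mathcal T(f) = \sum_i \langle J\nabla_\mathcal H f, X_i\rangle^2 = \|J\nabla_\mathcal H f\|^2 = \Gamma(f)$, since $J$ is orthogonal.

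I do not anticipate any serious technical obstacle: both identities are algebraic once one has $\mathrm{T}(X,Y) = \langle JX,Y\rangle Z$ and $\nabla J = \nabla Z = 0$, and the classical Sasakian literature records these facts. The only points warranting some care are, first, verifying that the covariant-derivative term in \eqref{Ro} really does vanish (one must write $\nabla_{X_\ell}\mathrm{T}$ out and use both $\nabla J = 0$ and $\nabla Z = 0$); second, noting that the Reeb vector field $Z$ is in $\mathfrak{iso}$, so that $\mathcal V = \mathbb R\cdot Z$ is a legitimate choice of transverse symmetry algebra and the framework of Section \ref{SS:subR} applies. Hypotheses \ref{A:exhaustion} and \ref{A:main_assumption} follow from completeness of the Riemannian extension and the computation in Section \ref{SS:subR}, respectively; Hypothesis \ref{A:regularity} is a separate matter, to be established along the lines alluded to in the discussion following Hypothesis \ref{A:regularity} in the introduction.
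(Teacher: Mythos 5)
Your proposal is correct and follows essentially the same route as the paper: compute $\mathcal R(f)=\mathrm{Ric}(\nabla_{\mathcal H}f,\nabla_{\mathcal H}f)+\frac{d}{4}\Gamma^Z(f)$ and $\mathcal T(f)=\Gamma(f)$ from the Sasakian torsion formula $\mathrm{T}(X,Y)=\langle JX,Y\rangle Z$ together with $\nabla J=0$, then apply Theorem \ref{T:cd}. The points you flag for care (vanishing of the $\nabla\mathrm{T}$ term, the Reeb field being a sub-Riemannian Killing field) are exactly the facts the paper invokes.
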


 \begin{remark}\label{S:hopf}
The example of CR Sasakian manifolds, together with that  of H-type groups studied in Section \ref{SSS:ht}, suggests the existence of an interesting class of sub-Riemannian manifolds with transverse symmetries. Indeed, returning to the setting and notations of Section \ref{SS:subR}, for $Z \in \mathcal{V}$ consider the map $J(Z)$ defined  on the horizontal bundle $\mathcal H$ by
\[
 \langle J(Z) X, Y \rangle = \langle Z, T(X,Y) \rangle.
\] 
Suppose that $J(Z)$ is orthogonal for every $Z\in \mathcal V$ such that $\| Z \|=1$, and that furthermore $\sum_{k=1}^d \nabla_{X_k} J(Z) =0$. In that case, similarly to the case of groups of the H-type case and Sasakian manifolds we can prove that, if the horizontal Ricci curvature of the canonical connection $\nabla$ is bounded from below by $\rho_1$, then $\bM$ satisfies the generalized curvature-dimension inequality \emph{CD}$(\rho_1,\frac{d}{4}, \di, d)$. An example of such structure is given by the Hopf fibration $\mathbb{S}^7 \to \mathbb{S}^4$ and, more generally, by the so-called  $3$ Sasakian manifolds (see  \cite{BGa} for an account on these geometric structures).
\end{remark}

\subsection{Principal bundles over Riemannian manifolds}\label{SS:ortho}

 Sub-Riemannian structures  with transverse symmetries also naturally arise in the context of principal bundles over Riemannian manifolds. 
 Let $(\mathbb{M},g)$ be a $C^\infty$ connected Riemannian manifold with dimension $d$.  Let us consider the orthonormal frame bundle $
\mathcal{O} \left( \mathbb{M}\right) $ over $\mathbb{M}$. The kernel of the Levi-Civita connection form defines the distribution $\mathcal{H}$ of horizontal directions. If the Riemannian curvature form is non-degenerate this distribution is two-step bracket generating (see for instance Chapter 3 in \cite{baudoin}). The set of vertical directions is then given by the vector fields that are tangent to the fibers of the bundle projection. It is then easily seen that in such case $\mathcal{V} \simeq \mathfrak{so}_d (\mathbb{R}),$ and therefore that the vertical bundle is generated by the sub-Riemannian  Killing vector fields of the horizontal bundle. We therefore have an example of a sub-Riemannian manifold with transverse symmetries. In this example the geometric quantities introduced in Section 2.2 may be interpreted in terms of the geometry of $\bM$.

First, let us observe that we can  find a globally defined adapted frame. For each
$x \in \mathbb{R}^d$ we can define  a horizontal vector field $H_x$
on $\mathcal{O}\left( \mathbb{M}\right)$ by the property that at
each point $u \in \mathcal{O} ( \mathbb{M} )$, $H_{x} (u)$ is the
horizontal lift of $u (x)$ from $u$. If  $(e_1,...,e_d)$ is the
canonical basis of $\mathbb{R}^d$,  the fundamental horizontal
vector fields are then defined by
\begin{equation*}
H_i= H_{e_i}.
\end{equation*}
 Now, for every $M \in \mathfrak{o}_d
(\mathbb{R})$ (space of $d
\times d$ skew-symmetric matrices), we can define a vertical vector field $V_M$ on
$\mathcal{O} \left( \mathbb{M}\right)$ by
\[
(V_M F)(u) = \lim_{t \rightarrow 0} \frac{F \left( u e^{tM}
\right) -F(u)}{t},
\]
where $u \in \mathcal{O} \left( \mathbb{M}\right)$ and $F:
\mathcal{O} \left( \mathbb{M}\right) \rightarrow \mathbb{R}$. If
$E_{ij}$, $1 \le i <j \le d$ denote the canonical basis of
$\mathfrak{o}_d (\mathbb{R})$ ($E_{ij}$ is the matrix whose
$(i,j)$-th entry is $1/2$, $(j,i)$-th entry is $-1/2$, and all other
entries are zero), then the fundamental vertical vector fields are
given by
\[
V_{ij}= V_{E_{ij}}.
\]
It can be shown that we have the following Lie bracket relations:
\[
[H_i,H_j]=-2\sum_{k<l} \Omega_{ij}^{kl} V_{kl},
\]
\[
[H_i,V_{jk}]=-\delta_{ij} \frac{1}{2} H_k +\delta_{ik} \frac{1}{2} H_j,
\]
where $\delta_{ij}=1$ if $i=j$ and $0$ otherwise,  and where $\Omega$ is the Riemannian curvature form:
\[
\Omega (X,Y)(u)=u^{-1} R(\pi_* X , \pi_* Y)u,~~X,Y \in
\mathrm{T}_u \mathcal{O} \left( \mathbb{M}\right),
\]
$R$ denoting the Riemannian curvature tensor on $\mathbb{M}$ and $\pi$ the canonical projection $\mathcal{O}\left( \mathbb{M}\right) \rightarrow \mathbb{M}$.

In this structure, the sub-Laplacian $L$ is the so-called horizontal Bochner Laplace operator. 
It is by definition the  diffusion operator on $\mathcal{O}\left( \mathbb{M}\right)$ given by
\begin{equation*}
\Delta _{\mathcal{O}\left( \mathbb{M}\right)
}=\sum_{i=1}^{d}H_{i}^{2}.
\end{equation*}
Its fundamental property is
that it is the lift of the Laplace-Beltrami operator $\Delta_{
\mathbb{M}}$ of $\mathbb{M}$. That is, for every smooth $f:
\mathbb{M} \rightarrow \mathbb{R}$,
\[
\Delta _{\mathcal{O}\left( \mathbb{M}\right)} (f \circ
\pi)=(\Delta _{ \mathbb{M}} f)\circ \pi.
\]

The canonical sub-Riemannian connection  $\nabla$ is easily expressed in terms of the Ehresman bundle connection. Let us  recall (see for instance Chapter 3 in \cite{baudoin}) that the Ehresmann connection form $\alpha$  on $\mathcal{O} \left(
\mathbb{M}\right)$   is the unique
skew-symmetric matrix $\alpha$ of one forms on $\mathcal{O} \left(
\mathbb{M}\right)$ such that:
\begin{enumerate}
\item $\alpha (X) =0$ if and only if $X \in \mathcal{H} \mathcal{O} ( \mathbb{M}
)$;
\item $V_{\alpha (X) }=X$ if and only if $X \in \mathcal{V} \mathcal{O} ( \mathbb{M}
)$,
\end{enumerate}
where $ \mathcal{H} \mathcal{O} ( \mathbb{M})$ denotes the horizontal bundle and $\mathcal{V} \mathcal{O} ( \mathbb{M}
)$ the vertical bundle. It is then easily verified that for a vector field $Y$ on $ \mathcal{O} ( \mathbb{M})$,
\[
\nabla_Y H_i=\sum_{k=1}^d \alpha_j^k (Y) H_k.
\]
Let us observe that if $X,Y$ are smooth horizontal vector fields then we have for the torsion:
\[
\text{T}(X,Y)=V_{\Omega (X,Y)}.
\]
We then obtain after straightforward computations
\begin{align*}
\mathcal{R} (f,f)= &\text{Ric}^* (\nabla_\mathcal{H} f ,\nabla_\mathcal{H} f )  - \sum_{j,k=1}^d V_{(\nabla_{H_j} \Omega) (H_j,H_k)} f H_kf   +\frac{1}{4}
\left( V_{\Omega(H_j,H_k)} f\right)^2,
\end{align*}
where for horizontal vector fields $X$ and $Y$,
\[
\text{Ric}^* (X,Y)= \text{Ric} (\pi_* X,\pi_*Y),
\]
and Ric denotes the Ricci tensor of $\mathbb{M}$. In the same vein we have
\[
\mathcal{T}(f)=\sum_{i=1}^d \| \text{T}(H_i, \nabla_{\mathcal{H}} f) \|^2=\sum_{i=1}^d \| V_{\Omega (H_i, \nabla_{\mathcal{H}} f)} \|^2.
\]
 We then observe that the expression of $\mathcal{R}$ simplifies if for every horizontal vector field $X$,
 \[
 \sum_{j=1}^d (\nabla_{H_j} \Omega) (H_j, X)=0.
\]
Using the second Bianchi identity, it is seen that this latter condition is equivalent to the fact that the Ricci tensor of $\mathbb{M}$ is a Codazzi tensor, that is for any vector fields $X,Y,Z$ on $\bM$,
\[
(\nabla_X \text{Ric}) (Y, Z)=(\nabla_Y \text{Ric}) (X, Z).
\]
As a conclusion we then obtain from Theorem \ref{T:cd}.

\begin{proposition}
Let $(\bM,g)$ be a $C^\infty$ connected Riemannian manifold with dimension $d$. Assume that:
\begin{enumerate}
\item $ \emph{Ric}$ is a Codazzi tensor;
\item There exists $\rho_1 \ge 0$ such that $\emph{Ric} \ge \rho_1$;
\item There exists $\rho_2 >0$ such that for every $U \in \mathfrak{so}_d(\mathbb{R})$,
\[
\sum_{i,j=1}^d  \langle  \Omega (H_j,H_k), U \rangle^2 \ge 4 \rho_2 \| U \|^2;
\]
\item There exists $\kappa \ge 0$ such that for every horizontal vector field $X.$
\[
\sum_{i=1}^d \| \Omega (H_i, X) \|^2 \le \kappa \|X\|^2,
\]
\end{enumerate}
Then, the horizontal Bochner operator of $\mathcal{O}(\bM)$ satisfies the generalized curvature-dimension inequality \emph{CD}$(\rho_1,\rho_2,\kappa,d)$.
\end{proposition}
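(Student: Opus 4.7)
The plan is to verify the geometric bounds in \eqref{riccibounds} from Theorem \ref{T:cd}, since these are equivalent to the generalized curvature-dimension inequality CD$(\rho_1,\rho_2,\kappa,d)$. In the setting of the orthonormal frame bundle, explicit formulas for $\mathcal{R}(f)$ and $\mathcal{T}(f)$ in terms of the Riemannian curvature form $\Omega$, the Ricci tensor of $\bM$, and the covariant derivative $\nabla\Omega$ have already been derived at the end of Section \ref{SS:ortho}. So the whole proof reduces to translating the four hypotheses into these quadratic forms.

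First I would deal with $\mathcal{R}$. By the observation at the end of Section \ref{SS:ortho}, the Codazzi hypothesis (1) is exactly the condition $\sum_{j}(\nabla_{H_j}\Omega)(H_j,X)=0$ for every horizontal $X$, and therefore the cross term $\sum_{j,k} V_{(\nabla_{H_j}\Omega)(H_j,H_k)} f\, H_k f$ in the formula for $\mathcal{R}(f)$ vanishes identically when $X=\nabla_\mathcal{H} f$. This leaves
\[
\mathcal{R}(f) \;=\; \text{Ric}^*(\nabla_\mathcal{H} f,\nabla_\mathcal{H} f) \;+\; \frac14 \sum_{j,k=1}^d \bigl( V_{\Omega(H_j,H_k)} f\bigr)^2.
\]
For the first term, the horizontal lift is a fiberwise isometry onto $\mathcal{H}$, so hypothesis (2) yields $\text{Ric}^*(\nabla_\mathcal{H} f,\nabla_\mathcal{H} f)\ge \rho_1 \|\nabla_\mathcal{H} f\|^2 = \rho_1 \Gamma(f)$. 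For the second term, identify $\nabla_\mathcal{V} f$ with an element $U^f\in\mathfrak{so}_d(\R)$ via the duality $V_M f=\langle U^f,M\rangle$; under this identification $\|U^f\|^2=\Gamma^Z(f)$ and $V_{\Omega(H_j,H_k)}f=\langle U^f,\Omega(H_j,H_k)\rangle$. Hypothesis (3) applied with $U=U^f$ then gives $\frac14\sum_{j,k}(V_{\Omega(H_j,H_k)}f)^2\ge \rho_2\Gamma^Z(f)$, so altogether $\mathcal{R}(f)\ge \rho_1\Gamma(f)+\rho_2\Gamma^Z(f)$.

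Next I would handle $\mathcal{T}$. The formula $\mathcal{T}(f)=\sum_{i=1}^d\|V_{\Omega(H_i,\nabla_\mathcal{H} f)}\|^2$ is nothing other than $\sum_{i=1}^d\|\Omega(H_i,\nabla_\mathcal{H} f)\|^2$ under the same identification of vertical vectors with elements of $\mathfrak{so}_d(\R)$. Applying hypothesis (4) with $X=\nabla_\mathcal{H} f$ directly yields $\mathcal{T}(f)\le\kappa\|\nabla_\mathcal{H} f\|^2=\kappa\Gamma(f)$. Both bounds in \eqref{riccibounds} are now established, and Theorem \ref{T:cd} concludes the proof.

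The only subtlety, and the part I would check most carefully, is the normalization underlying the identification between vertical tangent vectors and skew-symmetric matrices: the matrices $E_{ij}$ are defined with entries $\pm\frac12$, so one must verify that the inner product used implicitly in hypotheses (3) and (4), the Hilbert-Schmidt norm on $\mathfrak{so}_d(\R)$, and the vertical norm induced by the Riemannian extension of $g$ are consistent up to the factor accounting for this. Modulo this bookkeeping (which fixes the constant $4$ in hypothesis (3)), the argument is otherwise a direct substitution into Theorem \ref{T:cd}.
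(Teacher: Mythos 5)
Your proposal is correct and follows exactly the route the paper intends: the statement is presented as an immediate consequence of Theorem \ref{T:cd}, obtained by substituting the frame-bundle formulas for $\mathcal{R}(f)$ and $\mathcal{T}(f)$ (with the Codazzi hypothesis killing the $\nabla\Omega$ cross term via the second Bianchi identity) and reading hypotheses (2)--(4) as the bounds \eqref{riccibounds}. Your explicit tracking of the identification between vertical vectors and $\mathfrak{so}_d(\R)$, including the normalization responsible for the factor $4$ in hypothesis (3), is exactly the bookkeeping the paper leaves implicit.
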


The previous assumptions are readily satisfied in the case of spaces with constant curvature and, after some standard computations,  we obtain the following result.

\begin{corollary}
Let $(\bM,g)$ be a $C^\infty$ connected Riemannian manifold with dimension $d$ and constant curvature $K \neq 0$. The sub-Riemannian structure of $\mathcal{O}(\bM)$ satisfies the generalized curvature dimension inequality \emph{CD}$\left((d-1)K,\frac{d}{4}K^2,\frac{d(d-1)}{2} K^2,d\right).$
 \end{corollary}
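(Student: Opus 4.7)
The plan is to verify the four hypotheses of the preceding Proposition for a complete Riemannian manifold $(\bM,g)$ of constant sectional curvature $K\neq 0$ and dimension $d$, and then simply read off the curvature-dimension parameters. The two essential ingredients are the explicit form of the Riemann tensor, $R(X,Y)Z=K(\langle Y,Z\rangle X-\langle X,Z\rangle Y)$, and the fact that $\nabla R\equiv 0$.

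First, since $\nabla R\equiv 0$ we also have $\nabla \text{Ric}\equiv 0$, so the Ricci tensor is parallel and is therefore a Codazzi tensor, which verifies (1). Tracing the formula for $R$ yields $\text{Ric}=(d-1)Kg$, so $\text{Ric}(X,X)=(d-1)K\|X\|^2$ for every tangent vector $X$. This verifies (2) with $\rho_1=(d-1)K$.

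Next, I would compute the curvature form $\Omega$ explicitly in terms of the basis $\{E_{jk}\}_{j<k}$ of $\mathfrak{so}_d$. Using $\Omega(H_j,H_k)(u)=u^{-1}R(\pi_* H_j,\pi_* H_k)u$ together with the constant-curvature formula for $R$, a direct calculation at a frame where $\pi_* H_i=e_i$ gives $(R(e_j,e_k))_{ab}=K(\delta_{ka}\delta_{jb}-\delta_{ja}\delta_{kb})$, which in the chosen basis becomes $\Omega(H_j,H_k)=2KE_{jk}$ for $j<k$ (and is antisymmetric in $(j,k)$). Thus $\Omega$, up to the scalar $2K$, is nothing but the tautological identification $\Lambda^2\mathcal{H}\simeq \mathcal{V}\cong \mathfrak{so}_d$: this is the key structural simplification afforded by constant curvature.

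With this explicit expression in hand, conditions (3) and (4) reduce to algebraic identities on $\mathfrak{so}_d$. Expanding $U=\sum_{k<l}u_{kl}E_{kl}$ and $X=\sum_i x_iH_i$ in the respective orthonormal frames, the orthogonality among the $E_{jk}$'s for distinct index pairs collapses both sums to multiples of $\|U\|^2$ and $\|X\|^2$ respectively. Tracking the normalization conventions of the preceding Proposition, one obtains the lower bound (3) with $\rho_2=\tfrac{d}{4}K^2$ and the upper bound (4) with $\kappa=\tfrac{d(d-1)}{2}K^2$. Plugging the four constants $((d-1)K,\tfrac{d}{4}K^2,\tfrac{d(d-1)}{2}K^2,d)$ into the preceding Proposition yields the claimed generalized curvature-dimension inequality for the horizontal Bochner Laplacian $\Delta_{\mathcal{O}(\bM)}$.

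The main obstacle is not conceptual but bookkeeping: the various factors of $\tfrac12$ coming from the definition of $E_{ij}$, the factor $2K$ appearing in $\Omega(H_j,H_k)$, and the chosen inner product on $\mathfrak{so}_d$ must all be tracked consistently. A useful internal consistency check is that the intrinsic ratio $\kappa/\rho_2=2(d-1)$ depends only on $d$, not on $K$, as it must, since rescaling $\Gamma^Z$ leaves this ratio invariant while shifting $\rho_2$ and $\kappa$ by the same factor.
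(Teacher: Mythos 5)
Your proposal is correct and follows exactly the route the paper intends: the paper itself only remarks that "the previous assumptions are readily satisfied in the case of spaces with constant curvature... after some standard computations," and your verification of the four hypotheses via $R(X,Y)Z=K(\langle Y,Z\rangle X-\langle X,Z\rangle Y)$, $\nabla R\equiv 0$, and the identification $\Omega(H_j,H_k)=2KE_{jk}$ is precisely that computation written out. Your closing observation that only the ratio $\kappa/\rho_2=2(d-1)$ is intrinsic (the individual values of $\rho_2$ and $\kappa$ depending on the normalization of the inner product on $\mathfrak{so}_d(\R)$) is the right consistency check and matches the paper's stated constants.
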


Actually, more general principal bundles provide examples of   sub-Riemannian structures with transverse symmetries.
Let $\pi: (\bM,g) \to (\bM',g')$ be the projection of a principal fibre bundle with structure group a compact, semisimple Lie group $\bG$  with dimension $\di$ equipped with its bi-invariant metric given by the Cartan-Killing form. We suppose that $\pi$ is a Riemannian submersion with totally geodesic fibres isometric to $\bG$. We denote by $\theta$ the one-form of the principal connection corresponding to the  horizontal distribution $\mathcal{H}$.  If $\mathcal{H}$ is bracket generating, then we have an example of a sub-Riemannian structure with transverse symmetries. 

Let
\[
A_X Y=( \tilde{\nabla}_{X_\mathcal{H}} Y_\mathcal{H} )_\mathcal{V}+( \tilde{\nabla}_{X_\mathcal{H}} Y_\mathcal{V} )_\mathcal{H},
\]
be the O'Neill's tensor of the submersion. When $X$ and $Y$ are horizontal vector fields we have $\text{T}(X,Y)=-2A_X Y$, where, as usual,  $\text{T}$ denotes the torsion of the canonical sub-Riemannian connection. As a consequence, $A$ is the skew-symmetrization of $-\frac{1}{2} \text{T}$. The connection form $\theta$ is a Yang-Mills connection if for every horizontal vector field $X$, the vertical component of
\[
 \sum_{\ee=1}^d(\nabla_{X_\ee} \emph{T}) (X_\ee,X)
 \]
is zero (see for instance \cite{falcitelli}, p. 146). 
As a consequence of Theorem \ref{T:cd}, we then obtain the following result.
 
\begin{proposition}
Let us assume that:
\begin{enumerate}
\item $\theta$ is a Yang-Mills connection;
\item  there exists $\rho_1 \ge 0$ such that $\emph{Ric}' \ge \rho_1$ where  $\emph{Ric}' $ is the Ricci tensor of $(\bM',g')$;
\item there exists $\rho_2 >0$ such that for every vertical vector field $Z$,
\[
\sum_{i=1}^d  \| A_{X_i}  Z \|^2 \ge  \rho_2 \| Z \|^2;
\]
\item there exists $\kappa \ge 0$ such that for every horizontal vector field $X$,
\[
\sum_{m=1}^{\di}  \| A_{X} Z_i \|^2 \le \kappa \|X\|^2.
\]
\end{enumerate}
Then, the sub-Riemannian structure on  $\bM$  given by the submersion $\pi: (\bM,g) \to (\bM',g')$ satisfies the generalized curvature dimension inequality \emph{CD}$(\rho_1,\rho_2,\kappa,d)$.
\end{proposition}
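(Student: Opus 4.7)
The plan is to invoke Theorem \ref{T:cd}: it suffices to verify the geometric bounds \eqref{riccibounds}, namely $\mathcal{R}(f) \ge \rho_1 \Gamma(f) + \rho_2 \Gamma^Z(f)$ and $\mathcal{T}(f) \le \kappa \Gamma(f)$ for every $f \in C^\infty(\bM)$. I will translate both $\mathcal R(f)$ and $\mathcal T(f)$ into expressions involving the O'Neill tensor $A$ of the submersion, since hypotheses (1)--(4) are tailored to produce exactly the bounds one needs through such a translation.

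For the lower bound on $\mathcal R(f)$, I would handle the three pieces in \eqref{Ro} in turn. First, since the canonical connection $\nabla$ of Proposition \ref{P:horconn} acts as the Bott connection of the submersion on horizontal vectors, $\nabla$ projects under $\pi_\ast$ onto the Levi-Civita connection of $(\bM',g')$; consequently the Ricci tensor of $\nabla$ restricted to horizontal arguments satisfies $\mathrm{Ric}(X,Y) = \mathrm{Ric}'(\pi_\ast X, \pi_\ast Y)$, and combining hypothesis~(2) with the fact that $\pi$ is an isometry on $\mathcal H$ yields $\mathrm{Ric}(\nabla_\mathcal{H} f, \nabla_\mathcal{H} f) \ge \rho_1 \Gamma(f)$. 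Second, the cross term $\sum_{\ell,k}((\nabla_{X_\ell}\mathrm{T})(X_\ell, X_k)f)(X_k f)$ is killed by the Yang-Mills assumption (1): because $\mathrm T(X,Y)$ is vertical for horizontal $X,Y$ by Proposition~\ref{P:horconn}(iv), and $\nabla$ preserves the decomposition $\mathcal H \oplus \mathcal V$, the vector field $\sum_\ell (\nabla_{X_\ell}\mathrm{T})(X_\ell, X_k)$ is itself vertical, hence vanishes identically by (1). Third, the torsion-squared term $\frac{1}{4}\sum_{\ell,k}(\mathrm T(X_\ell,X_k)f)^2$ rewrites, via $\mathrm T = -2A$ and the skew-adjointness $\langle A_X U, V\rangle = -\langle U, A_X V\rangle$, as $\sum_\ell \|A_{X_\ell}(\nabla_\mathcal{V} f)\|^2$ after a short computation in the adapted frame; hypothesis~(3) then bounds this from below by $\rho_2\|\nabla_\mathcal{V} f\|^2 = \rho_2 \Gamma^Z(f)$.

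For the upper bound on $\mathcal T(f)$, I would again use $\mathrm T = -2A$ to obtain $\mathcal T(f) = 4\sum_i \|A_{X_i}(\nabla_\mathcal{H} f)\|^2$, and then invoke the crucial swapping identity
\[
\sum_{i=1}^d \|A_{X_i} Y\|^2 \;=\; \sum_{m=1}^{\di} \|A_Y Z_m\|^2 \qquad \text{for every horizontal $Y$,}
\]
which follows by combining the skew-adjointness of $A_X$ with the antisymmetry $A_X Y = -A_Y X$ on horizontal arguments (a direct consequence of the antisymmetry of $\mathrm T$). Applying this identity with $Y = \nabla_\mathcal{H} f$ and then invoking hypothesis~(4) produces the required bound $\mathcal T(f) \le \kappa\,\Gamma(f)$ (absorbing a universal multiplicative constant into the normalization of $\kappa$, as is done analogously in the Carnot and Sasakian examples). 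Both geometric bounds in~\eqref{riccibounds} being established, Theorem~\ref{T:cd} then delivers the generalized curvature-dimension inequality CD$(\rho_1,\rho_2,\kappa,d)$.

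The main obstacle I anticipate is the Yang-Mills cancellation in $\mathcal R(f)$: verifying that the derivative $\sum_\ell (\nabla_{X_\ell}\mathrm{T})(X_\ell, X_k)$ is purely vertical requires careful use of the connection properties (iii) and (iv) in Proposition~\ref{P:horconn}, and the Yang-Mills hypothesis is needed precisely to annihilate this vertical vector field so that its action on $f$ vanishes. Secondarily, the bookkeeping that converts the adapted-frame structure constants $\gamma_{ij}^m$ into the tensorial O'Neill quantities $A_{X_i} Z_m$ and $A_X Y$ requires some care, but once the relations $A_{X_i} X_k = -\tfrac{1}{2}\sum_m \gamma_{ik}^m Z_m$ and $A_{X_i} Z_m = \tfrac{1}{2}\sum_k \gamma_{ik}^m X_k$ are in hand, all the remaining algebra is routine.
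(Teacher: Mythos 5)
Your proposal is correct and follows exactly the route the paper intends: the paper offers no details beyond ``as a consequence of Theorem \ref{T:cd}'', and your verification of the two bounds in \eqref{riccibounds} --- the Yang--Mills cancellation of the $\nabla \mathrm{T}$ term (which, as you note, is automatically vertical because $\nabla$ preserves the splitting, so the hypothesis kills it entirely), the identification of the torsion-squared term in $\mathcal R(f)$ with $\sum_{i}\|A_{X_i}\nabla_{\mathcal V}f\|^2$ via skew-adjointness, and the swapping identity $\sum_i\|A_{X_i}Y\|^2=\sum_m\|A_YZ_m\|^2$ --- supplies precisely the missing computation. One small point: since $\mathrm{T}=-2A$ on horizontal arguments, the $\tfrac14$ in \eqref{Ro} makes the $\rho_2$ bound exact, but $\mathcal T(f)=4\sum_m\|A_{\nabla_{\mathcal H}f}Z_m\|^2\le 4\kappa\,\Gamma(f)$, so hypothesis (4) as literally stated yields CD$(\rho_1,\rho_2,4\kappa,d)$; you correctly flag this normalization of $\kappa$, which the paper leaves implicit.
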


\begin{remark}
If $\bG$ is simple, then by $\mathbf{Ad}$ invariance, it is seen that  
\[
\sum_{i=1}^d  \| A_{X_i}  Z \|^2= \frac{\| A \|^2}{\di} \| Z \|^2 ,
\]
and
\[
\sum_{m=1}^{\di}  \| A_{X} Z_i \|^2 \le \| A \|^2  \|X\|^2.
\]
\end{remark}

 \section{Second derivatives estimates}

In this section, in the context of sub-Riemannian manifolds with transverse symmetries, we develop some basic tools to obtain bounds on the second derivatives that will later be used. 

Let $\M$ be a sub-Riemannian manifold with transverse symmetries as in the previous section. If $X_1,\cdots,X_d$ is a local frame of horizontal vector fields, we define the tensor
\[
\delta T (V)= 
 \sum_{\ee=1}^d(\nabla_{X_\ee} \emph{T}) (X_\ee,V)
\]
Motivated by the examples of the previous section, we set the following definition:

\begin{definition}
The sub-Riemannian manifold $\M$ is said to be of Yang-Mills type, if for every horizontal vector field $X$,
\[
\delta T(X)=0.
\]
\end{definition}

For instance, Riemannian manifolds, CR Sasakian manifolds and Carnot groups of step 2 are examples of sub-Riemannian manifolds with transverse symmetries of Yang-Mills type.

\begin{proposition}\label{improveCD}
Suppose that $\M$ is of Yang-Mills type and that there exist constants $\rho_1
\in \mathbb{R}$, $\rho_2
>0$ and $\kappa \ge 0$ such that 
\[
\begin{cases}
\mathcal{R}(f) \ge \rho_1 \Gamma (f) +\rho_2 \Gamma^Z (f),
\\
\mathcal{T}(f) \le \kappa \Gamma (f),
\end{cases}
\]
 hold for every $f\in C^\infty(\bM)$.
Then, for $f\in C^\infty(\bM)$, and $\nu >0$, one has
\[
\Gamma(\Gamma(f)) \le  4 \Gamma(f) \left( \Gamma_{2}(f)+\nu \Gamma^Z_{2}(f)  -   \left( \rho_1 -\frac{\kappa}{\nu}\right)  \Gamma (f)  \right),
\]
and
\[
\Gamma( \Gamma^Z(f)) \le 4  \Gamma^Z(f) \Gamma_2^Z(f).
\]
\end{proposition}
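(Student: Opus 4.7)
The plan is to prove the two inequalities separately, working throughout in a local adapted frame $\{X_1,\ldots,X_d,Z_1,\ldots,Z_\di\}$. The second inequality is the short one: expanding in the frame gives $X_j\Gamma^Z(f)=2\sum_{m=1}^\di Z_m f\cdot X_j Z_m f$, and Cauchy--Schwarz yields $(X_j\Gamma^Z(f))^2\le 4\Gamma^Z(f)\sum_m(X_jZ_m f)^2$. Summing over $j$ and invoking the vertical Bochner formula \eqref{vbf}, which identifies $\Gamma_2^Z(f)=\|\nabla_\mathcal{H}\nabla_\mathcal{V} f\|^2=\sum_{j,m}(X_j Z_m f)^2$, gives the claim.

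For the first inequality, start from the identity $X_j\Gamma(f)=2\langle\nabla_\mathcal{H} f,\nabla_{X_j}\nabla_\mathcal{H} f\rangle$, which holds because the canonical connection of Proposition~\ref{P:horconn} is metric. Cauchy--Schwarz gives $(X_j\Gamma(f))^2\le 4\Gamma(f)|\nabla_{X_j}\nabla_\mathcal{H} f|^2$. Introducing the non-symmetric horizontal Hessian components $H_{jk}=X_j(X_k f)-(\nabla_{X_j}X_k)f$, the $k$-th component of $\nabla_{X_j}\nabla_\mathcal{H} f$ is exactly $H_{jk}$, so summing over $j$ produces $\Gamma(\Gamma(f))\le 4\Gamma(f)\sum_{j,k} H_{jk}^2$. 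Split $H_{jk}$ into symmetric and antisymmetric parts: since $T(X_j,X_k)$ is vertical by Proposition~\ref{P:horconn}(iv), a direct computation with the torsion gives $H_{jk}-H_{kj}=-T(X_j,X_k)f$. Orthogonality of symmetric and antisymmetric matrices in the Hilbert--Schmidt inner product, together with the identification of $\|\nabla_\mathcal{H}^2 f\|^2$ as the norm of the symmetrized horizontal Hessian \eqref{hessian}, implies
\[
\sum_{j,k}H_{jk}^2=\|\nabla_\mathcal{H}^2 f\|^2+\frac{1}{4}\sum_{j,k}\bigl(T(X_j,X_k)f\bigr)^2.
\]
Substituting $\|\nabla_\mathcal{H}^2 f\|^2=\Gamma_2(f)-\mathcal R(f)-\mathcal S(f)$ from the horizontal Bochner identity \eqref{bochner} expresses $\sum_{j,k}H_{jk}^2$ in terms of $\Gamma_2(f)$, $\mathcal R(f)$, $\mathcal S(f)$ and the torsion-squared term.

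The key step is to absorb the torsion-squared term using Yang--Mills. When $\delta T=0$, the Ricci-type tensor \eqref{Ro} collapses to $\mathcal R(f)=\text{Ric}(\nabla_\mathcal{H} f,\nabla_\mathcal{H} f)+\frac{1}{4}\sum_{j,k}(T(X_j,X_k)f)^2$. Evaluating the hypothesis $\mathcal R(f)\ge\rho_1\Gamma(f)+\rho_2\Gamma^Z(f)$ on test functions with prescribed horizontal gradient but vanishing vertical derivatives at a chosen base point (constructible exactly as in the proof of Theorem~\ref{P:tres_beau}, since both $\Gamma^Z(f)$ and all $T(X_j,X_k)f$ then vanish there) forces $\text{Ric}(v,v)\ge\rho_1\|v\|^2$ for every horizontal $v$. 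Consequently the global bound strengthens to
\[
\mathcal R(f)\ge\rho_1\Gamma(f)+\frac{1}{4}\sum_{j,k}\bigl(T(X_j,X_k)f\bigr)^2,
\]
so the $T^2$ terms cancel upon substitution. The remaining $-\mathcal S(f)$ is controlled exactly as in the proof of Theorem~\ref{T:cd}: Cauchy--Schwarz combined with \eqref{vbf} and the definition of $\mathcal T$ gives $|\mathcal S(f)|\le\frac{1}{\nu}\mathcal T(f)+\nu\Gamma_2^Z(f)\le\frac{\kappa}{\nu}\Gamma(f)+\nu\Gamma_2^Z(f)$. Assembling these estimates yields $\sum_{j,k}H_{jk}^2\le\Gamma_2(f)+\nu\Gamma_2^Z(f)-(\rho_1-\kappa/\nu)\Gamma(f)$, which combined with the Cauchy--Schwarz bound $\Gamma(\Gamma(f))\le 4\Gamma(f)\sum_{j,k}H_{jk}^2$ is the first inequality.

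The main obstacle is exactly this Yang--Mills step: without it, the raw hypothesis on $\mathcal R(f)$ leaves an uncancelled $\frac{1}{4}\sum(T(X_j,X_k)f)^2$ term on the right-hand side, and the torsion antisymmetric contribution to $\sum H_{jk}^2$ cannot be reabsorbed. The Yang--Mills assumption is precisely what allows the pointwise splitting of the lower bound on $\mathcal R(f)$ into separate bounds on its Ricci part and its torsion part that matches the structural identity $\mathcal R(f)=\text{Ric}(\nabla_\mathcal{H} f,\nabla_\mathcal{H} f)+\frac{1}{4}\sum(T(X_j,X_k)f)^2$.
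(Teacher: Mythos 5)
Your proof is correct and follows the same overall strategy as the paper's: bound $\Gamma(\Gamma(f))$ by $4\Gamma(f)$ times the squared norm of the full (non-symmetrized) horizontal Hessian, identify the excess over $\| \nabla_\mathcal{H}^2 f \|^2$ as the torsion term $\frac14\sum_{j,k}(T(X_j,X_k)f)^2$, and then use the Yang--Mills hypothesis to show that exactly this term can be subtracted from $\mathcal{R}(f)$ while retaining the lower bound $\rho_1\Gamma(f)$; the treatment of $\mathcal{S}(f)$, the use of the Bochner identity \eqref{bochner}, and the vertical inequality via \eqref{vbf} are identical to the paper's. The one place where you genuinely diverge is the derivation of the intermediate inequality $\Gamma(\Gamma(f))\le 4\Gamma(f)\bigl(\| \nabla_\mathcal{H}^2 f\|^2+\frac14\sum_{j,k}(T(X_j,X_k)f)^2\bigr)$: the paper picks a local adapted frame aligned with $\nabla_\mathcal{H} f$, so that $\Gamma(\Gamma(f))=4\Gamma(f)\sum_i(X_iX_df)^2$ holds exactly, and then compares the $d$-th column of the Hessian with the symmetrized one; you instead argue frame-independently via metricity of the canonical connection, Cauchy--Schwarz, and the orthogonal splitting of $H_{jk}$ into its symmetric part (whose Hilbert--Schmidt norm is \eqref{hessian}) and its antisymmetric part $-\frac12 T(X_j,X_k)f$. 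Your route is slightly less sharp at the intermediate stage (the paper only picks up the torsion entries in one column before relaxing) but lands on the identical inequality, and it makes transparent why the coefficient $\frac14$ on the torsion-squared term matches the one in \eqref{Ro}. You also make explicit a step the paper passes over with ``and thus'': namely that, under Yang--Mills, testing $\mathcal{R}(f)\ge\rho_1\Gamma(f)+\rho_2\Gamma^Z(f)$ on functions with prescribed horizontal gradient and vanishing vertical gradient at a point (which exist by the construction in the proof of Theorem~\ref{P:tres_beau}) yields the purely horizontal bound $\text{Ric}(u,u)\ge\rho_1\|u\|^2$, hence the strengthened inequality $\mathcal{R}(f)\ge\rho_1\Gamma(f)+\frac14\sum_{j,k}(T(X_j,X_k)f)^2$ that produces the cancellation. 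This is a worthwhile clarification rather than a deviation.
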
 

\begin{proof}
Let $f\in C^\infty(\bM)$ and $x_0 \in \M$. We assume that $ \nabla_\mathcal{H} f(x_0)\neq 0$, otherwise the inequality is straightforward. We can find a local adapted frame $\{ X_1,\cdots ,X_d, Z_1,\cdots,Z_\di \}$ in the neighborhood of $x_0$ such that
\[
X_1 f=0,\cdots, X_d f=\| \nabla_\mathcal{H} f \| .
\]
In this frame, we have
\[
\Gamma(\Gamma(f))= 4\left(  \sum_{i=1}^d (X_i X_d f)^2\right) \Gamma(f),
\]
and
\[
\| \nabla_\mathcal{H}^2 f \|^2 = \sum_{\ee=1}^d \left( X^2_\ee f-\sum_{i=1}^d
\omega_{i\ee}^\ee X_i f \right)^2 +  2 \sum_{1 \le \ee<j \le d}
\left( \frac  {X_j X_\ee + X_\ee X_j}{2}f -\sum_{i=1}^d \frac{\omega_{ij}^\ee
+\omega_{i\ee}^j}{2} X_i f \right)^2.
\]
By observing that $X_j X_\ee f=0 $ if $\ee \neq d$ and $X_j X_d f=\omega_{jd}^d X_d f +\sum_{m=1}^\di \gamma_{jd}^m Z_mf$ we easily reach the conclusion that
\begin{align*}
\Gamma(\Gamma(f)) -4 \| \nabla_\mathcal{H}^2 f \|^2 \Gamma(f)  & \le 2 \Gamma(f) \sum_{\ee=1}^d  \left( \sum_{m=1}^\di \gamma_{\ee d}^m Z_m f\right)^2 \\
 & \le 2 \Gamma(f) \sum_{1\le \ee<j\le d}\bigg(\sum_{m=1}^\di
\gamma^{m}_{\ee j} Z_{m} f\bigg)^2
\end{align*}
Now, from \eqref{bochner} in Theorem \ref{T:bochner} we have
\[
\Gamma_{2}(f)=\| \nabla_\mathcal{H}^2 f \|^2 +\mathcal{R}(f)+\mathcal{S}(f).  
\]
From this identity and the proof of Theorem \ref{T:cd}, we obtain 
for every $\nu>0$ 
\begin{equation}\label{cd2}
\Gamma_{2}(f)+\nu \Gamma^Z_{2}(f) \ge ||\nabla^2_{\mathcal H} f||^2  -\frac{\kappa}{\nu}  \Gamma (f) + \mathcal{R}(f).
\end{equation}
Therefore we have
\[
\Gamma(\Gamma(f)) \le  4 \Gamma(f) \left( \Gamma_{2}(f)+\nu \Gamma^Z_{2}(f)  +\frac{\kappa}{\nu}  \Gamma (f)-\mathcal{R}(f) +\frac{1}{2}  \sum_{1\le \ee<j\le d}\bigg(\sum_{m=1}^\di
\gamma^{m}_{\ee j} Z_{m} f\bigg)^2  \right).
\]
From the Yang-Mills assumption we have
\begin{align*}
 & \mathcal{R}(f) -\frac{1}{2}  \sum_{1\le \ee<j\le d}\bigg(\sum_{m=1}^\di
\gamma^{m}_{\ee j} Z_{m} f\bigg)^2\\
=& \sum_{k,\ee=1}^d \bigg\{\bigg(\sum_{j=1}^d
\sum_{m=1}^\di \gamma_{kj}^{m} \delta_{jm}^\ee\bigg) +
\sum_{j=1}^d
(X_\ee\omega^j_{kj} - X_j\omega^k_{\ee j}) \\
& + \sum_{i,j=1}^d \omega_{ji}^i \omega^\ee_{k j} - \sum_{i=1}^d
\omega_{k i}^i \omega_{\ee i}^i + \frac{1}{2} \sum_{1\le i<j\le d}
\bigg(\omega^\ee_{ij} \omega^k_{ij} - (\omega_{\ee j}^i +\omega_{\ee
i}^j)(\omega^i_{kj} + \omega^j_{ki})\bigg)\bigg\}X_k f X_\ee f,
\end{align*}
and thus
\[
\mathcal{R}(f) -\frac{1}{2}  \sum_{1\le \ee<j\le d}\bigg(\sum_{m=1}^\di
\gamma^{m}_{\ee j} Z_{m} f\bigg)^2 \ge \rho_1 \Gamma(f).
\]
Puttings things together, we conclude
\[
\Gamma(\Gamma(f)) \le  4 \Gamma(f) \left( \Gamma_{2}(f)+\nu \Gamma^Z_{2}(f)  -   \left( \rho_1 -\frac{\kappa}{\nu}\right)  \Gamma (f)  \right).
\]
The proof of
\[
\Gamma( \Gamma^Z(f)) \le 4  \Gamma^Z(f) \Gamma_2^Z(f).
\]
is easy and let to the reader.
\end{proof}

In the sequel of this  section we assume that $\M$ is complete and that there exist constants $\rho_1
\in \mathbb{R}$, $\rho_2
>0$ and $\kappa \ge 0$ such that \eqref{riccibounds} hold for every $f\in C^\infty(\bM)$.
The assumed completeness  of $\M$ implies that the Hypothesis \ref{A:exhaustion} be satisfied, that is that there exists an increasing
sequence $h_k\in C^\infty_0(\bM)$   such that $h_k\nearrow 1$ on
$\bM$, and
 \[
||\Gamma (h_k)||_{\infty} +||\Gamma^Z (h_k)||_{\infty}  \to 0,\ \ \text{as} \ k\to \infty.
\]
Following an argument of Strichartz \cite{Strichartz}, (Theorem 7.3
p. 246 and p. 261), this implies that the operators $L$ and $L+L^Z$ are both essentially self-adjoint on the space $C^\infty_0(\bM)$, where we have let
\[
L^Z=-\sum_{m=1}^\di Z_m^* Z_m.
\]
In the sequel, we will denote by $\mathcal{D}(L)$ the domain of the self-adjoint extension of $L$.
\begin{lemma}\label{L:LLZ}
The operators $L$ and $L + L^Z$ spectrally commute, that is for any bounded Borel function $\Psi: (-\infty,0] \rightarrow \mathbb{R}$ and any $f \in L^2(\M)$,
\[
\Psi( L) \Psi (L+L^Z) f=   \Psi (L+L^Z) \Psi( L) f.
\]
\end{lemma}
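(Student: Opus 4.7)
The plan is to reduce the spectral commutation of $L$ and $L+L^Z$ to the commutation of the associated heat semigroups $P_t=e^{tL}$ and $Q_s=e^{s(L+L^Z)}$, and then invoke the spectral theorem for self-adjoint operators.

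First I would show that each $Z_m$ commutes with $P_t$ on $C^\infty_0(\M)$. Starting from Lemma \ref{L:comm}, namely $[L,Z_m]=0$ on smooth functions, and using the hypoellipticity of $\partial_t-L$ so that $P_tf$ is smooth whenever $f\in C^\infty_0(\M)$, I would set $v(t,\cdot)=Z_mP_tf-P_tZ_mf$. Then $v$ is smooth with $v(0,\cdot)=0$ and satisfies $\partial_tv=Lv$. An $L^2$-uniqueness argument for the self-adjoint generator $L$, combined with a cutoff argument using the exhaustion sequence $h_k$ from Hypothesis \ref{A:exhaustion}, yields $v\equiv 0$. Since $Z_m^*=-Z_m+c_m$ for a constant $c_m$ (as noted in the proof of Lemma \ref{L:comm}), the same commutation immediately holds with $Z_m$ replaced by $Z_m^*$.

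Second, I would deduce that $L^Z$ commutes with $P_t$ on $C^\infty_0(\M)$ by
\[
L^Z P_t f=-\sum_{m=1}^{\di}Z_m^*Z_mP_tf=-\sum_{m=1}^{\di}P_tZ_m^*Z_mf=P_tL^Zf,
\]
which, combined with $LP_tf=P_tLf$, gives $(L+L^Z)P_tf=P_t(L+L^Z)f$ for every $f\in C^\infty_0(\M)$. Since $C^\infty_0(\M)$ is a core for the self-adjoint extension of $L+L^Z$, this identity extends to show that the bounded operator $P_t$ leaves the domain of $L+L^Z$ invariant and commutes with $L+L^Z$ as a closed operator. The standard spectral-theoretic fact that a bounded operator commuting with a self-adjoint operator also commutes with every bounded Borel function of it then yields $P_t\Psi(L+L^Z)=\Psi(L+L^Z)P_t$ for every bounded Borel $\Psi$, so in particular $Q_s$ and $P_t$ commute for all $s,t\ge 0$.

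Finally, applying the same principle with the roles reversed — the bounded operator $\Psi(L+L^Z)$ commutes with the self-adjoint semigroup $\{e^{tL}\}_{t\ge 0}$ and hence with every bounded Borel function of $L$ — I would conclude that $\Psi(L)\Psi(L+L^Z)=\Psi(L+L^Z)\Psi(L)$, which is the claim. The chief technical hurdle is the first step: the $L^2$-uniqueness needed to pass from $\partial_t v=Lv,\ v(0,\cdot)=0$ to $v\equiv 0$ requires that $Z_mP_tf$ belong to a class in which uniqueness actually holds, and this is precisely where completeness of $(\M,d)$, the exhaustion sequence $h_k$, and the essential self-adjointness of $L$ all intervene.
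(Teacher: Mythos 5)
Your overall strategy (reduce spectral commutation to commutation of the semigroups and then invoke the spectral theorem / Reed--Simon) is the same as the paper's, but the direction in which you run the heat-equation uniqueness argument creates a genuine gap. You propose to show $Z_m P_t f = P_t Z_m f$ (equivalently $L^Z P_t f = P_t L^Z f$) by observing that $v = Z_m P_t f - P_t Z_m f$ solves $\partial_t v = Lv$ with $v(0,\cdot)=0$ and then applying $L^2$-uniqueness. But uniqueness (Proposition \ref{uniquenessLp}) only applies once you know $\|v(\cdot,t)\|_2 < \infty$, i.e.\ that $Z_m P_t f \in L^2(\M)$. Nothing available at this stage gives that: from $P_t f \in \mathcal D_\infty(L)$ you control $L^k P_t f$ in $L^2$, but a \emph{vertical} derivative of $P_t f$ is not controlled by powers of $L$. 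The bound you would need, $\int_\M \Gamma^Z(P_t f)\, d\mu < \infty$, is exactly the content of Lemma \ref{spectralp} and Proposition \ref{a:priori:bounds}, whose proofs in the paper \emph{use} the spectral commutation of $L$ and $L+L^Z$ — so invoking it here would be circular. Completeness, the exhaustion sequence $h_k$, and essential self-adjointness, which you cite as the ingredients that "intervene," do not by themselves produce this integrability. The same issue undermines your core argument for $L+L^Z$: to conclude $P_t f \in \mathcal D(\overline{L+L^Z})$ you again need $(L+L^Z)P_t f \in L^2$, i.e.\ $L^Z P_t f \in L^2$.

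The paper resolves this by running the argument in the opposite direction, where an a priori $L^2$ bound \emph{is} available. It first shows, by integration by parts on $C^\infty_0(\M)$ and $[L,Z_m]=0$, that $\int_\M Lf\, L^Z f\, d\mu \ge 0$, whence $\|Lg\|_{L^2} \le \|(L+L^Z)g\|_{L^2}$ for $g \in \mathcal D(L+L^Z)$ (inequality \eqref{boundkj}). It then sets $\phi(\cdot,t) = L Q_t f$ with $Q_t = e^{t(L+L^Z)}$: since $Q_t f \in \mathcal D_\infty(L+L^Z)$, the inequality \eqref{boundkj} guarantees $\phi(\cdot,t) \in L^2$, so $L^2$-uniqueness for the $(L+L^Z)$-heat equation applies and yields $LQ_t f = Q_t Lf$, from which $P_sQ_t = Q_tP_s$ follows. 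If you want to keep your direction, you must first supply an independent proof that $\Gamma^Z(P_tf) \in L^1(\M)$ (or that $L^Z P_tf \in L^2$); otherwise you should reverse the roles of the two semigroups as the paper does.
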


\begin{proof}
Let $ f \in C_0^\infty(\mathbb{M})$. We first observe that
\begin{equation}\label{pos}
\int_\mathbb{M} \Gamma^Z(f, Lf) d\mu \le 0.
\end{equation}
To see this we note that, thanks to Lemma \ref{L:comm}, we have
\[
2 \int_\M \Gamma^Z(f,Lf) d\mu = \int_M L\Gamma^Z(f) d\mu - 2 \sum_{m=1}^\di \int_\M \Gamma(Z_m f) d\mu = - 2 \sum_{m=1}^\di \int_\M \Gamma(Z_m f) d\mu \le 0.
\]
Next, we observe that for every $f, g\in C^\infty_0(\M)$ we have
\[
0 = \int_\M L^Z(fg)d\mu = \int_\M f L^Z g d\mu + \int_\M g L^Z f d\mu + 2 \int_\M \Gamma^Z(f,g) d\mu.
\]
With $f\in C^\infty_0(\M)$ and $g = Lf$, this gives
\begin{align*}
- 2 \int_\M \Gamma^Z(f,Lf) d\mu = 2 \int_\M Lf L^Z f d\mu.
\end{align*}
In view of \eqref{pos} this gives for any $f\in C^\infty_0(\M)$
\[
\int_\M Lf L^Z f d\mu \ge 0.
\]
In turn, this implies for all $f\in C^\infty_0(\M)$
\begin{equation}\label{boundkj}
\int_\M (Lf)^2 d\mu \le \int_\M (Lf +L^Zf)^2 d\mu.
\end{equation}
But then, the inequality \eqref{boundkj} continues to be true for $f \in \mathcal{D}(L+L^Z)$.
Let now $f \in \mathcal{D}(L)$ and consider  the function,
\[
\phi(x,t)=LQ_t f(x),
\]
where $Q_t$ is the heat semigroup associated with $L+L^Z$. Since $L$ and $L+L^Z$ commute on smooth functions (see Lemma \ref{L:comm}), we easily see that $\phi$ solves the heat equation
\[
\frac{\partial \phi}{\partial t}=(L+L^Z)\phi,
\]
with initial condition $\phi(x,0)=Lf(x)$. From \eqref{boundkj}, we have that for every $t \ge 0$, $\int_\mathbb{M} \phi(x,t)^2 d\mu <\infty$. Thus by uniqueness in $L^2$ of solutions of the heat equation, we conclude that $\phi(x,t)=LQ_t f(x)=Q_t Lf(x)$. By a similar argument,  we may prove that for every $f \in L^2(\mathbb{M})$, $s,t \ge 0$
\[
P_sQ_t f =Q_t P_s f,
\]
which implies that $L$ and $L+L^Z$ spectrally commute, see Reed and Simon \cite{reed1} (Chapter 8, Section 5).

\end{proof}

\begin{lemma}\label{spectralp}
There is a constant $C=C(\rho_1,\rho_2,\kappa)>0$ such that for every smooth function $f$ belonging to $\mathcal{D}(L^2)$, one has
\[
0 \le - \int_\mathbb{M} \Gamma^Z(f, Lf) d\mu \le C  \| f \|^2_{\mathcal{D}(L^2)} ,
\]
where
\[
  \| f \|^2_{\mathcal{D}(L^2)} =\int_\mathbb{M} f^2 d\mu+ \int_\mathbb{M} (L^2f)^2 d\mu.
 \]

\end{lemma}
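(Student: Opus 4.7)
The lower bound is immediate from the computation already carried out in the proof of Lemma \ref{L:LLZ}: using $[L,Z_m]=0$ and integration by parts one has
\[
-\int_\M \Gamma^Z(f,Lf)\,d\mu \;=\; \sum_{m=1}^\di \int_\M \Gamma(Z_m f)\,d\mu \;\ge\; 0.
\]
The content of the lemma therefore lies in the upper bound. The plan is to reduce it, via Cauchy--Schwarz, to a spectral-domination estimate of $L^Z$ by $L^2$, and then to extract that estimate from the curvature-dimension inequality CD$(\rho_1,\rho_2,\kappa,d)$ by joint spectral analysis.

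For the reduction, write $L^Z=-\sum_m Z_m^* Z_m$ and integrate by parts to obtain $-\int \Gamma^Z(f,Lf)\,d\mu = \int Lf\cdot L^Z f\,d\mu \le \|Lf\|_2\,\|L^Z f\|_2$. The first factor is controlled by the spectral Cauchy--Schwarz inequality applied to the self-adjoint operator $L$: $\|Lf\|_2^2\le \|f\|_2\|L^2 f\|_2 \le \tfrac12 \|f\|_{\mathcal D(L^2)}^2$. The whole matter thus comes down to proving a bound of the form $\|L^Z f\|_2^2 \le C_0(\rho_1,\rho_2,\kappa,d)\,\|f\|_{\mathcal D(L^2)}^2$.

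To establish this, I would invoke Lemma \ref{L:LLZ}: since $L$ and $L+L^Z$ spectrally commute, so do $L$ and $L^Z=(L+L^Z)-L$. Let $dE_{g,g}(\lambda,\eta)$ denote the corresponding joint spectral measure with parameters $\lambda=-L,\ \eta=-L^Z\ge 0$. Integrating CD$(\rho_1,\rho_2,\kappa,d)$ over $\M$ for $g\in C_0^\infty(\M)$, using $\int_\M L\Gamma(g)\,d\mu = 0$, $\int \Gamma_2(g)\,d\mu = \|Lg\|_2^2$, and $\int \Gamma_2^Z(g)\,d\mu = -\int \Gamma^Z(g,Lg)\,d\mu$, the inequality reads in spectral form
\[
\int \big[(1-\tfrac1d)\lambda^2 + \nu\,\lambda\eta - (\rho_1-\tfrac{\kappa}{\nu})\lambda - \rho_2\,\eta\big]\,dE_{g,g}(\lambda,\eta)\;\ge\;0
\]
for every $g\in C_0^\infty(\M)$ and every $\nu>0$. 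By density of $C_0^\infty(\M)$ in $L^2(\M,\mu)$ (Hypothesis \ref{A:exhaustion}) and the ability to localize joint spectral measures to any point of $\sigma(-L,-L^Z)$, the bracketed expression must be non-negative pointwise on the joint spectrum. Optimizing in $\nu$---the natural choice $\nu = \rho_2/(2\lambda)$ handles $\lambda>0$, while $\lambda=0$ forces $\eta=0$ directly---yields the pointwise spectral bound $\eta \le C_1(\rho_1,\rho_2,\kappa,d)\,(1+\lambda^2)$ on $\sigma(-L,-L^Z)$. Squaring gives $\eta^2\le 2C_1^2(1+\lambda^4)$, hence $\|L^Z f\|_2^2 = \int \eta^2\,dE_{f,f} \le 2C_1^2\,\|f\|_{\mathcal D(L^2)}^2$, and combining with the earlier Cauchy--Schwarz step completes the upper bound.

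The delicate step---and the one I expect to be the main obstacle---is justifying the passage from the integrated CD inequality to the \emph{pointwise} bound on $\sigma(-L,-L^Z)$. To make this rigorous I would spectrally localize by bounded continuous functions of the commuting pair $(L,L+L^Z)$, smooth the resulting elements by the heat semigroups $e^{tL}$ or $e^{t(L+L^Z)}$ (which commute with the joint spectral projections) to obtain smooth approximants amenable to CD, and pass to the limit; bounded spectral cut-offs make the integrals $\int p_\nu\,dE_{g,g}$ continuous in $g$ so that the inequality persists. Once the pointwise bound $\eta\le C_1(1+\lambda^2)$ is in hand, the remainder of the argument is purely algebraic.
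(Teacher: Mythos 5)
Your argument is correct and follows essentially the same route as the paper's: both proofs integrate CD$(\rho_1,\rho_2,\kappa,d)$ over $\M$, pass to the joint spectral representation of the commuting pair $(L,L+L^Z)$ furnished by Lemma \ref{L:LLZ} to obtain a pointwise inequality in the spectral parameters $(\lambda,\lambda^Z)$, and then choose $\nu$ as a function of $\lambda$ to dominate $\lambda^Z$ polynomially in $\lambda$. The only differences are cosmetic: the paper takes $\nu=\rho_2(\lambda+1)^{-1}$ and closes the estimate by testing against the resolvent $\rho_2(-L+\mathrm{Id})^{-1}$ and substituting $-Lf+f$ for $f$, whereas you take $\nu=\rho_2/(2\lambda)$ to get $\lambda^Z\le C(1+\lambda^2)$ and close with Cauchy--Schwarz on $\int Lf\,L^Zf\,d\mu$; both yield the stated bound, and the justification of the pointwise spectral inequality (which you rightly flag as the delicate step) is handled no more carefully in the paper than in your sketch.
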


\begin{proof}

  From Theorem \ref{T:cd} we have for every $\nu>0$
\[
\Gamma_2(f) +\nu \Gamma_2^Z(f) \ge \left( \rho_1-\frac{\kappa}{\nu} \right) \Gamma(f)+\rho_2\Gamma^Z(f).
\]
Since
\[
2\Gamma_2(f)=L \Gamma(f)-2\Gamma(f,Lf),
\]
and
\[
2\Gamma^Z_2(f)=L \Gamma^Z(f)-2\Gamma^Z(f,Lf),
\]
we deduce by an integration over $\mathbb{M}$ that for $\nu >0$,
\[
\int_\mathbb{M} (Lf)^2 d\mu +\nu \int_\mathbb{M} Lf L^Z f d\mu \ge \left( \rho_1-\frac{\kappa}{\nu} \right) \int_\mathbb{M} \Gamma(f) d\mu+\rho_2\int_\mathbb{M}\Gamma^Z(f) d\mu.
\]
(One should keep in mind that, since $f\in C_0^\infty(\M)$, we have $\int_\M L\Gamma(f)d\mu = \int_\M L\Gamma^Z(f)d\mu = 0$, and that $-\int_\M \Gamma(f,Lf)d\mu = \int_\M (Lf)^2 d\mu$, $-\int_\M \Gamma^Z(f,Lf)d\mu = \int_\M Lf L^Z f d\mu$.)
The latter inequality can be re-written as
\[
\int_\mathbb{M} (Lf)^2 d\mu +\nu \int_\mathbb{M} Lf  L^Z f d\mu \ge \left( \rho_1-\frac{\kappa}{\nu} \right) \int_\mathbb{M} (-Lf)f d\mu+\rho_2\int_\mathbb{M} (-L^Z f)f  d\mu.
\]
From  Lemma \ref{L:comm}, the diffusion operators $L$ and $L+L^Z$ spectrally commute, therefore from the spectral theorem, 
 there is a measure space $(\Omega, \alpha)$, a unitary map $U: \mathbf{L}_{\alpha}^2 (\Omega,\mathbb{R})  \rightarrow L^2 ( \mathbb{M})$ and   real valued measurable functions $\lambda$ and $\lambda^Z$ on $\Omega$ such that for $x \in \Omega$,
\[
U^{-1} L U g (x)=-\lambda(x) g(x), 
\]
\[
U^{-1} L^Z U g (x)=-\lambda^Z(x) g(x).
\]
From the previous inequality, we obtain
\[
\| \lambda U^{-1} f \|^2_{L^2_\alpha} +\nu \langle \lambda U^{-1}f , \lambda^Z  U^{-1}f \rangle_{L^2_\alpha}  \ge \left( \rho_1-\frac{\kappa}{\nu} \right) \langle \lambda U^{-1}f ,   U^{-1}f \rangle_{L^2_\alpha} +\rho_2 \langle \lambda^Z U^{-1}f ,   U^{-1}f \rangle_{L^2_\alpha}.
\]
Since it holds for every smooth and compactly supported functions, we deduce that for every $\nu >0$, we have almost everywhere with respect to $\alpha$,
\[
\lambda^2(x) +\nu \lambda^Z(x) \lambda(x)  \ge \left( \rho_1-\frac{\kappa}{\nu} \right)\lambda(x)+\rho_2 \lambda^Z(x).
\] 
In particular, by choosing
\[
\nu=\rho_2 (\lambda(x)+1)^{-1},
\]
we obtain the following inequality on the spectral measures
\begin{align}\label{spectrald}
\frac{\rho_2 \lambda^Z}{\lambda+1} \le -\left( \rho_1-\frac{\kappa}{\rho_2}\right)\lambda +\left(1+\frac{\kappa}{\rho_2}\right)\lambda^2.
\end{align}
As a consequence, for any $f \in \mathcal{D}(L^2)$,
\begin{align}\label{boundZ}
\rho_2 \int_{\M} (-L^Z f) f d\mu \le&  -\left( \rho_1-\frac{\kappa}{\rho_2}\right)\left(  \int_{\M} (-L f) f d\mu+  \int_{\M} (L f)^2 d\mu\right) \\
& +\left(1+\frac{\kappa}{\rho_2}\right)\left(  \int_{\M} (L f)^2 d\mu+  \int_{\M} (-L f)(L^2f) d\mu\right) \notag
\end{align}
By denoting
\[
\mathbf{R}=\rho_2 (-L+Id)^{-1},
\]
we also deduce from \ref{spectrald} that for every $f \in \mathcal{D}(L)$,
\[
\rho_2 \int_\mathbb{M} (-L^Z f) (\mathbf{R} f) d\mu \le -\left( \rho_1-\frac{\kappa}{\rho_2} \right) \int_\mathbb{M} -f Lf d\mu+\left(1+\frac{\kappa}{\rho_2}\right)\int_\mathbb{M} (Lf)^2  d\mu.
\]
By using now the above inequality with $-Lf+f$ instead of $f$, and using (\ref{boundZ}) we obtain the desired inequality.

\end{proof}

\begin{remark}
The previous proof also shows the following inclusion of domains:
\[
\mathcal{D} (L^2) \subset \mathcal{D}(L+L^Z) \subset \mathcal{D}(L).
\]
\end{remark}

As a consequence of the previous inequality , we obtain the following useful a priori bounds.

\begin{proposition}\label{a:priori:bounds}
There exists a positive constant $C=C(\rho_1,\rho_2,\kappa)>0$ such that for every smooth function $f$ belonging to $\mathcal{D}(L^2)$,
\[
\int_\bM \Gamma^Z(f) d\mu \le C \| f \|^2_{\mathcal{D}(L^2)},
\]
\[
\int_\bM \Gamma^Z_2 (f) d\mu \le C \| f \|^2_{\mathcal{D}(L^2)},
\]
\[
\int_\bM \left( \Gamma_{2}(f)+ \Gamma^Z_{2}(f)  -   \left( \rho_1 -\kappa \right)  \Gamma (f) \right) d\mu \le C \| f \|^2_{\mathcal{D}(L^2)}.
\]

\end{proposition}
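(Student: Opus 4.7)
The plan is to reduce all three bounds to the spectral inequality (\ref{spectrald}) established in the proof of Lemma \ref{spectralp}, working in the joint spectral representation of $L$ and $L + L^Z$ afforded by Lemma \ref{L:LLZ}. For $f \in C_0^\infty(\bM)$, where boundary terms vanish trivially by compact support, integrating the defining identities \eqref{gamma2}--\eqref{gamma2Z} and using the symmetry of $L$ and $L^Z$ together with $\int_\bM L\Gamma(f) d\mu = \int_\bM L\Gamma^Z(f) d\mu = 0$ yields
\[
\int_\bM \Gamma^Z(f)\, d\mu = \langle -L^Z f, f\rangle_{L^2}, \quad \int_\bM \Gamma_2(f)\, d\mu = \|Lf\|_{L^2}^2, \quad \int_\bM \Gamma^Z_2(f)\, d\mu = \langle Lf, L^Z f\rangle_{L^2}.
\]
Multiplying (\ref{spectrald}) through by $\lambda + 1$ gives the polynomial majorization $\lambda^Z \le c(\lambda + \lambda^2 + \lambda^3)$, and hence $\lambda \lambda^Z \le c(\lambda^2 + \lambda^3 + \lambda^4)$ on the spectral measure space, with $c$ depending only on $\rho_1,\rho_2,\kappa$. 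Combining this with $\int_\Omega \lambda^{2j}|\hat f|^2 d\alpha = \|L^j f\|_{L^2}^2$ and the AM-GM interpolation $\lambda^{2j+1} \le \tfrac{1}{2}(\lambda^{2j} + \lambda^{2j+2})$ yields $\int_\Omega (\lambda^Z + \lambda \lambda^Z)|\hat f|^2 d\alpha \le C\|f\|^2_{\mathcal{D}(L^2)}$, which delivers the first two inequalities for $f \in C_0^\infty(\bM)$; the third follows after adding $-(\rho_1-\kappa)\int_\bM \Gamma(f) d\mu = -(\rho_1 - \kappa)\int_\Omega \lambda|\hat f|^2 d\alpha$, which is likewise covered.

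For smooth $f \in \mathcal{D}(L^2)$ not assumed to be compactly supported, the first inequality extends straightforwardly: the Remark after Lemma \ref{spectralp} gives $\mathcal{D}(L^2) \subset \mathcal{D}(L^Z)$, and since $L^Z$ is essentially self-adjoint on $C_0^\infty(\bM)$ (by the Strichartz argument invoked before Lemma \ref{L:LLZ}), one approximates $f$ by $f_n \in C_0^\infty(\bM)$ in the graph norm of $L^Z$. Then $\langle -L^Z f_n, f_n\rangle_{L^2} \to \langle -L^Z f, f\rangle_{L^2}$, and the bilinearity identity $\int_\bM \Gamma^Z(f_n) - \int_\bM \Gamma^Z(f) = \int_\bM \Gamma^Z(f_n-f, f_n+f)\,d\mu$, together with the Cauchy-Schwarz inequality for $\Gamma^Z$ and the relation $\int_\bM \Gamma^Z(u)\,d\mu = \langle -L^Z u, u\rangle_{L^2}$ already known on $C_0^\infty$, gives $\int_\bM \Gamma^Z(f_n) d\mu \to \int_\bM \Gamma^Z(f) d\mu$ and hence the first inequality.

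The main obstacle is extending the identities for $\int_\bM \Gamma_2(f) d\mu$ and $\int_\bM \Gamma^Z_2(f) d\mu$, as these involve second derivatives of $f$ for which no natural graph-norm approximation is available. The plan is to localize via the cutoff $h_k$ of Hypothesis \ref{A:exhaustion}: one has $\int_\bM h_k L\Gamma^Z(f) d\mu = -\int_\bM \Gamma(\Gamma^Z(f), h_k) d\mu$ and analogously with $\Gamma$ in place of $\Gamma^Z$, and by the Cauchy-Schwarz inequality for $\Gamma$ this is bounded by $\|\Gamma(h_k)\|_\infty^{1/2}\int_\bM \Gamma(\Gamma^Z(f))^{1/2} d\mu$. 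Proposition \ref{improveCD} supplies $\Gamma(\Gamma^Z(f)) \le 4 \Gamma^Z(f)\Gamma^Z_2(f)$ and the companion inequality bounding $\Gamma(\Gamma(f))$; combining these with the first inequality (already proven) and a monotone-convergence bootstrap that first bounds $\int_\bM h_k^2 \Gamma^Z_2(f) d\mu$ and $\int_\bM h_k^2 \Gamma_2(f) d\mu$ uniformly in $k$, one shows the error terms vanish as $k \to \infty$ since $\|\Gamma(h_k)\|_\infty + \|\Gamma^Z(h_k)\|_\infty \to 0$. This closes the extension argument and completes the proof.
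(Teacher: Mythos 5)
For $f\in C_0^\infty(\bM)$ your argument is correct and is essentially the paper's: the paper obtains $\int_\bM \Gamma_2^Z(f)\,d\mu=-\int_\bM\Gamma^Z(f,Lf)\,d\mu$ directly from Lemma \ref{spectralp}, $\int_\bM\Gamma_2(f)\,d\mu=\int_\bM(Lf)^2d\mu$, and the $\Gamma^Z$-bound from the integrated curvature-dimension inequality, whereas you unwind everything to the spectral majorization \eqref{spectrald} in the joint representation of Lemma \ref{L:LLZ}; the mechanism is identical and your polynomial/AM--GM bookkeeping is sound.

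The divergence is in the extension to smooth $f\in\mathcal{D}(L^2)$, which the paper dispatches by citing the density of $C_0^\infty(\bM)$ in $\mathcal{D}(L^2)$ and ``the same arguments as in Bakry,'' leaving details to the reader. Your more explicit attempt has two soft spots. First, you assert that $L^Z$ is essentially self-adjoint on $C_0^\infty(\bM)$ ``by the Strichartz argument''; the paper establishes this only for $L$ and $L+L^Z$, and Strichartz's completeness criterion does not apply to $L^Z$ alone, whose carr\'e du champ $\Gamma^Z$ is purely vertical and hence associated with a degenerate (non-bracket-generating) metric. The inclusion $\mathcal D(L^2)\subset\mathcal D(L+L^Z)$ from the Remark after Lemma \ref{spectralp}, together with the spectral calculus, is what should be used to make sense of $\langle -L^Zf,f\rangle$ here. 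Second, and more seriously, your $h_k$-bootstrap for $\int_\bM h_k^2\,\Gamma_2^Z(f)\,d\mu$ only localizes the term $\tfrac12\int h_k^2 L\Gamma^Z(f)\,d\mu$; by \eqref{gamma2Z} you must also control $\int_\bM h_k^2\,\Gamma^Z(f,Lf)\,d\mu$, and Cauchy--Schwarz bounds this by $\bigl(\int_\bM\Gamma^Z(f)d\mu\bigr)^{1/2}\bigl(\int_\bM\Gamma^Z(Lf)d\mu\bigr)^{1/2}$, which requires $Lf\in\mathcal{D}(L^2)$ --- not implied by $f\in\mathcal{D}(L^2)$. As written the bootstrap does not close. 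The standard repair is to regularize first (prove the bounds for $P_\varepsilon g$ with $g\in C_0^\infty(\bM)$, or more generally for $f\in\cap_k\mathcal D(L^k)$, where all the spectral identities are available, and then pass to the limit using lower semicontinuity), or to approximate $f$ in the graph norm of $L^2$ and identify the limits locally via hypoellipticity; either way this is exactly the Bakry-type argument the paper alludes to, and your sketch should be reorganized around it rather than around the cutoffs $h_k$ alone.
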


\begin{proof}
Let $f \in C^\infty_0(\bM)$. According to Lemma  \ref{spectralp},  we have 
\[
\int_\bM \Gamma_2^Z(f) d\mu=  - \int_\mathbb{M} \Gamma^Z(f, Lf) d\mu \le  C_1 \| f \|^2_{\mathcal{D}(L^2)}.
\]
Then, we get
\[
\int_\bM \Gamma_2(f) d\mu=  - \int_\mathbb{M} \Gamma(f, Lf) d\mu = \int_{\bM} (Lf)^2 d\mu \le  \| f \|^2_{\mathcal{D}(L^2)},
\]
and 
\[
\int_\mathbb{M} (Lf)^2 d\mu +\nu \int_\mathbb{M} Lf L^Z f d\mu \ge \left( \rho_1-\frac{\kappa}{\nu} \right) \int_\mathbb{M} \Gamma(f,f) d\mu+\rho_2\int_\mathbb{M}\Gamma^Z(f,f) d\mu.
\]
which implies
\[
\int_\bM \Gamma^Z(f) d\mu \le  C_2 \| f \|^2_{\mathcal{D}(L^2)}.
\]
Putting things together, we conclude that for $f \in C_0^\infty(\bM)$,
\[
\int_\bM \Gamma^Z(f) d\mu \le C \| f \|^2_{\mathcal{D}(L^2)},
\]
\[
\int_\bM \Gamma^Z_2 (f) d\mu \le C \| f \|^2_{\mathcal{D}(L^2)},
\]
\[
\int_\bM \left( \Gamma_{2}(f)+ \Gamma^Z_{2}(f)  -   \left( \rho_1 -\kappa \right)  \Gamma (f) \right) d\mu \le C \| f \|^2_{\mathcal{D}(L^2)}.
\]
The inequalities are then extended to the smooth functions of $\mathcal{D}(L^2)$ by using the essential self-adjointness of $L$ which implies the density of $ C_0^\infty(\bM)$ in   $\mathcal{D}(L^2)$ and the  same  arguments as in Bakry \cite{bakry-CRAS,bakry-stflour}. The details are let to the reader.
\end{proof}


\section{The heat semigroup and parabolic comparison theorems}\label{S:comp}

We now return to the general framework described in the introduction. Hereafter in this paper, $\bM$ will be a $C^\infty$ connected manifold endowed with a smooth measure $\mu$ and a smooth, locally subelliptic operator $L$ satisfying $L1=0$ and \eqref{sa}. We indicate with $\Gamma(f)$ the quadratic differential form defined by \eqref{gamma} and  denote by $d(x,y)$ the canonical distance \eqref{di} associated with such form. As we have said in the introduction throughout this paper we assume that $(\M,d)$ be a complete metric space. Furthermore, we assume that $\bM$ be endowed with another smooth bilinear differential form, indicated with $\Gamma^Z$, satisfying \eqref{Zleib} above.
We thus have, in particular, $\Gamma^Z(1) = 0$. As stated in the introduction, we assume that $\Gamma^Z(f) \ge 0$ for every $f\in C^\infty(\M)$.

From \eqref{sa} we have that, as an operator defined on $C^\infty_0(\bM)$, $L$ is
symmetric with respect to the measure $\mu$ and non-positive: for $f
\in C^\infty_0(\bM)$, $<Lf,f> \le 0$.

Then, 
following an argument of Strichartz \cite{Strichartz}, Theorem 7.3
p. 246 and p. 261, by using the completeness of $(\bM,d)$, we conclude  that $L$ is essentially self-adjoint
on $C^\infty_0(\bM)$.
As a consequence, $L$ admits a unique self-adjoint extension (its Friedrichs extension). We shall continue to denote such extension by $L$. The domain of this extension shall be denoted by $\mathcal{D}(L)$. 

Hereafter, for $1\le p \le \infty$ we will write $L^p(\M)$ instead of $L^p(\M,\mu)$. If $L=-\int_0^{\infty} \lambda dE_\lambda$ denotes the spectral
decomposition of $L$ in $L^2 (\bM)$, then by definition, the
heat semigroup $(P_t)_{t \ge 0}$ is given by $P_t= \int_0^{\infty}
e^{-\lambda t} dE_\lambda$. It is a one-parameter family of bounded operators on
$L^2 (\bM)$. Since the quadratic form $\mathcal Q(f) = -<f,Lf>$ is a Dirichlet
form in the sense of Fukushima \cite{Fu}, we deduce 
that $(P_t)_{t \ge 0}$ is a sub-Markov semigroup: it transforms
positive functions into positive functions and satisfies
\begin{equation}\label{submarkov}
P_t 1 \le 1.
\end{equation}
This property implies in particular \begin{equation}\label{sminfty}
||P_tf||_{L^1(\bM)} \le ||f||_{L^1(\bM)},\ \ \
||P_tf||_{L^\infty(\bM)} \le ||f||_{L^\infty(\bM)},
\end{equation}
and therefore by the Theorem of Riesz-Thorin
\begin{equation}\label{smp}
||P_tf||_{L^p(\bM)} \le ||f||_{L^p(\bM)},\ \  1\le p\le \infty.
\end{equation}

From the spectral definition of $P_t$,  it is clear that for every $t>0$, and every $f \in L^2(\bM)$, $P_tf \in \mathcal{D}_\infty (L)=\cap_{k \ge 1} \mathcal{D}(L^k)$.
Moreover, it can be shown as in \cite{LI}:

\begin{proposition}\label{uniquenessLp}
The unique solution of the Cauchy problem
\[
\begin{cases}
\frac{\p u}{\p t} - Lu = 0,
\\
u(x,0) = f(x),\ \ \ \   f\in L^p(\bM),1<p<\infty,
\end{cases}
\]
that satisfies $\| u(\cdot,t) \|_p <\infty$, is given by $u(x,t)=P_t f(x)$.
\end{proposition}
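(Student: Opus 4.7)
The strategy separates existence from uniqueness. For existence, $P_t$ is initially defined on $L^2(\bM)$ by the spectral theorem and extends by density and the $L^p$ contractivity \eqref{smp} to a contraction semigroup on every $L^p(\bM)$, $1<p<\infty$. For $f\in L^p$, the local hypoellipticity of $\partial_t - L$ promotes $P_tf$ to a smooth function of $(x,t)\in\bM\times(0,\infty)$ that satisfies $\partial_t P_tf = L P_tf$ pointwise, and by construction $\|P_tf\|_p\le \|f\|_p<\infty$.

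For uniqueness, it suffices to show that any solution $v$ with $v(\cdot,0)=0$ and $\|v(\cdot,s)\|_p<\infty$ vanishes identically on a fixed interval $[0,T]$. Hypoellipticity of $\partial_t - L$ ensures $v$ is smooth in $(x,s)$, so the computation below is legitimate once localized. Let $\{h_k\}\subset C_0^\infty(\bM)$ be an exhaustion with $h_k\nearrow 1$ and $\|\Gamma(h_k)\|_\infty\to 0$, whose existence is guaranteed by completeness of $(\bM,d)$ via the same Strichartz-type construction already invoked. The main step is the $L^p$ energy identity
\[
\frac{d}{ds}\int_\bM |v|^p h_k^2\,d\mu = -p(p-1)\int |v|^{p-2}\Gamma(v)\,h_k^2\,d\mu - 2p\int |v|^{p-2}v\,h_k\,\Gamma(v,h_k)\,d\mu,
\]
obtained by integration by parts after the localizing weight $h_k^2$ makes the integrand compactly supported. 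Applying Cauchy--Schwarz in the form
\[
\Bigl|2p\int |v|^{p-2}v\,h_k\,\Gamma(v,h_k)\,d\mu\Bigr| \le p(p-1)\int |v|^{p-2}h_k^2\,\Gamma(v)\,d\mu + \frac{p}{p-1}\int |v|^p\,\Gamma(h_k)\,d\mu,
\]
and absorbing the $\Gamma(v)$ term yields
\[
\frac{d}{ds}\int_\bM |v|^p h_k^2\,d\mu \le \frac{p}{p-1}\,\|\Gamma(h_k)\|_\infty\int_\bM |v(\cdot,s)|^p\,d\mu.
\]
Integrating over $[0,T]$ with $v(\cdot,0)=0$, then letting $k\to\infty$ (monotone convergence on the left, $\|\Gamma(h_k)\|_\infty\to 0$ on the right), one concludes $\|v(\cdot,T)\|_p = 0$, hence $v(\cdot,T)\equiv 0$.

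The principal technical obstacle is the lack of smoothness of $t\mapsto |t|^{p-2}t$ at the origin when $1<p<2$; the computation above should therefore first be performed with the regularization $(v^2+\epsilon)^{(p-2)/2}v$ in place of $|v|^{p-2}v$ and then $\epsilon\to 0$. A secondary subtlety is ensuring that $s\mapsto \|v(\cdot,s)\|_p^p$ is integrable on $[0,T]$, which is needed to conclude that the right-hand side vanishes in the limit; this follows from the continuity in $s$ of the $L^p$-norm of a classical smooth solution, combined with the standing pointwise hypothesis $\|v(\cdot,s)\|_p<\infty$. If this continuity is not immediate, one can first work on $[\delta,T]$, where $v(\cdot,\delta)=P_\delta\cdot 0=0$ after a short-time argument, and then let $\delta\to 0$.
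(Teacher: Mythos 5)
Your argument is correct and is essentially the route the paper itself takes: the paper simply defers to Li's reference \cite{LI}, whose proof of $L^p$-uniqueness ($p>1$) is exactly this localized energy estimate $\frac{d}{ds}\int|v|^p\phi^2\,d\mu\le C_p\|\Gamma(\phi)\|_\infty\|v(\cdot,s)\|_p^p$ with cutoffs whose carr\'e du champ tends to zero, which here is supplied by the sequence $h_k$ from Hypothesis \ref{A:exhaustion} (guaranteed by completeness), together with the standard regularization of $|t|^{p-2}t$ when $1<p<2$. The only point to keep an eye on is the one you already flag, namely that integrating the differential inequality requires $s\mapsto\|v(\cdot,s)\|_p^p$ to be locally integrable on $[0,T]$, which is the implicit reading of the hypothesis $\|u(\cdot,t)\|_p<\infty$ in the proposition and of the corresponding hypothesis in \cite{LI}.
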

Due to the hypoellipticity of $L$ the function $(x,t) \rightarrow P_t f(x)$ is
smooth on $\mathbb{M}\times (0,\infty) $ and
\[ P_t f(x)  = \int_{\mathbb M} p(x,y,t) f(y) d\mu(y),\ \ \ f\in
C^\infty_0(\mathbb M),\] where $p(x,y,t) > 0$ is the so-called heat
kernel associated to $P_t$. Such function is smooth and it is symmetric, i.e., \[ p(x,y,t)
= p(y,x,t). \]
 By the
semi-group property for every $x,y\in \bM$ and $0<s,t$, we have
\begin{align}\label{sgp}
p(x,y,t+s) & = \int_\bM p(x,z,t) p(z,y,s) d\mu(z)  
\\
& = \int_\bM p(x,z,t)
p(y,z,s) d\mu(z) = P_s(p(x,\cdot,t))(y).
\notag
\end{align}

We first establish a global comparison theorem in $L^2$.

\begin{proposition}\label{P:missing_key}
Suppose that $\M$ satisfy the Hypothesis \ref{A:exhaustion}. Let $T>0$. Let $u,v: \mathbb{M}\times [0,T] \to \mathbb{R}$ be  smooth functions such that:
\begin{itemize}
\item[(i)]  For every $t \in [0,T]$, $u(\cdot,t) \in L^2(\bM)$ and $\int_0^T \| u(\cdot,t)\|_2 dt <\infty$;
\item[(ii)]  $\int_0^T \| \sqrt{\Gamma(u) (\cdot,t)} \|_p dt <\infty$ for some $1 \le p \le \infty$;
\item[(iii)]  For every $t \in [0,T]$, $v(\cdot,t) \in L^q(\bM)$ and $\int_ 0^T \| v(\cdot,t ) \|_q dt <\infty$ for some $1 \le q \le \infty$. 
\end{itemize}
 If the  inequality 
\[
Lu+\frac{\partial u}{\partial t} \ge v,
\]
holds on $\mathbb{M}\times [0,T]$, then we have
\[
P_T u(\cdot,T)(x) \ge u(x,0) +\int_0^T P_s v(\cdot,s)(x) ds.
\]
\end{proposition}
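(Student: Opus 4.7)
The plan is to fix $x \in \M$ and introduce
\[
\phi(s) = P_s(u(\cdot, s))(x), \qquad s \in [0, T],
\]
so that $\phi(0) = u(x, 0)$ and $\phi(T) = P_T u(\cdot, T)(x)$. The desired inequality reduces to showing $\phi'(s) \ge P_s v(\cdot, s)(x)$ and integrating in $s$. Formally one computes
\[
\phi'(s) = LP_s(u(\cdot, s))(x) + P_s(\partial_s u(\cdot, s))(x) = P_s\bigl(Lu + \partial_s u\bigr)(\cdot, s)(x) \ge P_s v(\cdot, s)(x),
\]
using that $LP_s = P_s L$ (since $P_s = e^{sL}$ commutes with $L$ by the spectral theorem), the assumed differential inequality, and the positivity preserving character of $P_s$ (a consequence of sub-Markovianity \eqref{submarkov}).

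To make this rigorous under only the weak integrability (i)--(iii), I would test against an arbitrary nonnegative $\eta \in C_0^\infty(\M)$. By self-adjointness of $P_s$ on $L^2(\M)$,
\[
\Psi(s) = \int_\M \eta(x) P_s(u(\cdot, s))(x) d\mu(x) = \int_\M (P_s \eta)(y) u(y, s) d\mu(y).
\]
Differentiation under the integral (justified by (i), (iii) and standard smoothness of $s\mapsto P_s\eta$) yields
\[
\Psi'(s) = \int_\M L(P_s \eta)\, u(\cdot, s) d\mu + \int_\M (P_s \eta)\, \partial_s u(\cdot, s) d\mu.
\]
The first term must be rewritten as $\int_\M (P_s \eta)\, Lu d\mu$; since $P_s \eta$ is smooth (by hypoellipticity) but not compactly supported, I insert the cutoff $h_k$ from Hypothesis \ref{A:exhaustion}. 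Because $h_k P_s\eta$ has compact support, the standard integration by parts
\[
\int_\M (h_k P_s \eta)(Lu) d\mu = -\int_\M \Gamma(h_k P_s \eta, u) d\mu = \int_\M u\, L(h_k P_s \eta) d\mu
\]
is legitimate. Expanding $\Gamma(h_k P_s \eta, u) = h_k \Gamma(P_s \eta, u) + (P_s \eta)\Gamma(h_k, u)$ and letting $k\to\infty$, the cutoff error is controlled by Cauchy--Schwarz for $\Gamma$:
\[
\left|\int_\M (P_s \eta) \Gamma(h_k, u) d\mu\right| \le \|\Gamma(h_k)\|_\infty^{1/2}\, \|P_s \eta\|_{p'}\, \bigl\|\sqrt{\Gamma(u)(\cdot, s)}\bigr\|_p,
\]
which tends to $0$ by Hypothesis \ref{A:exhaustion} together with (ii), since $P_s$ is an $L^{p'}$-contraction on $\eta \in C_0^\infty(\M) \subset L^{p'}(\M)$. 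This delivers $\Psi'(s) \ge \int_\M \eta(x) P_s v(\cdot, s)(x) d\mu(x)$, and integrating in $s$ over $[0, T]$ together with the arbitrariness of $\eta \ge 0$ yields the pointwise conclusion (both sides being continuous in $x$ by hypoellipticity and the smoothness of $u, v$).

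The main obstacle will be the quantitative control of the cutoff errors uniformly enough in $s$ to commute $k\to\infty$ with the $s$-integration over $[0, T]$. One needs the bound
\[
\int_0^T \|\Gamma(h_k)\|_\infty^{1/2}\, \|P_s \eta\|_{p'}\, \bigl\|\sqrt{\Gamma(u)(\cdot, s)}\bigr\|_p\, ds \longrightarrow 0,
\]
which follows from (ii) and $L^{p'}$-contractivity of $P_s$ applied to $\eta$, giving the dominating integrand. A closely related delicate point is the differentiation of $\Psi(s)$ through the integral, which requires dominating both $L(P_s \eta) u(\cdot, s)$ and $(P_s \eta) \partial_s u(\cdot, s)$ in $L^1(d\mu \otimes ds)$; here one combines (i) and (iii) with the uniform-in-$s$ smoothness of $P_s \eta$ and the a priori bound $\|LP_s \eta\|_2 \le C(s)\|\eta\|_2$ coming from the spectral calculus.
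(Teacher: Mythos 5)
Your overall strategy is the right one and is close in spirit to the paper's: test against a nonnegative $\eta\in C_0^\infty(\bM)$, move $P_s$ onto $\eta$ by symmetry, insert the cutoff $h_k$ from Hypothesis \ref{A:exhaustion}, and control the cutoff errors by $\|\Gamma(h_k)\|_\infty^{1/2}$ times quantities made finite by (i), (ii) and the $L^{p'}$-contractivity of $P_s$. The error estimates you display are exactly the ones the paper uses. However, there is a genuine gap in where you insert the cutoff, and it matters because of how weak the hypotheses (i)--(iii) are.

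The gap is the identity $\Psi'(s)=\int_\bM L(P_s\eta)\,u\,d\mu+\int_\bM (P_s\eta)\,\partial_s u\,d\mu$ and the subsequent recombination into $\int_\bM(P_s\eta)(Lu+\partial_su)\,d\mu$. Nothing in (i)--(iii) gives any global integrability of $\partial_s u$ or of $Lu$ separately: the differential inequality only controls the \emph{sum} $Lu+\partial_s u$ from below by $v$, and only $v$ is assumed integrable. So $\int_\bM(P_s\eta)\,\partial_s u\,d\mu$ need not converge, the differentiation under the integral sign is not justified, and the passage $\int_\bM(h_kP_s\eta)Lu\,d\mu\to\int_\bM(P_s\eta)Lu\,d\mu$ has no dominating function. (A secondary omission: your final step $-\int_\bM\Gamma(P_s\eta,u)\,d\mu=\int_\bM u\,L(P_s\eta)\,d\mu$ is itself an integration by parts in which neither factor is compactly supported, so it needs its own cutoff and produces the second error term $\int_\bM u\,\Gamma(h_k,P_s\eta)\,d\mu$, bounded via (i) by $\|\Gamma(h_k)\|_\infty^{1/2}\|u(\cdot,s)\|_2\|\sqrt{\Gamma(P_s\eta)}\|_2$.) The paper avoids the main problem by putting the compactly supported cutoff on $u$ \emph{before} differentiating: it works with
\[
\phi(t)=\int_\bM \eta\, P_t\bigl(h_k\,u(\cdot,t)\bigr)\,d\mu ,
\]
so that $\phi'(t)=\int_\bM \eta\,P_t\bigl(L(h_ku)+h_k\partial_tu\bigr)\,d\mu$ and the terms $Lu$, $\partial_t u$ only ever occur in the compactly supported combination $h_k(Lu+\partial_tu)\ge h_kv$; the cutoff errors are then $(Lh_k)u+2\Gamma(h_k,u)$, which after an integration by parts reduce to exactly the two $\Gamma$-error terms above. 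If you replace your $\Psi$ by this $\phi$, your estimates carry the argument through; as written, the separation of $Lu$ from $\partial_s u$ is not available under the stated hypotheses.
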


\begin{proof}
Let $f,g \in C_0^\infty (\mathbb{M})$, $f,g \ge 0$. We claim that we must have
\begin{align}\label{cp1}
& \int_\mathbb{M} g P_T(fu(\cdot,T)) d\mu - \int_\mathbb{M} g f u(x,0) d\mu
\ge   -  \|\sqrt{\Gamma(f)}\|_\infty \int_0^T  \int_\mathbb{M}   (P_t g) \sqrt{\Gamma(u)}d\mu dt
\\
& -  \| \sqrt{\Gamma(f)} \|_\infty \int_0^T \| \sqrt{\Gamma(P_t g) }\|_2 \| u(\cdot,t) \|_2   dt  +  \int_\mathbb{M} g \int_0^T  P_t( f v(\cdot,t)) d\mu dt,
\notag
\end{align}
where for every $1\le p \le \infty$ and a measurable $F$, we have let $||F||_p = ||F||_{L^p(\M)}$.
To establish \eqref{cp1} we consider the function
\[
\phi(t)=\int_\mathbb{M} g P_t (fu(\cdot,t)) d\mu.
\]
Differentiating $\phi$ we find
\begin{align*}
\phi'(t)& =\int_\mathbb{M} g P_t \left(L( fu) + f\frac{\partial u}{\partial t} \right) d\mu \\
 &= \int_\mathbb{M} g P_t \left((L f) u+2 \Gamma (f,u) +f Lu + f\frac{\partial u}{\partial t} \right) d\mu \\
 &\ge \int_\mathbb{M} g P_t \left((L f) u+2 \Gamma (f,u)  \right) d\mu+\int_\mathbb{M} g P_t( f v) d\mu.
\end{align*}
Since
\begin{align*}
\int_\mathbb{M} g P_t \left((L f) u\right) d\mu &= \int_\mathbb{M}  (P_t g) (L f) u d\mu \\
 &= -\int_\mathbb{M}  \Gamma( f, u(P_t g)) d\mu  \\
 &=-\left( \int_\mathbb{M}  P_t g \Gamma( f, u)+ u \Gamma(f,P_t g) d\mu\right),
\end{align*}
we obtain
\[
\phi'(t) \ge \int_\M P_t g \Gamma(f,u) d\mu - \int_\M u \Gamma(f,P_t g) d\mu + \int_\M g P_t(fv) d\mu.
\]
Now, we can bound
\[
\left| \int_\mathbb{M}  (P_t g) \Gamma( f, u) d\mu\right| 
 \le    \| \sqrt{\Gamma(f) } \|_\infty \int_\mathbb{M}  (P_t g)  \sqrt{\Gamma( u)} d\mu,
\]
and for a.e. $t\in [0,T]$ the integral in the right-hand side is finite in view of the assumption (ii) above.
We have thus obtained
\begin{align*}
\phi'(t)  \ge -  \| \sqrt{\Gamma(f) } \|_\infty  \int_\mathbb{M}  (P_t g)  \sqrt{\Gamma(u)} d\mu- \int_\mathbb{M} u \Gamma(f , P_t g) d\mu+ \int_\mathbb{M} g P_t( f v(\cdot,t)) d\mu.
\end{align*}
As a consequence, we find
\begin{align*}
 & \int_\mathbb{M} g P_T (fu(\cdot,T)) d\mu-\int_\mathbb{M} gfu(x,0)d\mu  \\
 \ge&  -  \| \sqrt{\Gamma(f) } \|_\infty   \int_0^T  \int_\mathbb{M}   (P_t g) \sqrt{\Gamma(u)} d\mu dt -  \int_0^T \int_\mathbb{M}  u \Gamma\left(f,P_t g\right) d\mu dt + \int_0^T  \int_\mathbb{M} g  P_t(f v(\cdot,t)) d\mu dt
 \\
 \ge &  -  \| \sqrt{\Gamma(f) } \|_\infty   \int_0^T \int_\mathbb{M} (P_t g) \sqrt{\Gamma(u)} d\mu dt - \int_0^T \| u(\cdot,t) \|_2 \| \Gamma(f, P_t g) \|_2 dt  + \int_\mathbb{M} g \int_0^T  P_t( f v(\cdot,t)) dt d\mu \\
 \ge &    -  \| \sqrt{\Gamma(f) } \|_\infty   \int_0^T  \int_\mathbb{M} (P_t g) \sqrt{\Gamma(u)} d\mu dt -  \| \sqrt{\Gamma(f)} \|_\infty \int_0^T \| u(\cdot,t) \|_2  \| \sqrt{\Gamma(P_t g) }\|_2 dt \\ +  & \int_\mathbb{M} g \int_0^T  P_t(f v(\cdot,t)) dt d\mu,
\end{align*}
which proves \eqref{cp1}.
Let now $h_k\in C^\infty_0(\M)$ be a sequence as in Hypothesis \ref{A:exhaustion}.                
Using $h_k$ in place of $f$ in \eqref{cp1}, and letting $k \to \infty$, gives
\begin{align*}
  \int_\mathbb{M} g P_T (u(\cdot,T)) d\mu - \int_\mathbb{M} g u(x,0)d\mu  
 \ge   \int_\mathbb{M} g \int_0^T  P_t(v(\cdot,t)) dt d\mu.
\end{align*}
We observe that the assumption on $v$ and Minkowski's integral inequality guarantee that the function $x\to \int_0^T  P_t(v(\cdot,t))(x) dt$ belongs to $L^q(\M)$. We have in fact
\begin{align*}
\left(\int_\mathbb{M} \left|\int_0^T  P_t(v(\cdot,t)) dt\right|^q d\mu\right)^{\frac 1q} & \le \int_0^T \left| \int_\mathbb{M} \left|P_t(v(\cdot,t))\right|^q d\mu\right|^{\frac 1q} dt \le \int_0^T \left| \int_\mathbb{M} \left|v(\cdot,t)\right|^q d\mu\right|^{\frac 1q} dt
\\
& \le T^{\frac{1}{q'}}  \left(\int_0^T \int_\mathbb{M} \left|v(\cdot,t)\right|^q d\mu dt \right)^{\frac 1q} <\infty.
\end{align*}
Since this must hold for every non negative $g \in C_0^\infty (\mathbb{M})$, we conclude that
\[
P_T(u(\cdot,T))(x) \ge u(x,0) +\int_0^T P_s (v(\cdot,s))(x) ds,
\]
which completes the proof.
\end{proof}

The next theorem shows that Hypothesis  \ref{A:regularity} is redundant on complete sub-Riemannian manifolds with transverse symmetries of Yang-Mills type  if the  sub-Laplacian $L$ satisfies the generalized curvature dimension inequality.

\begin{theorem}\label{T:on revient tojour a son premiere amour}
Let $L$ be the sub-Laplacian on a complete sub-Riemannian manifold with transverse symmetries of Yang-Mills type. Suppose that $L$ satisfies \emph{CD}$(\rho_1,\rho_2,\kappa,d)$, for some $\rho_1\in \R$. Then, the Hypothesis \ref{A:regularity} is satisfied. 
\end{theorem}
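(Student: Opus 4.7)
Hypothesis \ref{A:regularity} has two components to verify: the stochastic completeness $P_t 1=1$, and the uniform bound $\sup_{t\in[0,T]}\bigl(\|\Gamma(P_t f)\|_\infty + \|\Gamma^Z(P_t f)\|_\infty\bigr) <\infty$ for every $f\in C_0^\infty(\bM)$ and $T\ge 0$. My plan is to deduce both from a single gradient estimate of the form
\[
a(0)\,\Gamma(P_T f) + b(0)\,\Gamma^Z(P_T f) \le a(T)\,P_T\bigl(\Gamma(f)\bigr) + b(T)\,P_T\bigl(\Gamma^Z(f)\bigr),
\]
for suitable smooth positive coefficients $a(s),b(s)$. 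Because $\Gamma(f),\Gamma^Z(f)\in C_0^\infty(\bM)$ are bounded and $P_T$ is sub-Markov, the right-hand side is bounded uniformly in $x$, giving the desired $L^\infty$ estimate.

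\textbf{The functional and the formal computation.} Fix $T>0$ and $f\in C_0^\infty(\bM)$. For $s\in[0,T]$ set $g_s := P_{T-s}f$ and
\[
u(\cdot,s) := a(s)\,\Gamma(g_s) + b(s)\,\Gamma^Z(g_s).
\]
Using $\partial_s g_s = -Lg_s$ together with the definitions of $\Gamma_2$ and $\Gamma_2^Z$, a direct computation yields
\[
Lu + \partial_s u = a'\,\Gamma(g_s) + b'\,\Gamma^Z(g_s) + 2a\,\Gamma_2(g_s) + 2b\,\Gamma_2^Z(g_s).
\]
Applying CD$(\rho_1,\rho_2,\kappa,d)$ pointwise with parameter $\nu=b(s)/a(s)$ and discarding the nonnegative term $\tfrac{2a}{d}(Lg_s)^2$ gives
\[
Lu + \partial_s u \ge \bigl(a' + 2\rho_1 a - 2\kappa a^2/b\bigr)\Gamma(g_s) + \bigl(b' + 2\rho_2 a\bigr)\Gamma^Z(g_s).
\]
One selects $a,b$ so that both brackets are nonnegative (e.g.\ $a\equiv 1$ together with $b$ solving the resulting ODE, absorbing the $\rho_1$ term via an exponential factor), which reduces the inequality to $Lu+\partial_s u\ge 0$. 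The parabolic comparison Proposition \ref{P:missing_key}, applied with $v\equiv 0$, then yields $P_T(u(\cdot,T))\ge u(\cdot,0)$, which is exactly the gradient estimate above.

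\textbf{Main obstacle and the role of Yang--Mills.} The main technical point is verifying the integrability hypotheses of Proposition \ref{P:missing_key}: $u(\cdot,s)\in L^2(\bM)$ and $\sqrt{\Gamma(u)(\cdot,s)}\in L^p(\bM)$ with integrable time-norms. The $L^2$ control of $u$ follows from Proposition \ref{a:priori:bounds} applied to $g_s\in \mathcal D(L^2)$, which bounds both $\int_\bM\Gamma^Z(g_s)\,d\mu$ and the key combination $\int_\bM\bigl(\Gamma_2(g_s)+\Gamma_2^Z(g_s)-(\rho_1-\kappa)\Gamma(g_s)\bigr)d\mu$ by $\|g_s\|_{\mathcal D(L^2)}^2$; integrating $Lu+\partial_s u$ gives the $L^1$-in-time bound on $\|\Gamma(g_s)\|_2$ and $\|\Gamma^Z(g_s)\|_2$ as well. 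The control on $\sqrt{\Gamma(u)}$ requires pointwise bounds on $\Gamma(\Gamma(g_s))$ and $\Gamma(\Gamma^Z(g_s))$, and this is where the Yang--Mills assumption is indispensable: Proposition \ref{improveCD} provides
\[
\Gamma(\Gamma(g_s)) \le 4\,\Gamma(g_s)\bigl(\Gamma_2(g_s)+\nu\,\Gamma_2^Z(g_s)-(\rho_1-\kappa/\nu)\Gamma(g_s)\bigr),
\]
\[
\Gamma(\Gamma^Z(g_s)) \le 4\,\Gamma^Z(g_s)\,\Gamma_2^Z(g_s),
\]
and Cauchy--Schwarz combined with Proposition \ref{a:priori:bounds} transforms these pointwise bounds into the needed global integrability. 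Without Yang--Mills, Proposition \ref{improveCD} would carry extra torsion terms that are not a priori controlled, and the comparison argument would not close --- this is the structural reason the hypothesis is needed here.

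\textbf{Stochastic completeness.} Once the gradient estimate is in hand, $P_t 1=1$ follows by applying the same scheme to the exhaustion $h_k$ of Hypothesis \ref{A:exhaustion}: the gradient bound applied to $P_t h_k$ combined with $\|\Gamma(h_k)\|_\infty+\|\Gamma^Z(h_k)\|_\infty \to 0$ lets one pass $k\to\infty$ in Proposition \ref{P:missing_key} to deduce $P_t 1\ge 1$, and \eqref{submarkov} supplies the reverse inequality.
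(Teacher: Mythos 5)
Your overall strategy (run a parabolic comparison on a functional built from $\Gamma(P_{T-t}f)$ and $\Gamma^Z(P_{T-t}f)$, using Proposition \ref{improveCD} and Proposition \ref{a:priori:bounds} to justify the integrability required by Proposition \ref{P:missing_key}, then deduce stochastic completeness via the exhaustion $h_k$) is the right one, and your diagnosis of where Yang--Mills enters is correct. But there is a genuine gap in the verification of hypothesis (i) of Proposition \ref{P:missing_key} for your functional $u=a\,\Gamma(g_s)+b\,\Gamma^Z(g_s)$. You claim $u(\cdot,s)\in L^2(\bM)$ "follows from Proposition \ref{a:priori:bounds}", but that proposition only gives $L^1$ bounds: $\int_\bM\Gamma^Z(g_s)\,d\mu\le C\|g_s\|^2_{\mathcal D(L^2)}$, and similarly for the $\Gamma_2$-combination. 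To have $\Gamma(g_s),\Gamma^Z(g_s)\in L^2$ you would need fourth-order moments of first derivatives, $\int_\bM\Gamma(g_s)^2\,d\mu<\infty$, which is not available a priori from any of the stated lemmas. The argument therefore does not close as written.

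This is precisely why the paper runs the comparison argument on \emph{square roots}: first on $\Phi(t)=\sqrt{\Gamma^Z(P_{T-t}f)}$, which lies in $L^2$ exactly because $\Gamma^Z(g_s)\in L^1$, and whose carr\'e du champ satisfies $\Gamma(\Phi)=\Gamma(\Gamma^Z(g_s))/(4\Gamma^Z(g_s))\le\Gamma_2^Z(g_s)\in L^1$ by Proposition \ref{improveCD}; this yields $\sqrt{\Gamma^Z(P_Tf)}\le P_T(\sqrt{\Gamma^Z(f)})$ and hence $\Gamma^Z(P_tf)\in L^\infty\cap L^1\subset L^2$. Only then does one treat $\Psi(t)=e^{-\alpha(T-t)}\bigl(\sqrt{\Gamma(P_{T-t}f)}+\Gamma^Z(P_{T-t}f)\bigr)$, whose $L^2$ membership now uses the output of the first step. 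Note that taking square roots changes the differential inequality: one picks up the term $-\Gamma(\Gamma(g_s))/(4\Gamma(g_s)^{3/2})$, which is controlled by Proposition \ref{improveCD} with the specific choice $\nu=2\sqrt{\Gamma(g_s)}$ (producing the harmless additive constant $-\kappa/2$), rather than the fixed $\nu=b/a$ you propose. So the Yang--Mills hypothesis is needed twice --- both for the integrability of $\Gamma(\Psi)$ and for the pointwise differential inequality itself --- and the two gradient forms must be handled sequentially, not simultaneously. (One also needs the regularization $\sqrt{y+\varepsilon^2}-\varepsilon$ where $\Gamma$ or $\Gamma^Z$ vanishes, and the stochastic completeness step requires first upgrading Proposition \ref{P:missing_key} to $L^1$ data, which is legitimate only after the $L^\infty$ gradient bounds are in hand.)
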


\begin{proof}
Let $f \in C_0^\infty(\bM)$ and consider the functional
\[
\Phi(t)=\sqrt{ \Gamma^Z(P_{T-t} f)}.
\]
We first assume that $(x,t)\to \Gamma^Z(P_t f)(x)>0$ on $\M\times [0,T]$. From Proposition \ref{a:priori:bounds} we have $\Phi(t) \in L^2(\bM)$. Moreover $\Gamma(\Phi)(t)=\frac{\Gamma(\Gamma^Z(P_{T-t} f))}{4 \Gamma^Z(P_{T-t} f)}$. So from Proposition \ref{improveCD},  we have $\Gamma( \Phi)(t) \le \Gamma_2^Z(P_{T-t} f)$.  Therefore, again from Proposition \ref{a:priori:bounds} , we deduce that  $\Gamma( \Phi)(t) \in L^1(\bM)$. Next, we easily compute that
\[
\frac{\partial \Phi}{\partial t}+ L\Phi =\frac{\Gamma^Z_2(P_{T-t} f)}{\sqrt{\Gamma^Z(P_{T-t} f)}}-\frac{\Gamma(\Gamma^Z(P_{T-t} f))}{4 \Gamma^Z(P_{T-t} f)^{3/2} } .
\]
Thus, from Proposition \ref{improveCD}, we obtain that
\[
\frac{\partial \Phi}{\partial t}+ L\Phi \ge 0.
\]
We can then use Proposition \ref{P:missing_key} to infer that 
\[
\sqrt{ \Gamma^Z(P_{T} f)} \le P_T \left(\sqrt{\Gamma^Z (f)} \right).
\]
This implies that for every $t \ge 0$, $\Gamma^Z(P_t f) \in L^p(\bM)$ for every $1 \le p \le \infty$. If $(x,t) \to \Gamma^Z(P_t f)(x)$ vanishes on $\M\times [0,T]$, we consider the functional 
\[
\Phi(t)=g_\varepsilon (\Gamma^Z(P_{T-t} f) ),
\]
where, for $0 < \varepsilon <1$,
\begin{align*}
g_\varepsilon (y)=\sqrt{ y+\varepsilon^2}-\varepsilon.
\end{align*}
Since $\Phi(t) \in L^2(\M)$, an argument similar to that above (details are let to the reader) shows that
\[
g_\varepsilon (\Gamma^Z(P_{T} f) )\le  P_T \left( g_\varepsilon( \Gamma^Z (f)) \right).
\]
Letting $\varepsilon \to 0$, we conclude that 
\[
\sqrt{ \Gamma^Z(P_{T} f)} \le P_T \left(\sqrt{\Gamma^Z (f)} \right).
\]
Proving that $(x,t)\to \Gamma(P_t f)(x)$ is bounded is similar. For $\alpha \in \mathbb{R}$, we  consider the functional
\[
\Psi(t)=e^{-\alpha(T-t)} \left( \sqrt{ \Gamma(P_{T-t} f)}+\Gamma^Z(P_{T-t} f)\right), 
\]
and first assume that $(x,t)\to \Gamma(P_t f)(x)$ does not vanish on $\M \times [0,T]$.  From the previous inequality,  Proposition \ref{a:priori:bounds} and Proposition \ref{improveCD}, it is seen that  $\Psi(t) \in L^2(\bM)$ and $\sqrt{\Gamma( \Psi)(t)} \in L^1(\bM)+L^2(\bM)$. Moreover,
 \[
\frac{\partial \Psi}{\partial t}+ L\Psi =e^{-\alpha(T-t)} \left(\frac{\Gamma_2(P_{T-t} f)}{\sqrt{\Gamma(P_{T-t} f)}}-\frac{\Gamma(\Gamma(P_{T-t} f))}{4 \Gamma(P_{T-t} f)^{3/2} }+2\Gamma^Z_2(P_{T-t}f)  \right)+\alpha \Phi.
\]
According to  Proposition \ref{improveCD} we have for every $f\in C^\infty(\bM)$, and $\nu >0$,
\[
\Gamma(\Gamma(f)) \le  4 \Gamma(f) \left( \Gamma_{2}(f)+\nu \Gamma^Z_{2}(f)  -   \left( \rho_1 -\frac{\kappa}{\nu}\right)  \Gamma (f)  \right),
\]
Choosing $\nu=2\sqrt{\Gamma(f)}$ gives 
\[
\frac{\Gamma_2(f)}{\sqrt{\Gamma( f)}}-\frac{\Gamma(\Gamma(f))}{4 \Gamma( f)^{3/2}}+2\Gamma^Z_2(f) \ge \rho_1 \sqrt{\Gamma(f)}  -\frac{\kappa}{2}.
\]
We deduce 
 \[
\frac{\partial \Psi}{\partial t}+ L\Psi  \ge e^{-\alpha(T-t)} \left((\alpha+ \rho_1 )\sqrt{\Gamma(P_{T-t} f )}+\alpha \Gamma^Z(P_{T-t} f) \right) -\frac{\kappa}{2}e^{-\alpha(T-t)}.
\]
Therefore, by choosing $\alpha$ large enough we obtain
\[
\frac{\partial \Psi}{\partial t}+ L\Psi  \ge-\frac{\kappa}{2}e^{-\alpha(T-t)}.
\]
As a consequence of Proposition \ref{P:missing_key}, we find
\[
 \sqrt{ \Gamma(P_{T} f)}+\Gamma^Z(P_{T} f) \le e^{\alpha T} \left(  P_T(\sqrt{\Gamma}(f)) + P_T(\Gamma^Z(f)) \right)+\frac{\kappa}{2} e^{\alpha T} \int_0^T (P_s 1) ds.
\]
Since $P_s 1 \le 1$, we conclude therefore:
\[
 \sqrt{ \Gamma(P_{T} f)}+\Gamma^Z(P_{T} f) \le e^{\alpha T} \left(  P_T(\sqrt{\Gamma}(f)) + P_T(\Gamma^Z(f)) \right)+\frac{\kappa}{2} Te^{\alpha T}.
\]
This implies that $(x,t)\to \Gamma(P_{t} f)(x) +\Gamma^Z(P_t f)(x)\in L^\infty(\M \times [0,T])$. If $(x,t) \to\Gamma(P_t f)(x)$ does  vanish on $\M\times [0,T]$, then we consider the $C^\infty$ approximation of the square root as above.

\

We now prove that $P_t 1=1$, that is that $P_t$ is stochastically complete. A first consequence of the fact that for every $f \in C_0^\infty(\M)$, and $T \ge 0$,    $(x,t)\to \Gamma(P_{t} f)(x) +\Gamma^Z(P_t f)(x)\in L^\infty(\M \times [0,T])$ is that in   Proposition \ref{P:missing_key} we can now allow $u$ to be in $L^1$. 
More precisely, under the very same assumptions as in Proposition \ref{P:missing_key} where $(i)$ is replaced by: For every $t \in [0,T]$, $u(\cdot,t) \in L^1(\bM)$ and $\int_0^T \| u(\cdot,t)\|_1 dt <\infty$, we still have the conclusion 
\[
P_T(u(\cdot,T))(x) \ge u(x,0) +\int_0^T P_s(v(\cdot,s))(x) ds.
\]
The proof of this fact is identical to that of Proposition \ref{P:missing_key}.  With the notations of this proof, $\Gamma(P_{\cdot} g) \in L^\infty([0,T]\times\bM)$  is used to obtain the following bound  
\[
\left|\int_0^T \int_\mathbb{M} u \Gamma\left(f,P_t g\right) d\mu dt\right|  \le \|\sqrt{\Gamma(f)} \|_\infty  \int_0^T \|\sqrt{\Gamma(P_t g)}\|_\infty \| u(\cdot,t)\|_1 dt.
\]
This leads to an inequality where \eqref{cp1} is replaced by
\begin{align}\label{cp2}
& \int_\mathbb{M} g P_T(fu(\cdot,T)) d\mu - \int_\mathbb{M} g f u(x,0) d\mu
\ge   -  \|\sqrt{\Gamma(f)}\|_\infty \int_0^T  \int_\mathbb{M}   (P_t g) \sqrt{\Gamma(u)}d\mu dt
\\
& -  \| \sqrt{\Gamma(f)} \|_\infty \int_0^T \|\sqrt{\Gamma(P_t g)}\|_\infty \| u(\cdot,t)\|_1 dt  +  \int_\mathbb{M} g \int_0^T  P_t( f v(\cdot,t)) d\mu dt.
\notag
\end{align}
At this point the argument proceeds exactly as in the conclusion of the proof of Proposition \ref{P:missing_key}.

With this $L^1$ comparison result in hands,  we can now come back to the stochastic completeness problem.  Let    $f \in C_0^\infty(\mathbb M)$ and consider the functional
\[
u(x,t)=e^{\alpha (T-t)}\left( \Gamma(P_{T-t} f)(x)+\Gamma^Z(P_{T-t} f)(x)\right).
\]
We have
\[
Lu(x,t)= e^{\alpha (T-t)}\left(L \Gamma(P_{T-t} f)(x)+L\Gamma^Z(P_{T-t} f)(x)\right),
\]
and
\[
\frac{\partial u}{\partial t}(x,t)= -\alpha u (x,t) -2e^{\alpha (T-t)}\left( \Gamma(P_{T-t} f, LP_{T-t})(x)+\Gamma^Z(P_{T-t} f, LP_{T-t})(x)\right).
\]
Therefore we have
\[
Lu(x,t)+\frac{\partial u}{\partial t}(x,t) =-\alpha u (x,t) +2 e^{\alpha (T-t)}\left( \Gamma_2(P_{T-t} f)(x)+\Gamma_2^Z(P_{T-t} f)(x)\right).
\]
By using now the inequality  \emph{CD}$(\rho_1,\rho_2,\kappa,d)$ with $\nu=1$, we obtain
\[
Lu(x,t)+\frac{\partial u}{\partial t}(x,t) \ge  e^{\alpha (T-t)}\left((2(\rho_1-\kappa)-\alpha) \Gamma(P_{T-t} f)(x)+(2\rho_2-\alpha)\Gamma^Z(P_{T-t} f)(x)\right).
\]
By choosing $ \alpha \le 2\min\{\rho_2, \rho_1-\kappa\}$, we thus get 
\[
Lu(x,t)+\frac{\partial u}{\partial t}(x,t) \ge 0,
\]
and we conclude by using the $L^1$ version of  Proposition \ref{P:missing_key2} that
\begin{align}\label{pkliu}
\Gamma (P_t f)+\Gamma^Z (P_t f) \le  e^{-\alpha t} \left( P_t \Gamma(f)+ P_t \Gamma^Z(f)\right).
\end{align}
We are now ready for the final argument leading to the stochastic completeness.
Let $f,g \in  C^\infty_0(\mathbb M)$, by \eqref{sa} and \eqref{gamma} we have
\begin{align*}
\int_{\bM} (P_t f -f) g d\mu = \int_0^t \int_{\bM}\left(
\frac{\partial}{\partial s} P_s f \right) g d\mu ds= \int_0^t
\int_{\bM}\left(L P_s f \right) g d\mu ds=- \int_0^t \int_{\bM}
\Gamma ( P_s f , g) d\mu ds.
\end{align*}
By means of Cauchy-Schwarz inequality and \eqref{pkliu}, we find
\begin{equation}\label{P1}
\left| \int_{\bM} (P_t f -f) g d\mu \right| \le \left(\int_0^t
e^{-\frac{\alpha s}{2}} ds\right) \sqrt{ \| \Gamma (f) \|_\infty + \|
\Gamma^Z (f) \|_\infty } \int_{\bM}\Gamma (g)^{\frac{1}{2}}d\mu.
\end{equation}
We now apply \eqref{P1} with $f = h_k$, where $h_k$ is the sequence whose existence is postulated in 
the Hypothesis  \eqref{A:exhaustion}, and then let $k\to \infty$.
By Beppo Levi's monotone convergence theorem we have $P_t
h_k(x)\nearrow P_t 1(x)$ for every $x\in \bM$. We conclude that the
left-hand side of \eqref{P1} converges to $\int_{\bM} (P_t 1 -1) g d\mu$. Since in view of the Hypothesis \eqref{A:exhaustion} the right-hand side converges to zero, we reach the conclusion
\[
\int_{\bM} (P_t 1 -1) g d\mu=0,\ \ \ g\in C^\infty_0(\bM).
\]
It follows that $P_t 1 =1$.

\end{proof}

We point out that the stochastic completeness of the heat semigroup  is classically equivalent to the uniqueness in the Cauchy
problem for initial data in $L^\infty(\bM)$. Following the classical approach (see for instance Theorem 8.18 in \cite{Gri}), we in fact obtain:

\begin{proposition}\label{P:ucp}
Suppose that $\bM$ satisfy Hypothesis \ref{A:exhaustion}, Hypothesis \ref{A:regularity}. Then, for every $f\in L^\infty(\bM)$ the Cauchy problem
\[
\begin{cases}
Lu - u_t = 0, \ \ \  \text{in}\ \M \times (0,\infty),
\\
u(x,0) = f(x),\ \ \ \  f\in L^\infty(\bM),
\end{cases}
\]
admits a unique bounded solution, given by $u(x,t)=P_t f(x)$.
\end{proposition}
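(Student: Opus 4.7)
\emph{Existence.} Given $f \in L^\infty(\bM)$, set $u(x,t) = P_t f(x)$. The sub-Markovian property \eqref{submarkov}, combined with the stochastic completeness $P_t 1 = 1$ from Hypothesis \ref{A:regularity}, implies that $P_t$ extends by duality to a contraction on $L^\infty(\bM)$, so $\|u(\cdot,t)\|_\infty \le \|f\|_\infty$. Hypoellipticity of $L - \partial_t$ gives $u \in C^\infty(\bM \times (0,\infty))$, and the semigroup property yields $\partial_t u = L u$ on $\bM \times (0,\infty)$, with the initial datum attained in the appropriate weak sense.

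\emph{Uniqueness, strategy.} By linearity it suffices to show that any bounded solution $u$ of $(\partial_t - L)u = 0$ on $\bM \times [0,T]$ with $u(\cdot,0) = 0$ must vanish identically. Following the duality approach of Grigor'yan's Theorem 8.18 cited by the authors, I would fix $\phi \in C_0^\infty(\bM)$ and consider
$$F(t) = \int_\bM u(x,t)\,(P_{T-t}\phi)(x)\, d\mu(x), \qquad 0 \le t \le T.$$
This is well-defined: $u$ is bounded and $\|P_{T-t}\phi\|_{L^1} \le \|\phi\|_{L^1}$, the $L^1$-contractivity following from the $\mu$-symmetry of $P_t$ together with $P_t 1 = 1$. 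Formally, $\partial_t u = Lu$ and $\partial_t(P_{T-t}\phi) = -L(P_{T-t}\phi)$, combined with the symmetry of $L$, force $F'(t) \equiv 0$; hence $F(T) = F(0) = 0$. Since $\phi \in C_0^\infty(\bM)$ is arbitrary, this yields $u(\cdot,T) = 0$ for every $T > 0$.

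\emph{Justification and the main obstacle.} Making $F'(t) \equiv 0$ rigorous on a non-compact manifold is the crux, and is where Hypothesis \ref{A:exhaustion} and \ref{A:regularity} enter. I would introduce the exhaustion $\{h_k\} \subset C_0^\infty(\bM)$ from Hypothesis \ref{A:exhaustion} and set
$$F_k(t) = \int_\bM u(x,t)\,(P_{T-t}\phi)(x)\, h_k(x)\, d\mu(x),$$
so that $F_k(0) = 0$ and, since $h_k$ has compact support, every integration by parts in the computation of $F_k'(t)$ is legitimate. A direct computation using $\partial_t u = Lu$, $\partial_t(P_{T-t}\phi) = -L(P_{T-t}\phi)$, the symmetry of $L$ and the Leibniz rule for $\Gamma$ reduces $F_k'(t)$ to error terms of the schematic form $\int_\bM P_{T-t}\phi\, \Gamma(h_k,u)\, d\mu$ and $\int_\bM u\, \Gamma(h_k, P_{T-t}\phi)\, d\mu$; integrating in $t$ and applying Cauchy--Schwarz, each such term is bounded by $\|\sqrt{\Gamma(h_k)}\|_\infty$ multiplied by a quantity controlled via $\|u\|_\infty$ and $\sup_{s \in [0,T]} \|\sqrt{\Gamma(P_s \phi)}\|_\infty$, the latter being finite thanks to Hypothesis \ref{A:regularity}. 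Passing to $k \to \infty$: the left-hand side $F_k(T) \to \int_\bM u(x,T)\phi(x)\, d\mu$ by dominated convergence, using $h_k \nearrow 1$, $u \in L^\infty(\bM)$ and $P_T\phi \in L^1(\bM)$; the error integrals vanish because $\|\sqrt{\Gamma(h_k)}\|_\infty \to 0$.

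The technical heart of the argument, and the place where care is required, is precisely the last step: one must ensure that the $L^1$-type spatial integrability accompanying the Cauchy--Schwarz bound is uniform in $k$, so that the estimate $\|\sqrt{\Gamma(h_k)}\|_\infty \to 0$ alone suffices to drive the error terms to zero. This is achieved by exploiting both the uniform $L^\infty$ bound on $\sqrt{\Gamma(P_s\phi)}$ provided by Hypothesis \ref{A:regularity} and the fact that $\Gamma(h_k)$ is supported in the region where $h_k$ fails to be locally constant, which escapes every compact set as $k \to \infty$. Once this limit is justified, one obtains $\int_\bM u(x,T)\phi(x)\, d\mu = 0$ for every $\phi \in C_0^\infty(\bM)$, hence $u(\cdot,T) \equiv 0$, completing the proof.
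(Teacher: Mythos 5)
Your existence half is fine. The uniqueness half, however, has a genuine gap at exactly the step you identify as "the technical heart". After the integration by parts, $F_k'(t)$ contains the error term $\int_\bM P_{T-t}\phi\,\Gamma(h_k,u)\,d\mu$, and by Cauchy--Schwarz the best you can say is
\[
\Big|\int_\bM P_{T-t}\phi\,\Gamma(h_k,u)\,d\mu\Big|\ \le\ \|\sqrt{\Gamma(h_k)}\|_\infty \int_\bM P_{T-t}\phi\,\sqrt{\Gamma(u)}\,d\mu .
\]
The factor $\int_\bM P_{T-t}\phi\,\sqrt{\Gamma(u)}\,d\mu$ is \emph{not} controlled by $\|u\|_\infty$ and $\sup_s\|\sqrt{\Gamma(P_s\phi)}\|_\infty$, as you claim: $u$ is an arbitrary bounded solution, and nothing in Hypotheses \ref{A:exhaustion} or \ref{A:regularity} gives any bound, pointwise or integral, on $\Gamma(u)$. (Hypothesis \ref{A:regularity} controls $\Gamma(P_tf)$ for $f\in C_0^\infty$, i.e. for the solution you are trying to show $u$ equals -- using it here would be circular.) You cannot dodge this by integrating by parts a second time to move all derivatives onto $P_{T-t}\phi\,h_k$, because then you pick up $\int_\bM u\,(P_{T-t}\phi)\,Lh_k\,d\mu$, and Hypothesis \ref{A:exhaustion} controls $\Gamma(h_k)$ and $\Gamma^Z(h_k)$ but not $Lh_k$. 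Even the remaining term $\int_\bM u\,\Gamma(h_k,P_{T-t}\phi)\,d\mu$ needs $\sqrt{\Gamma(P_s\phi)}\in L^1(\bM)$ uniformly in $s$, which the hypotheses give in $L^\infty$ and $L^2$ but not in $L^1$ when $\mu(\bM)=\infty$. This is precisely why the paper's Proposition \ref{P:missing_key} carries the extra integrability hypothesis (ii) on $\sqrt{\Gamma(u)}$; your argument proves uniqueness only in that smaller class, not among all bounded solutions.

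The paper does not attempt this duality computation: it invokes the classical equivalence between stochastic completeness ($P_t1=1$) and $L^\infty$-uniqueness (Theorem 8.18 in \cite{Gri}). That proof runs through a different reduction: a bounded solution $u$ with $u(\cdot,0)=0$ is fed into the Laplace transform $v_\lambda=\int_0^\infty e^{-\lambda t}u(\cdot,t)\,dt$, which is a bounded solution of $Lv_\lambda=\lambda v_\lambda$, and one then shows that stochastic completeness forces every bounded (nonnegative) $\lambda$-harmonic function to vanish -- no control on $\Gamma(u)$ is ever needed. Alternatively, within this paper's toolkit you could apply the probabilistic comparison principle of Proposition \ref{P:missing_key2} (Ito's formula plus the infinite lifetime of the diffusion, which is exactly $P_t1=1$) to $w(x,t)=u(x,T-t)$ and to $-w$, obtaining $u(\cdot,T)=P_T(u(\cdot,0))$ directly for any bounded solution. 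Either route closes the gap; your current one does not.
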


We state the following $L^\infty$ global parabolic comparison theorem that will be easier to use that Proposition \ref{P:missing_key} because it does not require a priori bounds on the derivatives.

\begin{proposition}\label{P:missing_key2}
Suppose that $\bM$ satisfy  Hypothesis \ref{A:regularity}. Let $T>0$. Let $u,v: \mathbb{M}\times [0,T] \to \mathbb{R}$ be  smooth functions such that  for every $T>0$,  $\sup_{t \in [0,T]} \| u(\cdot,t)\|_\infty <\infty$, $\sup_{t \in [0,T]} \| v(\cdot,t)\|_\infty <\infty$; If the  inequality 
\[
Lu+\frac{\partial u}{\partial t} \ge v
\]
holds on $\mathbb{M}\times [0,T]$, then we have
\[
P_T(u(\cdot,T))(x) \ge u(x,0) +\int_0^T P_s(v(\cdot,s))(x) ds.
\]

\end{proposition}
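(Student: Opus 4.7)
The plan is to reduce this forward-in-time inequality to a parabolic maximum principle for bounded subsolutions of the heat equation, which under the stochastic completeness $P_t 1 = 1$ built into Hypothesis \ref{A:regularity} (together with the uniform gradient bound it affords on $P_t g$ for $g \in C_0^\infty$) can be established by duality against the backward heat semigroup.

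First I reverse time, setting $\tilde u(x,t) := u(x,T-t)$ and $\tilde v(x,t) := v(x,T-t)$. Both are smooth and uniformly bounded in $L^\infty$ on $\bM \times [0,T]$, and the hypothesis $Lu + u_t \ge v$ rewrites as $\partial_t \tilde u - L\tilde u \le -\tilde v$, making $\tilde u$ a classical bounded subsolution of the inhomogeneous heat equation with source $-\tilde v$. I then introduce the Duhamel candidate
\[
\hat u(x,t) \;:=\; P_t(u(\cdot,T))(x) \;-\; \int_0^t P_{t-s}\bigl(\tilde v(\cdot,s)\bigr)(x)\, ds,
\]
which, under Hypothesis \ref{A:regularity}, is well-defined, uniformly bounded on $\bM \times [0,T]$, and smooth on $\bM \times (0,T]$. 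A direct differentiation gives $\partial_t \hat u - L\hat u = -\tilde v$ and $\hat u(\cdot,0) = u(\cdot,T) = \tilde u(\cdot,0)$. Hence the difference $W := \tilde u - \hat u$ is bounded on $\bM \times [0,T]$, satisfies $\partial_t W - LW \le 0$, and has zero initial data. Evaluating the inequality $W \le 0$ at $t = T$ and undoing the change of variables $r = T-s$ will then recover exactly the claimed inequality.

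To establish $W \le 0$, I pair against a backward-propagated test function. For $g \in C_0^\infty(\bM)$ with $g \ge 0$, consider
\[
\psi(t) \;:=\; \int_\bM (P_{T-t}g)(y)\, W(y,t)\, d\mu(y),
\]
which is finite because $P_{T-t}g \in L^1(\bM)$ (sub-Markov) and $W \in L^\infty$. Differentiating formally, using $\partial_t(P_{T-t}g) = -L(P_{T-t}g)$ and the self-adjointness of $L$, one gets $\psi'(t) = \int_\bM P_{T-t}g\,(W_t - LW)\, d\mu \le 0$. Hence $\psi(T) \le \psi(0) = \int_\bM P_T g\, W(\cdot,0)\, d\mu = 0$, i.e., $\int_\bM g\, W(\cdot,T)\, d\mu \le 0$; varying $g$ over non-negative test functions yields $W(\cdot,T) \le 0$, and running the same argument on each sub-interval $[0,t]$ gives $W \le 0$ throughout.

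The main obstacle is the self-adjointness identity $\int L(P_{T-t}g)\, W\, d\mu = \int P_{T-t}g\, LW\, d\mu$ implicit in the computation of $\psi'(t)$: since $W$ is only $L^\infty$, not $L^2$, this must be justified carefully. I would handle it by inserting the exhaustion $\{h_k\}$ of Hypothesis \ref{A:exhaustion}: the identity is immediate when $W$ is replaced by $h_k W \in L^2$, and taking $k \to \infty$ produces a boundary correction of the form $\int_\bM W\, \Gamma(P_{T-t}g, h_k)\, d\mu$ that is controlled by
\[
\|W\|_\infty\, \|\sqrt{\Gamma(h_k)}\|_\infty \cdot \int_{\{\nabla h_k \ne 0\}} \sqrt{\Gamma(P_{T-t}g)}\, d\mu.
\]
The first factor tends to $0$ by Hypothesis \ref{A:exhaustion}, and the second is uniformly bounded thanks to the $L^\infty$-bound on $\sqrt{\Gamma(P_{T-t}g)}$ from Hypothesis \ref{A:regularity}, together with the $L^2$-bound $\int_\bM \Gamma(P_{T-t}g)\, d\mu = -\int_\bM P_{T-t}g\, L P_{T-t}g\, d\mu < \infty$ (valid since $g \in C_0^\infty$). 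This is the delicate step: once it is in place, the chain of inequalities closes and the proposition follows.
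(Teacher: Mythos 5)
Your strategy is genuinely different from the paper's: the paper proves Proposition \ref{P:missing_key2} probabilistically, applying It\^o's formula to $u(X^x_t,t)$ along the diffusion generated by $L$, localizing the resulting local martingale by stopping times, and passing to the limit by dominated convergence, with stochastic completeness $P_t1=1$ supplying the infinite lifetime. The whole point of this proposition, as the paper remarks just before stating it, is that it requires \emph{no} a priori bounds on the derivatives of $u$, and the It\^o argument only ever uses $Lu+u_t\ge v$ pointwise together with the $L^\infty$ bounds on $u$ and $v$. Your analytic route --- time reversal, a Duhamel solution $\hat u$, and duality of $W=\tilde u-\hat u$ against $P_{T-t}g$ --- is a reasonable reduction in spirit (it amounts to the $L^\infty$ comparison principle for bounded subsolutions with zero initial data, which is indeed tied to stochastic completeness, cf.\ Proposition \ref{P:ucp}), but the execution has a genuine gap.

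The gap sits exactly where derivative control is needed. First, writing $\psi'(t)=\int(-LP_{T-t}g)\,W\,d\mu+\int P_{T-t}g\,W_t\,d\mu$ and then converting $\int (LP_{T-t}g)\,W\,d\mu$ into $\int P_{T-t}g\,LW\,d\mu$ presupposes that $\int P_{T-t}g\,W_t\,d\mu$, $\int P_{T-t}g\,LW\,d\mu$, and the intermediate terms $\int h_k\,\Gamma(P_{T-t}g,W)\,d\mu$ are all finite; but $W_t$, $LW$ and $\Gamma(W)$ involve derivatives of $u$, on which the hypotheses impose no bound whatsoever (only $u,v\in L^\infty$). Second, even the boundary correction you isolate is not controlled: $\|\sqrt{\Gamma(h_k)}\|_\infty\to0$ by Hypothesis \ref{A:exhaustion}, but the factor $\int_{\{\Gamma(h_k)\ne0\}}\sqrt{\Gamma(P_{T-t}g)}\,d\mu$ need not stay bounded as $k\to\infty$: an $L^\infty$ bound together with an $L^2$ bound on $\sqrt{\Gamma(P_{T-t}g)}$ does not yield an $L^1$ bound when $\mu(\bM)=\infty$, and Hypothesis \ref{A:exhaustion} gives no control on $\mu(\mathrm{supp}\,h_k)$ or on $\int\Gamma(h_k)\,d\mu$. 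To close your argument you would either have to add derivative hypotheses on $u$ (which defeats the purpose of the proposition --- that is what Proposition \ref{P:missing_key} is for), or establish the $L^\infty$ parabolic comparison principle under $P_t1=1$ by a different method, such as Grigor'yan's argument or the paper's It\^o-formula argument.
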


\begin{proof}
Let $(X^x_t)_{t \ge 0}$ be the diffusion Markov process with semigroup $(P_t)_{t \ge 0}$ and started at $x \in \M$ (see for instance Chapter 7 in \cite{ Fu} for the construction of such process). From $P_t1=1$, we deduce that  $(X^x_t)_{t\ge 0}$ has an infinite lifetime. We have then for $t \ge 0$,
\[
u\left( X^x_t, t \right)=u\left( x,0\right)+\int_0^t \left( Lu+\frac{\partial u}{\partial t}\right)(X^x_s,s) ds +M_t,
\]
where $(M_t)_{t \ge0 }$ is a local martingale. From the assumption one obtains
\[
u\left( X^x_t, t \right) \ge u\left( x,0\right)+\int_0^t v(X^x_s,s) ds +M_t.
\]
Let now $(T_n)_{n \in \mathbb{N}}$ be an increasing sequence of stopping times such that almost surely $T_n \to +\infty$ and $(M_{t\wedge T_n})_{t \ge 0}$ is a martingale.
From the previous inequality, we find
\[
\mathbb{E}\left( u\left( X^x_{t\wedge T_n}, t\wedge T_n \right) \right) \ge u\left( x,0\right)+\mathbb{E}\left( \int_0^{t\wedge T_n} v(X^x_s,s) ds\right).
\]
By using the dominated convergence theorem, we conclude
\[
\mathbb{E}\left( u\left( X^x_{t}, t \right) \right) \ge u\left( x,0\right)+\mathbb{E}\left( \int_0^{t} v(X^x_s,s) ds\right),
\]
which yields the conclusion.
\end{proof}

For later use, we also finally record the following gradient bounds that are consequences of Hypothesis \ref{A:regularity}.

\begin{corollary}\label{C:expdecay}
Suppose that $L$ satisfies \emph{CD}$(\rho_1,\rho_2,\kappa,d)$, for some $\rho_1\in \R$ and that Hypothesis \ref{A:regularity} is satisfied. There exists $\alpha \in \mathbb{R}$ ($ \alpha \le 2\min\{\rho_2, \rho_1
-\kappa\}$ will do), such that for   every   $f \in C^\infty_0(\M)$, one has
\begin{equation}\label{pointwiseCaccioppoli}
\Gamma (P_t f)+\Gamma^Z (P_t f) \le  e^{-\alpha t} \left( P_t
\Gamma(f)+ P_t \Gamma^Z(f)\right).
\end{equation}
As a consequence,  for every $f \in C_0^\infty(\mathbb
M)$ and  $1\le p\le \infty$ one obtains
\begin{equation}\label{LpCaccioppoli}
\| \Gamma (P_t f) \|_{L^p(\bM)} \le e^{-\alpha t} \left( \| \Gamma (f)
\|_{L^p(\bM)} +  \| \Gamma^Z (f) \|_{L^p(\bM)} \right), \quad t \ge
0.
\end{equation}
and
\begin{equation}\label{LpVerticalCaccioppoli}
\| \Gamma^Z (P_t f) \|_{L^p(\bM)} \le e^{-\alpha t} \left( \| \Gamma (f)
\|_{L^p(\bM)} +  \| \Gamma^Z (f) \|_{L^p(\bM)} \right), \quad t \ge
0.
\end{equation}
\end{corollary}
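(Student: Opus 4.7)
The plan is to reproduce, in this more general axiomatic setting, the pointwise bound \eqref{pkliu} that appeared inside the proof of Theorem \ref{T:on revient tojour a son premiere amour}. Fix $T>0$ and $f\in C^\infty_0(\M)$, and for $\alpha\in\R$ introduce the ``entropy-like'' functional
\[
u(x,t)=e^{\alpha(T-t)}\bigl(\Gamma(P_{T-t}f)(x)+\Gamma^Z(P_{T-t}f)(x)\bigr),\qquad (x,t)\in\M\times[0,T].
\]
A direct computation, using the definitions \eqref{gamma2}, \eqref{gamma2Z} of $\Gamma_2$ and $\Gamma_2^Z$, yields
\[
Lu+\frac{\partial u}{\partial t}=-\alpha u+2e^{\alpha(T-t)}\bigl(\Gamma_2(P_{T-t}f)+\Gamma_2^Z(P_{T-t}f)\bigr).
\]

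The second step is to invoke the generalized curvature-dimension inequality CD$(\rho_1,\rho_2,\kappa,d)$ with the parameter choice $\nu=1$, dropping the nonnegative term $\frac{1}{d}(LP_{T-t}f)^2$. This gives
\[
Lu+\frac{\partial u}{\partial t}\ge e^{\alpha(T-t)}\Bigl((2(\rho_1-\kappa)-\alpha)\,\Gamma(P_{T-t}f)+(2\rho_2-\alpha)\,\Gamma^Z(P_{T-t}f)\Bigr).
\]
Choosing any $\alpha\le 2\min\{\rho_2,\,\rho_1-\kappa\}$ makes both coefficients nonnegative, so that $Lu+\partial_t u\ge 0$ on $\M\times[0,T]$.

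Now I apply the $L^\infty$ parabolic comparison Proposition \ref{P:missing_key2} with $v\equiv 0$. The hypothesis to verify is that $\sup_{t\in[0,T]}\|u(\cdot,t)\|_\infty<\infty$, and this is precisely the content of Hypothesis \ref{A:regularity}, which guarantees that $\Gamma(P_tf)$ and $\Gamma^Z(P_tf)$ are uniformly bounded for $t\in[0,T]$. Thus the comparison theorem gives $P_T(u(\cdot,T))(x)\ge u(x,0)$, which rearranges to exactly the pointwise bound \eqref{pointwiseCaccioppoli} at time $t=T$; since $T$ was arbitrary, this proves the corollary in pointwise form.

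Finally, for the $L^p$ consequences \eqref{LpCaccioppoli} and \eqref{LpVerticalCaccioppoli}, one notes $\Gamma(P_tf)\le\Gamma(P_tf)+\Gamma^Z(P_tf)$ (and similarly for $\Gamma^Z$), takes the $L^p(\M)$ norm of \eqref{pointwiseCaccioppoli}, and applies Minkowski's inequality together with the sub-Markov property \eqref{smp}, which gives $\|P_t g\|_p\le\|g\|_p$ for every $1\le p\le\infty$. The only genuine obstacle in the argument is ensuring the applicability of Proposition \ref{P:missing_key2}, which is precisely why Hypothesis \ref{A:regularity} is invoked in the statement; everything else is a mechanical reprise of the differentiation-along-the-semigroup trick used throughout Section \ref{S:comp}.
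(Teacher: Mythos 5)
Your argument is correct and is exactly the paper's proof: the authors state that the corollary is proved "identically to the proof of \eqref{pkliu} except that we now use Proposition \ref{P:missing_key2}," which is precisely the functional $u$, the computation via $\Gamma_2,\Gamma_2^Z$, the use of CD$(\rho_1,\rho_2,\kappa,d)$ with $\nu=1$, and the choice $\alpha\le 2\min\{\rho_2,\rho_1-\kappa\}$ that you reproduce, with Hypothesis \ref{A:regularity} supplying the boundedness needed for the $L^\infty$ comparison theorem. The passage to the $L^p$ bounds via contractivity of $P_t$ is likewise the intended (routine) conclusion.
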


\begin{proof}

The proof is identical to the proof of (\ref{pkliu}) except that we now use Proposition \ref{P:missing_key2}.
\end{proof}

\section{Entropic variational inequalities}\label{S:EI}

Our objective in this section is proving a fundamental variational inequality which will play a pervasive role in our study, see Theorem \ref{T:source} below. 
We begin with some preliminary results. Henceforth, we will indicate $C_b^\infty(\mathbb M) = C^\infty(\M)\cap L^\infty(\M)$.

\begin{lemma}\label{L:derivatives}
Let $f \in C^\infty_b(\mathbb{M})$, $f > 0$ and $T>0$, and consider the functions
\[
\phi_1 (x,t)=(P_{T-t} f) (x)\Gamma (\ln P_{T-t}f)(x),
\]
\[
\phi_2 (x,t)= (P_{T-t} f)(x) \Gamma^Z (\ln P_{T-t}f)(x),
\]
which are defined on $\M\times (-\infty,T)$.  We have
\[
L\phi_1+\frac{\partial \phi_1}{\partial t} =2 (P_{T-t} f) \Gamma_2 (\ln P_{T-t}f). 
\]
If, furthermore, the Hypothesis \eqref{A:main_assumption} is valid, then 
\[
L\phi_2+\frac{\partial \phi_2}{\partial t} =2 (P_{T-t} f) \Gamma_2^Z (\ln P_{T-t}f).
\]
\end{lemma}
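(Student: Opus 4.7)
The plan is to perform a direct computation setting $u = P_{T-t}f$ and $g = \ln u$, exploit the fact that $u$ solves the backward heat equation $\partial_t u = -Lu$, and use the derivation properties of $L$, $\Gamma$, $\Gamma^Z$ together with Hypothesis \ref{A:main_assumption}. First I would record the chain rule identity $Lu = L e^g = u(Lg + \Gamma(g))$, which follows from $L(\Psi(g)) = \Psi'(g) Lg + \Psi''(g)\Gamma(g)$ for $\Psi(x) = e^x$. Dividing by $u$ gives $\partial_t g = -Lu/u = -(Lg + \Gamma(g))$, which is the key identity turning time derivatives into expressions involving $Lg$ and $\Gamma(g)$.

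For the first identity, I would expand $L\phi_1 = L(u\,\Gamma(g))$ via Leibniz as $(Lu)\Gamma(g) + u\,L\Gamma(g) + 2\Gamma(u,\Gamma(g))$, then rewrite $L\Gamma(g) = 2\Gamma_2(g) + 2\Gamma(g,Lg)$ directly from the definition \eqref{gamma2} of $\Gamma_2$. Since $Xu = uXg$ for any vector field, one has $\Gamma(u,h) = u\,\Gamma(g,h)$ for every $h$, so $\Gamma(u,\Gamma(g)) = u\,\Gamma(g,\Gamma(g))$. On the other side, $\partial_t\phi_1 = (\partial_t u)\Gamma(g) + 2u\,\Gamma(g,\partial_t g) = -u(Lg+\Gamma(g))\Gamma(g) - 2u\,\Gamma(g,Lg) - 2u\,\Gamma(g,\Gamma(g))$. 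Summing the two, every term except $2u\,\Gamma_2(g)$ cancels, yielding the first identity.

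For the second identity, the computation is completely parallel, with the same expansion producing $L\phi_2 + \partial_t\phi_2 = 2u\,\Gamma_2^Z(g) + 2[\Gamma(u,\Gamma^Z(g)) - u\,\Gamma^Z(g,\Gamma(g))]$. The bracket is precisely $2u[\Gamma(g,\Gamma^Z(g)) - \Gamma^Z(g,\Gamma(g))]$, again using $\Gamma(u,h) = u\,\Gamma(g,h)$. This is exactly where Hypothesis \ref{A:main_assumption} intervenes: the commutation $\Gamma(g,\Gamma^Z(g)) = \Gamma^Z(g,\Gamma(g))$ forces the bracketed term to vanish and leaves $L\phi_2 + \partial_t\phi_2 = 2u\,\Gamma_2^Z(g)$.

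The only mildly delicate step is the algebra of the cancellations; the main conceptual obstacle is recognizing why Hypothesis \ref{A:main_assumption} is used precisely once, namely to transfer the horizontal gradient of a vertical object onto a vertical gradient of a horizontal object so that the parasitic term generated by Leibniz matches and cancels the term produced by $\partial_t g$. Once this is understood the argument is purely mechanical and requires no appeal to smoothness of the heat kernel beyond what the hypoellipticity of $L$ already provides (so that $u = P_{T-t}f$ is smooth and strictly positive on $\bM\times(-\infty,T)$, justifying taking $g = \ln u$).
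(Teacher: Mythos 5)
Your computation is correct and follows essentially the same route as the paper: expand $L\phi_i$ by Leibniz, use the backward heat equation (equivalently, the identity $L(\ln u)=-\Gamma(\ln u)-\partial_t\ln u$), plug in the definitions \eqref{gamma2}--\eqref{gamma2Z}, and observe that Hypothesis \ref{A:main_assumption} is invoked exactly once to cancel the term $\Gamma(u,\Gamma^Z(\ln u))-u\,\Gamma^Z(\ln u,\Gamma(\ln u))$ in the vertical case. The cancellations check out, so nothing further is needed.
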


\begin{proof} Let for simplicity $g(x,t) = P_{T-t} f(x)$. A simple computation gives
\[
\frac{\p \phi_1}{\p t} = g_t \Gamma(\ln g) + 2 g \Gamma(\ln g,\frac{g_t}{g}).
\]
On the other hand,
\[
L\phi_1 = Lg \Gamma(\ln g) + g L \Gamma(\ln g) + 2 \Gamma(g,\Gamma(\ln g)).
\]
Combining these equations we obtain
\[
L\phi_1 + \frac{\p \phi_1}{\p t} = g L\Gamma(\ln g) +  2\Gamma(g,\Gamma(\ln g)) + 2 g \Gamma(\ln g,\frac{g_t}{g}).
\]
From \eqref{gamma2} we see that
\begin{align*}
2 g \Gamma_2(\ln g) & = g (L \Gamma(\ln g) - 2 \Gamma(\ln g,L(\ln g)))
\\
& = g L\Gamma(\ln g) - 2 g \Gamma(\ln g,L(\ln g)).
\end{align*}
Observing that
\[
L(\ln g) = - \frac{\Gamma(g)}{g^2} - \frac{g_t}{g},
\]
we conclude   that
\[
L\phi_1+\frac{\partial \phi_1}{\partial t} =2 (P_{T-t} f) \Gamma_2 (\ln P_{T-t}f). 
\]

In the same vein, we obtain
\[
L\phi_2 + \frac{\p \phi_2}{\p t} = g L\Gamma^Z(\ln g) +  2\Gamma(g,\Gamma^Z(\ln g)) + 2 g \Gamma^Z(\ln g,\frac{g_t}{g}).
\]
On the other hand, this time using \eqref{gamma2Z}, we find 
\begin{align*}
2 g \Gamma_2^Z(\ln g) & = g (L \Gamma^Z(\ln g) - 2 \Gamma^Z(\ln g,L(\ln g)))
\\
& = g L\Gamma^Z(\ln g) + 2 g \Gamma^Z(\ln g,\frac{\Gamma(g)}{g^2}) + 2 g \Gamma^Z(\ln g,\frac{g_t}{g}).
\end{align*}
From this latter equation it is now clear that, if the Hypothesis \eqref{A:main_assumption} is valid, then
\[
L\phi_2 + \frac{\p \phi_2}{\p t} =  2 g \Gamma_2^Z(\ln g).
\]
This concludes the proof.

\end{proof}

We now turn to our most important variational inequality.  Given a function $f\in C^\infty_b(\M)$ and $\ve>0$, we let $f_\varepsilon=f+\varepsilon$. 

Suppose that $T>0$, and $x\in \M$ be given. For a function $f\in  C^\infty_b(\M)$ with $f \ge 0$ we define for $t\in [0,T]$,
\[
\Phi_1 (t)=P_t \left( (P_{T-t} f_\varepsilon) \Gamma (\ln P_{T-t}f_\varepsilon) \right),
\]
\[
\Phi_2 (t)=P_t \left( (P_{T-t} f_\varepsilon) \Gamma^Z (\ln P_{T-t}f_\varepsilon) \right).
\]

\begin{theorem}\label{T:source}
Suppose that the Hypothesis  \ref{A:exhaustion},   \ref{A:main_assumption},  \ref{A:regularity} be satisfied and that the curvature-dimension inequality \eqref{cdi} holds for $\rho_1\in \R$. Let $a, b \in C^1([0,T],[0,\infty))$, $\gamma \in C((0,T),\R)$ be such that
$a'+2\rho_1 a -2\kappa \frac{a^2}{b}-\frac{4a\gamma}{d}$, $b'+2\rho_2 a$, $a\gamma, a\gamma^2$
be continuous functions on $[0,T]$. Given $f \in C_0^\infty(\M)$, with $f\ge 0$, we have
\begin{align*}
 & a(T) P_T \left(  f_\varepsilon \Gamma (\ln f_\varepsilon) \right) +b(T) P_T \left(  f_\varepsilon \Gamma^Z (\ln f_\varepsilon) \right)-a(0)(P_{T} f_\varepsilon) \Gamma (\ln P_{T}f_\varepsilon)-b(0)\Gamma^Z (\ln P_{T}f_\varepsilon) 
 \\
 & \ge  \int_0^T \left(a'+2\rho_1 a -2\kappa \frac{a^2}{b}-\frac{4a\gamma}{d} \right)\Phi_1  ds +\int_0^T(b'+2\rho_2 a) \Phi_2  ds
 \\
 & +\left(\frac{4}{d}\int_0^T a\gamma ds\right)LP_{T} f_\varepsilon -\left(\frac{2 }{d}\int_0^T a\gamma^2ds\right)P_T f_\varepsilon.
\end{align*}
\end{theorem}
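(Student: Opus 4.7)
Set $\Psi(t) = a(t)\Phi_1(t) + b(t)\Phi_2(t)$ for $t\in[0,T]$, and abbreviate $g(x,t) = P_{T-t}f_\varepsilon(x)$. Since $f_\varepsilon \ge \varepsilon >0$ and $P_t$ preserves positivity, $g\ge \varepsilon$, so $\ln g$ is smooth; moreover Hypothesis \ref{A:regularity} (together with the translation invariance of $\Gamma, \Gamma^Z$) yields uniform bounds on $\Gamma(g)$ and $\Gamma^Z(g)$ over $[0,T]\times\bM$. This regularity legitimizes differentiation under the semigroup, giving $\Phi_i'(t) = P_t\bigl(L\phi_i + \partial_t\phi_i\bigr)$, and by Lemma \ref{L:derivatives} this simplifies to
\[
\Phi_1'(t) = 2P_t\bigl(g\,\Gamma_2(\ln g)\bigr),\qquad \Phi_2'(t) = 2P_t\bigl(g\,\Gamma^Z_2(\ln g)\bigr).
\]

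Next, apply the generalized curvature-dimension inequality \eqref{cdi} pointwise to $\ln g$ with $\nu = b(t)/a(t)$ (on the set $\{a > 0\}$), multiply through by $2a(t)g(x) \ge 0$, and apply the positivity-preserving operator $P_t$. Using $P_t(g\,\Gamma(\ln g)) = \Phi_1$ and $P_t(g\,\Gamma^Z(\ln g)) = \Phi_2$, this produces
\[
2P_t\bigl(g\bigl[a\,\Gamma_2(\ln g) + b\,\Gamma^Z_2(\ln g)\bigr]\bigr) \ge \frac{2a}{d}P_t\bigl(g(L\ln g)^2\bigr) + 2\Bigl(\rho_1 a - \tfrac{\kappa a^2}{b}\Bigr)\Phi_1 + 2\rho_2 a\,\Phi_2.
\]
To linearize the quadratic term, the elementary bound $(L\ln g - \gamma(t))^2\ge 0$ gives $g(L\ln g)^2 \ge 2\gamma\, gL\ln g - \gamma^2 g$. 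Since $L\ln g = Lg/g - \Gamma(\ln g)$, one has $gL\ln g = Lg - g\,\Gamma(\ln g)$, and using $LP_t = P_tL$ together with $P_tP_{T-t}=P_T$,
\[
P_t(gL\ln g) = LP_Tf_\varepsilon - \Phi_1,\qquad P_t(g) = P_Tf_\varepsilon.
\]

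Combining the three previous displays yields the pointwise differential inequality
\[
\Psi'(t) \ge \Bigl(a' + 2\rho_1 a - 2\kappa\tfrac{a^2}{b} - \tfrac{4a\gamma}{d}\Bigr)\Phi_1 + (b' + 2\rho_2 a)\Phi_2 + \tfrac{4a\gamma}{d}LP_Tf_\varepsilon - \tfrac{2a\gamma^2}{d}P_Tf_\varepsilon.
\]
Integrating over $[0,T]$ and identifying the boundary values
\[
\Psi(T) = a(T)P_T\bigl(f_\varepsilon\Gamma(\ln f_\varepsilon)\bigr) + b(T)P_T\bigl(f_\varepsilon\Gamma^Z(\ln f_\varepsilon)\bigr),
\]
\[
\Psi(0) = a(0)(P_Tf_\varepsilon)\Gamma(\ln P_Tf_\varepsilon) + b(0)(P_Tf_\varepsilon)\Gamma^Z(\ln P_Tf_\varepsilon),
\]
gives the stated inequality.

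The main technical obstacle is the rigorous justification of the time differentiation $\Phi_i'(t) = P_t(L\phi_i + \partial_t\phi_i)$, which requires uniform boundedness of $\phi_i$ and its first space and time derivatives on $[0,T]\times\bM$. This is exactly what Hypothesis \ref{A:regularity} together with the lower bound $g\ge\varepsilon$ (which keeps $\ln g$ and all its derivatives under control) is designed to furnish. A secondary delicate point is the treatment of the zeros of $a$ and $b$, where the ratios $a^2/b$ and $\nu = b/a$ are formally undefined; the hypothesis that the composite quantities $a' + 2\rho_1 a - 2\kappa a^2/b - 4a\gamma/d$, $a\gamma$ and $a\gamma^2$ are continuous on the closed interval $[0,T]$ is precisely what renders these singularities removable, so the differential inequality, first established on $\{a>0\}$, extends by continuity to all of $[0,T]$.
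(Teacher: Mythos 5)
Your computation is the paper's computation: Lemma \ref{L:derivatives}, the choice $\nu=b/a$ in \eqref{cdi}, the linearization $(L\ln g)^2\ge 2\gamma L\ln g-\gamma^2$, and the identities $P_t(gL\ln g)=LP_Tf_\varepsilon-\Phi_1$, $P_t(g)=P_Tf_\varepsilon$ all appear verbatim. The one place where you genuinely diverge is the step you yourself flag as the main obstacle, and there your argument has a real gap. You assert $\Phi_i'(t)=P_t\bigl(L\phi_i+\partial_t\phi_i\bigr)$ by ``differentiation under the semigroup,'' claiming Hypothesis \ref{A:regularity} supplies the needed bounds. But Hypothesis \ref{A:regularity} only controls $\Gamma(P_tf)$ and $\Gamma^Z(P_tf)$ (hence $\phi_i$ itself, using $g\ge\varepsilon$); it says nothing about $L\phi_i=2g\,\Gamma_2(\ln g)-\partial_t\phi_i$, which involves second derivatives, nor does it justify the commutation $P_tL\phi_i=LP_t\phi_i$ that your identity silently uses --- on a noncompact manifold this requires either $\phi_i(\cdot,t)\in\mathcal{D}(L)$ or an integration by parts against the heat kernel with a cutoff argument to kill the terms at infinity. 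This is precisely why the paper does \emph{not} differentiate $\Phi_i$: it instead establishes the pointwise differential inequality $L\phi+\partial_t\phi\ge v$ for $\phi=a\phi_1+b\phi_2$ (exactly your display for $\Psi'$, but before applying $P_t$) and then invokes the global parabolic comparison theorem, Proposition \ref{P:missing_key2}, which requires only $L^\infty$ bounds on $\phi$ and on the right-hand side $v$ --- both of which \emph{do} follow from Hypothesis \ref{A:regularity} and $g\ge\varepsilon$ --- and whose proof is carried out by stochastic calculus (the martingale representation of $u(X_t^x,t)$), bypassing the commutation issue entirely. To close your argument, replace the differentiation claim by: the pointwise inequality for $L\phi+\partial_t\phi$ holds on $\M\times[0,T]$, and Proposition \ref{P:missing_key2} then yields the integrated semigroup inequality. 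Your treatment of the zeros of $a$ and $b$ via continuity of the composite coefficients is fine and matches the role of the hypotheses in the statement.
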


\begin{proof}
Let $f \in C^\infty(\mathbb{M})$, $ f \ge 0$.
Consider the function
\[
\phi (x,t)=a(t)(P_{T-t} f) (x)\Gamma (\ln P_{T-t}f)(x)+b(t)(P_{T-t} f) (x) \Gamma^Z (\ln P_{T-t}f)(x).
\]
Applying Lemma \ref{L:derivatives} and the curvature-dimension inequality \eqref{cdi}, we obtain
\begin{align*}
  L\phi+\frac{\partial \phi}{\partial t} &
=a' (P_{T-t} f) \Gamma (\ln P_{T-t}f)+b' (P_{T-t} f) \Gamma^Z (\ln P_{T-t}f) \\
&+2a (P_{T-t} f) \Gamma_2 (\ln P_{T-t}f)+2b (P_{T-t} f) \Gamma_2^Z (\ln P_{T-t}f) \\
&\ge  \left(a'+2\rho_1 a -2\kappa \frac{a^2}{b}\right)(P_{T-t} f) \Gamma (\ln P_{T-t}f)
\\
&  +(b'+2\rho_2 a) (P_{T-t} f)  \Gamma^Z (\ln P_{T-t}f) \\
&+\frac{2a}{d}  (P_{T-t} f) (L(\ln P_{T-t} f))^2. 
\end{align*}
But, 
\[
(L(\ln P_{T-t} f))^2 \ge 2\gamma L(\ln P_{T-t}f) -\gamma^2,
\]
and
\[
 L(\ln P_{T-t}f)=\frac{LP_{T-t}f}{P_{T-t}f} -\Gamma(\ln P_{T-t} f ).
 \]
Therefore,
 \begin{align*}
L\phi+\frac{\partial \phi}{\partial t}  & \ge \left(a'+2\rho_1 a -2\kappa \frac{a^2}{b}-\frac{4a\gamma}{d} \right) (P_{T-t} f) \Gamma (\ln P_{T-t}f)
\\
& +(b'+2\rho_2 a)  (P_{T-t} f) \Gamma^Z (\ln P_{T-t}f) \\
&+\frac{4a\gamma}{d} LP_{T-t} f - \frac{2a\gamma^2}{d} P_{T-t} f.
\end{align*}
If now $f \in C_0^\infty(\M)$, $f\ge 0$, we obtain the same differential inequality if we use $f_\varepsilon$ instead of $f$ throughout. At that point we apply Proposition \ref{P:missing_key2} to reach the desired conclusion.

\end{proof}

The following corollary is of particular importance.

\begin{corollary}\label{C:variational}
Under the same assumptions of Theorem \ref{T:source}, let $b:[0,T]\to [0,\infty)$ be a non-increasing $C^2$ function such that, with 
\begin{equation}\label{babygamma}
\gamma\overset{def}{=}\frac{d}{4} \left(\frac{b''}{b'} +\frac{\kappa}{\rho_2} \frac{b'}{b} +2\rho_1\right),
\end{equation}
the functions $b'\gamma, b'\gamma^2$ be continuous on $[0,T]$. Then, we have  for $f \in C_0^\infty(\M)$,
\begin{align}\label{var}
  - &\frac{b'(T)}{2\rho_2}P_T \left(  f_\varepsilon \Gamma (\ln f_\varepsilon) \right) +b(T)P_T \left(  f_\varepsilon \Gamma^Z (\ln f_\varepsilon) \right)
 \\
  + & \frac{b'(0)}{2\rho_2} (P_{T} f_\varepsilon) \Gamma (\ln P_{T}f_\varepsilon) -b(0) \Gamma^Z (\ln P_{T}f_\varepsilon)
 \notag\\
\ge  &   -\left( \frac{2}{d\rho_2}\int_0^T b' \gamma ds\right) LP_Tf_\varepsilon + \left(\frac{1}{d\rho_2}\int_0^T  b'  \gamma^2 ds\right) P_T f_\varepsilon.
\notag
\end{align}
\end{corollary}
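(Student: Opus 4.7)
The plan is to specialize Theorem \ref{T:source} to a carefully chosen pair $(a,b)$, where $b$ is the function from the statement and
\[
a(t) = -\frac{b'(t)}{2\rho_2}.
\]
The motivation is that this choice is designed to simultaneously annihilate both integrand coefficients involving $\Phi_1$ and $\Phi_2$ in Theorem \ref{T:source}, leaving only the boundary terms together with the integrals against $LP_T f_\varepsilon$ and $P_T f_\varepsilon$. In fact, the ansatz is essentially forced by comparing the coefficients of $P_T(f_\varepsilon \Gamma(\ln f_\varepsilon))$ and $(P_T f_\varepsilon)\Gamma(\ln P_T f_\varepsilon)$ in \eqref{var} with the corresponding boundary terms in Theorem \ref{T:source}.

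First I would verify the admissibility conditions required to invoke Theorem \ref{T:source}. Since $b$ is non-increasing we have $b'\le 0$ on $[0,T]$, so $a\ge 0$; the smoothness $a\in C^1([0,T],[0,\infty))$ follows from $b\in C^2$. The continuity of $a\gamma$ and $a\gamma^2$ is exactly the assumed continuity of $b'\gamma$ and $b'\gamma^2$, up to the multiplicative constant $-1/(2\rho_2)$. The continuity of the other two quantities $a'+2\rho_1 a-2\kappa a^2/b-4a\gamma/d$ and $b'+2\rho_2 a$ will be automatic once these are shown to vanish identically.

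The heart of the argument is then to check both vanishing identities. The $\Phi_2$ coefficient is $b'+2\rho_2 a$, which by the very choice of $a$ is identically zero. The $\Phi_1$ coefficient is $a'+2\rho_1 a-2\kappa a^2/b -4a\gamma/d$; rewriting each term in terms of $b,b',b''$ and substituting the definition \eqref{babygamma} of $\gamma$, a direct algebraic computation shows that this expression also collapses to zero. This is really the only nontrivial step: the precise form of $\gamma$ in \eqref{babygamma} has been reverse-engineered so that the $\Phi_1$ coefficient vanishes for $a=-b'/(2\rho_2)$.

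Finally I would substitute $a=-b'/(2\rho_2)$ into the boundary terms $a(T)P_T(\cdot)$, $b(T)P_T(\cdot)$, $-a(0)(\cdot)$, $-b(0)(\cdot)$ appearing in Theorem \ref{T:source}, and into the two remaining integrals against $LP_T f_\varepsilon$ and $P_T f_\varepsilon$, which immediately reproduces the four boundary terms and the right-hand side of \eqref{var}. I do not anticipate any genuine obstacle here: once Theorem \ref{T:source} is available and the ansatz $a=-b'/(2\rho_2)$ is guessed, the corollary reduces to verifying one algebraic identity plus some bookkeeping.
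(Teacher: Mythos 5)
Your proposal is correct and follows exactly the paper's own proof: the paper also sets $a=-b'/(2\rho_2)$ so that $b'+2\rho_2 a=0$, observes that with $\gamma$ as in \eqref{babygamma} the coefficient $a'+2\rho_1 a-2\kappa a^2/b-4a\gamma/d$ vanishes identically, and then reads off \eqref{var} from Theorem \ref{T:source}. The algebraic cancellation you describe checks out, and your additional verification of the admissibility hypotheses ($a\ge 0$ from $b$ non-increasing, continuity of $a\gamma$ and $a\gamma^2$) is a welcome bit of care the paper leaves implicit.
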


\begin{proof}
We choose $a:[0,T]\to [0,\infty)$  of class $C^1$ so that 
\[
b'+2\rho_2 a=0.
\]
With this choice, and with $\gamma$ defined by \eqref{babygamma}, we obtain
\[
a'+2\rho_1 a -2\kappa \frac{a^2}{b} -\frac{4a\gamma}{d}  =0.
\]
Applying Theorem \ref{T:source} with these $a, b$ and $\gamma$, we immediately reach the desired conclusion. 

\end{proof}

\section{Li-Yau type estimates}

In this section, we extend the celebrated Li-Yau inequality in
\cite{LY} to the heat semigroup associated with the subelliptic
operator $L$. Let us mention that, in this setting, related
inequalities were obtained by Cao-Yau \cite{Cao-Yau}. However, these
authors work locally and the geometry of the manifold does not enter in their study.
Instead, our analysis in based on the entropic inequalities established in Section \ref{S:EI} and, consequently, it hinges crucially on our curvature-dimension inequality \eqref{cdi}. As we have shown in the discussion of the examples in Section \ref{S:appendix}, such inequality is deeply connected to the sub-Riemannian geometry of the manifold. We have mentioned in the introduction that, even when specialized to the Riemannian case, the ideas in this section provide a new, more elementary approach of the Li-Yau inequalities. For this aspect we refer the reader to the paper \cite{BGjga}.

\begin{theorem}[Gradient estimate]\label{T:ge}
Assume the  Hypothesis  \ref{A:exhaustion},  \ref{A:main_assumption}, \ref{A:regularity}  and that the curvature-dimension inequality \eqref{cdi} be satisfied for $\rho_1\in \R$. Let  $f \in C_0^\infty(\M)$, $f  \ge 0$, $f \not\equiv 0$, then the following inequality holds for $t>0$:
\[
\Gamma (\ln P_t f) +\frac{2 \rho_2}{3}  t \Gamma^Z (\ln P_t f) \le
\left(1+\frac{3\kappa}{2\rho_2}-\frac{2\rho_1}{3} t\right)
\frac{LP_t f}{P_t f} +\frac{d\rho_1^2}{6} t-\frac{\rho_1 d}{2}\left(
1+\frac{3\kappa}{2\rho_2}\right) +\frac{d\left(
1+\frac{3\kappa}{2\rho_2}\right)^2}{2t}.
\]
\end{theorem}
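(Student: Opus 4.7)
The plan is to apply the entropic variational inequality of Corollary \ref{C:variational} with a carefully calibrated choice of weight $b$ on $[0,T]$, and then let the regularization parameter $\varepsilon \to 0$. Reading the statement, the target inequality should emerge by isolating $\Gamma(\ln P_T f)$ and $\Gamma^Z(\ln P_T f)$ at the initial time $t=0$ of the variational inequality, which forces the boundary terms at $t=T$ in \eqref{var} to vanish. This imposes $b(T)=0$ and $b'(T)=0$. Matching the coefficients of $(P_T f_\varepsilon)\Gamma(\ln P_T f_\varepsilon)$ and $\Gamma^Z(\ln P_T f_\varepsilon)$ at $t=0$ against the target inequality then fixes $b'(0)=-2\rho_2$ and $b(0)=\tfrac{2\rho_2 T}{3}$.

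These four conditions are compatible with the simple cubic template
\[
b(t)=\frac{2\rho_2}{3T^2}(T-t)^3,
\]
for which $b'(t)=-\tfrac{2\rho_2}{T^2}(T-t)^2$, $b''(t)=\tfrac{4\rho_2}{T^2}(T-t)$, and the function $\gamma$ in \eqref{babygamma} becomes
\[
\gamma(t)=\frac{d}{4}\left(-\frac{2+3\kappa/\rho_2}{T-t}+2\rho_1\right).
\]
The factor $(T-t)^2$ in $b'$ kills the $1/(T-t)$ singularity in $\gamma$, so the continuity hypotheses of Corollary \ref{C:variational} on $b'\gamma$ and $b'\gamma^2$ on $[0,T]$ are fulfilled. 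An elementary computation of the two integrals yields
\[
\frac{2}{d\rho_2}\int_0^T b'\gamma\,ds=1+\frac{3\kappa}{2\rho_2}-\frac{2\rho_1 T}{3},
\]
\[
-\frac{1}{d\rho_2}\int_0^T b'\gamma^2\,ds=\frac{d(1+\tfrac{3\kappa}{2\rho_2})^2}{2T}-\frac{\rho_1 d}{2}\!\left(1+\frac{3\kappa}{2\rho_2}\right)+\frac{d\rho_1^2 T}{6},
\]
which are precisely the coefficients appearing on the right-hand side of the target inequality.

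Substituting everything into \eqref{var}, the two boundary terms at $t=T$ vanish because $b(T)=b'(T)=0$, and what remains is the claimed inequality applied to $f_\varepsilon=f+\varepsilon$, after division by the strictly positive quantity $P_T f_\varepsilon$. To conclude, for $f\in C_0^\infty(\bM)$ with $f\ge 0$ and $f\not\equiv 0$, strict positivity of the heat kernel gives $P_T f>0$ everywhere on $\bM$, so $P_T f_\varepsilon\to P_T f$ smoothly with all first and second derivatives converging locally uniformly. Letting $\varepsilon\to 0^+$ and renaming $T$ as $t$ yields the gradient estimate.

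The main obstacle is the calibration of $b$: the cubic profile must simultaneously meet four boundary constraints and make the time integrals of $b'\gamma$ and $b'\gamma^2$ collapse onto the exact polynomial-in-$T$ coefficients demanded by the statement. Once the correct template is guessed, the remaining verification is a bookkeeping exercise and the limit $\varepsilon\to 0$ poses no analytic difficulty.
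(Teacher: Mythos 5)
Your proposal is correct and follows essentially the same route as the paper: the paper's proof applies Corollary \ref{C:variational} with $b(t)=(T-t)^3$, and your $b(t)=\tfrac{2\rho_2}{3T^2}(T-t)^3$ is just a positive constant multiple of it, which changes nothing since $\gamma$ in \eqref{babygamma} is invariant under rescaling of $b$ and both sides of \eqref{var} are homogeneous of degree one in $b$. Your evaluation of the two integrals and the limit $\varepsilon\to 0$ match the paper's computation.
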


\begin{proof}
We apply Corollary \ref{C:variational}, in which we choose $b(t)=(T-t)^3$. With such choice, \eqref{babygamma} gives:
\[
\gamma (t)=
\frac{d}{2}\left(\rho_1
 - \frac{1}{T-t}\left(1 + \frac{3\kappa}{2\rho_2}\right)\right),
\]
and thus $b'\gamma, b'\gamma^2\in C([0,t]),\R)$. Simple calculations give
\[
\int_0^T b'(t) \gamma(t) dt =-\frac{\rho_1 d}{2} T^3 +\frac{3d}{4} \left( 1+ \frac{3\kappa}{2\rho_2}\right) T^2,
\]
and
\[
\int_0^T b'(t) \gamma(t)^2 dt =-\frac{3 d^2}{16} \left(
\frac{4\rho^2_1 }{3} T^3+ 4 \left( 1+ \frac{3\kappa}{2\rho_2}\right)
^2 T -4\rho_1\left( 1+ \frac{3\kappa}{2\rho_2}\right) T^2 \right).
\]
Using the latter two equations in \eqref{var} and letting $\varepsilon \to 0$, by the arbitrariness of $T>0$ we obtain the desired conclusion.

\end{proof}

\begin{remark}\label{R:rho1zero}
We notice that when $\rho_1\ge \rho_1'$, then one trivially has that:
\[
\emph{CD}(\rho_1,\rho_2,\kappa,d)\  \Longrightarrow\  \emph{CD}(\rho_1',\rho_2,\kappa,d).
\]
As a consequence of this observation, when \eqref{cdi} holds with $\rho_1>0$, then also \text{CD}$(0,\rho_2,\kappa,d)$ is true. Therefore, when $\rho_1>0$, Theorem \ref{T:ge} gives in particular for $f\in C_0^\infty(\M)$, $f\ge 0$,
\begin{align}\label{liyaupositifzero}
\Gamma (\ln P_t f) +\frac{2 \rho_2}{3}  t \Gamma^Z (\ln P_t f) \le
\left(1+\frac{3\kappa}{2\rho_2}\right) \frac{LP_t f}{P_t f} +\frac{d\left(
1+\frac{3\kappa}{2\rho_2}\right)^2}{2t}.
\end{align}
However, this inequality is not optimal when $\rho_1>0$. It leads to a optimal Harnack inequality only when
$\rho_1 = 0$. Sharper
bounds in the case $\rho_1>0$ will be obtained in
\eqref{gamma_bound} of Proposition \ref{P:bound_kernel} below by a different choice of the function $b(t)$ in Corollary \ref{C:variational}.
\end{remark}

\begin{remark}\label{R:ultrac}
Throughout the remainder of the paper the symbol $D$ will only be used with the following meaning:
 \begin{equation}\label{D} D =  d  \left( 1+
\frac{3\kappa}{2\rho_2}\right). \end{equation}
With this notation, observing that the left-hand side of \eqref{liyaupositifzero} is always nonnegative, and that $LP_t f = \p_t P_t f$, when $\rho_1 \ge 0$ we obtain
\begin{align}\label{ultracontractivity}
\p_t(\ln (t^{D/2} P_t f(x))) \ge 0.
\end{align}
By integrating \eqref{ultracontractivity} from $t<1$ to $1$ leads to the following on-diagonal bound for the heat kernel,
\begin{align}\label{ultracontractivity2}
p (x,x,t) \le \frac{1}{t^{D/2}} p(x,x,1).
\end{align}

The constant $\frac{D}{2}$ in \eqref{ultracontractivity2} is not optimal, in
general, as the example of the heat semigroup on a Carnot group shows. In such case, in fact, one can argue as
in \cite{Fo} to show that the heat kernel $p(x,y,t)$ is homogeneous
of degree $-\frac{Q}{2}$ with respect to the non-isotropic group dilations,
where $Q$ indicates the corresponding
homogeneous dimension of the group. From such homogeneity of $p(x,y,t)$,
one obtains the estimate
\begin{align*}
p (x,x,t) \le \frac{1}{t^{Q/2}} p(x,x,1),
\end{align*}
which, unlike \eqref{ultracontractivity2}, is best possible.
In the sub-Riemannian setting it does not seem easy to obtain sharp geometric constants by using only
the curvature-dimension inequality \eqref{cdi}. This aspect is quite different
from the Riemannian case, for which the \emph{CD}$(\rho_1,n)$ inequality
\eqref{CDi} does provide sharp geometric constants (see
\cite{bakry-tata}, \cite{ledoux-zurich}). However, in such case our bound \eqref{ultracontractivity2} is sharp as well, since if $d = n=$ dim$(\M)$, and $\kappa = 0$, then \eqref{D} gives $D = n$.
\end{remark}

\section{A parabolic Harnack inequality}\label{S:harnack}

In this section we generalize the celebrated Harnack inequality in
\cite{LY} to solutions of the heat equation $Lu - u_t = 0$
on $\bM$ which are in the form $u(x,t) = P_t f(x)$,
for some $f\in C_b^\infty(\bM)$, $f\ge 0$. Theorem
\ref{T:harnack} below should be seen as a generalization of (i) of
Theorem 2.2 in \cite{LY}, in the case of a zero potential $q$. One
should also see the paper \cite{Cao-Yau}, where the authors deal
with subelliptic operators on a compact manifold. As we have
mentioned, these authors do not obtain bounds which depend 
on the sub-Riemannian geometry of the underlying manifold. 

\begin{theorem}\label{T:harnack}
Assume the Hypothesis \ref{A:exhaustion}, \ref{A:main_assumption}, \ref{A:regularity}  and that the curvature-dimension inequality \eqref{cdi} be satisfied for $\rho_1\ge 0$. Given $(x,s), (y,t)\in \bM\times (0,\infty)$, with
$s<t$, one has for any $f\in
C_b^\infty(\bM)$, $f\ge 0$,
\begin{equation}\label{beauty}
P_s f(x) \le P_t f(y) \left(\frac{t}{s}\right)^{\frac{D}{2}} \exp\left(
\frac{D}{d} \frac{d(x,y)^2}{4(t-s)} \right).
\end{equation}
\end{theorem}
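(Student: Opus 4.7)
The plan is to apply the classical Li--Yau trick: integrate the gradient estimate of Theorem~\ref{T:ge} along a well-chosen space--time curve joining $(y,t)$ to $(x,s)$.

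Since $\rho_1\ge 0$, Remark~\ref{R:rho1zero} will let us invoke Theorem~\ref{T:ge} with $\rho_1$ replaced by $0$. Dropping the nonnegative $\Gamma^Z$ term and using $D=d\bigl(1+\tfrac{3\kappa}{2\rho_2}\bigr)$, the gradient estimate becomes the pointwise bound
\[
\partial_t\ln P_t f \;\ge\; \frac{d}{D}\,\Gamma(\ln P_t f)\;-\;\frac{D}{2t}.
\]
Because Theorem~\ref{T:ge} is stated for $f\in C_0^\infty$, one must first extend it to the setting $f\in C_b^\infty$, $f\ge 0$, of the present theorem: replace $f$ by $fh_k+\varepsilon$, with $\{h_k\}$ the exhaustion of Hypothesis~\ref{A:exhaustion} and $\varepsilon>0$, apply the gradient estimate, and at the end let $k\to\infty$ and $\varepsilon\to 0^+$ using $P_t 1 = 1$ from Hypothesis~\ref{A:regularity}. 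The shift by $\varepsilon$ also ensures $\ln P_t(f+\varepsilon)$ remains bounded below throughout.

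Next, fix $T>d(x,y)$ and, invoking the definition \eqref{ds} of $d=d_s$, pick a subunit curve $\tilde\gamma:[0,T]\to\bM$ with $\tilde\gamma(0)=y$, $\tilde\gamma(T)=x$; reparametrize as $\gamma(\sigma)=\tilde\gamma(T\sigma)$, $\sigma\in[0,1]$, so that $\gamma$ is ``$T$-subunit'', meaning $|\langle\nabla_\mathcal{H} F,\dot\gamma(\sigma)\rangle|\le T\sqrt{\Gamma(F)(\gamma(\sigma))}$ for every smooth $F$ and a.e.\ $\sigma$. Introduce the affine time reparametrization $\tau(\sigma)=t-\sigma(t-s)$ and consider
\[
\phi(\sigma) \;=\; \ln P_{\tau(\sigma)}(f+\varepsilon)(\gamma(\sigma)),
\]
which is smooth by hypoellipticity. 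Differentiating and applying Cauchy--Schwarz gives $\phi'(\sigma)\le -(t-s)\,\partial_t \ln P_\tau + T\sqrt{\Gamma(\ln P_\tau)}$, the right-hand side being evaluated at $(\gamma(\sigma),\tau(\sigma))$. Substituting the pointwise bound from the previous step and optimizing over $A=\Gamma(\ln P_\tau)\ge 0$ via the elementary identity $\max_{A\ge 0}(-\alpha A+\beta\sqrt A)=\beta^2/(4\alpha)$ with $\alpha=(t-s)d/D$ and $\beta=T$ yields
\[
\phi'(\sigma) \;\le\; \frac{T^2 D}{4(t-s)d}\;+\;\frac{(t-s)D}{2\tau(\sigma)}.
\]

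Integrating over $[0,1]$ (the second term contributes $\tfrac{D}{2}\ln(t/s)$ since $\int_0^1 (t-s)\,d\sigma/\tau(\sigma)=\ln(t/s)$), exponentiating, and finally letting $T\downarrow d(x,y)$ and $\varepsilon\downarrow 0^+$ will deliver the claimed inequality \eqref{beauty}. The main technical obstacle is the pointwise application of the gradient estimate at the running point $(\gamma(\sigma),\tau(\sigma))$: one needs $P_\tau(f+\varepsilon)$, $\Gamma(\ln P_\tau(f+\varepsilon))$, and $\partial_t\ln P_\tau(f+\varepsilon)$ to be classically defined and smooth along $\gamma$. This is supplied by hypoellipticity of $L$ (for smoothness of $P_\tau$) together with the strict lower bound $P_\tau(f+\varepsilon)\ge\varepsilon>0$ (for regularity of the logarithm and its gradient), which is precisely what makes the $\varepsilon$-shift indispensable.
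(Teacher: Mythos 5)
Your proposal is correct and follows essentially the same route as the paper's proof: derive the $\rho_1=0$ Li--Yau bound $-\partial_t\ln P_t f \le -\tfrac{d}{D}\Gamma(\ln P_t f)+\tfrac{D}{2t}$ from Theorem \ref{T:ge} via Remark \ref{R:rho1zero}, integrate $\ln P_\tau f$ along a space--time path over a subunit curve joining $y$ to $x$, optimize the quadratic-in-$\sqrt{\Gamma}$ expression, and minimize over subunit paths. The only cosmetic differences are that you optimize pointwise in $\Gamma$ where the paper uses Young's inequality with a single parameter $\epsilon$ chosen at the end (same constant), and you perform the $C_b^\infty$ approximation at the start via $fh_k+\varepsilon$ rather than at the end via $h_nP_\tau f$ as the paper does.
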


\begin{proof}
Let $f\in C_0^\infty(\M)$ be as in the statement of the theorem, and for every $(x,t)\in \bM\times
(0,\infty)$ consider $u(x,t) = P_t f(x)$ . Since $Lu = \frac{\p u}{\p t}$, in terms of $u$
the inequality \eqref{liyaupositifzero} can be reformulated as
\[ \Gamma (\ln u) +\frac{2 \rho_2}{3} t \Gamma^Z (\ln u) \le
(1+\frac{3\kappa}{2\rho_2}) \frac{\p \log u}{\p t}  +
\frac{d\left( 1+\frac{3\kappa}{2\rho_2}\right)^2}{2t}.
\]
Recalling \eqref{D}, this implies in particular, 
\begin{equation}\label{liyaupositif2}
- \frac{\p \ln u}{\p t} \le - \frac{d}{D} \Gamma
(\ln u)  +\frac{D}{2t}.
\end{equation}

We now fix two points $(x,s), (y,t)\in \bM\times (0,\infty)$, with
$s<t$. Let $\gamma(\tau)$, $0\le \tau \le T$ be a subunit path such
that $\gamma(0) = y$, $\gamma(T) = x$ (for the definition of subunit path see \cite{FP1}). Consider the path in $\bM\times
(0,\infty)$ defined by
\[
\alpha(\tau) = \left(\gamma(\tau),t + \frac{s-t}{T}\tau\right),\ \ \
\ 0\le \tau\le T,
\]
so that $\alpha(0) = (y,t)$, $\alpha(T) = (x,s)$. We have
\begin{align*}
\ln \frac{u(x,s)}{u(y,t)}& = \int_0^T \frac{d}{d\tau} \ln
u(\alpha(\tau)) d\tau \\
& \le \int_0^T \left[\Gamma(\ln u(\alpha(\tau)))^{\frac{1}{2}}
- \frac{t-s}{T} \frac{\p \ln u}{\p t}(\alpha(\tau))\right] d \tau.
\end{align*}
Applying \eqref{liyaupositif2} for any $\epsilon >0$ we find
\begin{align*}
\log \frac{u(x,s)}{u(y,t)}& \le T^{\frac{1}{2}} \left(\int_0^T
\Gamma(\ln u)(\alpha(\tau)) d\tau\right)^{\frac{1}{2}} -
\frac{t-s}{T} \int_0^T \frac{\p \ln u}{\p t}(\alpha(\tau)) d \tau
\\
& \le \frac{1}{2\epsilon} T + \frac{\epsilon}{2} \int_0^T \Gamma(\ln
u)(\alpha(\tau)) d\tau - \frac dD \frac{t-s}{T}
\int_0^T \Gamma(\ln u)(\alpha(\tau)) d\tau
\\
&   - \frac{D(s-t)}{2T}
\int_0^T \frac{d\tau}{t + \frac{s-t}{T} \tau}.
\end{align*}
If we now choose $\epsilon >0$ such that
\[
\frac{\epsilon}{2} = \frac dD\frac{t-s}{T},
\]
we obtain from the latter inequality
\[
\log \frac{u(x,s)}{u(y,t)} \le
\frac Dd\frac{\ell_s(\gamma)^2}{4(t-s)}  +
\frac{D}{2}
\ln\left(\frac{t}{s}\right),
\]
where we have denoted by $\ell_s(\gamma)$ the subunitary length of
$\gamma$. If we now minimize over all subunitary paths joining $y$
to $x$, and we exponentiate, we obtain
\[
u(x,s) \le u(y,t) \left(\frac{t}{s}\right)^{\frac{D}{2}} \exp\left(\frac Dd \frac{d(x,y)^2}{4(t-s)} \right).
\]
This proves
\eqref{beauty} when $f \in C_0^\infty(\M)$. We can then extend the result to  $f \in C_b^\infty(\bM)$ by considering the approximations $h_n P_\tau f \in C_0^\infty(\M)$ , where $h_n \in C_0^\infty(\M)$, $h_n \ge 0$, $h_n \to_{n \to \infty} 1$ and let $n \to \infty$ and $\tau \to 0$.

\end{proof}

The following result represents an important consequence of Theorem
\ref{T:harnack}.

\begin{corollary}\label{C:harnackheat}
Suppose that the Hypothesis \ref{A:exhaustion}, \ref{A:main_assumption},  \ref{A:regularity} be valid, and that the curvature-dimension inequality \eqref{cdi} be satisfied for $\rho_1\ge 0$. Let $p(x,y,t)$ be the heat kernel on $\bM$. For every $x,y, z\in
\bM$ and every $0<s<t<\infty$ one has
\[
p(x,y,s) \le p(x,z,t) \left(\frac{t}{s}\right)^{\frac{D}{2}}
\exp\left(\frac{D}{d} \frac{d(y,z)^2}{4(t-s)} \right).
\]
\end{corollary}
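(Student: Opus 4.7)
The plan is to reduce the corollary to the parabolic Harnack inequality of Theorem \ref{T:harnack}, applied to the heat kernel itself viewed as an initial datum, and then exploit the semigroup property \eqref{sgp}. Fix $x \in \bM$ and an auxiliary small parameter $0 < \epsilon < s$, and set
\[
f^\epsilon_x(w) = p(x,w,\epsilon), \qquad w \in \bM.
\]
By hypoellipticity of $L$ the function $f^\epsilon_x$ is smooth, and it is nonnegative by the positivity of $p$. The decisive identity is the semigroup reproducing property \eqref{sgp}, which gives
\[
P_\tau f^\epsilon_x(y) = \int_\bM p(y,w,\tau)\,p(x,w,\epsilon)\,d\mu(w) = p(x,y,\tau+\epsilon)
\]
for every $\tau > 0$ and every $y \in \bM$, so the orbit $\tau \mapsto p(x,\cdot,\tau+\epsilon)$ is realised as a semigroup trajectory with initial datum $f^\epsilon_x$.

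Next, I would apply Theorem \ref{T:harnack} to the function $f^\epsilon_x$ at the pair of space-time points $(y, s-\epsilon)$ and $(z, t-\epsilon)$, noting that $0 < s-\epsilon < t-\epsilon$ and $(t-\epsilon)-(s-\epsilon) = t-s$. Substituting the semigroup identity above, the inequality \eqref{beauty} becomes
\[
p(x,y,s) \le p(x,z,t)\left(\frac{t-\epsilon}{s-\epsilon}\right)^{D/2}\exp\left(\frac{D}{d}\,\frac{d(y,z)^2}{4(t-s)}\right).
\]
Letting $\epsilon \to 0^+$ and using continuity of $\tau \mapsto p(x,y,\tau)$ yields the sought bound.

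The one point that demands care, and the main obstacle, is that Theorem \ref{T:harnack} is stated for $f \in C_b^\infty(\bM)$, whereas $f^\epsilon_x$ is \emph{a priori} only known to be smooth and nonnegative---its global boundedness on $\bM$ is not automatic from the abstract hypotheses. This is circumvented by precisely the truncation argument already used at the end of the proof of Theorem \ref{T:harnack}: I would apply the inequality to the compactly supported smooth approximations $h_n f^\epsilon_x \in C_0^\infty(\bM)$, where $\{h_n\}$ is the exhausting sequence provided by Hypothesis \ref{A:exhaustion}. Since $0 \le h_n f^\epsilon_x \nearrow f^\epsilon_x$, Beppo Levi's monotone convergence theorem gives $P_\tau(h_n f^\epsilon_x)(y) \nearrow P_\tau f^\epsilon_x(y) = p(x,y,\tau+\epsilon)$ for every fixed $y$ and $\tau$, so the pointwise Harnack inequality for $h_n f^\epsilon_x$ passes to the limit as $n \to \infty$ and delivers the bound displayed above for $f^\epsilon_x$; a final passage $\epsilon \to 0^+$ completes the proof.
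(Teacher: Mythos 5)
Your proposal is correct and follows essentially the same route as the paper: both regularize the heat kernel initial datum by a small time shift, truncate by a compactly supported cutoff sequence so that Theorem \ref{T:harnack} applies, pass to the limit in the cutoff by monotone convergence using the semigroup identity \eqref{sgp}, and finally let the time shift tend to zero. The only cosmetic difference is that you shift the evaluation times down by $\epsilon$ (yielding the ratio $((t-\epsilon)/(s-\epsilon))^{D/2}$) whereas the paper shifts the kernel times up by $\tau$ (yielding $(t/s)^{D/2}$ directly); both limits give the stated bound.
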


\begin{proof}
Let $\tau >0$ and $x\in \bM$ be fixed. By the hypoellipticity of $L - \p_t$, we know that $p(x,\cdot,\cdot +
\tau)\in C^\infty(\bM \times (-\tau,\infty))$, see \cite{FSC}. From \eqref{sgp} we
have
\[
p (x,y,s+\tau)=P_s (p(x,\cdot,\tau))(y)
\]
and
\[
p (x,z,t+\tau)=P_t (p(x,\cdot,\tau))(z)
\]
Since we cannot apply Theorem \ref{T:harnack} directly to $u(y,t) =
P_t(p(x,\cdot,\tau))(y)$, we consider 
$u_n(y,t) = P_t(h_n p(x,\cdot,\tau))(y)$, where $h_n\in
C^\infty_0(\bM)$, $0\le h_n\le 1$, and $h_n\nearrow  1$. From
\eqref{beauty} we find
\[
P_s (h_np(x,\cdot,\tau))(y) \le P_t (h_np(x,\cdot,\tau))(z)
\left(\frac{t}{s}\right)^{\frac{D}{2}} \exp\left(\frac{D}{d}
\frac{d(y,z)^2}{4(t-s)} \right)
\]
Letting $n \to \infty$, by Beppo Levi's monotone convergence theorem
we obtain
\[
p (x,y,s+\tau) \le p (x,z,t+\tau)
\left(\frac{t}{s}\right)^{\frac{D}{2}} \exp\left(\frac{D}{d}
\frac{d(y,z)^2}{4(t-s)} \right).
\]
The desired conclusion follows by letting $\tau \to 0$.

\end{proof}

\section{Off-diagonal Gaussian upper bounds for $p(x,y,t)$}\label{S:gaussianub}

Suppose that the assumption of Theorem \ref{T:harnack} are in force. Fix $x\in \bM$ and
$t>0$. Applying Corollary \ref{C:harnackheat} to $(y,t)\to p(x,y,t)$
for every $y\in B(x,\sqrt t)$ we find
\[
p(x,x,t) \le  2^{\frac{D}{2}} e^{\frac{D}{4d}}\ p(x,y,2t) =
C(\rho_2,\kappa,d) p(x,y,2t).
\]
Integration over $B(x,\sqrt t)$ gives
\[
p(x,x,t)\mu(B(x,\sqrt t)) \le C(\rho_2,\kappa,d) \int_{B(x,\sqrt
t)}p(x,y,2t)d\mu(y) \le C(\rho_2,\kappa,d),
\]
where we have used $P_t1\le 1$. This gives the on-diagonal upper
bound 
\begin{equation}\label{odub} 
p(x,x,t) \le \frac{C(\rho_2,\kappa,d)}{\mu(B(x,\sqrt t))}.
\end{equation}

The aim of this section is to establish the following off-diagonal
upper bound for the heat kernel. Before doing so, let us observe that from the general theory of Markov semigroups, if the volume doubling property is assumed, then  the on-diagonal bound \eqref{odub}  implies an off-diagonal bound (see for instance \cite{CS}). However, in our framework, the volume doubling property is only proved in the sequel paper \cite{BBG} which relies on the results in the present paper. Therefore, and we think this is interesting in itself, to prove the off-diagonal upper bound, we completely bypass the use of uniform volume estimates and instead rely in an essential way on the scale invariant parabolic Harnack inequality.

\begin{theorem}\label{T:ub}
Assume the Hypothesis \ref{A:exhaustion}, \ref{A:main_assumption},  \ref{A:regularity},  and that the curvature-dimension inequality \eqref{cdi} be satisfied for $\rho_1\ge 0$. For any $0<\epsilon <1$
there exists a constant $C(\rho_2,\kappa,d,\epsilon)>0$, which tends
to $\infty$ as $\epsilon \to 0^+$, such that for every $x,y\in \bM$
and $t>0$ one has
\[
p(x,y,t)\le \frac{C(d,\kappa,\rho_2,\epsilon)}{\mu(B(x,\sqrt
t))^{\frac{1}{2}}\mu(B(y,\sqrt t))^{\frac{1}{2}}} \exp
\left(-\frac{d(x,y)^2}{(4+\epsilon)t}\right).
\]
\end{theorem}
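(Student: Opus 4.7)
The plan is to combine three ingredients: the on‑diagonal bound $p(x,x,t)\le C/\mu(B(x,\sqrt t))$ already derived in the paragraph preceding the theorem, Davies' perturbation method in $L^2$, and a further use of the scale-invariant Harnack inequality of Corollary \ref{C:harnackheat}. No volume doubling will be used.

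First I would establish the Davies weighted $L^2$ bound: for every Lipschitz $\psi$ with $\Gamma(\psi)\le 1$, every $\alpha\in\R$ and every $f\in C_0^\infty(\M)$,
\[
\int_\M (P_tf)^2 e^{-2\alpha\psi}\,d\mu \le e^{2\alpha^2 t}\int_\M f^2 e^{-2\alpha\psi}\,d\mu.
\]
The proof consists of differentiating the functional $F(t)=\int (P_tf)^2 e^{-2\alpha\psi}d\mu$, writing $F'(t) = -2\int \Gamma(P_tf,(P_tf)e^{-2\alpha\psi})d\mu$ via the symmetry of $L$ and Leibniz for $\Gamma$, and estimating the cross term by $|\Gamma(u,\psi)|\le\sqrt{\Gamma(u)\Gamma(\psi)}\le\sqrt{\Gamma(u)}$ together with $2|a b|\le a^2+b^2$. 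This yields $F'(t)\le 2\alpha^2 F(t)$, and Gronwall does the rest. The integrability needed to justify the integration by parts is guaranteed by Hypothesis \ref{A:regularity}.

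Next, fix $x,y\in\M$, set $r=d(x,y)$ and pick a small $\eta>0$ (to be chosen). Two applications of Corollary \ref{C:harnackheat}, with time ratio $1+\eta$, give, for $z\in B(x,\sqrt t)$ and $w\in B(y,\sqrt t)$,
\[
p(x,y,t)\le C(\eta)\, p(z,y,t(1+\eta)),\qquad p(z,y,t(1+\eta))\le C(\eta)\, p(z,w,t(1+2\eta)),
\]
with $C(\eta)=(1+\eta)^{D/2}e^{D/(4d\eta)}$. Integrating against the indicator of $B(x,\sqrt t)\times B(y,\sqrt t)$ yields
\[
\mu(B(x,\sqrt t))\,\mu(B(y,\sqrt t))\,p(x,y,t)\le C(\eta)^2\,\bigl\langle \mathbf 1_{B(x,\sqrt t)},\,P_{(1+2\eta)t}\mathbf 1_{B(y,\sqrt t)}\bigr\rangle.
\]
Now I would apply the Davies bound above with $\psi(\cdot)=d(\cdot,x)$ (a Lipschitz function with $\Gamma(\psi)\le 1$), together with the trivial pairing estimate $\langle g,P_sf\rangle\le e^{\alpha^2 s}\|e^{\alpha\psi}g\|_2\|e^{-\alpha\psi}f\|_2$. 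Using $\|e^{\alpha\psi}\mathbf 1_{B(x,\sqrt t)}\|_2^2\le \mu(B(x,\sqrt t))e^{2\alpha\sqrt t}$ and $\|e^{-\alpha\psi}\mathbf 1_{B(y,\sqrt t)}\|_2^2\le \mu(B(y,\sqrt t))e^{-2\alpha(r-\sqrt t)_+}$ (the case $r\le \sqrt t$ being trivial via the on‑diagonal bound), one gets
\[
p(x,y,t)\le \frac{C(\eta)^2\,\exp\!\bigl(\alpha^2(1+2\eta)t+2\alpha\sqrt t-\alpha r\bigr)}{\sqrt{\mu(B(x,\sqrt t))\,\mu(B(y,\sqrt t))}}.
\]

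To close, I minimize the exponent in $\alpha\ge 0$: the optimum value is $-(r-2\sqrt t)_+^2/(4(1+2\eta)t)$. Given $\epsilon\in(0,1)$, first choose $\eta$ so small that $4(1+2\eta)<4+\epsilon/2$, and then use the elementary inequality $(r-2\sqrt t)^2\ge (1-\delta)r^2-C_\delta t$ with $\delta$ small enough that $(1-\delta)/(4(1+2\eta))\ge 1/(4+\epsilon)$ to obtain the required exponent $-r^2/((4+\epsilon)t)$, absorbing the remaining constants into the prefactor $C(\rho_2,\kappa,d,\epsilon)$. The \emph{main obstacle} is exactly the tradeoff in this last step: making the Davies exponent sharp (close to $4t$) forces $\eta\to 0^+$, and this in turn blows up the Harnack constant $e^{D/(4d\eta)}$, which is the mechanism that makes $C(\rho_2,\kappa,d,\epsilon)\to\infty$ as $\epsilon\to 0^+$. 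A secondary subtlety is the absorption of the linear term $2\alpha\sqrt t$ into the quadratic $\alpha^2 t$ via AM-GM, which is what obstructs getting the sharp exponent $-r^2/(4t)$ by this method.
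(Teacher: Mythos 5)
Your argument is correct, and it follows the same overall strategy as the paper -- a Gaussian-weighted $L^2$ estimate for the semigroup combined with the parabolic Harnack inequality of Corollary \ref{C:harnackheat} to pass from integrated to pointwise bounds, deliberately bypassing volume doubling -- but the two key technical steps are realized differently. For the weighted estimate, you use Davies' original perturbation with a \emph{time-independent} exponential weight $e^{-2\alpha\psi}$, $\psi=d(\cdot,x)$, deriving $\|e^{-\alpha\psi}P_tf\|_2\le e^{\alpha^2 t}\|e^{-\alpha\psi}f\|_2$ and optimizing over $\alpha$ at the end; the paper instead proves the integral maximum principle $\int e^{g(\cdot,\tau)}f^2(\cdot,\tau)\,d\mu\le\int e^{g(\cdot,0)}f^2(\cdot,0)\,d\mu$ for the \emph{time-dependent} Gaussian weight $g(y,t)=-d(x,y)^2/\big(2((1+2\alpha)T-t)\big)$ satisfying $-\partial_tg\ge\tfrac12\Gamma(g)$, with no final optimization in the weight parameter. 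For the Harnack step, you apply Corollary \ref{C:harnackheat} symmetrically in both variables and integrate, reducing everything to the clean bilinear pairing $\langle \mathbf 1_{B(x,\sqrt t)},P_{(1+2\eta)t}\mathbf 1_{B(y,\sqrt t)}\rangle$, whereas the paper applies Theorem \ref{T:harnack} to the auxiliary solution $f(y,s)=P_s\big(p(x,\cdot,T)\mathbf 1_{B(x,\sqrt t)}\big)(y)$ and to the kernel separately, which is somewhat more involved. Your version is arguably cleaner, and you have correctly identified the mechanism forcing $C\to\infty$ as $\epsilon\to 0^+$ (here the Harnack constant $e^{D/(4d\eta)}$; in the paper the analogous blow-up as $\alpha\to 0$). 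Two points deserve the same care the paper gives them: the justification of the integration by parts defining $F'(t)$ on the noncompact $\M$ requires the cutoffs $h_k$ of Hypothesis \ref{A:exhaustion} together with the gradient bounds of Corollary \ref{C:expdecay} (not just Hypothesis \ref{A:regularity} in isolation), and the use of $\psi=d(\cdot,x)$ and of indicator functions requires the smooth/compactly supported approximations that the paper carries out explicitly. Neither is a gap in the idea, only in the write-up.
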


\begin{proof}
We suitably adapt here an idea in \cite{Cao-Yau} for the case of a
compact manifold without boundary. Since, however, we allow the
manifold $\bM$ to be  non-compact, we need to take care of this
aspect. Corollary \ref{C:expdecay} will prove crucial in this
connection. Given $T>0$, and $\alpha>0$ we fix $0<\tau \le
(1+\alpha)T$. For a function $\psi\in C^\infty_0(\bM)$, with $\psi
\ge 0$, in $\bM \times (0,\tau)$ we consider the function
\[
f(y,t) = \int_{\bM} p(y,z,t) p(x,z,T) \psi(z) d\mu(z),\ \ \ x\in
\bM.
\]
Since $f = P_t(p(x,\cdot,T)\psi)$, it satisfies the Cauchy problem
\[
\begin{cases}
Lf - f_t = 0 \ \ \ \ \text{in}\ \bM \times (0,\tau),
\\
f(z,0) = p(x,z,T)\psi(z),\ \  \ z\in \bM.
\end{cases}
\]
Notice that by the hypoellipticity of $L-\p_t$ we know $y\to
p(x,y,T)$ is in $C^\infty(\bM)$, and therefore $p(x,\cdot,T)\psi\in
L^\infty(\bM)$. Moreover, \eqref{smp} gives \[
||P_t(p(x,\cdot,T)\psi)||^2_{L^2(\bM)} \le
||p(x,\cdot,T)\psi||^2_{L^2(\bM)} = \int_{\bM} p(x,z,T)^2 \psi(z)
d\mu(z) <\infty, \]
and therefore
\begin{equation}\label{L2f}
\int_0^\tau \int_{\bM} f(y,t)^2 d\mu(z)dt \le \tau \int_{\bM}
p(x,z,T)^2 \psi(z) d\mu(z)dt <\infty.
\end{equation}
Invoking \eqref{pointwiseCaccioppoli} in Corollary \ref{C:expdecay}
we have
\[
\Gamma (f)(z,t) \le  e^{-\alpha t} \left( P_t
\Gamma(p(x,\cdot,T)\psi)(z) + P_t
\Gamma^Z(p(x,\cdot,T)\psi)(z)\right).
\]
This allows to conclude
\begin{equation}\label{L2gradf}
\int_0^\tau \int_{\bM} \Gamma(f)(z,t)^2 d\mu(z)dt <\infty.
\end{equation}
We now consider a function $g\in
C^1([0,(1+\alpha)T],\text{Lip}_d(\bM))\cap L^\infty(\bM\times
(0,(1+\alpha)T))$ such that
\begin{equation}\label{ineg}
- \frac{\p g}{\p t} \ge \frac{1}{2}\Gamma(g),\ \ \text{on}\ \bM \times
(0,(1+\alpha)T).
\end{equation}

Since
\[ (L-\frac{\p}{\p t})f^2 = 2 f(L-\frac{\p}{\p t})f + 2 \Gamma(f) =
2 \Gamma(f), \] multiplying this identity by $h_n^2(y) e^{g(y,t)}$,
where $h_n$ is a sequence as in Hypothesis \ref{A:exhaustion}, and
integrating by parts, we obtain
\begin{align*}
0 & = 2 \int_0^\tau \int_{\bM} h_n^2 e^g \Gamma(f) d\mu(y) dt -
\int_0^\tau \int_{\bM}h_n^2 e^g (L-\frac{\p}{\p t})f^2 d\mu(y) dt
\\
& = 2 \int_0^\tau \int_{\bM} h_n^2 e^g \Gamma(f) d\mu(y) dt + 4
\int_0^\tau \int_{\bM} h_n e^g f \Gamma(h_n,f) d\mu(y) dt \\
& + 2 \int_0^\tau\int_{\bM}h_n^2 e^g f \Gamma(f,g)d\mu(y) dt -
\int_0^\tau \int_{\bM} h_n e^g f^2 \frac{\p g}{\p t} d\mu(y) dt
\\
& -   \int_{\bM} h_n e^g f^2 d\mu(y)\bigg|_{t=0} + \int_{\bM} h_n
e^g f^2 d\mu(y)\bigg|_{t=\tau}
\\
& \ge 2 \int_0^\tau \int_{\bM} h_n^2 e^g \left(\Gamma(f) +
\frac{f^2}{4} \Gamma(g) + f \Gamma(f,g)\right) d\mu(y) dt + 4 \int_0^\tau \int_{\bM} h_n
e^g f \Gamma(h_n,f) d\mu(y) dt
\\
& + \int_{\bM} h_n e^g f^2 d\mu(y)\bigg|_{t=\tau} -   \int_{\bM} h_n
e^g f^2 d\mu(y)\bigg|_{t=0},
\end{align*}
where in the last inequality we have made use of the assumption
\eqref{ineg} on $g$. From this we conclude
\[
\int_{\bM} h_n e^g f^2 d\mu(y)\bigg|_{t=\tau} \le \int_{\bM} h_n e^g
f^2 d\mu(y)\bigg|_{t=0} - 4 \int_0^\tau \int_{\bM} h_n e^g f
\Gamma(h_n,f) d\mu(y) dt.
\]
We now claim that
\[
\underset{n\to \infty}{\lim} \int_0^\tau \int_{\bM} h_n e^g f
\Gamma(h_n,f) d\mu(y) dt = 0.
\]
To see this we apply Cauchy-Schwarz inequality which gives
\begin{align*}
& \left|\int_0^\tau \int_{\bM} h_n e^g f \Gamma(h_n,f) d\mu(y)
dt\right|\le \left(\int_0^\tau \int_{\bM} h_n^2 e^g f^2 \Gamma(h_n)
d\mu(y) dt\right)^{\frac{1}{2}} \left(\int_0^\tau \int_{\bM} e^g
\Gamma(f) d\mu(y) dt\right)^{\frac{1}{2}}
\\
& \le \left(\int_0^\tau \int_{\bM} e^g f^2 \Gamma(h_n) d\mu(y)
dt\right)^{\frac{1}{2}} \left(\int_0^\tau \int_{\bM} e^g \Gamma(f)
d\mu(y) dt\right)^{\frac{1}{2}} \to 0,
\end{align*}
as $n\to \infty$, thanks to \eqref{L2f}, \eqref{L2gradf}. With the
claim in hands we now let $n\to \infty$ in the above inequality
obtaining
\begin{equation}\label{CYpsi}
\int_{\bM}  e^{g(y,\tau)} f^2(y,\tau) d\mu(y) \le \int_{\bM}
e^{g(y,0)} f^2(y,0) d\mu(y).
\end{equation}
At this point we fix $x\in \bM$ and for $0<t\le\tau$ consider the
indicator function $\mathbf 1_{B(x,\sqrt t)}$ of the ball $B(x,\sqrt
t)$. Let $\psi_k\in C^\infty_0(\bM)$, $\psi_k \ge 0$, be a sequence
such that $\psi_k \to \mathbf 1_{B(x,\sqrt t)}$ in $L^2(\bM)$, with
supp$\ \psi_k\subset B(x,100\sqrt t)$. Slightly abusing the notation
we now set \[ f(y,s) = P_s(p(x,\cdot,T)\mathbf{1}_{B(x,\sqrt t)})(y)
= \int_{B(x,\sqrt t)} p(y,z,s) p(x,z,T) d\mu(z). \] Thanks to the
symmetry of $p(x,y,s) = p(y,x,s)$, we have
\begin{equation}\label{psquare}
 f(x,T) = \int_{B(x,\sqrt t)} p(x,z,T)^2 d\mu(z).
\end{equation}

Applying \eqref{CYpsi} to $f_k(y,s) = P_s(p(x,\cdot,T)\psi_k)(y)$,
we find
\begin{equation}\label{CYpsik}
\int_{\bM}  e^{g(y,\tau)} f^2_k(y,\tau) d\mu(y) \le \int_{\bM}
e^{g(y,0)} f^2_k(y,0) d\mu(y).
\end{equation}
At this point we observe that as $k\to \infty$
\begin{align*}
& \left|\int_{\bM}  e^{g(y,\tau)} f^2_k(y,\tau) d\mu(y) - \int_{\bM}
e^{g(y,\tau)} f^2(y,\tau) d\mu(y)\right|
\\
& \le 2 ||e^{g(\cdot,\tau)}||_{L^\infty(\bM)}
||p(x,\cdot,T)||_{L^2(\bM)} ||p(x,\cdot,\tau)||_{L^\infty(B(x,110
\sqrt t))} ||\psi_k - \mathbf 1_{B(x,\sqrt t)}||_{L^2(\bM)} \to 0.
\end{align*}
By similar considerations we find
\begin{align*}
& \left|\int_{\bM}  e^{g(y,0)} f^2_k(y,0) d\mu(y) - \int_{\bM}
e^{g(y,0)} f^2(y,0) d\mu(y)\right|
\\
& \le 2 ||e^{g(\cdot,0)}||_{L^\infty(\bM)}
||p(x,\cdot,T)||_{L^\infty(B(x,110 \sqrt t))} ||\psi_k - \mathbf
1_{B(x,\sqrt t)}||_{L^2(\bM)} \to 0.
\end{align*}
Letting $k\to \infty$ in \eqref{CYpsik} we thus conclude that the
same inequality holds with $f_k$ replaced by $f(y,s) =
P_s(p(x,\cdot,T)1_{B(x,\sqrt t)})(y)$. This implies in particular
the basic estimate
\begin{align}\label{be}
& \underset{z\in B(x,\sqrt t)}{\inf}\ e^{g(z,\tau)} \int_{B(x,\sqrt
t)} f^2(z,\tau) d\mu(z)
\\
& \le \int_{B(x,\sqrt t)} e^{g(z,\tau)} f^2(z,\tau) d\mu(z) \le
\int_{\bM} e^{g(z,\tau)} f^2(z,\tau) d\mu(z) \notag\\
& \le \int_{\bM} e^{g(z,0)} f^2(z,0) d\mu(z) = \int_{B(y,\sqrt t)}
e^{g(z,0)} p(x,z,T)^2 d\mu(z) \notag\\
& \le \underset{z\in B(y,\sqrt t)}{\sup}\ e^{g(z,0)} \int_{B(y,\sqrt
t)} p(x,z,T)^2 d\mu(z). \notag
\end{align}

At this point we choose in \eqref{be}
\[ g(y,t) = g_x(y,t) = -
\frac{d(x,y)^2}{2((1+2\alpha) T - t)}.
\]
Using the fact that $\Gamma(d)\le 1$, one can easily check that
\eqref{ineg} is satisfied for this $g$. Taking into account that
\[
\underset{z\in B(x,\sqrt t)}{\inf}\ e^{g_x(z,\tau)} = \underset{z\in
B(x,\sqrt t)}{\inf}\ e^{-\frac{d(x,z)^2}{2((1+2\alpha)T- \tau)}} \ge
e^{\frac{-t}{2((1+2\alpha)T- \tau)}},
\]
if we now choose $\tau = (1+\alpha)T$, then from the previous
inequality and from \eqref{psquare} we conclude that
\begin{equation}\label{lemmaub}
\int_{B(x,\sqrt t)} f^2(z,(1+\alpha)T) d\mu(z) \le
\left(\underset{z\in B(y,\sqrt t)}{\sup}\
e^{-\frac{d(x,z)^2}{2(1+2\alpha)T} + \frac{t}{2\alpha T}}\right)
\int_{B(y,\sqrt t)} p(x,z,T)^2 d\mu(z).
\end{equation}
We now apply Theorem \ref{T:harnack} which gives for every $z\in
B(x,\sqrt t)$
\[
f(x,T)^2 \le f(z,(1+\alpha)T)^2
(1+\alpha)^{d(1+\frac{3\kappa}{2\rho_2})}
e^{\frac{t(1+\frac{3\kappa}{2\rho_2})}{2\alpha T}}.
\]
Integrating this inequality on $B(x,\sqrt t)$ we find
\[
\left(\int_{B(y,\sqrt t)} p(x,z,T)^2 d\mu(z)\right)^2 = f(x,T)^2 \le
\frac{(1+\alpha)^{d(1+\frac{3\kappa}{2\rho_2})}
e^{\frac{t(1+\frac{3\kappa}{2\rho_2})}{2\alpha T}}}{\mu(B(x,\sqrt
t)) } \int_{B(x,\sqrt t)} f^2(z,(1+\alpha)T) d\mu(z).
\]
If we now use \eqref{lemmaub} in the last inequality we obtain
\begin{align*}
& \int_{B(y,\sqrt t)} p(x,z,T)^2 d\mu(z) \le
\frac{(1+\alpha)^{d(1+\frac{3\kappa}{2\rho_2})}
e^{\frac{t(1+\frac{3\kappa}{2\rho_2})}{2\alpha T}}}{\mu(B(x,\sqrt
t))} \left(\underset{z\in B(y,\sqrt t)}{\sup}\
e^{-\frac{d(x,z)^2}{2(1+2\alpha)T} + \frac{t}{2\alpha T}}\right).
\end{align*}
Choosing $T = (1+\alpha)t$ in this inequality we find
\begin{align}\label{ub2}
& \int_{B(y,\sqrt t)} p(x,z,(1+\alpha)t)^2 d\mu(z) \le
\frac{(1+\alpha)^{d(1+\frac{3\kappa}{2\rho_2})}
e^{\frac{(1+\frac{3\kappa}{2\rho_2})}{2\alpha(1+\alpha)}+
\frac{1}{2\alpha (1+\alpha)}}}{\mu(B(x,\sqrt t))}
\left(\underset{z\in B(y,\sqrt t)}{\sup}\
e^{-\frac{d(x,z)^2}{2(1+2\alpha)(1+\alpha)t} + \frac{1}{2\alpha
(1+\alpha)}}\right).
\end{align}
We now apply Corollary \ref{C:harnackheat} obtaining for every $z\in
B(y,\sqrt t)$
\[
p(x,y,t)^2 \le p(x,z,(1+\alpha)t)^2 (1+\alpha)^{d\left(
1+\frac{3\kappa}{2\rho_2}\right)} \exp\left(\frac{
1+\frac{3\kappa}{2\rho_2}}{2\alpha } \right).
\]
Integrating this inequality in $z\in B(y,\sqrt t)$, we have
\[
\mu(B(y,\sqrt t)) p(x,y,t)^2 \le (1+\alpha)^{d\left(
1+\frac{3\kappa}{2\rho_2}\right)} e^{\frac{
1+\frac{3\kappa}{2\rho_2}}{2\alpha }} \int_{B(y,\sqrt t)}
p(x,z,(1+\alpha)t)^2 d\mu(z).
\]
Combining this inequality with \eqref{ub2} we conclude
\[
p(x,y,t) \le \frac{(1+\alpha)^{d(1+\frac{3\kappa}{2\rho_2})}
e^{\frac{(1+\frac{3\kappa}{2\rho_2})(2+\alpha)}{4\alpha(1+\alpha)}+
\frac{3}{4\alpha (1+\alpha)}}}{\mu(B(x,\sqrt
t))^{\frac{1}{2}}\mu(B(y,\sqrt
t))^{\frac{1}{2}}}\left(\underset{z\in B(y,\sqrt t)}{\sup}\
e^{-\frac{d(x,z)^2}{2(1+2\alpha)(1+\alpha)t}}\right).
\]
If now $x\in B(y,\sqrt t)$, then
\[
d(x,z)^2 \ge (d(x,y) - \sqrt t)^2 > d(x,y)^2 - t,
\]
and therefore
\[
\underset{z\in B(y,\sqrt t)}{\sup}\
e^{-\frac{d(x,z)^2}{2(1+2\alpha)(1+\alpha)t}} \le
e^{\frac{1}{2(1+2\alpha)(1+\alpha)}}
e^{-\frac{d(x,y)^2}{2(1+2\alpha)(1+\alpha)t}}.
\]
If instead $x\not\in B(y,\sqrt t)$, then for every $\delta >0$ we
have
\[
d(x,z)^2 \ge (1-\delta) d(x,y)^2  - (1+ \delta^{-1}) t
\]
Choosing $\delta = \alpha/(\alpha+1)$ we find
\[
d(x,z)^2 \ge \frac{d(x,y)^2}{1+\alpha}  - (2 + \alpha^{-1}) t,
\]
and therefore
\[
\underset{z\in B(y,\sqrt t)}{\sup}\
e^{-\frac{d(x,z)^2}{2(1+2\alpha)(1+\alpha)t}} \le
e^{-\frac{d(x,y)^2}{2(1+2\alpha)(1+\alpha)^2 t} + \frac{2 +
\alpha^{-1}}{2(1+2\alpha)(1+\alpha)}}
\]
For any $\epsilon >0$ we now choose $\alpha>0$ such that
$2(1+2\alpha)(1+\alpha)^2 = 4+\epsilon$ to reach the desired
conclusion.

\end{proof}

\section{A generalization of Yau's Liouville theorem}

In his seminal 1975 paper \cite{Yau}, by using gradient estimates,
Yau proved his celebrated Liouville theorem that there exists no
non-constant positive harmonic function on a complete Riemannian
manifold with non-negative Ricci curvature. The aim of this section
is to extend Yau's theorem to the sub-Riemannian setting of this
paper. An interesting point to keep in mind here is that, even in
the Riemannian setting, our approach gives a new proof of Yau's
theorem which is not based on delicate tools from Riemann geometry
such as the Laplacian comparison theorem \eqref{lct} for the
geodesic distance. However, due to the nature of our proof at the
moment we are only able to deal with harmonic functions bounded from
two sides, whereas in \cite{Yau} the author is able to treat
functions satisfying a one-side bound. In the sequel paper \cite{BBG} we will remove this restriction.

We begin with a Harnack type inequality for the operator $L$.

\begin{theorem}\label{T:harnackL}
Assume the Hypothesis \ref{A:exhaustion}, \ref{A:main_assumption},  \ref{A:regularity}  and that the curvature-dimension inequality \eqref{cdi} be satisfied for $\rho_1\ge 0$. Let $0\le f\le M$ be a harmonic function on $\bM$,
then there exists a constant $C = C(\rho_2,\kappa,d)>0$ such that
for any $x_0\in \bM$ and any $r>0$ one has
\[
\underset{B(x_0,r)}{\sup} f  \le  C \underset{B(x_0,r)}{\inf} f.
\]
\end{theorem}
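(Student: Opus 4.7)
The plan is to derive the elliptic Harnack inequality directly from the scale-invariant parabolic Harnack inequality (Theorem \ref{T:harnack}), without any new PDE work. The key observation is that a bounded harmonic function is a bounded solution of the heat equation, constant in time, and hence is fixed by the heat semigroup.

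First, I would note that by the hypoellipticity of $L$, any harmonic $f$ is automatically $C^\infty$, so the hypothesis $0\le f\le M$ places $f\in C_b^\infty(\bM)$. Since $Lf=0$, the constant-in-time function $u(x,t)=f(x)$ is a bounded solution of the Cauchy problem
\[
Lu-u_t=0,\qquad u(\cdot,0)=f.
\]
The function $(x,t)\mapsto P_tf(x)$ is another bounded solution with the same initial datum, so by the $L^\infty$-uniqueness stated in Proposition \ref{P:ucp} (which is available because Hypothesis \ref{A:exhaustion} and Hypothesis \ref{A:regularity} hold), one concludes
\[
P_tf(x)=f(x),\qquad x\in\bM,\ t\ge 0.
\]

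Second, I would apply Theorem \ref{T:harnack} with this $f$: for every $0<s<t$ and every $x,y\in\bM$,
\[
f(x)=P_sf(x)\le P_tf(y)\left(\frac{t}{s}\right)^{\!D/2}\exp\!\left(\frac{D}{d}\,\frac{d(x,y)^2}{4(t-s)}\right)=f(y)\left(\frac{t}{s}\right)^{\!D/2}\exp\!\left(\frac{D}{d}\,\frac{d(x,y)^2}{4(t-s)}\right),
\]
where $D=d\bigl(1+\tfrac{3\kappa}{2\rho_2}\bigr)$ as in \eqref{D}.

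Third, I would specialize $x,y\in B(x_0,r)$, which gives $d(x,y)\le 2r$, and choose $s=r^2$ and $t=2r^2$. Then $(t/s)^{D/2}=2^{D/2}$ and $d(x,y)^2/(4(t-s))\le 1$, so
\[
f(x)\le C\,f(y),\qquad C=2^{D/2}\,e^{D/d},
\]
with $C$ depending only on $\rho_2,\kappa,d$ and in particular independent of $x_0$ and $r$. Taking the supremum over $x\in B(x_0,r)$ and the infimum over $y\in B(x_0,r)$ yields the claim.

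There is no real obstacle here: the entire statement reduces, via the uniqueness of bounded solutions of the heat equation, to a single scale-invariant application of Theorem \ref{T:harnack}. The only subtle point worth double-checking is that $P_tf=f$ can be rigorously invoked in the form needed by Theorem \ref{T:harnack}, which requires $f\in C_b^\infty(\bM)$ with $f\ge 0$; both are satisfied by hypoellipticity and by the hypothesis $0\le f\le M$.
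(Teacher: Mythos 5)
Your proof is correct and follows essentially the same route as the paper: invoke Proposition \ref{P:ucp} to conclude $P_tf=f$ from $Lf=0$ and boundedness, then apply the parabolic Harnack inequality of Theorem \ref{T:harnack} with $s=r^2$, $t=2r^2$ to points of $B(x_0,r)$, obtaining the same constant $C=2^{D/2}e^{D/d}$.
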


\begin{proof}
We know that $f\in C^\infty_b(\bM)$, and $f\ge 0$. Applying
Theorem \ref{T:harnack} to the function $u(x,t) = P_t f(x)$, we
obtain for $x,y\in B(x_0,r)$
\[
P_s f(x) \le P_t f(y) \left(\frac{t}{s}\right)^{\frac{D}{2}}
\exp\left(\frac{D r^2}{d(t-s)}\right),\ \ \ 0<s<t<\infty.
\]
At this point we observe that, thanks to the assumption $Lf = 0$,
the functions $u(x,t) = P_t f(x)$ and $v(x,t) = f(x)$ solve the same
Cauchy problem on $\bM$. By Proposition \ref{P:ucp} we must have
$P_t f(x) = f(x)$ for every $x\in \bM$ and every $t>0$. Therefore,
taking $s = r^2, t = 2r^2$, the latter inequality gives
\[
f(x) \le  \left(\sqrt 2 e^{\frac{1}{d}}\right)^D\ f(y), \ \ \ x,y\in
B(x_0,r).
\]
\end{proof}

\begin{theorem}[of Cauchy-Liouville type]\label{T:liouville}
Under the same assumptions of Theorem \ref{T:harnackL}, there exist no bounded
solutions to $Lf=0$ on $\bM$, other than the constants.
\end{theorem}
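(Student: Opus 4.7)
My plan is to deduce Theorem \ref{T:liouville} directly from the scale-invariant elliptic Harnack inequality of Theorem \ref{T:harnackL} via a classical oscillation-type argument. Let $f$ be a bounded solution of $Lf=0$ on $\bM$, so that $|f|\le M$ for some $M$. Setting $m=\inf_\bM f$, the function $g=f-m$ is still bounded and still satisfies $Lg=0$, with the additional feature that $g\ge 0$ on all of $\bM$ and $\inf_\bM g=0$.

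Next I fix a base point $x_0\in\bM$ and apply Theorem \ref{T:harnackL} to the nonnegative bounded harmonic function $g$ on each metric ball $B(x_0,r)$. This yields the uniform-in-$r$ bound
\[
\sup_{B(x_0,r)} g \;\le\; C\,\inf_{B(x_0,r)} g,
\]
where $C=C(\rho_2,\kappa,d)>0$ depends only on the parameters in \eqref{cdi}, and in particular is independent of $r$. The key point is that the constant in Theorem \ref{T:harnackL} does not degenerate as $r\to\infty$.

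I then let $r\to\infty$. Because $(\bM,d)$ is complete and the hypothesis $S(x,y)\ne\varnothing$ guarantees that every point of $\bM$ lies at finite distance from $x_0$, the balls $B(x_0,r)$ exhaust $\bM$, and hence
\[
\sup_{B(x_0,r)} g \;\nearrow\; \sup_\bM g, \qquad \inf_{B(x_0,r)} g \;\searrow\; \inf_\bM g \;=\; 0.
\]
Passing to the limit in the Harnack inequality gives $\sup_\bM g\le 0$, so combined with $g\ge 0$ we conclude $g\equiv 0$, i.e.\ $f\equiv m$.

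There is essentially no obstacle here: the substantive content is entirely packaged in Theorem \ref{T:harnackL}, which in turn rested on the parabolic Harnack inequality and the uniqueness statement of Proposition \ref{P:ucp}. The only small point to check is that $\inf_\bM g=0$, but this is immediate from the definition $m=\inf_\bM f$.
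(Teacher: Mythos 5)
Your argument is correct and is essentially identical to the paper's own proof: both subtract $\inf_\bM f$ to obtain a nonnegative bounded harmonic function, apply the $r$-independent Harnack constant of Theorem \ref{T:harnackL} on $B(x_0,r)$, and let $r\to\infty$ to force $\sup_\bM g \le C\inf_\bM g = 0$. No gaps; nothing further to add.
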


\begin{proof}
Suppose $a\le f\le b$ on $\bM$. Consider the function $g = f -
\underset{\bM}{\inf}\ f$. Clearly, $0\le g \le M = b-a$. If we apply
Theorem \ref{T:harnackL} to $g$ we find for any $x_0\in \bM$ and
$r>0$
\[
\underset{B(x_0,r)}{\sup} g  \le  C \underset{B(x_0,r)}{\inf} g.
\]
Letting $r\to \infty$ we reach the conclusion $\underset{\bM}{\sup}\
f = \underset{\bM}{\inf}\ f$, hence $f\equiv $ const.

\end{proof}

\section{A sub-Riemannian Bonnet-Myers theorem}\label{S:myer}

Let  $(\mathbb{M},g)$ be a complete, connected Riemannian manifold
of dimension $n\ge 2$. It is well-known that if for some $\rho_1>0$ the Ricci tensor of
$\mathbb{M}$ satisfies the bound 
\begin{equation}\label{ricbd}
\text{Ric} \geq (n-1) \rho_1, 
\end{equation} then $\bM$ is compact, with a finite
fundamental group, and diam$(\bM) \le \frac{\pi}{\sqrt{\rho_1}}$.
This is the celebrated Myer's theorem, which strengthens Bonnet's
theorem. Like the latter, Myer's theorem is usually proved by using
Jacobi vector fields (see e.g. Theorem 2.12 in \cite{Chavel}).

A different approach is based on the curvature-dimension inequality CD$((n-1)\rho_1,n)$,
which as we have seen, one obtains from \eqref{ricbd} (see \eqref{CDi}). When $n>2$, in the paper
\cite{ledoux-zurich} (see also \cite{Bakry-Ledoux1}) Ledoux uses ingenious non-linear methods, based on the study of the partial differential equation
\[
c(f^{p-1}-f)=-\Delta f, \ \ \ \ \    1\le p \le
\frac{2n}{n-2},
\]
 to deduce  from the curvature-dimension
inequality CD$((n-1)\rho_1,n)$ the following Sobolev
inequality
\begin{equation}\label{sobolev}
\frac{n}{(n-2)\rho_1^2} \left[ \left( \int_{\mathbb M} |f|^p d\mu
\right)^{2/p} -\int_{\mathbb M} f^2 d\mu \right] \le \int_{\mathbb
M} \Gamma (f) d\mu, \ \ f\in C^\infty_0(\mathbb M),
\end{equation}
where $\mu$ is the Riemannian measure. By a simple iteration procedure, the author shows
from \eqref{sobolev} that the diameter of $\mathbb{M}$ is finite and
bounded by $\frac{\pi}{\sqrt{\rho_1}}$. The non-linear methods in \cite{ledoux-zurich} seem difficult to extend to the framework of the present paper. 

A weaker version of the Myers theorem was proved by Bakry in \cite{bakry-stflour} by using linear methods only. We have been able to suitably adapt his approach, based on entropy-energy inequalities (a strong form of log-Sobolev inequalities).
In this section we establish the following sub-Riemannian
Bonnet-Myer's compactness theorem. 

\begin{theorem}\label{T:BM}
Assume the  Hypothesis \ref{A:exhaustion}, \ref{A:main_assumption},  \ref{A:regularity},  and that the curvature-dimension inequality \eqref{cdi} be satisfied for $\rho_1> 0$. Then, the metric space $(\mathbb{M},d)$ is compact and we have \[ \emph{diam}\ \bM \le
\frac{\pi}{\sqrt \rho_1} 2\sqrt{3} \sqrt{\left(\frac{\kappa}{\rho_2} + 1\right) D}=2\sqrt{3} \pi \sqrt{
\frac{\rho_2+\kappa}{\rho_1\rho_2} \left(
1+\frac{3\kappa}{2\rho_2}\right)d } .
\]
\end{theorem}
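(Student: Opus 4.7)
My plan is to apply Corollary~\ref{C:variational} with a carefully chosen time-dependent function $b$ that exploits the positivity of $\rho_1 > 0$, in analogy with the proof of the Li--Yau estimate (Theorem~\ref{T:ge}). There, the choice $b(t) = (T-t)^3$ produced the polynomial estimate suitable for $\rho_1 = 0$. For $\rho_1 > 0$ I would instead try a trigonometric ansatz such as $b(t) = (\cos(\omega(T-t)))^\alpha$ or $b(t) = (\sin(\omega(T-t)))^\alpha$ on an interval $[0,T]$ with $\omega T < \pi$ (or $\pi/2$), where $\omega > 0$ and $\alpha > 0$ are free parameters. The parameter $\omega$ should scale like $\sqrt{\rho_1}$, which is what ultimately produces the $\pi/\sqrt{\rho_1}$ scaling in the diameter bound. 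The fact that $b$ vanishes at an endpoint of a finite interval is the mechanism by which compactness is forced: the bound in Corollary~\ref{C:variational} can only be evaluated on a time window of bounded length, and this length will translate, via subunit path integration, into a bound on $d(x,y)$.

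\textbf{From the variational inequality to a Harnack bound.} For such a $b$, compute $\gamma(t)$ from \eqref{babygamma} and evaluate the moments $\int_0^T b'\gamma\,ds$ and $\int_0^T b'\gamma^2\,ds$; these will be explicit in terms of Beta-type integrals of the chosen trigonometric functions. Plugging into \eqref{var}, dropping the nonnegative term $b(T) P_T(f_\varepsilon \Gamma^Z(\ln f_\varepsilon))$ on the left, and rearranging, one obtains a pointwise inequality of the shape
\[
A(T)\,\Gamma(\ln P_T f_\varepsilon) \;\ge\; B(T)\,\frac{LP_T f_\varepsilon}{P_T f_\varepsilon} \;-\; C(T),
\]
where $A(T), B(T), C(T)$ are explicit positive quantities built from $\rho_1, \rho_2, \kappa, d$ that remain finite for all $T$ smaller than a critical value $T^*$ determined by the first zero of $b$. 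Proceeding exactly as in the proof of Theorem~\ref{T:harnack} — integrating along a subunit path from $x$ to $y$ against the parabolic function $\ln P_t f_\varepsilon$ and using Cauchy--Schwarz together with $\Gamma(d) \le 1$ — I would upgrade this to a parabolic Harnack inequality
\[
P_s f_\varepsilon(x) \;\le\; P_t f_\varepsilon(y)\cdot \Psi\bigl(s,t,d(x,y)\bigr),
\]
valid for $0 < s < t$ with $t - s < T^*$.

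\textbf{Contradiction and conclusion.} Letting $\varepsilon \to 0^+$ (using that $f_\varepsilon = f + \varepsilon$ and Proposition~\ref{uniquenessLp}) and picking any nontrivial $f \in C_0^\infty(\bM)$, $f \ge 0$, with support near $x$, the left-hand side $P_s f(x)$ is strictly positive for $s > 0$, while the right-hand side $P_t f(y)$ is likewise strictly positive by hypoellipticity. If $d(x,y)$ exceeds the critical threshold
\[
D_{\max} \;=\; 2\sqrt{3}\,\pi\,\sqrt{\frac{\rho_2+\kappa}{\rho_1\rho_2}\left(1+\frac{3\kappa}{2\rho_2}\right)d},
\]
then after optimizing $T$, $\omega$, and $\alpha$, the function $\Psi$ becomes infinite (or, equivalently, the finite time window $[0,T^*]$ available in Corollary~\ref{C:variational} is insufficient to connect $x$ and $y$ via a subunit path). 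This contradiction forces $d(x,y) \le D_{\max}$ for every pair $x, y \in \bM$, and completeness of $(\bM,d)$ then gives compactness in the metric topology.

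\textbf{Main obstacle.} The delicate step is producing precisely the constant $2\sqrt{3}\pi$, including the algebraic factor $\sqrt{3}\sqrt{1+\frac{3\kappa}{2\rho_2}}$. This will require solving an optimization problem over the two parameters $(\omega,\alpha)$ (and possibly the endpoint $T$) in the trigonometric ansatz for $b$. The factor $\sqrt{3}$, together with the coefficient structure already seen in the Li--Yau estimate (where the same $\frac{3\kappa}{2\rho_2}$ appeared), strongly suggests that the right choice is $\alpha = 3$ with $\omega^2$ proportional to $\rho_1\rho_2/(\rho_2+\kappa)$, so that the leading terms in $\gamma(t)$ at both endpoints balance and the integrals $\int b'\gamma\, ds$, $\int b'\gamma^2\,ds$ can be evaluated in closed form. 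Carrying out this balancing carefully, and then propagating the resulting sharp estimate through the subunit-path argument, is the technical core of the proof; everything else is essentially an adaptation of techniques already developed for Theorems~\ref{T:ge} and \ref{T:harnack}.
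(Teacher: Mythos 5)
Your starting point is right---the paper's proof does begin by feeding a new choice of $b$ into Corollary \ref{C:variational} to exploit $\rho_1>0$ (it uses $b(t)=(e^{-\alpha t}-e^{-\alpha T})^\beta$ with $\alpha$ proportional to $\rho_1\rho_2/(\rho_2+\kappa)$, rather than a trigonometric ansatz)---but the mechanism you propose for extracting compactness does not work. What such a choice of $b$ yields is an inequality of the form $c_1\Gamma(\ln P_Tf)+c_2\Gamma^Z(\ln P_Tf)\le A(T)\frac{LP_Tf}{P_Tf}+B(T)$ with $A(T),B(T)>0$ decaying exponentially in $T$ (see \eqref{gamma_bound}); running any such estimate through the subunit-path argument of Theorem \ref{T:harnack} always produces a Harnack factor of the shape $\exp\left(C\,d(x,y)^2/(t-s)+\dots\right)$, which is \emph{finite for every distance}. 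The requirement that $b$ be nonnegative and non-increasing on $[0,T]$ does restrict the admissible $T$ for a trigonometric $b$, but a bounded time window is no obstruction to joining arbitrarily distant points by a subunit path---the spatial distance enters the Harnack chain only through the quadratic exponent, never through a finite-distance singularity. So no contradiction arises when $d(x,y)>D_{\max}$, and your argument cannot close; you have not exhibited (and, given the structure of these estimates, will not be able to exhibit) the blow-up of $\Psi$ at finite distance on which the whole proposal rests.

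The actual proof needs two further ingredients with no counterpart in your outline. First, the exponential decay of $\partial_t\ln P_tf$ and of $\Gamma(\ln P_tf)$ (Propositions \ref{P:bound_kernel} and \ref{harnack_spectral}) is used to show that $p(x,y,t)$ converges to a constant as $t\to\infty$, hence that $\mu(\bM)<\infty$; after normalizing $\mu$ to a probability measure, integrating the differential inequality in $t$ gives the global ultracontractive bound $p(x,y,t)\le(1-e^{-\alpha t})^{-D/2}$ of Proposition \ref{globalbound}. Second, this bound is converted by Davies' theorem into an entropy--energy inequality $\int_\bM f^2\ln f^2\,d\mu\le\Phi\left(\int_\bM\Gamma(f)\,d\mu\right)$ with an explicit concave $\Phi$ (Proposition \ref{P:entropyenergy}), and the compactness and diameter bound then follow from Bakry's Theorem 5.4 in \cite{bakry-stflour}, which gives $\text{diam}\,\bM\le-2\int_0^\infty\sqrt{x}\,\Phi''(x)\,dx$; evaluating this integral produces exactly the constant $2\sqrt{3}\,\pi\sqrt{\frac{\rho_2+\kappa}{\rho_1\rho_2}\left(1+\frac{3\kappa}{2\rho_2}\right)d}$. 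It is this functional-inequality machinery---not an optimized Harnack chain---that turns positivity of $\rho_1$ into a finite diameter.
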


The proof of Theorem \ref{T:BM} will be accomplished in several steps. In the remainder of this section we will tacitly assume the hypothesis of Theorem \ref{T:BM}. 

\subsection{Global heat kernel bounds}\label{S:global}

Our first result is the following large-time exponential decay for
the heat kernel.

\begin{proposition}\label{P:bound_kernel}
Let $0< \nu < \frac{\rho_1\rho_2}{\rho_2+ \kappa}$. There exist $t_0
>0$ and $C_1>0$ such that for every $f\in C^\infty_0(\mathbb M)$, $f\ge 0$:
\[
\left| \frac{\partial }{\partial t} \ln P_t f  (x) \right| \le C_1
e^{-\nu t} , \quad \ \ \ \ x \in \mathbb{M},\ t \ge t_0.
\]
\end{proposition}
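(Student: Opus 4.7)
The plan is to apply Corollary \ref{C:variational} with an exponential weight $b$ carefully adapted to the regime $\rho_1>0$. The key observation is that the choice $b(t)=e^{-2\nu^{\ast}t}$, with $\nu^{\ast}:=\rho_1\rho_2/(\rho_2+\kappa)$, makes the auxiliary function $\gamma$ of Corollary \ref{C:variational} vanish identically: since $b'/b=b''/b'=-2\nu^{\ast}$,
\[
\gamma=\tfrac{d}{4}\Bigl(-2\nu^{\ast}-\tfrac{2\kappa\nu^{\ast}}{\rho_2}+2\rho_1\Bigr)=\tfrac{d}{2}\bigl(\rho_1-\nu^{\ast}(1+\kappa/\rho_2)\bigr)=0,
\]
using $\nu^{\ast}(1+\kappa/\rho_2)=\rho_1$ by definition of $\nu^{\ast}$. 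Consequently the entire right-hand side of the inequality in Corollary \ref{C:variational} vanishes, and one obtains the pointwise entropic estimate
\begin{equation}\label{gamma_bound}
\tfrac{\nu^{\ast}}{\rho_2}(P_Tf_\varepsilon)\Gamma(\ln P_Tf_\varepsilon)+\Gamma^Z(\ln P_Tf_\varepsilon)\le e^{-2\nu^{\ast}T}\Bigl[\tfrac{\nu^{\ast}}{\rho_2}P_T\bigl(f_\varepsilon\Gamma(\ln f_\varepsilon)\bigr)+P_T\bigl(f_\varepsilon\Gamma^Z(\ln f_\varepsilon)\bigr)\Bigr],
\end{equation}
which is the inequality referenced in Remark \ref{R:rho1zero}. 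Since $f\in C_0^\infty(\M)$ and $f_\varepsilon=f+\varepsilon\ge\varepsilon>0$, the functions $f_\varepsilon\Gamma(\ln f_\varepsilon)=\Gamma(f)/f_\varepsilon$ and $f_\varepsilon\Gamma^Z(\ln f_\varepsilon)$ are bounded on $\M$, and by the sub-Markov property the right-hand side of \eqref{gamma_bound} is at most $C(f,\varepsilon)e^{-2\nu^{\ast}T}$. Using $(P_Tf_\varepsilon)\Gamma(\ln P_Tf_\varepsilon)=\Gamma(P_Tf)/P_Tf_\varepsilon$, $\Gamma^Z(\ln P_Tf_\varepsilon)=\Gamma^Z(P_Tf)/(P_Tf_\varepsilon)^2$, together with $\varepsilon\le P_Tf_\varepsilon\le\|f\|_\infty+\varepsilon$, one extracts uniform pointwise decay $\Gamma(P_Tf)(x),\Gamma^Z(P_Tf)(x)\le C'(f,\varepsilon)e^{-2\nu^{\ast}T}$ for all $x\in\M$.

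The second step converts this gradient decay into the required bound on $\partial_T\ln P_Tf=LP_Tf/P_Tf$. For the prescribed $\nu\in(0,\nu^{\ast})$ I would apply Corollary \ref{C:variational} a second time with $b(t)=e^{-2\nu t}$; in this case $\gamma$ is the strictly positive constant $\gamma_0=(d/2)(\rho_1-\nu c)$ with $c=1+\kappa/\rho_2$, and the integrals simplify to $\int_0^T b'\gamma\,ds=-\gamma_0(1-e^{-2\nu T})$ and $\int_0^T b'\gamma^2\,ds=-\gamma_0^2(1-e^{-2\nu T})$. Rearranging so as to isolate $LP_Tf_\varepsilon$, dividing by $P_Tf_\varepsilon\ge\varepsilon$, and retaining the negative correction terms in $\Gamma(\ln P_Tf_\varepsilon),\Gamma^Z(\ln P_Tf_\varepsilon)$ that this manipulation produces, one uses \eqref{gamma_bound} to control these corrections at the faster rate $2\nu^{\ast}$; the outcome is the desired upper bound $\partial_T\ln P_Tf_\varepsilon\le C_1e^{-\nu T}$ for $T\ge t_0$. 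The matching lower bound $\partial_T\ln P_Tf_\varepsilon\ge -C_1e^{-\nu T}$ follows from a parallel argument, exploiting the upper bound $P_Tf_\varepsilon\le\|f\|_\infty+\varepsilon$ together with the decomposition $\partial_T\ln P_Tf=L\ln P_Tf+\Gamma(\ln P_Tf)$ and the already established decay of $\Gamma(\ln P_Tf)$. Finally, the limit $\varepsilon\to 0$ is justified by the strict positivity of the heat kernel, which ensures $P_Tf(x)>0$ for $t>0$.

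The main obstacle is the second step. The direct rearrangement of Corollary \ref{C:variational} with $b(t)=e^{-2\nu t}$ inevitably produces a non-vanishing constant upper bound proportional to $\gamma_0$ in addition to the expected $e^{-2\nu T}$ terms, and this constant must be eliminated to obtain genuine exponential decay. The mechanism of elimination is a careful interplay with \eqref{gamma_bound}, which provides decay at the strictly faster rate $2\nu^{\ast}$; the strict inequality $\nu<\nu^{\ast}$ in the hypothesis is precisely the margin in the exponents that allows this absorption to close, and it is the reason why the conclusion of the proposition cannot be stated with $\nu=\nu^{\ast}$.
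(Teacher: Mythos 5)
Your first step is correct and is a clean observation: with $b(t)=e^{-2\nu^{\ast}t}$, $\nu^{\ast}=\rho_1\rho_2/(\rho_2+\kappa)$, the function $\gamma$ of Corollary \ref{C:variational} vanishes identically, the right-hand side of the corollary disappears, and one gets pointwise decay of $\Gamma(\ln P_Tf_\varepsilon)$ and $\Gamma^Z(\ln P_Tf_\varepsilon)$ at rate $e^{-2\nu^{\ast}T}$. Two caveats already here: your constants involve $\|f\|_\infty$, $\|\Gamma(f)\|_\infty$, $\|\Gamma^Z(f)\|_\infty$ and $\varepsilon$, whereas the proposition asserts a $C_1$ uniform over all $f\in C_0^\infty(\M)$ (and this uniformity is used afterwards, e.g.\ for the heat kernel in \eqref{estimeenoyau} and in Proposition \ref{harnack_spectral}). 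The paper removes the $f$-dependence by taking $b$ with $b(T)=b'(T)=0$, which kills exactly the terms $P_T\bigl(f_\varepsilon\Gamma(\ln f_\varepsilon)\bigr)$ and $P_T\bigl(f_\varepsilon\Gamma^Z(\ln f_\varepsilon)\bigr)$ that carry the $f$-dependence.

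The decisive gap is in your second step. With $b(t)=e^{-2\nu t}$, $\nu<\nu^{\ast}$, both integrals $\int_0^Tb'\gamma\,ds$ and $\int_0^Tb'\gamma^2\,ds$ converge to the nonzero constants $-\gamma_0$ and $-\gamma_0^2$ as $T\to\infty$, and after dividing by the coefficient of $LP_Tf_\varepsilon$ the inequality takes the form
\[
\frac{LP_Tf_\varepsilon}{P_Tf_\varepsilon}\ \le\ \frac{\gamma_0}{2}\ +\ O\bigl(e^{-2\nu T}\bigr)\ -\ \frac{d}{2\gamma_0\bigl(1-e^{-2\nu T}\bigr)}\Bigl(\nu\,\Gamma(\ln P_Tf_\varepsilon)+\rho_2\,\Gamma^Z(\ln P_Tf_\varepsilon)\Bigr).
\]
The last group of terms is nonpositive and, by your first step, tends to zero; a nonpositive quantity converging to zero can only push the bound \emph{down towards} $\gamma_0/2=\frac{d}{4}\bigl(2\rho_1-2\nu(1+\kappa/\rho_2)\bigr)>0$, never cancel it. So the ``absorption'' you invoke does not occur, and any fixed pure-exponential $b$ can only yield boundedness of $\partial_t\ln P_tf$, not exponential decay. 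The paper's proof avoids this structurally: it takes $b(t)=(e^{-\alpha t}-e^{-\alpha T})^\beta$ with $\beta>2$, so that $b(T)=b'(T)=0$; the choice $\alpha=2\rho_1\rho_2/\beta(\rho_2+\kappa)$ makes the surviving left-hand side a nonnegative combination of $\Gamma(\ln P_Tf)$ and $\Gamma^Z(\ln P_Tf)$ and immediately gives the lower bound $\partial_t\ln P_tf\ge -Ce^{-\alpha t}$; for the upper bound one must in addition let $\alpha$ depend on $T$ (namely $\alpha=\eta-\gamma e^{-\eta T}/\beta(\rho_2+\kappa)$) so that the two integrals are themselves of orders $e^{-\alpha T}$ and $e^{-2\alpha T}$ and their ratio produces the decay. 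Your proposal contains no substitute for this mechanism; moreover your sketch of the lower bound via $\partial_t\ln P_tf=L\ln P_tf+\Gamma(\ln P_tf)$ leaves $L\ln P_tf$ uncontrolled from below.
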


\begin{proof}
In Corollary \ref{C:variational}, we choose
\[
b(t)=(e^{-\alpha t} -e^{-\alpha T})^\beta,\ \ \ \ 0\le t\le T,
\]
with $\beta >2$ and $\alpha >0$. With such choice a simple
computation gives,
\[
\gamma (t) =\frac{d}{4}  \left( 2\rho_1 -\alpha \beta -\alpha \beta
\frac{\kappa}{\rho_2} -e^{-\alpha T} \left( \alpha(\beta -1) +
\frac{\alpha \beta\kappa}{\rho_2}\right) b(t)^{
-\frac{1}{\beta}}\right).
\]
Keeping in mind that $b(T) = b'(T) = 0$, and that $b(0) =
(1-e^{-\alpha T})^\beta$, $b'(0) = - \alpha \beta (1-e^{-\alpha
T})^{\beta-1}$, we obtain from \eqref{var}
\begin{align}\label{lb}
& - \frac{\alpha \beta (1-e^{-\alpha T})^{\beta-1}}{2\rho_2}
\Gamma(\ln P_Tf) - (1-e^{-\alpha T})^\beta \Gamma^Z(\ln P_T f)
\\
& \ge - \frac{2}{d\rho_2} \left(\int_0^T b'(t) \gamma (t) dt\right)
\frac{L P_T f}{P_T f} + \frac{1}{d\rho_2} \left(\int_0^T b'(t)
\gamma (t)^2 dt\right). \notag
\end{align} Now,
\begin{align*}
\int_0^T b'(t) \gamma (t) dt  = & - \frac{d}{4} \left( 2 \rho_1
-\alpha\beta - \alpha \beta \frac{\kappa}{\rho_2}\right) (1
-e^{-\alpha T})^\beta \\
& + \frac{d}{4}\frac{1}{1-\frac{1}{\beta}} \left( \alpha\beta
-\alpha+ \alpha \beta \frac{\kappa}{\rho_2} \right) e^{-\alpha T}(1
-e^{-\alpha T})^{\beta-1},
\end{align*}
\begin{align*}
\int_0^T b'(t) \gamma (t)^2 dt= & -\frac{d^2}{16} \left( 2 \rho_1 -\alpha\beta - \alpha \beta \frac{\kappa}{\rho_2}\right)^2 (1 -e^{-\alpha T})^\beta
 \\ &+\frac{d^2}{8} \frac{ \left( 2 \rho_1 -\alpha\beta - \alpha \beta \frac{\kappa}{\rho_2}\right) \left( \alpha\beta -\alpha+ \alpha \beta \frac{\kappa}{\rho_2} \right)}{1-\frac{1}{\beta}} e^{-\alpha T}(1 -e^{-\alpha T})^{\beta-1} \\
 &-\frac{d^2}{16}\frac{\left( \alpha\beta -\alpha+ \alpha \beta \frac{\kappa}{\rho_2} \right)^2}{ 1-\frac{2}{\beta}}e^{-2\alpha T}(1 -e^{-\alpha T})^{\beta-2}.
\end{align*}
If we choose
\[
\alpha= \frac{2 \rho_1\rho_2}{\beta(\rho_2+\kappa)},
\]
then \[ 2\rho_1 - \alpha \beta - \alpha \beta \frac{\kappa}{\rho_2}
= 0, \ \ \ \alpha \beta - \alpha + \alpha \beta
\frac{\kappa}{\rho_2} = 2\rho_1 - \alpha,\]
 and we obtain from \eqref{lb}:
\begin{align}\label{gamma_bound}
0 \le & \frac{\rho_1}{\rho_2+ \kappa} \Gamma (\ln P_T
f)+(1-e^{-\alpha T})\Gamma^Z (\ln P_T f) \le
\frac{d(2\rho_1-\alpha)}{2\rho_2 \left(1-\frac{1}{\beta} \right)}
e^{-\alpha T} \frac{LP_T f}{P_T f}
\\
& + \frac{d(2\rho_1-\alpha)^2}{16 \rho_2 \left(1-\frac{2}{\beta}
\right)} \frac{e^{-2\alpha T}}{ 1-e^{-\alpha T}}. \notag
\end{align}
Noting that $2\rho_1 - \alpha =
\frac{2\rho_1}{\beta(\rho_2+\kappa)}((\beta-1)\rho_2 +
\beta\kappa)>0$, and that $\beta
>2$ implies $\alpha < \frac{\rho_1 \rho_2}{\rho_2 + \kappa}$,
\eqref{gamma_bound} gives in particular the desired lower bound for
$\frac{\partial }{\partial t} \ln P_t f (x) $ with $\nu = \alpha$.

The upper bound is more delicate. We fix  $ 0<\eta = \frac{2
\rho_1\rho_2}{\beta(\rho_2+\kappa)}$, and with $\gamma =
2\beta\rho_1\rho_2$ we now choose in \eqref{lb}
\[
\alpha= \frac{2 \rho_1\rho_2- \gamma e^{-\eta T}
}{\beta(\rho_2+\kappa)} = \eta - \frac{\gamma e^{-\eta T}
}{\beta(\rho_2+\kappa)}.
\]
Clearly, $\alpha>0$ provided that $T$ be sufficiently large. This
choice gives
\[ 2\rho_1 - \alpha \beta - \alpha \beta \frac{\kappa}{\rho_2}
= \frac{\gamma e^{-\eta T}}{\rho_2}, \ \ \ \alpha \beta - \alpha +
\alpha \beta \frac{\kappa}{\rho_2} = 2\rho_1 - \alpha - \frac{\gamma
e^{-\eta T}}{\rho_2}.
\]
We thus have
\begin{align*}
\int_0^T b'(t) \gamma (t) dt & = - \frac{d}{4} e^{-\alpha T} (1 -
e^{-\alpha T})^{\beta-1} \left\{\frac{\gamma (1 - e^{-\alpha T})
e^{-(\eta-\alpha)T}}{\rho_2} - \frac{\beta}{\beta-1} (2 \rho_1 -
\alpha - \frac{\gamma e^{-\eta T}}{\rho_2})\right\}. \end{align*}
Noting that $e^{-(\eta-\alpha)T} = e^{-\frac{\gamma T e^{-\eta
T}}{\beta(\rho_2+\kappa)}}\to 1$, and $\alpha \longrightarrow
 \frac{2\rho_1\rho_2}{\beta(\rho_2+\kappa)}$ as $T\to \infty$, we
obtain
\[
\frac{\gamma (1 - e^{-\alpha T}) e^{-(\eta-\alpha)T}}{\rho_2} -
\frac{\beta}{\beta-1} (2 \rho_1 - \alpha - \frac{\gamma e^{-\eta
T}}{\rho_2})\ \longrightarrow\ \frac{\gamma}{\rho_2} -
\frac{\beta}{\beta-1}\left(2\rho_1  -
\frac{2\rho_1\rho_2}{\beta(\rho_2+\kappa)}\right).\] Since by our
choice of $\gamma$ we have $\frac{\gamma}{\rho_2} -
\frac{\beta}{\beta-1}\left(2\rho_1  -
\frac{2\rho_1\rho_2}{\beta(\rho_2+\kappa)}\right)>0$, it is clear
that we have \[ \int_0^T b'(t) \gamma (t) dt \le -
\frac{d}{8}\left(\frac{\gamma}{\rho_2} -
\frac{\beta}{\beta-1}\left(2\rho_1
-\frac{2\rho_1\rho_2}{\beta(\rho_2+\kappa)}\right)\right) e^{-\alpha
T} (1 - e^{-\alpha T})^{\beta-1} , \]
 provided that $T$ be large
enough. We also have
\begin{align*}
\int_0^T b'(t) \gamma (t)^2 dt= & -\frac{d^2}{16} e^{-2\alpha T} (1
-e^{-\alpha T})^{\beta-2} \bigg\{\frac{\beta}{\beta-2} (2 \rho_1 -
\alpha - \frac{\gamma e^{-\eta T}}{\rho_2})^2
\\
&  + \frac{\gamma^2}{\rho_2^2} (1 -e^{-\alpha T})^2
e^{-2(\eta-\alpha)T} - 2 \frac{\gamma}{\rho_2}\frac{\beta}{\beta -1}
(1 -e^{-\alpha T}) (2 \rho_1 - \alpha - \frac{\gamma e^{-\eta
T}}{\rho_2}) e^{-(\eta-\alpha)T}\bigg\}.
\end{align*}
Using our choice of $\gamma$ we see that, if we let $T\to \infty$,
the quantity between curly bracket in the right-hand side converges
to
\[
\frac{\beta}{\beta-2} 4\rho_1^2 \left(\frac{(\beta-1)\rho_2+\beta
\kappa}{\beta(\rho_2+\kappa)}\right)^2 + 4 \beta^2\rho_1^2 -
\frac{8\beta^2\rho_1^2}{\beta-1}\frac{(\beta-1)\rho_2+\beta
\kappa}{\beta(\rho_2+\kappa)}.
\]
This quantity is strictly positive provided that
\[
\frac{2\beta}{\beta-1}\frac{(\beta-1)\rho_2+\beta
\kappa}{\beta(\rho_2+\kappa)} < \frac{1}{\beta-2}
\left(\frac{(\beta-1)\rho_2+\beta
\kappa}{\beta(\rho_2+\kappa)}\right)^2 +  \beta,
\]
and this latter inequality is true, as one recognizes by applying
the inequality $2xy\le x^2 + y^2$. From these considerations and
from \eqref{lb} we conclude the desired upper bound for
$\frac{\partial }{\partial t} \ln P_t f (x) $.

\end{proof}

\begin{proposition}\label{harnack_spectral}
Let $0< \nu < \frac{\rho_1\rho_2}{\kappa +\rho_2}$. There exist $t_0
>0$ and $C_2>0$ such that for every $f\in C^\infty_0(\mathbb
M)$, with $f\ge 0$,
\[
e^{-C_2 e^{-\nu t} d(x,y)} \le \frac{P_t f  (x)}{P_t f (y)} \le
e^{C_2 e^{-\nu t} d(x,y)} , \quad \ \ \ x,y \in \mathbb{M},\ t \ge
t_0.
\]
\end{proposition}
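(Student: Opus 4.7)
The plan is to reduce the statement to a pointwise gradient bound of the form $\sqrt{\Gamma(\ln P_t f)}(x)\le C_2\, e^{-\nu t}$, valid uniformly in $x\in\bM$ for $t\ge t_0$, and then exploit the subunit characterization of $d$. Indeed, given $x,y\in\bM$ and $\varepsilon>0$, one may select a subunit path $\gamma\colon[0,T]\to\bM$ with $\gamma(0)=y$, $\gamma(T)=x$, and $T\le d(x,y)+\varepsilon$. Since $\gamma'(s)$ is subunit for $L$ at $\gamma(s)$ for a.e.\ $s$, we have $\big|\tfrac{d}{ds}\ln P_t f(\gamma(s))\big|\le \sqrt{\Gamma(\ln P_t f)(\gamma(s))}\le C_2\, e^{-\nu t}$; integration over $[0,T]$ and sending $\varepsilon\to 0^+$ then yields $|\ln P_t f(x)-\ln P_t f(y)|\le C_2\, e^{-\nu t}\, d(x,y)$, and exponentiation gives the two-sided bound in the statement. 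Note that the strict positivity of the heat kernel together with the hypothesis $f\ge 0$, $f\not\equiv 0$, makes $\ln P_t f$ well defined.

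To obtain the pointwise gradient bound, I would revisit the inequality \eqref{gamma_bound} produced inside the proof of Proposition~\ref{P:bound_kernel}. Dropping the nonnegative term $(1-e^{-\alpha T})\Gamma^Z(\ln P_T f)$ on its left-hand side, and replacing the (possibly signed) first summand on its right-hand side by its absolute value (the second summand being manifestly nonnegative), one obtains, for every $T$ large enough so that $1-e^{-\alpha T}$ is bounded below,
\[
\Gamma(\ln P_T f)(x) \le K_1\, e^{-\alpha T}\,\Big|\frac{LP_T f(x)}{P_T f(x)}\Big| \;+\; K_2\, e^{-2\alpha T},
\]
uniformly in $x\in\bM$, where $\alpha=\frac{2\rho_1\rho_2}{\beta(\rho_2+\kappa)}$ and $\beta>2$ are free parameters, while $K_1,K_2>0$ depend only on $\rho_1,\rho_2,\kappa,d,\beta$. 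Combining this with the uniform time-derivative bound $\big|\tfrac{\partial}{\partial t}\ln P_t f\big|\le C_1\, e^{-\nu' t}$ delivered by Proposition~\ref{P:bound_kernel}, valid for any $0<\nu'<\frac{\rho_1\rho_2}{\rho_2+\kappa}$ as soon as $t\ge t_0$, and taking square roots gives
\[
\sqrt{\Gamma(\ln P_T f)} \;\le\; \sqrt{K_1C_1}\, e^{-(\alpha+\nu')T/2} \;+\; \sqrt{K_2}\, e^{-\alpha T}.
\]

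The final step is a calibration of constants. Given $0<\nu<\frac{\rho_1\rho_2}{\rho_2+\kappa}$, I would fix $\beta>2$ sufficiently close to $2$ and $\nu'<\frac{\rho_1\rho_2}{\rho_2+\kappa}$ sufficiently close to the threshold so that both $\alpha>\nu$ and $(\alpha+\nu')/2>\nu$; this is possible precisely because $\alpha\to\frac{\rho_1\rho_2}{\rho_2+\kappa}$ as $\beta\to 2^+$ and $\nu$ lies strictly below this limit. With such a choice, both summands on the right of the last display are dominated by a multiple of $e^{-\nu T}$, which yields the desired uniform bound $\|\sqrt{\Gamma(\ln P_t f)}\|_\infty\le C_2\, e^{-\nu t}$ for $t\ge t_0$, and the first paragraph then completes the proof. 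The only real obstacle is bookkeeping: extracting the effective constants $K_1,K_2$ from \eqref{gamma_bound} and verifying that the two decay rates $(\alpha+\nu')/2$ and $\alpha$ can simultaneously be made to exceed the prescribed $\nu$; no new analytical input is needed beyond Proposition~\ref{P:bound_kernel} and the identification of $d$ with the subunit distance.
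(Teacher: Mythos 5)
Your proposal is correct and follows essentially the same route as the paper: the paper likewise combines \eqref{gamma_bound} with the upper bound of Proposition \ref{P:bound_kernel} to get $\Gamma(\ln P_t f)\le C_2^2 e^{-2\nu t}$, simply taking $\alpha=\nu$ (i.e. $\beta=2\rho_1\rho_2/(\nu(\rho_2+\kappa))>2$) instead of your slightly more elaborate calibration of $\alpha$ and $\nu'$. The only other (immaterial) difference is that the paper converts the gradient bound into the Lipschitz estimate directly from the sup-definition \eqref{di} of $d$, whereas you integrate along subunit paths; the two are equivalent since $d=d_s$.
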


\begin{proof}
If we combine \eqref{gamma_bound} (in which we take $\alpha = \nu$), with the upper bound of
Proposition \ref{P:bound_kernel}, we obtain that for $x \in
\mathbb{M}$ and $t \ge t_0$,
\[
\Gamma (\ln P_t f)(x) \le C_2^2 e^{-2\nu t},
\]
with $C_2 = \sqrt{d(2\rho_1-\nu)/2\rho_2(1-\beta^{-1})}$.
We infer that the function $u(x) = C_2^{-1} e^{\nu t} \ln P_t f(x)$,
which belongs to $C^\infty(\bM)$, is such that $||\Gamma(u)||_\infty \le 1$. From \eqref{di} we obtain that \[ |u(x) - u(y)| \le
d(x,y), \ \ \ \ \ x, y\in \bM. \] This implies the sought for
conclusion.

\end{proof}

If we now fix $x \in \mathbb{M}$, and denote by $p(x,\cdot,t)$ the
heat kernel with singularity at $(x,0)$, then according to
Proposition \ref{P:bound_kernel} we obtain for $t \ge t_0$,
\begin{align}\label{estimeenoyau}
\left| \frac{\partial \ln p(x,y,t)}{\partial t}\right|   \le  C_1
\exp\left(-\nu t \right),
\end{align}
with $0< \nu < \frac{\rho_1\rho_2}{\kappa +\rho_2}$. This shows that
$\ln p(\cdot,\cdot,t) $ converges when $t\to\infty$. Let us call
$\ln p_\infty $ this limit. Moreover, from Proposition
\ref{harnack_spectral}  the limit, $\ln p_{\infty}( x, \cdot)$ is a
constant $C(x)$. By the symmetry property $p(x,y,t)=p(y,x,t)$, so
that $C(x)$ actually does not depend on $x$. We deduce from this
that the measure $\mu$ is finite. We may then as well
suppose that $\mu$ is a probability measure, in which case
$p_{\infty}=1$. We assume this from now on.

We now can prove a global and explicit upper bound for the heat kernel $p(x,y,t)$.

\begin{proposition}\label{globalbound}
For $x,y \in \bM$ and $t>0$,
\[
p(x,y,t) \le \frac{1}{\left( 1-e^{-\frac{2\rho_1 \rho_2
t}{3(\rho_2+\kappa)}}
\right)^{\frac{d}{2}\left(1+\frac{3\kappa}{2\rho_2}\right)} }.
\]
\end{proposition}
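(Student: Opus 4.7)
The plan is to derive the global heat kernel upper bound from an entropic differential inequality on $\ln P_t f$ supplied by Corollary~\ref{C:variational}. I would specialize that corollary with the same test function used in Proposition~\ref{P:bound_kernel}, namely $b(t) = (e^{-\alpha t} - e^{-\alpha T})^{3}$ with $\alpha = \frac{2\rho_1\rho_2}{3(\rho_2+\kappa)}$ (so $\beta = 3$ in the parametrization there). Since $b(T) = b'(T) = 0$, the time-$T$ contributions vanish. Computing the resulting
\[
\gamma(t) = -\frac{d(2\rho_1 - \alpha)\,e^{-\alpha T}}{4(e^{-\alpha t} - e^{-\alpha T})}
\]
and substituting it into $\int_0^T b'\gamma\,ds$ and $\int_0^T b'\gamma^{2}\,ds$, then dividing by $(P_T f_{\varepsilon})(1-e^{-\alpha T})^{\beta-1}$ and discarding the two nonnegative $\Gamma(\ln P_T f_{\varepsilon})$ and $\Gamma^Z(\ln P_T f_{\varepsilon})$ terms, I expect to obtain the pointwise lower bound
\[
\partial_T \ln P_T f_{\varepsilon}(x) \;\ge\; -\,\frac{d\,\rho_1\,(2\rho_2+3\kappa)}{6(\rho_2+\kappa)}\cdot\frac{e^{-\alpha T}}{1-e^{-\alpha T}}.
\]

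I would then fix $f \in C_0^\infty(\M)$ with $f\ge 0$ and $\int f\,d\mu = 1$, and integrate the previous inequality over $s\in [t,T_0]$. The crucial input is the discussion preceding the proposition: $\mu$ may be taken to be a probability measure with $p_\infty = 1$, so $P_s f(x) \to 1$ as $s\to\infty$ and $\ln P_s f_{\varepsilon}(x) \to \ln(1+\varepsilon)$. Letting $T_0\to\infty$ and then $\varepsilon\to 0$, and using
\[
\int_t^\infty \frac{e^{-\alpha s}}{1-e^{-\alpha s}}\,ds \;=\; -\frac{1}{\alpha}\ln\bigl(1-e^{-\alpha t}\bigr),
\]
the specific value $\alpha = \frac{2\rho_1\rho_2}{3(\rho_2+\kappa)}$ makes the dimensional constant collapse to exactly $D/2$:
\[
\ln P_t f(x) \;\le\; -\frac{d(2\rho_2+3\kappa)}{4\rho_2}\,\ln\bigl(1-e^{-\alpha t}\bigr) \;=\; -\frac{D}{2}\ln\bigl(1-e^{-\alpha t}\bigr),
\]
hence $P_t f(x) \le (1-e^{-\alpha t})^{-D/2}$ for every such $f$.

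By homogeneity and a standard density argument, the bound extends to $P_t g(x) \le \|g\|_{L^1(\M)}\,(1-e^{-\alpha t})^{-D/2}$ for every nonnegative $g \in L^1(\M)$. Applying this with $g = p(x,\cdot,s)$ for small $s>0$ (which has $\|g\|_{L^1} = 1$ by stochastic completeness), the semigroup identity $P_t g(y) = p(x,y,s+t)$ together with the continuity of the heat kernel as $s\downarrow 0$ yields $p(x,y,t) \le (1-e^{-\alpha t})^{-D/2}$ for all $x,y \in \M$ and $t>0$. The main obstacle I anticipate is the first step: the algebra must be carried out carefully enough to see that the dimension $d$, which enters quadratically via $\gamma^{2}$ in the constant term and linearly via $\gamma$ in the coefficient of $\partial_T \ln P_T f$, conspires to produce the sharp exponent $D/2$ rather than the weaker $\tfrac{1}{2}(1 + \tfrac{3\kappa}{2\rho_2})$ that a cruder extraction from the intermediate inequality would give.
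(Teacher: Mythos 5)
Your argument is exactly the paper's proof: Proposition \ref{globalbound} is obtained there by taking \eqref{gamma_bound} (i.e.\ Corollary \ref{C:variational} with $b(t)=(e^{-\alpha t}-e^{-\alpha T})^{3}$ and $\alpha=\tfrac{2\rho_1\rho_2}{3(\rho_2+\kappa)}$), discarding the nonnegative $\Gamma$ and $\Gamma^Z$ terms to get $\partial_t \ln P_t f \ge -\tfrac{d\rho_1(2\rho_2+3\kappa)}{6(\rho_2+\kappa)}\tfrac{e^{-\alpha t}}{1-e^{-\alpha t}}$, and integrating from $t$ to $\infty$ using $p_\infty=1$, with the constant collapsing to $D/2$ just as you compute. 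Your extra care in passing from normalized $f\in C_0^\infty$ to the kernel via density and the semigroup property only makes explicit a step the paper leaves implicit, so the proposal is correct and coincides with the paper's route.
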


\begin{proof}
We apply \eqref{gamma_bound} with $\beta=3$ and obtain
\begin{align}\label{bla}
\frac{\rho_1}{\rho_2+\kappa} \Gamma (\ln P_t f)+(1-e^{-\alpha
t})\Gamma^Z (\ln P_t f) & \le   \frac{\rho_1}{2\rho_2}
\frac{2\rho_2+3\kappa}{\rho_2+\kappa} e^{-\alpha t} \frac{LP_t
f}{P_t f}
\\
& + \frac{d \rho_1^2}{12 \rho_2} \left(\frac{2\rho_2+
3\kappa}{\rho_2+\kappa} \right)^2 \frac{e^{-2\alpha t}}{
1-e^{-\alpha t}}, \notag
\end{align}
where $\alpha=\frac{2\rho_1\rho_2}{3(\rho_2+\kappa)}$. We deduce
\[
\frac{\partial \ln P_tf}{\partial t} \ge -\frac{d\rho_1}{6}
\frac{2\rho_2+3\kappa}{\rho_2+\kappa}\frac{e^{-\alpha t}}{
1-e^{-\alpha t}}.
\]
By integrating from $t$ to $\infty$, we obtain
\[
-\ln p(x,y,t) \ge -\frac{d}{2} \left( 1+\frac{3\kappa}{2\rho_2}\right) \ln (1-e^{-\alpha t}).
\]
This gives the desired conclusion.

\end{proof}

\subsection{Diameter bound}

In this subsection we conclude the proof of Theorem \ref{T:BM} by
showing that the diam$\ \bM$ is bounded. The idea is to show that the operator $L$ satisfies an
entropy-energy inequality. Such inequalities have been extensively
studied by Bakry (see chapters $4$ and $5$ in \cite{bakry-stflour}).
To simplify the computations, in what follows we denote by $D$ the
number defined in \eqref{D}, and we set
\[
\alpha=\frac{2\rho_1 \rho_2 }{3(\rho_2+\kappa)}.
\]

\begin{proposition}\label{P:entropyenergy}
For $f \in L^2 (\bM)$ such that $\int_\bM f^2 d\mu =1$, we have
\[
\int_\bM f^2 \ln f^2 d\mu \le \Phi \left( \int_\bM \Gamma(f) d\mu \right),
\]
where
\[
\Phi(x)=  D \left[  \left( 1+\frac{2}{\alpha D } x\right)\ln \left(
1+\frac{2}{\alpha D} x\right)-\frac{2}{\alpha D } x  \ln \left(
\frac{2}{\alpha D} x  \right)\right].
\]
\end{proposition}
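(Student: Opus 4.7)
The plan is to derive this entropy--energy inequality by combining the ultracontractive heat-kernel upper bound of Proposition~\ref{globalbound} with a semigroup interpolation in the spirit of Bakry--Ledoux. By a standard regularization ($f \rightsquigarrow f + \varepsilon$ and then $\varepsilon \to 0$, as in Corollary~\ref{C:variational}) it suffices to work with $f \in C_0^\infty(\bM)$ bounded below; set $g := f^2$ (a probability density, thanks to the normalization $\int f^2 \, d\mu = 1$) and $g_t := P_t g$.

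First I would invoke the de Bruijn entropy identity $\frac{d}{dt}\int g_t \log g_t\, d\mu = -\, I(g_t)$, where $I(h) := \int \Gamma(h)/h\, d\mu$ is the Fisher information. Since $g_t \to p_\infty = 1$ uniformly as $t \to \infty$ (as established in Section~\ref{S:global}), $\int g_t \log g_t \, d\mu \to 0$, and integrating the identity gives, for every $T > 0$,
\[
\int f^2 \log f^2 \, d\mu \;=\; \int_0^T I(g_s)\, ds \;+\; \int g_T \log g_T \, d\mu.
\]
For the second term I would apply Proposition~\ref{globalbound}: $g_T \le m(T) := (1-e^{-\alpha T})^{-D/2}$ pointwise, and since $\int g_T\, d\mu = 1$ Jensen's inequality yields
\[
\int g_T \log g_T \, d\mu \;\le\; \log m(T) \;=\; -\tfrac{D}{2}\log(1-e^{-\alpha T}).
\]

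The technical heart is controlling $\int_0^T I(g_s)\, ds$ linearly in $\int \Gamma(f)\, d\mu$. My approach is to multiply the pointwise inequality \eqref{gamma_bound} --- the very estimate that drove Proposition~\ref{globalbound} --- by $g_s = P_s g$ and integrate over $\bM$; using $\int L g_s\, d\mu = 0$ together with $\int g_s\, d\mu = 1$, this converts \eqref{gamma_bound} into a uniform-in-$g$ pointwise-in-$s$ upper bound on $I(g_s)$ of the form $C_1 e^{-2\alpha s}/(1-e^{-\alpha s})$. Interpolating this long-time decay estimate against the short-time bound $I(g_s) \le I(g) = 4\int \Gamma(f)\, d\mu$ (whose validity relies on the near-monotonicity of $s \mapsto I(g_s)$, which in turn comes from the $\Gamma$-commutation of Corollary~\ref{C:expdecay}) leads to an affine bound $\int_0^T I(g_s)\, ds \le 2T \int \Gamma(f)\, d\mu$, with the correct numerical constant.

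Substituting Steps~2 and~3 yields the two-parameter family
\[
\int f^2 \log f^2 \, d\mu \;\le\; 2T \int \Gamma(f)\, d\mu \;-\; \tfrac{D}{2}\log(1-e^{-\alpha T}), \qquad T > 0,
\]
and optimizing over $T > 0$ completes the argument: the critical $T_\ast$ satisfies $e^{-\alpha T_\ast} = y/(1+y)$ with $y = 2\int \Gamma(f)\, d\mu/(\alpha D)$, and a direct computation shows that the minimum of the right-hand side equals $\Phi\!\left(\int \Gamma(f)\, d\mu\right)$ as defined in the statement. The main obstacle I anticipate is Step~3: the naive bound $I(g_s) \le I(g)$ alone is not sharp enough to reproduce the precise constants appearing in $\Phi$, so one must either carefully optimize the auxiliary parameters $a, b, \gamma$ in the underlying Theorem~\ref{T:source} when deriving the Fisher information bound, or run the short-time/long-time interpolation with the cutoff tuned to the subsequent optimization in $T$, so that the cutoff and the variational parameter balance to give exactly the coefficients in $\Phi$.
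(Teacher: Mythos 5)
There is a genuine gap, and it sits exactly where you anticipated: in Step~3. The paper's own proof is much shorter and avoids the issue entirely: from Proposition~\ref{globalbound} one reads off the ultracontractive bound $\|P_tf\|_\infty\le (1-e^{-\alpha t})^{-D/2}\|f\|_2$, and Davies' theorem (Theorem 2.2.3 in \cite{Davies}) converts \emph{any} bound $\|P_t\|_{2\to\infty}\le e^{M(t)}$ directly into $\int_\bM f^2\ln f^2\,d\mu\le 2t\int_\bM\Gamma(f)\,d\mu+2M(t)$, i.e.\ here $\int_\bM f^2\ln f^2\,d\mu\le 2t\int_\bM\Gamma(f)\,d\mu-D\ln(1-e^{-\alpha t})$; minimizing over $t$ gives precisely $\Phi$. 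Note the coefficient $-D\ln(1-e^{-\alpha t})$, which is $2M(t)$, not $M(t)$: the factor $2$ comes from the Gross--Davies interpolation along the varying $L^{p(t)}$ norms, not from a fixed-entropy (de Bruijn) argument.

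Your de Bruijn route cannot reproduce these constants. First, since $\Gamma(f^2)=4f^2\Gamma(f)$, the initial Fisher information is $I(g)=4\int_\bM\Gamma(f)\,d\mu$, so even \emph{exact} monotonicity $I(g_s)\le I(g)$ would only give $\int_0^T I(g_s)\,ds\le 4T\int_\bM\Gamma(f)\,d\mu$; your claimed factor $2T$ is unproven and too strong. Second, the monotonicity itself is not available here: Corollary~\ref{C:expdecay} controls $\Gamma(P_sg)$ only through $P_s\Gamma(g)+P_s\Gamma^Z(g)$, so the vertical term $\Gamma^Z(f^2)$ enters and $s\mapsto I(g_s)$ need not be (near-)decreasing for the horizontal form alone. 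Third, even granting your displayed family $2T\,x-\tfrac{D}{2}\log(1-e^{-\alpha T})$, its minimum over $T$ is \emph{not} $\Phi(x)$: the critical point satisfies $e^{-\alpha T_*}=z/(1+z)$ with $z=4x/(\alpha D)$ (not $y=2x/(\alpha D)$ as you wrote --- that is the critical point of the paper's family), and the minimum equals $\tfrac{D}{2}\left[(1+z)\ln(1+z)-z\ln z\right]$, a different function from $\Phi$. Since the subsequent diameter bound integrates $\sqrt{x}\,\Phi''(x)$ with this specific $\Phi$, the constants matter. Your closing remark that one must "tune the cutoff" to recover the coefficients is an acknowledgment of the gap rather than a repair; the repair is to run the Davies interpolation (or cite it), which is what the paper does.
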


\begin{proof}
From Proposition \ref{globalbound}, for every $f \in L^2(\bM)$ we
have
\[
\|P_t f \|_\infty \le \frac{1}{\left( 1-e^{-\alpha t}
\right)^{\frac{D}{2} }} \| f \|_2.
\]
Therefore, from Davies' theorem (Theorem 2.2.3 in \cite{Davies}), for $f \in L^2
(\bM)$ such that $\int_\bM f^2 d\mu =1$, we obtain
\[
\int_\bM f^2 \ln f^2 d\mu \le 2t \int_\bM \Gamma(f) d\mu -D \ln \left( 1-e^{-\alpha t} \right), \quad t >0.
\]
By minimizing over $t$ the right-hand side of the above inequality,
we obtain
\[
\int_\bM f^2 \ln f^2 d\mu \le -\frac{2}{\alpha} x
\ln\left(\frac{2x}{2x + \alpha D}\right) +D
\ln\left(\frac{2x+\alpha D}{\alpha D}\right).
\]
where $ x = \int_\bM \Gamma(f) d\mu$. It is now an easy exercise to
recognize that the right-hand side of the latter inequality is the
same as $\Phi(x)$.

\end{proof}

With Proposition \ref{P:entropyenergy} in hands, we can finally
complete the proof of Theorem \ref{T:BM}.

\begin{proposition}
One has
\[ \emph{diam}\ \bM \le 2 \sqrt 2 \sqrt{\frac{D}{\alpha}}\pi = 2\sqrt{3} \pi \sqrt{
\frac{\rho_2+\kappa}{\rho_1\rho_2} \left(
1+\frac{3\kappa}{2\rho_2}\right)d } .
\]
\end{proposition}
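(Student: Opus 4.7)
The plan is to derive the diameter bound by a Herbst-type concentration argument based on the entropy-energy inequality of Proposition~\ref{P:entropyenergy}. Recall that in Section~\ref{S:global} the measure $\mu$ has been normalized to be a probability measure on $\bM$. Let $F\in C^\infty_b(\bM)$ with $\Gamma(F)\le 1$. For $s>0$ set $Z(s)=\int_\bM e^{sF}\,d\mu$, $H(s)=\ln Z(s)$, and $f_s=e^{sF/2}/\sqrt{Z(s)}$, so that $\int_\bM f_s^2\,d\mu=1$. A direct computation gives
\[
\int_\bM f_s^2\ln f_s^2\,d\mu = sH'(s)-H(s),\qquad \int_\bM \Gamma(f_s)\,d\mu \le \frac{s^2}{4},
\]
the second inequality using $\Gamma(f_s)=\tfrac{s^2}{4}f_s^2\,\Gamma(F)\le \tfrac{s^2}{4}f_s^2$. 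Feeding these into Proposition~\ref{P:entropyenergy}, and using the monotonicity of $\Phi$, we obtain the key differential inequality
\[
sH'(s)-H(s)\le \Phi\!\left(\tfrac{s^2}{4}\right),\qquad s>0.
\]

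Setting $K(s)=H(s)/s$, extended by continuity via L'H\^opital to $K(0)=\int_\bM F\,d\mu$, the inequality above reads $K'(s)\le \Phi(s^2/4)/s^2$. Integrating over $(0,\infty)$ and using that $K(s)\to \sup F$ as $s\to\infty$ (standard, since $\mu$ is a probability measure and $F$ is bounded) yields
\[
\sup F - \int_\bM F\,d\mu \le \int_0^\infty \frac{\Phi(s^2/4)}{s^2}\,ds,
\]
and the symmetric estimate with $-F$ in place of $F$ gives $\int_\bM F\,d\mu - \inf F \le \int_0^\infty \Phi(s^2/4)/s^2\,ds$. Summing, $\sup F-\inf F \le 2\int_0^\infty \Phi(s^2/4)/s^2\,ds$. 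The key computation is to evaluate this integral explicitly: the substitution $u=s^2/4$ followed by $v=2u/(\alpha D)$ reduces it to $\tfrac{1}{4}\sqrt{2D/\alpha}\,I$, where $I=\int_0^\infty v^{-3/2}[(1+v)\ln(1+v)-v\ln v]\,dv$. Noting that $\frac{d}{dv}[(1+v)\ln(1+v)-v\ln v]=\ln(1+1/v)$, an integration by parts, then the substitution $w=1/v$, then a second integration by parts, and finally $w=t^2$, transform $I$ into $8\int_0^\infty dt/(1+t^2)=4\pi$. Hence $\int_0^\infty \Phi(s^2/4)/s^2\,ds=\pi\sqrt{2D/\alpha}$, and we conclude
\[
\sup F-\inf F \le 2\pi\sqrt{2D/\alpha}\quad\text{for every }F\in C^\infty_b(\bM)\text{ with }\Gamma(F)\le 1.
\]

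To convert this into the diameter bound, we invoke the defining property \eqref{di}: for any $x,y\in\bM$ and $\epsilon>0$, there exists smooth $F$ with $\|\Gamma(F)\|_\infty\le 1$ and $|F(x)-F(y)|\ge d(x,y)-\epsilon$; truncating and mollifying to produce a bounded smooth function with the same Lipschitz bound (standard given the completeness of $(\bM,d)$), the preceding inequality yields $d(x,y)-\epsilon \le \sup F-\inf F \le 2\pi\sqrt{2D/\alpha}$. Letting $\epsilon\to 0$ and taking the supremum over $x,y\in\bM$ gives $\mathrm{diam}\,\bM\le 2\pi\sqrt{2D/\alpha}=2\sqrt 2\,\pi\sqrt{D/\alpha}$, which reduces to the stated $2\sqrt 3\,\pi\sqrt{\frac{\rho_2+\kappa}{\rho_1\rho_2}(1+\frac{3\kappa}{2\rho_2})d}$ upon inserting $\alpha=2\rho_1\rho_2/(3(\rho_2+\kappa))$ and $D=d(1+3\kappa/(2\rho_2))$. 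The main technical obstacle is the truncation/mollification step needed to make $f_s$ admissible in Proposition~\ref{P:entropyenergy} (which is formulated for $f\in L^2(\bM)$), together with the careful bookkeeping of boundary terms in the two integrations by parts for $I$ (all of which vanish at $0$ and $\infty$); both are routine but must be carried out explicitly.
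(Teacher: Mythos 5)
Your proof is correct, and it reaches the paper's bound by what is, at bottom, the same mechanism: everything is funneled through the entropy--energy inequality of Proposition \ref{P:entropyenergy} and the function $\Phi$. The difference is that the paper simply invokes Theorem 5.4 of \cite{bakry-stflour} (after checking that $\Phi'(x)/\sqrt{x}$ and $\Phi(x)/x^{3/2}$ are integrable and $\Phi$ is concave) to get $\mathrm{diam}\,\bM\le -2\int_0^\infty \sqrt{x}\,\Phi''(x)\,dx$, whereas you open up that black box and reprove it via the Herbst/Laplace-transform argument: the differential inequality $sH'(s)-H(s)\le\Phi(s^2/4)$ for $H(s)=\ln\int e^{sF}d\mu$, integration of $K(s)=H(s)/s$, and the identification of $\lim_{s\to\infty}K(s)$ with $\sup F$. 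Your final bound $2\int_0^\infty \Phi(s^2/4)s^{-2}\,ds=\tfrac12\int_0^\infty\Phi(x)x^{-3/2}\,dx$ coincides with the paper's $-2\int_0^\infty\sqrt{x}\,\Phi''(x)\,dx$ (this is exactly the chain of identities the paper records in its bullet list), and your evaluation $I=4\pi$, hence $\pi\sqrt{2D/\alpha}$ for the single-sided integral, agrees with the paper's computation from $\Phi''(x)=-2D/(x(2x+\alpha D))$. The payoff of your route is self-containedness; the cost is the extra bookkeeping you correctly flag (admissibility of $f_s=e^{sF/2}/\sqrt{Z(s)}$ in Proposition \ref{P:entropyenergy}, which is harmless since $\mu$ is a probability measure and $F$ is bounded and smooth, and the truncation of an unbounded competitor $F$ in \eqref{di} by composing with a bounded $1$-Lipschitz smooth function, which preserves $\Gamma(F)\le 1$). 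Both of these are genuinely routine, so the argument stands.
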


\begin{proof}
The function $\Phi$ that appears in the Proposition
\ref{P:entropyenergy} enjoys the following properties:
\begin{itemize}
\item $\Phi'(x)/x^{1/2}$  and $\Phi(x)/x^{3/2}$ are integrable on $(0,\infty)$;
\item $\Phi$ is concave;
\item $\frac{1}{2}\int_0^{+\infty} \frac{\Phi(x)}{x^{3/2}}dx=\int_0^{+\infty} \frac{\Phi'(x)}{\sqrt{x}}dx =-2\int_0^{+\infty} \sqrt{x} \Phi''(x)dx <+\infty.$
\end{itemize}
We can therefore apply the beautiful Theorem 5.4 in \cite{bakry-stflour} to deduce that the diameter of $\bM$ is finite and
\[
\emph{diam}\ \bM \le-2\int_0^{+\infty} \sqrt{x} \Phi''(x)dx.
\]
Since $\Phi''(x) = - \frac{2D}{x(2x+\alpha D)}$, a routine
calculation shows
\[
-2\int_0^{+\infty} \sqrt{x} \Phi''(x)dx= \frac{\pi}{\sqrt \rho_1} 2\sqrt{3} \sqrt{\left(\frac{\kappa}{\rho_2} + 1\right) D}.
\]
\end{proof}

\begin{remark}
The constant $2\sqrt{3} \pi \sqrt{
\frac{\rho_2+\kappa}{\rho_1\rho_2} \left(
1+\frac{3\kappa}{2\rho_2}\right)d } $ is not sharp. For instance, if $\M$ is a Riemannian manifold, we can take $d = n =$ dim$(\M)$, $\kappa=0$, and we thus obtain
\[
\emph{diam}\ \bM \le  2\sqrt{3}\pi\sqrt{ \frac{n}{\rho_1}},
\]
whereas it is known from the classical Bonnet-Myer's theorem that
\[
\emph{diam}\ \bM  \le  \pi \sqrt{ \frac{n-1}{\rho_1}}.
\]
\end{remark}

\vskip 0.3in

\bigskip
\footnotesize
\noindent\textit{Acknowledgments.}
The first author supported in part by NSF Grant DMS 0907326. The second author was supported in part by NSF Grant DMS-1001317

\end{document}